\documentclass[a4paper,11pt,oneside]{amsart}

\usepackage{amsmath, amsthm, amscd,  amsfonts,}
\usepackage{graphicx,float}
\usepackage{lineno,hyperref}
\usepackage{mathrsfs}
\usepackage{amssymb}
\usepackage{color}
\usepackage{dsfont}
\usepackage{fancybox}
\usepackage{comment}

\usepackage[english]{babel}

\newcommand{\even}{\mathrm{Even}}
\newcommand{\TP}{\mathrm{TP}}

\newcommand{\Ult}{\mathrm{Ult}}

\newcommand*\axiomfont[1]{\textsf{\textup{#1}}}

\newcommand\gch{\axiomfont{GCH}}
\newcommand\sch{\axiomfont{SCH}}
\newcommand\ch{\axiomfont{CH}}
\newcommand{\crit}{\mathrm{crit}}
\newcommand{\cof}{\mathrm{cof}}
\newcommand{\otp}{\mathrm{otp}}
\newcommand{\dom}{\text{dom}\,}
\newcommand{\Add}{\mathrm{Add}}

\newtheorem{theo}{Theorem}[section]

\newtheorem{defi}[theo]{Definition}
\newtheorem{lemma}[theo]{Lemma}
\newtheorem{prop}[theo]{Proposition}
\newtheorem{remark}[theo]{Remark}
\newtheorem{convention}[theo]{Convention}
\newtheorem{claim}[theo]{Claim}
\newtheorem{subclaim}[theo]{Subclaim}

\newtheorem{notation}[theo]{Notation}

\def\s{\subseteq}
\newcommand{\one}{\mathds{1}}
\def\l{\langle}
\def\r{\rangle}

\title[Tree property at first and double successors]{The tree property at first and double successors of singular cardinals with an arbitrary gap}

\begin{document}

\author{Alejandro Poveda}
\address{Departament de Matem\`atiques i Inform\`atica, Universitat de Barcelona. 
Gran Via de les Corts Catalanes, 585,
08007 Barcelona, Catalonia.}
\email{alejandro.poveda@ub.edu}
\subjclass[2000]{Primary: 03Exx. Secondary: 03E50, 03E57.}
%\keywords{Tree property, Sinapova forcing, Scale, Mitchell forcing.}
\thanks{ This research has been supported by MECD (Spanish Government) Grant no FPU15/00026, MEC project number MTM2017-86777-P and SGR (Catalan Govern\-ment) project number 2017SGR-270.}

\maketitle
\begin{abstract}
Let $\cof(\mu)=\mu$ and $\kappa$  be  a supercompact cardinal with $\mu<\kappa$. Assume that there is an increasing and continuous sequence of cardinals $\langle\kappa_\xi\mid \xi<\mu\rangle$ with $\kappa_0:=\kappa$ and such that, for each $\xi<\mu$, $\kappa_{\xi+1}$ is supercompact. Besides, assume that $\lambda$ is a weakly compact cardinal with $\sup_{\xi<\mu}\kappa_\xi<\lambda$. Let $\Theta\geq\lambda$ be a cardinal with $\cof(\Theta)>\kappa$. Assuming the $\gch_{\geq\kappa}$, we construct a generic extension where $\kappa$ is strong limit, $\cof(\kappa)=\mu$, $2^\kappa= \Theta$ and both $\TP(\kappa^+)$ and $\TP(\kappa^{++})$ hold. Further, in this model there is a very good and a bad scale at $\kappa$. This  generalizes the main results of \cite{SinTree} and \cite{FriHon}.
\end{abstract}

\section{Introduction}\label{Intro}
Infinite trees play a central role in infinite combinatorics. Recall that a $\kappa$-tree is called $\kappa$-Aronszajn if it has no cofinal branches. 
Given a re\-gular cardinal $\kappa$ it is said that the tree property holds at $\kappa$, denoted by $\TP(\kappa)$, if every $\kappa$-tree has a cofinal branch. By classical results of K\"{o}nig and Aronszajn it is well-known that $\TP(\aleph_0)$ holds while $\TP(\aleph_1)$ fails.
In 1972, Mitchell proved that assuming the existence of  a weakly compact cardinal  $\kappa$ there is a generic extension by a forcing $\mathbb{M}(\kappa)$ where $\kappa=\aleph_2$, $2^{\aleph_0}=\aleph_2$ and $\TP(\aleph_2)$ holds. Thereby the consistency of a weakly compact cardinal gives an upper bound for the consistency of $\TP(\aleph_2)$. It is worth mentioning that the failure of the \ch\, in Mitchell's model is necessary, for  otherwise, by virtue of Specker's theorem, there would be a special $\aleph_2$-Aronszajn tree. The converse implication is also true on the basis of a theorem of Silver (see e.g. \cite{Jech}) who proved that if $\TP(\aleph_2)$ holds then $\aleph_2$ is a  weakly compact cardinal in $L$. Combining both theorems, it follows that $\TP(\aleph_2)$ is equiconsistent with the existence of a weakly compact cardinal. In this paper we are interested in the forcing devised by Mitchell in \cite{Mit}, as well as in other similar constructions developed by several authors over the years \cite{Abr} \cite{CumFor} \cite{SinTree} \cite{Ung} \cite{FriHal} \cite{FriHon}.

Intuitively, Mitchell forcing $\mathbb{M}(\kappa)$ can be conceived as the amalgam of two components: the first one intended to blow up the power set of $\aleph_0$ to $\kappa$ (\textit{Cohen component}) and the second one devised to collapse the interval $(\omega_1,\kappa)$ (\textit{Collapsing component}). Combining this with a fine analysis of the quotients of $\mathbb{M}(\kappa)$, Mitchell's theorem follows.

In the light of Mitchell's result it is  natural to ask whether it is consistent to have the tree property at two consecutive cardinals. The first result in this direction was due to Abraham, who  proved in 1983 that from the existence of a supercompact cardinal with a weakly compact cardinal above, it is possible to force $\TP(\aleph_2)$ and $\TP(\aleph_3)$ \cite{Abr}. Prima facie it may seem surprising   that for getting the consistency of $\TP(\aleph_2) + \TP(\aleph_3)$ one needs much stronger hypotheses than those assumed by Mitchell: especially considering that the consistency of $\TP(\aleph_2)+\TP(\aleph_4)$ follows from a straightforward application of Mitchell's ideas to two weakly compact cardinals.
But, as Magidor observed, to get the consistency of the tree property at two consecutive cardinals one needs to trascend the level of $0^\sharp$ (see \cite[Theorem 1.1]{Abr}).

Some years later, and building on Abraham's ideas, Cummings and Foreman designed a forcing that, starting with infinitely many supercompact cardinals, yields a generic extension where the tree property holds at $\aleph_n$, for each $2\leq n<\omega$ \cite{CumFor}.
In that paper the authors combined Mitchell's construction with the Prikry-type forcing technology to get a model where $\TP(\kappa^{++})$ holds, $\kappa$ is a strong limit cardinal with $\cof(\kappa)=\omega$, and the $\sch_\kappa$ fails \cite{CumFor}. %Specifically, starting  with a supercompact cardinal $\kappa$ joint a weakly compact cardinal $\lambda>\kappa$, they produce a generic extension where $2^\kappa=\lambda$, $\lambda=\kappa^{++}$, $\TP(\kappa^{++})$ holds and $\kappa$ is a strong limit with $\cof(\kappa)=\omega$. As the Cumming-Foreman ideas will be extensively used in the paper it should be convenient to sketch some of the main ideas of their proof.
Building on these ideas, as well as on others from \cite{Ung}, Friedman, Honzik and Stejskalov\'a \cite{FriHon} exhibited an argument to obtain arbitrary values of $2^\kappa$ in  Cummings-Foreman's model. In particular this shows that the tree property at the double successor of a strong limit singular cardinal $\kappa$ is consistent with an arbitrary failure of the $\sch_\kappa$. Building on \cite{FriHon} this result was subsequently generalized in \cite{GolPov} to the setting of uncountable cofinalities. %as well as pushed down to the level of $\aleph_{\mu}$, modulo some arithmetic restrictions.\footnote{In fact in \cite{GolPov} it is shown that  the hypothesis needed for all these results can be weakened to a strong cardinal $\kappa$ with $o(\kappa)=\mu$.}

A related discussion to that described previously is about the existen\-ce of Aronszajn trees at first successors of strong limit singular cardinals.  This problem is related with the proof of the consistency of the failure of the \sch\, at a singular strong limit cardinal. Recall that if $\kappa$ is a measurable cardinal with $2^\kappa\geq\kappa^{++}$ then Prikry forcing yields a generic extension where $\square^*_\kappa$ holds, hence $\TP(\kappa^+)$ fails, and $\sch_\kappa$ fails.\footnote{The consistency of the former hypotheses is exactly the existence of a measurable cardinal $\kappa$ with $o(\kappa)=\kappa^{++}$ as proved by Gitik and Mitchell \cite{Jech}.}
%The origin of this problem can be traced back to the first proof of the consistency of $\neg\mathrm{SCH}_\kappa$, due to Silver and Prikry (see \cite{Jech}): start with $\kappa$ a measurable cardinal with $2^\kappa=\kappa^{++}$ \footnote{The consistency of this statement is exactly the existence of a measurable cardinal $\kappa$ with $o(\kappa)=\kappa^{++}$ as proved by Gitik \cite{Jech}.} and force with the Prikry forcing $\mathbb{P}_\mathcal{U}$  with respect to some normal measure on $\kappa$.  
%It is the case that ${\mathbb{P}}_{\mathcal{U}}$ not only forces that $\kappa$ is a singular strong limit cardinal, hence $\mathrm{SCH}_\kappa$ fails, but also that $\TP(\kappa^{+})$ fails. Indeed, since $\kappa$ is a measurable cardinal, in particular $\kappa^{<\kappa}=\kappa$, then $\square^*_\kappa$ holds (see e.g. \cite{CumForMag}). Since Prikry forcing preserves the successor of $\kappa$, $\square^*_\kappa$ also holds in the generic extension by $\mathbb{P}_\mathcal{U}$. On the other had, by virtue of Jensen's theorem, it is known that the existence of a $\square^*_\kappa$-sequence is equivalent to the existence of a special $\kappa^+$-Aronszajn tree, hence ${\mathbb{P}}_\mathcal{U}$ forces both the failure of $\mathrm{SCH}_\kappa$ and the failure of $\TP(\kappa^{+})$.%\footnote{The same argument but with the Prikry forcing with interleaved collapses would yield a model where $\kappa=\aleph_\omega$, $\aleph_\omega$ is strong limit and $\mathrm{SCH}_{\aleph_\omega}$ and $\TP(\aleph_{\omega+1})$ both fail.}
Thus a natural question that arises is if this is essentially the only  possible way to produce a model where the $\sch_\kappa$ fails. More formally, given a singular strong limit cardinal $\kappa$ with $\cof(\kappa)=\omega$ does $\TP(\kappa^+)$ (and, in particular, $\neg \square^*_\kappa$) imply $\sch_\kappa$? This question was originally posed in 1989 by Woodin and other authors (see e.g. \cite{For}) and remained unanswered for a long time. Possibly the most decided attempt towards settling this question was due to Gitik and Sharon, who proved the consistency of $\neg\sch_\kappa+\neg \square^*_\kappa$ from the existence of a $\kappa^{+\omega}$-supercompact cardinal $\kappa$ \cite{GitSh}. Also in Gitik-Sharon model there is a very good scale at $\kappa$, a PCF object of central relevance in cardinal arithmetic (see \cite{SheCard} for definitions). %The main forcing construction of that paper was the so called Diagonal Supercompact Prikry forcing which somehow generalizes the forcing used by Magidor in  in \cite{Mag}.%-which is a generalization of the Supercompact Prikry forcing used by Magidor, and starting with a $\kappa^{+\omega+1}$-supercompact cardinal $\kappa$  produced a generic extension  where there is a very good scale at $\kappa$ (see definition \ref{GoodScale}) and both $\mathrm{SCH}_\kappa$ and $\square^*_\kappa$ fail. 
Shortly after, Cummings and Foremann observed that the failure of $\square^*_\kappa$ in  Gitik-Sharon's model was due to the existence of a bad  scale at $\kappa$.
%Altogether, in Gitik-Sharon's model there is a strong limit cardinal $\kappa$ with $\cof(\kappa)=\omega$ that carries both a very good scale and a bad scale, and $\mathrm{SCH}_\kappa$ fails.

The construction of a model for $\neg \sch_\kappa+\TP(\kappa^+)$ finally came from Neeman \cite{Nee}, who starting with $\omega$-many supercompact cardinals was able to combine the ideas from \cite{GitSh} with the analysis of narrow systems of \cite{MagShe} to give rise the desired result. Following up on Neeman's ideas, Sinapova proved in \cite{SinTree} that the Mitchell-like forcing of \cite{Ung} can be used to yield a generic extension where $\TP(\kappa^+)$ and $\TP(\kappa^{++})$ both hold while  $\sch_\kappa$ fails. In fact, subsequent work of Sinapova and Unger showed that this can be also done  for $\kappa=\aleph_{\omega^2}$ \cite{SinUng}.
\medskip

In this paper we aim to combine Sinapova's arguments from \cite{SinTree} with those developed in \cite{Ung},\cite{FriHon} and \cite{GolPov}, in order to get a generic extension where $\TP(\kappa^+)$ and $\TP(\kappa^{++})$ both hold, $\kappa$ is a singular strong limit cardinal with $\cof(\kappa)=\mu$ and  there is an arbitrary failure of the $\sch_\kappa$.  Further, as a consequence of results of Sinapova \cite{Sin}, in our generic extension there will be a very good scale and a bad scale at $\kappa$. The main result of the paper is the following.

\begin{theo}[Main Theorem]\label{MainTheorem1}
Let $\cof(\mu)=\mu$ and $\kappa$ be a supercompact cardinal, with $\mu<\kappa$. Assume that there is an increasing and continuous sequence of cardinals $\langle\kappa_\xi\mid \,\xi<\mu\rangle$ with $\kappa_0:=\kappa$ and $\kappa_{\xi+1}$ being supercompact, for each $\xi<\mu$. Besides, assume that there is a weakly compact cardinal $\lambda$ with $\sup_{\xi<\mu} \kappa_\xi<\lambda$, and let $\Theta\geq\lambda$ be a cardinal  with $\cof(\Theta)>\kappa$. Assuming that the $\gch_{\geq\kappa}$ holds, there is a generic extension of the universe where the following holds:
\begin{enumerate}
\item $\kappa$ is a strong limit cardinal with $\cof(\kappa)=\mu$.
\item All cardinals and cofinalities ${\geq}\lambda$ are preserved, ${(\sup_{\xi<\mu}\kappa_\xi)^+}^V=\kappa^+$ and $\lambda=\kappa^{++}$. %$\delta=\kappa^+$ and 
\item $2^\kappa= \Theta$, hence $\neg \sch_\kappa$.
\item $\TP(\kappa^+)$ and $\TP(\kappa^{++})$ hold.
\item {There is a very good scale and a bad scale at $\kappa$.}
\end{enumerate}
\end{theo}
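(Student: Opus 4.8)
The strategy is to build a three-stage iteration combining a Prikry-type singularization with a Mitchell-style collapse and a mechanism for inflating $2^\kappa$. First I would fix a supercompact embedding $j\colon V\to M$ witnessing enough supercompactness of $\kappa$ (with $\mu<\kappa$ and the $\kappa_\xi$'s above), and apply a Laver-type preparation so that supercompactness of $\kappa$ and of each $\kappa_{\xi+1}$ becomes indestructible under the relevant directed-closed forcings; simultaneously I would arrange the weakly compact cardinal $\lambda$ to remain weakly compact after the collapses below it. The overall poset is of the shape $\mathbb{P}=\mathbb{A}\ast\dot{\mathbb{M}}\ast\dot{\mathbb{Q}}$, where: $\mathbb{A}$ adds $\Theta$-many ``Cohen-like'' subsets at the bottom level to eventually realize $2^\kappa=\Theta$ (this is the $\cite{FriHon}$/$\cite{GolPov}$ ingredient that decouples the value of $2^\kappa$ from the combinatorics); $\dot{\mathbb{M}}$ is the Unger–Sinapova Mitchell-style forcing from $\cite{Ung}$, $\cite{SinTree}$ relativised to the sequence $\langle\kappa_\xi\mid\xi<\mu\rangle$, which both collapses $(\sup_\xi\kappa_\xi)^{+}$ onto $\kappa^{+}$ and sets up a scale; and $\dot{\mathbb{Q}}$ is the diagonal Prikry/Magidor-type forcing that singularizes $\kappa$ to cofinality $\mu$ using the supercompactness of the $\kappa_{\xi+1}$'s, so that in the final model $\kappa$ is strong limit with $\cof(\kappa)=\mu$, $(\sup_\xi\kappa_\xi)^{+V}=\kappa^{+}$ and $\lambda=\kappa^{++}$.

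Granting the forcing is set up correctly, clauses (1)–(3) follow from a bookkeeping-plus-chain-condition analysis: the Prikry property of $\dot{\mathbb{Q}}$ gives that no new bounded subsets of $\kappa$ are added at the final stage, so $\kappa$ stays strong limit; the closure/chain-condition interplay shows all cardinals and cofinalities $\geq\lambda$ are preserved and the targeted collapses occur; and a nice-names count against $\mathbb{A}$ together with the fact that the later forcing is small relative to $\Theta$ (using $\cof(\Theta)>\kappa$) pins down $2^\kappa=\Theta$, hence $\neg\sch_\kappa$. For clause (5), I would verify that $\dot{\mathbb{M}}\ast\dot{\mathbb{Q}}$ produces both a very good scale and a bad scale at $\kappa$ exactly as in Sinapova's analysis $\cite{Sin}$: the very good scale comes from the canonical PCF-object built into the Mitchell forcing, while the bad scale is extracted from the Prikry sequences, and the argument is insensitive to the gap $\Theta$ since $\mathbb{A}$ is sufficiently mild and factors out.

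The heart of the matter, and what I expect to be the main obstacle, is clause (4): establishing $\TP(\kappa^{+})$ and $\TP(\kappa^{++})$ simultaneously in the presence of the $\Theta$-gap. For $\TP(\kappa^{++})=\TP(\lambda)$ I would run the usual Mitchell-style argument: take a $\lambda$-tree $T$ in the extension, reflect it using weak compactness of $\lambda$ via an elementary embedding, and use a quotient analysis $\mathbb{P}\cong\mathbb{P}_{<\lambda}\ast(\text{term forcing})$ where the tail is suitably closed and the head has the $\lambda$-c.c., so that a branch in the reflected model pulls back to a branch of $T$; the delicate point is that the presence of $\mathbb{A}$ and of the Prikry tail must not destroy the closure needed for the quotient, which is why the indestructibility preparation and the careful ordering of the three factors matter. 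For $\TP(\kappa^{+})$ I would follow Neeman/Sinapova $\cite{Nee}$, $\cite{SinTree}$: given a $\kappa^{+}$-tree, analyze the associated \emph{narrow system} in the spirit of $\cite{MagShe}$, use the supercompactness of the $\kappa_{\xi+1}$'s (surviving via indestructibility) to obtain a cofinal branch through the system, and argue that the Prikry-type forcing $\dot{\mathbb{Q}}$ adds no ``new'' obstructions — the technical crux being a suitable version of the ``no new branches'' lemma for the Mitchell component over the Prikry-generic model, now checked in the larger universe where $2^\kappa=\Theta$. Assembling these two branch-existence arguments so that they hold in the \emph{same} final extension, and checking that every chain-condition and closure hypothesis they each require is compatible with the $\Theta$-sized bottom component, is where the real work lies; the rest is adaptation of $\cite{SinTree}$, $\cite{FriHon}$ and $\cite{GolPov}$.
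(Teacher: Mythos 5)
Your outline assembles the right ingredients (Laver preparation, a Cohen component of size $\Theta$, a Mitchell-style collapse, a diagonal supercompact Prikry-type singularization, weak-compactness reflection for $\kappa^{++}$, Neeman--Sinapova branch arguments for $\kappa^{+}$), but the decomposition $\mathbb{A}\ast\dot{\mathbb{M}}\ast\dot{\mathbb{Q}}$ with the singularizing forcing as a \emph{final} iteration stage is not the construction that works, and as stated it breaks down. In the actual argument the Sinapova forcing is not appended after a Mitchell collapse: the forcing is a Mitchell-style amalgam of triples $(p,\dot q,r)$ in which $\dot q$ is a Sinapova condition over the Cohen extension and $r$ is a \emph{term-forcing} coordinate of $\mathbb{A}_{\mathrm{Even}(\gamma)}\ast\dot{\mathbb{S}}^\pi_\gamma$-names for conditions of $\Add(\delta,1)$. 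The Sinapova component itself collapses the interval $(\kappa,\varepsilon]$ (so $\delta=\varepsilon^+$ becomes $\kappa^+$) and is the source of \emph{both} the very good and the bad scale; the $r$-component only collapses $(\delta,\lambda)$ to make $\lambda=\kappa^{++}$. Your version, in which $\dot{\mathbb{M}}$ collapses $(\sup_\xi\kappa_\xi)^+$ onto $\kappa^{+}$ \emph{before} $\kappa$ is singularized, would collapse the $\kappa_\xi$'s while $\kappa$ is still regular, destroying exactly the structure (supercompact measures on $\mathcal{P}_\kappa(\kappa_\xi)$, the supercompactness of the $\kappa_{\xi+1}$'s, and the scale construction) that the later stages need; and with the Prikry-type forcing placed last, the quotient/term-forcing factorizations that drive both tree-property proofs are no longer available in the form you invoke them.

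The second, and more serious, gap is that the proposal contains none of the technical content that makes the arbitrary-gap version go through. To run the Friedman--Honzik/Golshani--Poveda mismatch machinery with Sinapova forcing one must show that the Sinapova forcing defined from measures in $V[G]$ projects onto the Sinapova forcings defined from the restricted measures in the intermediate extensions $V[G\upharpoonright\alpha]$; the paper's route to this is a Mathias-style geometric criterion for genericity for Sinapova forcing (proved via a Strong Prikry Property and a diagonalization), and nothing in your sketch supplies this or a substitute. Likewise, for $\TP(\kappa^{++})$ the reflected quotient $\mathbb{R}^*/(\mathbb{R}^*\upharpoonright\gamma)$ is not ``suitably closed''; the branch-preservation argument needs a projection from a product $\mathbb{P}_\gamma\times\mathbb{Q}_\gamma$ with $\mathbb{Q}_\gamma$ being $\delta$-directed closed (handled by Silver's lemma, using $2^\kappa\geq\gamma$) and with $\mathbb{P}_\gamma\times\mathbb{P}_\gamma$ being $\delta$-cc (handled by Unger's lemma), and establishing that chain condition requires a concrete analysis of when a $\mathbb{C}_\lambda$-condition lies in the quotient $\mathbb{P}_\gamma$ together with a generalized R\"owbottom/homogeneity lemma for the diagonal measures. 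Deferring all of this to ``assembling the arguments is where the real work lies'' leaves clauses (2) and (4) --- the heart of the theorem --- unproved.
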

For the proof of this result we shall make use of some ideas developed in  \cite{SinTree}, \cite{Sin} and \cite{SinUncCof} for the proof of $\TP(\kappa^{+})$ and (5). For the rest of items we will use some other ideas from \cite{Ung},\cite{FriHon} and \cite{GolPov}. The structure of the paper is as follows: In Section \ref{SinapovaPreliminaries} we will give an overview of Sinapova forcing following \cite{Sin}. In Section 3 we will proof a criterion for genericity  for Sinapova forcing, which extends the classical Mathias' criterion for Prikry forcing \cite{Git}. %Here we have been inspired by the exposition of \cite{Fuch} for Magidor forcing. 
This result will be crucial in Section \ref{SectionTheMainConstruction}, where we will present our main forcing construction $\mathbb{R}$, and also in  Section \ref{TPkappa++Section}, where we will prove $V^{\mathbb{R}}\models \TP(\kappa^{++})$. We end up the paper with  Section \ref{TPkappa+Section} proving $V^{\mathbb{R}}\models \TP(\kappa^{+})$. Any non defined notion/notation is either standard or will be properly referred.

% and prove some basic facts about the combinatorics of its generic extensions.
%As the reader will notice, our forcing $\mathbb{R}$ is essentially the same forcing as that presented in  \cite{GolPov} but here the Magidor forcing is replaced for the Sinapova forcing: this will produce the desired scales in the generic extension. 

%Finally, section 4 is devoted to the proof of the tree property at $\kappa^{++}$ and $\kappa^{+}$ in the generic extension given by $\mathbb{R}$. For the sake of clarity we shall divide this section into three parts: In the first part we shall give a full detailed exposition of the arguments for getting a generic extension witnessing (1)-(5) of theorem \ref{MainTheorem1} for $\gamma=\lambda^+$, while in the second we shall simply sketch the main ideas to extend this result for any $\gamma\geq\lambda^+$.
%Finally in the third part we will sketch the argument to get $\TP(\kappa^+)$ in the generic extension: the reason for this is that the necessary arguments are one by one those given in \cite{SinTree}.

%Since the arguments necessary to get $\TP(\kappa^+)$ are exactly those in \cite{SinTree} we shall simply sketch 

%we rely on ideas of \cite{Ung}, \cite{FriHon} and \cite{GolPov} to show that $\TP(\kappa^{++})$ and (1)-(3) hold in $V^\mathbb{R}$, while in Section 5 we show that the arguments presented in \cite{SinTree} adapt to our setting.

\section{An overview on Sinapova forcing}\label{SinapovaPreliminaries}
In this section we will review a forcing construction due to D. Sinapova. %devised that can be used to extend Gitik-Sharon theorem \cite{GitSh} to uncountable cofinalities. 
Our exposition will  follow Sinapova's dissertation \cite{Sin}. % though with minor differences: specifically, and for the sake of our future arguments, we shall present Sinapova forcing for a continuous sequence of cardinals $\langle\kappa_\xi:\,\xi<\mu\rangle$ such $\kappa_0$ and each $\kappa_{\xi+1}$ are supercompact. For further details on this regard the reader may consult the above quoted reference.
%\medskip 
Originally, \textit{Sinapova forcing} (or also \textit{Diagonal Supercompact Magidor forcing}) was conceived to generalize Gitik-Sharon's (GS) theorem to uncountable cofinalities \cite{GitSh}. Also,  inspired by the subsequent inquiries of Cummings and Foremann \cite{CumForGitikSharon} on GS-model, Sinapova devised this forcing to obtain a generic extension where the following hold:
\begin{enumerate}
\item There is a strong limit cardinal $\kappa$ of uncountable cofinality,
\item $\sch_\kappa$ fails,
\item There is a very good and a bad scale at $\kappa$.
%\item There is a bad scale at $\kappa$.
\end{enumerate}
Hereafter, $\mu$, $\kappa$, $\langle \kappa_\xi\mid \xi<\mu\rangle$, $\lambda$ and $\Theta$  will be as in the statement of Theorem \ref{MainTheorem1}. Besides, we will define $\varepsilon:=\sup_{\xi<\mu}\kappa_\xi$ and $\delta:=\varepsilon^+$. Since we are assuming $\gch_{\geq \kappa}$ in our ground model, modulo a suitable preparation, we may assume that $\gch_{\geq\varepsilon}$ holds, $2^{\kappa_\xi}=\kappa^+_\xi$, for each $\xi<\mu$, and that $\{\kappa\}\cup\langle\kappa_{\xi+1}\mid \xi<\mu\rangle$ are Laver indestructible supercompact cardinals.\footnote{In this section and in the latter sections \ref{SectionTheMainConstruction} and \ref{TPkappa++Section} we will simply use that $\kappa$ is Laver indestructible. The indestructibility of $\langle\kappa_{\xi+1}\mid \xi<\mu\rangle$ will be important in Section \ref{TPkappa+Section} for the proof of Lemma \ref{DaggerSinapova}. } Through this and the latter sections we will rely on the following standard convention:
\begin{convention}\label{ConventionDownArrow}
\rm{If $\mathbb{P}$ is a forcing notion and $p\in\mathbb{P}$, we will denote by $\mathbb{P}\downarrow p$ the set of conditions in $\mathbb{P}$ below $p$.}
\end{convention}

\subsection{Sinapova forcing}
 Let $\mathbb{A}:=\Add(\kappa,\Theta)$, $G\subseteq \mathbb{A}$ generic and \linebreak$\langle f_\eta\mid \eta\in \Theta\rangle$ be an enumeration of the generic functions added by this filter.  During this section our ground model will be $V[G]$. The next series of result can be found in \cite[Chapter 2]{Sin}.%but we add its proof for completeness.

\begin{prop}
There is a $\Theta^+$-supercompact embedding $j:V\rightarrow M$ with $\crit(j)=\kappa$, such that, for each $\eta<\delta$, $j(f_\eta)(\kappa)=\eta$. Also, $\kappa_\xi^{<\kappa}\leq \kappa_\xi^+$, for each $\xi<\mu$ limit.
\end{prop}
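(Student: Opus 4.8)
The plan is to obtain $j$ by lifting, through $\mathbb{A}=\Add(\kappa,\Theta)$, a $\Theta^{+}$-supercompact embedding fixed in the universe $V$ over which $G$ was added, so that the lift has domain $V[G]$ (the ground model of this section); the only delicate extra point is to arrange $j(f_\eta)(\kappa)=\eta$ for $\eta<\delta$, which I do by choosing the generic for $j(\mathbb{A})$ to contain a suitable condition whose domain avoids the range of $j\restriction\mathbb{A}$.

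First I record the features of $\mathbb{A}$ I will use: since $\gch_{\geq\kappa}$ holds, $\mathbb{A}$ has size $\Theta$, is ${<}\kappa$-directed-closed and $\kappa^{+}$-cc (using that $\kappa$ is inaccessible), and $\mathbb{A}\in H_{\Theta^{+}}$. Working in $V$, where $\kappa$ is Laver indestructibly supercompact by the preparation, fix a $\Theta^{+}$-supercompact embedding $j_{0}\colon V\to M_{0}$ with $\crit(j_{0})=\kappa$, $j_{0}(\kappa)>\Theta^{+}$, ${}^{\Theta^{+}}M_{0}\subseteq M_{0}$, chosen via the Laver function so as to be liftable through $\mathbb{A}$. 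By the standard technique for lifting embeddings through ${<}\kappa$-directed-closed forcing under indestructibility, there is in $V[G]$ an $M_{0}$-generic filter $G^{*}\subseteq j_{0}(\mathbb{A})=\Add(j_{0}(\kappa),j_{0}(\Theta))^{M_{0}}$ with $j_{0}[G]\subseteq G^{*}$; then $j_{0}$ extends to an elementary $j\colon V[G]\to M:=M_{0}[G^{*}]$ with $\crit(j)=\kappa$ and $j(G)=G^{*}$, and by the usual argument ($\mathbb{A}$ is ${<}\kappa$-closed and $\kappa^{+}$-cc, $M_{0}$ is $\Theta^{+}$-closed, $j_{0}(\mathbb{A})$ is ${<}j_{0}(\kappa)$-closed in $M_{0}$) the model $M$ is closed under $\Theta^{+}$-sequences in $V[G]$. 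Thus $j$ witnesses that $\kappa$ is $\Theta^{+}$-supercompact in $V[G]$.

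It remains to see that $G^{*}$ can be chosen so that $j(f_\eta)(\kappa)=\eta$ for every $\eta<\delta$. Since $\delta=\varepsilon^{+}\leq\lambda\leq\Theta$, for such $\eta$ we have $\eta\in\Theta$, so $j(\eta)=j_{0}(\eta)\in j_{0}[\Theta]$ is a coordinate of $j_{0}(\mathbb{A})$. As $\crit(j_{0})=\kappa$, the domain of $j_{0}(p)$ is included in $j_{0}[\Theta]\times j_{0}[\kappa]=j_{0}[\Theta]\times\kappa$ for every $p\in\mathbb{A}$; hence, since $\kappa\notin\kappa$, the condition $q^{*}$ of $j_{0}(\mathbb{A})$ which assigns the value $\eta$ to the coordinate $\langle j(\eta),\kappa\rangle$ for each $\eta<\delta$ has domain disjoint from $\dom(j_{0}(p))$ for all $p\in\mathbb{A}$. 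In particular $q^{*}$ is compatible with every $j_{0}(p)$, $p\in G$, so $j_{0}[G]\cup\{q^{*}\}$ generates a proper filter; moreover $q^{*}\in M_{0}$ — it is coded by $j_{0}\restriction\delta$, a $\delta$-sequence of ordinals, which is in $M_{0}$ by closure — and $q^{*}$ is a genuine condition, its domain having size $\leq\delta\leq\Theta<j_{0}(\kappa)$. So the construction of $G^{*}$ can be carried out below $q^{*}$, and for this $G^{*}$ one obtains $j(f_\eta)(\kappa)=\eta$ for all $\eta<\delta$.

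Finally, for the assertion $\kappa_\xi^{<\kappa}\leq\kappa_\xi^{+}$ with $\xi<\mu$ a limit: the set $\bigcup_{\nu<\kappa}{}^{\nu}\kappa_\xi$ is the same in $V$ and $V[G]$, since $\mathbb{A}$ is ${<}\kappa$-closed, and in $V$ it has cardinality at most $\kappa_\xi^{+}$, because $2^{\kappa_\xi}=\kappa_\xi^{+}$ holds in $V$ and $\kappa\leq\kappa_\xi$ (so $\kappa_\xi^{\nu}\leq 2^{\kappa_\xi}$ for each $\nu<\kappa$); as $\mathbb{A}$ preserves all cardinals, the same bound persists in $V[G]$ — note this uses $\gch$ in $V$, not in $V[G]$, where $2^{\kappa_\xi}$ may exceed $\kappa_\xi^{+}$. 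I expect the real obstacle to be the existence of the lift: since $|j_{0}[G]|=\Theta>\kappa$ there is no master condition in $M_{0}$ below all of $j_{0}[G]$, so producing $G^{*}$ genuinely requires the indestructibility apparatus for ${<}\kappa$-directed-closed forcing rather than a naive counting or closure argument; granting that, the insertion of $q^{*}$ above is a harmless refinement of the construction.
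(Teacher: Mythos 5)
Your two auxiliary points are fine: the condition $q^{*}$ (domain $\{\langle j_{0}(\eta),\kappa\rangle:\eta<\delta\}$, disjoint from every $\dom(j_{0}(p))$, an element of $M_{0}$ by closure) is exactly the standard device for arranging $j(f_\eta)(\kappa)=\eta$, and the closing computation of $\kappa_\xi^{<\kappa}\leq\kappa_\xi^{+}$ from ${<}\kappa$-closure plus $2^{\kappa_\xi}=\kappa_\xi^{+}$ in $V$ is correct. (For the record, the paper does not prove this proposition; it quotes it from Chapter 2 of Sinapova's dissertation, which follows Gitik--Sharon, and there the guiding requirement is indeed imposed through such a condition.)

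The genuine gap is the step you yourself flag and then wave through: the existence, in $V[G]$, of an $M_{0}$-generic filter $G^{*}\subseteq j_{0}(\mathbb{A})$ with $j_{0}[G]\subseteq G^{*}$ for an embedding $j_{0}\colon V\rightarrow M_{0}$ \emph{fixed in the prepared model $V$} (so $M_{0}\subseteq V$). This is not merely unproved; it is false. Since ${}^{\Theta^{+}}M_{0}\subseteq M_{0}$, the set $E:=j_{0}[\Theta]\times\kappa$ belongs to $M_{0}$ and has size $\Theta<j_{0}(\kappa)$, so inside $M_{0}$ the forcing $j_{0}(\mathbb{A})$ factors as (partial functions on $E$ lying in $M_{0}$) $\times$ (conditions with domain disjoint from $E$), and the set of conditions whose restriction to $E$ is \emph{total} on $E$ is dense and lies in $M_{0}$. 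Any $M_{0}$-generic $G^{*}$ meets it; since every $j_{0}(p)$, $p\in G$, has domain contained in $E$, membership of $j_{0}[G]$ in $G^{*}$ would force $\bigcup j_{0}[G]$ to be contained in a single function $F\in M_{0}\subseteq V$, and then $G$ could be decoded in $V$ from $F$ and $j_{0}\upharpoonright(\Theta\times\kappa)\in V$ --- contradicting genericity of $G$. So no master condition is the least of the problems: the prescribed $j_{0}$ does not lift at all, and Laver indestructibility cannot supply what you ask for. Indestructibility guarantees that $\kappa$ remains $\Theta^{+}$-supercompact in $V[G]$, but the witnessing embeddings restrict on $V$ to embeddings whose targets are \emph{not} subclasses of $V$; in the cited argument the lift is arranged so that the relevant target model has already absorbed $G$ (e.g.\ by lifting an embedding of the model before the preparation through the preparation first), at which point $\bigcup j[G]\cup q^{*}$ really is a condition of the image forcing and serves as a master condition, and a GCH counting argument produces the generic below it inside $V[G]$. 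Your $q^{*}$ then enters exactly as you intend, but only after this restructuring; as written, the proof's central existence claim fails.
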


%For ease of notation we will assume  along the section that our ground model is $V[G]$.
%The previous proposition is necessary to proof the forthcoming results that provide the necessary conditions to cons\-truct a very good scale in Sinapova model.  Again the correspon\-ding proofs can be found in \cite{Sin}:

\begin{prop}
For all $\xi<\mu$ and all $\mathcal{X}\subseteq\mathcal{P}(\mathcal{P}_\kappa(\kappa_\xi))$,  there is a $\kappa_\xi$-supercompact measure $U_\xi$ on $\mathcal{P}_\kappa(\kappa_\xi)$ such that $\mathcal{X}\in \Ult(V, U_\xi)$. Also, there are functions $\langle F^\xi_\eta\mid \eta<\delta\rangle$, $F^\xi_\eta\colon \kappa\rightarrow\kappa$ such that, for each $\eta<\delta$, $j_{U_\xi}(F^\xi_\eta)(\kappa)=\eta$.
\end{prop}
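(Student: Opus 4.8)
The plan is to realise each $U_\xi$ as a supercompactness measure derived from a sufficiently strong elementary embedding of $V[G]$, exploiting that $\kappa$ remains (fully) supercompact in $V[G]$ by Laver indestructibility. Fix $\xi<\mu$ and $\mathcal{X}\subseteq\mathcal{P}(\mathcal{P}_\kappa(\kappa_\xi))$. Since $\Add(\kappa,\Theta)$ is ${<}\kappa$-closed and $\kappa^{+}$-cc, in $V[G]$ we still have $\kappa_\xi^{<\kappa}\leq\kappa_\xi^{+}$, and, using $\gch_{\geq\kappa}$ in $V$, the transitive closure of $\mathcal{X}$ has size at most $\Theta^{+}$. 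Let $j\colon V[G]\to M$ be the $\Theta^{+}$-supercompact embedding with $\crit(j)=\kappa$ and $j(f_\eta)(\kappa)=\eta$ for all $\eta<\delta$ supplied by the previous proposition; then $\mathcal{X}\in M$ (as $M^{\Theta^{+}}\subseteq M$) and $\delta\leq j(\kappa)$. Set
\[
U_\xi:=\bigl\{A\subseteq\mathcal{P}_\kappa(\kappa_\xi)\mid j[\kappa_\xi]\in j(A)\bigr\}.
\]
The standard verification shows that $U_\xi$ is a normal, fine, $\kappa$-complete ultrafilter on $\mathcal{P}_\kappa(\kappa_\xi)$, i.e. a $\kappa_\xi$-supercompact measure. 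Write $i\colon V[G]\to N:=\Ult(V[G],U_\xi)$ for the ultrapower embedding, so that $N$ is closed under $\kappa_\xi$-sequences, $[\mathrm{id}]_{U_\xi}=i[\kappa_\xi]$, and there is a factor embedding $k\colon N\to M$ with $k\circ i=j$ and $k([f]_{U_\xi})=j(f)(j[\kappa_\xi])$.

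I would first pin down $\crit(k)$: with $f_\alpha(x):=\otp(x\cap\alpha)$ one gets $[f_\alpha]_{U_\xi}=\alpha$ for $\alpha\leq\kappa_\xi$, and the analogous computation with bijections $\kappa_\xi\to\gamma$ (for $\gamma<\kappa_\xi^{+}$), together with a brief Fodor-style induction passing from ``$k$ sends $[g_\gamma]_{U_\xi}$ to $\gamma$'' to ``$[g_\gamma]_{U_\xi}=\gamma$'', yields $k\restriction(\kappa_\xi^{+}{+}1)=\mathrm{id}$; thus $\crit(k)>\kappa_\xi^{+}$ and $k\restriction V_{\crit(k)}=\mathrm{id}$. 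Since the rank of $\mathcal{X}$ is below $\kappa_\xi^{+}$, this reduces the claim $\mathcal{X}\in N$ to exhibiting $\mathcal{X}$ in the range of $k$ (for then its preimage, having the same rank, equals $\mathcal{X}$), i.e. to writing $\mathcal{X}=j(g)(j[\kappa_\xi])$ for some $g\in V[G]$. The only delicate point is that $j$ (equivalently $i$) moves the ordinals in $[\kappa,\kappa_\xi)$; one handles this via the canonical relabelling given by $i\restriction\kappa_\xi=[\,x\mapsto\pi_x\,]_{U_\xi}\in N$, where $\pi_x$ is the increasing enumeration of $x$: taking $g$ to send $x$ to the $\pi_x$-pullback of the appropriate trace of $\mathcal{X}$ at $x$, one checks by elementarity — using that every $w\subseteq j[\kappa_\xi]$ of size ${<}\kappa$ is $j[z]$ for a unique $z\in\mathcal{P}_\kappa(\kappa_\xi)$ — that $j(g)(j[\kappa_\xi])=\mathcal{X}$, whence $\mathcal{X}=[g]_{U_\xi}\in N$.

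For the functions $F^\xi_\eta$, observe that $\kappa=[\,x\mapsto|x\cap\kappa|\,]_{U_\xi}$, so for $F\colon\kappa\to\kappa$ one has $i(F)(\kappa)=[\,x\mapsto F(|x\cap\kappa|)\,]_{U_\xi}$, an ordinal whose $k$-image is $j(F)(\kappa)$. Taking $F^\xi_\eta:=f_\eta$ gives $k\bigl(i(F^\xi_\eta)(\kappa)\bigr)=j(f_\eta)(\kappa)=\eta$, so $i(F^\xi_\eta)(\kappa)=\eta$ as soon as $k$ fixes $\eta$. Verifying this last proviso — that $k$ is the identity on all ordinals below $\delta=\varepsilon^{+}$, which one extracts from $N$ being $\kappa_\xi$-closed, $M$ being $\Theta^{+}$-closed, and $\varepsilon<\lambda\leq\Theta$ — is the main technical obstacle, and it is precisely what lets a single measure $U_\xi$ serve simultaneously for the capture of $\mathcal{X}$ and for the existence of the $F^\xi_\eta$.
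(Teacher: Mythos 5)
Your construction of $U_\xi$ does not depend on $\mathcal{X}$ at all: it is the measure derived from the fixed seed $j[\kappa_\xi]$. If your argument were correct it would therefore prove the stronger statement ``there is one $U_\xi$ such that \emph{every} $\mathcal{X}\subseteq\mathcal{P}(\mathcal{P}_\kappa(\kappa_\xi))$ lies in $\Ult(V[G],U_\xi)$'', and that is provably false: taking for $\mathcal{X}$ the ultrafilter $U_\xi$ itself, together with a set coding all functions from $\mathcal{P}_\kappa(\kappa_\xi)$ to $\kappa$ (such a code is again a subset of $\mathcal{P}(\mathcal{P}_\kappa(\kappa_\xi))$), the model $N:=\Ult(V[G],U_\xi)$ could then compute the order type of the $<_{U_\xi}$-prewellordering of ${}^{\mathcal{P}_\kappa(\kappa_\xi)}\kappa\cap V[G]$, which is exactly $j_{U_\xi}(\kappa)$; $N$ would conclude that $j_{U_\xi}(\kappa)$ has cardinality at most $2^{\kappa_\xi^{+}}$, contradicting the inaccessibility of $j_{U_\xi}(\kappa)$ in $N$. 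The step of your proof where this breaks is precisely the representation $\mathcal{X}=j(g)(j[\kappa_\xi])$. Since $|\mathcal{X}|$ is in general $\geq\kappa$ (it can be $2^{\kappa_\xi^{<\kappa}}\geq\Theta$), $j(\mathcal{X})\neq j[\mathcal{X}]$, and for an arbitrary $A'\in j(\mathcal{X})$ the members $y\subseteq j[\kappa_\xi]$ of $A'$ are only of $M$-cardinality $<j(\kappa)$, not of $V$-cardinality $<\kappa$, so they need not be of the form $j[z]$ with $z\in\mathcal{P}_\kappa(\kappa_\xi)$; consequently the ``trace'' of $A'$ need not belong to $\mathcal{X}$, and $j(g)(j[\kappa_\xi])$, computed in $M$, picks up spurious members and is in general different from $\mathcal{X}$. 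A related flaw: ``$k\upharpoonright V_{\crit(k)}=\mathrm{id}$'' is not a valid principle for factor maps $k\colon N\to M$ (it holds for embeddings whose domain is the full universe); for instance $k$ sends $\mathcal{P}(\mathcal{P}_\kappa(\kappa_\xi))^{N}$ to $\mathcal{P}(\mathcal{P}_\kappa(\kappa_\xi))^{M}$, a set of rank about $\kappa_\xi+4$ that is typically moved, so even your reduction of ``$\mathcal{X}\in N$'' to ``$\mathcal{X}\in\mathrm{ran}(k)$'' is unsound as stated.

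Two further remarks. First, the part about the $F^\xi_\eta$ is essentially salvageable, but not by the closure considerations you invoke at the end: closure of $N$ under $\kappa_\xi$-sequences says nothing about ordinals in $(\kappa_\xi^{+},\delta)$. The correct reason $k$ fixes every $\eta<\delta$ is that $\eta\in\mathrm{ran}(k)$: writing $h(x):=\otp(x\cap\kappa)$ one has $j(h)(j[\kappa_\xi])=\otp(j[\kappa_\xi]\cap j(\kappa))=\kappa$, hence $j(f_\eta\circ h)(j[\kappa_\xi])=j(f_\eta)(\kappa)=\eta$; since thus $\delta\subseteq\mathrm{ran}(k)$, an induction along $\delta$ gives $k\upharpoonright\delta=\mathrm{id}$, and then $j_{U_\xi}(f_\eta)(\kappa)=\eta$, so $F^\xi_\eta:=f_\eta$ works. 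Second, for the main clause a genuinely different idea is required, and the quantifier order $\forall\mathcal{X}\,\exists U_\xi$ is essential: the measure must be constructed \emph{after} $\mathcal{X}$ is given. The paper itself offers no proof (it cites \cite[Chapter 2]{Sin}); there, following the Gitik--Sharon template, one exploits that $\mathcal{X}$, $\mathcal{P}(\mathcal{P}_\kappa(\kappa_\xi))$ and the derived ultrafilter all lie in $M$, together with the $\gch$ of the ground model and the structure of the Cohen extension, to produce an ultrafilter tailored to $\mathcal{X}$ — which is exactly what your fixed derived measure cannot do.
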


\begin{prop}\label{MeasuresMitchelOrder}
There is a $\lhd$-sequence of measures $\langle U_\xi\mid \xi<\mu\rangle$ (i.e. $U_\xi\in \Ult(V, U_{\xi'})$, for $\xi<\xi'$) and functions $\langle F^\xi_\eta\mid \xi<\mu, \eta<\delta\rangle$, $F^\xi_\eta\colon \kappa\rightarrow\kappa$  such that, $U_\xi$ is a $\kappa_\xi$-supercompact measure on $\mathcal{P}_\kappa(\kappa_\xi)$,  and for all $\xi<\mu$ and $\eta<\delta$, $j_{U_\xi}(F^\xi_\eta)(\kappa)=\eta$.
\end{prop}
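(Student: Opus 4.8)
The plan is to build the sequence $\langle U_\xi\mid\xi<\mu\rangle$, together with the accompanying functions, by recursion on $\xi<\mu$, feeding the construction into the previous proposition at each step. So suppose $\xi<\mu$ and that $\langle U_{\xi'}\mid\xi'<\xi\rangle$ has already been defined, with each $U_{\xi'}$ a $\kappa_{\xi'}$-supercompact measure on $\mathcal{P}_\kappa(\kappa_{\xi'})$. First I would encode the sequence $\langle U_{\xi'}\mid\xi'<\xi\rangle$ as a single set $\mathcal{X}_\xi\subseteq\mathcal{P}(\mathcal{P}_\kappa(\kappa_\xi))$. This is possible on cardinality grounds: since $\mathbb{A}=\Add(\kappa,\Theta)$ is ${<}\kappa$-closed, the value of $\kappa_{\xi'}^{<\kappa}$ is the same in $V$ and in $V[G]$, so the $\gch_{\geq\varepsilon}$ of the ground model yields $|\mathrm{trcl}(U_{\xi'})|\leq 2^{\kappa_{\xi'}^{+}}\leq 2^{\kappa_\xi^{<\kappa}}=|\mathcal{P}(\mathcal{P}_\kappa(\kappa_\xi))|$ for all $\xi'<\xi$ (using $\kappa_{\xi'}^{+}<\kappa_{\xi'+1}\leq\kappa_\xi$), while there are only $|\xi|<\kappa$ many such $\xi'$; one then fixes a bijection of the transitive set $\mathrm{trcl}(\langle U_{\xi'}\mid\xi'<\xi\rangle)$ with an ordinal and codes its $\in$-relation as the required $\mathcal{X}_\xi$. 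Applying the previous proposition to $\xi$ and $\mathcal{X}_\xi$ then produces a $\kappa_\xi$-supercompact measure $U_\xi$ on $\mathcal{P}_\kappa(\kappa_\xi)$ with $\mathcal{X}_\xi\in\Ult(V,U_\xi)$, along with functions $\langle F^\xi_\eta\mid\eta<\delta\rangle$, $F^\xi_\eta\colon\kappa\to\kappa$, such that $j_{U_\xi}(F^\xi_\eta)(\kappa)=\eta$ for every $\eta<\delta$. This closes the recursion.

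It then remains to check that the resulting sequence is $\lhd$-increasing, i.e.\ that $U_{\xi'}\in\Ult(V,U_\xi)$ whenever $\xi'<\xi<\mu$. Fix such $\xi'<\xi$. Since $\mathcal{X}_\xi\in\Ult(V,U_\xi)$, and $\Ult(V,U_\xi)$ is a well-founded transitive class containing $\mathcal{X}_\xi$, it correctly computes the Mostowski collapse of the extensional well-founded relation coded by $\mathcal{X}_\xi$; by absoluteness of the transitive collapse we get $\langle U_{\xi''}\mid\xi''<\xi\rangle\in\Ult(V,U_\xi)$, and in particular $U_{\xi'}\in\Ult(V,U_\xi)$. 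Hence $\langle U_\xi\mid\xi<\mu\rangle$, together with $\langle F^\xi_\eta\mid\xi<\mu,\ \eta<\delta\rangle$, is as desired.

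The one delicate point --- and the reason the detour through the previous proposition is needed at all --- is that, unlike in the ground model, in $V[G]$ one has $2^{\kappa_\xi}\geq 2^\kappa=\Theta>\kappa_\xi$ for every $\xi<\mu$, so $\mathcal{P}(\mathcal{P}_\kappa(\kappa_\xi))$ need not belong to $H(\kappa_\xi^+)$ and one cannot simply argue that a sufficiently closed $\kappa_\xi$-supercompact ultrapower automatically absorbs the earlier measures; moreover the functions $F^\xi_\eta$ require $U_\xi$ to be chosen ``rich enough'' (so that $j_{U_\xi}(\kappa)>\delta$), which is again not automatic. The substantive work has thus already been carried out in the previous proposition (whose proof, we recall, exploits the $\Theta^{+}$-supercompact embedding provided at the start of this section and the condition $j(f_\eta)(\kappa)=\eta$ for $\eta<\delta$); what is left here is the bookkeeping of the recursion together with the verification that $\kappa_\xi^{<\kappa}$ is both large enough, and --- thanks to the ${<}\kappa$-closure of $\mathbb{A}$ --- well enough behaved, to carry the code $\mathcal{X}_\xi$.
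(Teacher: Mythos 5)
The paper itself gives no proof of this proposition (it is quoted from Sinapova's dissertation \cite{Sin}), but your plan --- recursion on $\xi$, at each stage coding the previously constructed measures as a set $\mathcal{X}_\xi\subseteq\mathcal{P}(\mathcal{P}_\kappa(\kappa_\xi))$ and invoking the preceding proposition to get $U_\xi$ with $\mathcal{X}_\xi\in\Ult(V,U_\xi)$ --- is exactly the standard route, and your cardinality bookkeeping is fine. There is, however, one step that does not work as written: the decoding inside the ultrapower. You code $\langle U_{\xi'}\mid\xi'<\xi\rangle$ by fixing an \emph{arbitrary} bijection of its transitive closure with an ordinal and transferring the $\in$-relation to a subset of $\mathcal{P}(\mathcal{P}_\kappa(\kappa_\xi))$ ``on cardinality grounds'', and then argue that $\Ult(V,U_\xi)$ computes the Mostowski collapse of ``the relation coded by $\mathcal{X}_\xi$''. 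But the ultrapower only contains the set $\mathcal{X}_\xi$; it need not contain the injection of $\theta\times\theta$ into $\mathcal{P}(\mathcal{P}_\kappa(\kappa_\xi))$ you used to produce it. That injection was chosen arbitrarily in $V$, has domain of size up to $2^{\kappa_\xi^{<\kappa}}$, and nothing places it in $\Ult(V,U_\xi)$, which is only guaranteed to be closed under $\kappa_\xi$-sequences (indeed $U_\xi$ itself, of comparable size, is never in its own ultrapower). Without the injection, $\mathcal{X}_\xi$ is an unstructured family of subsets of $\mathcal{P}_\kappa(\kappa_\xi)$, the coded relation is not identifiable inside $\Ult(V,U_\xi)$, and absoluteness of the transitive collapse has nothing to latch onto: membership of a code in the ultrapower yields membership of the decoded object only when the decoding itself is available there.

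The gap is easy to close by choosing a coding whose decoding is definable from parameters that do lie in the ultrapower. For instance, take $\mathcal{X}_\xi:=\bigcup_{\xi'<\xi}U_{\xi'}$; this is a legitimate subset of $\mathcal{P}(\mathcal{P}_\kappa(\kappa_\xi))$ because $\mathcal{P}_\kappa(\kappa_{\xi'})\subseteq\mathcal{P}_\kappa(\kappa_\xi)$, and by fineness every $A\in U_{\xi'}$ satisfies $\bigcup A=\kappa_{\xi'}$ (each $\alpha<\kappa_{\xi'}$ lies in measure-one many $x$, so $A$ meets $\{x\mid \alpha\in x\}$), whence $U_{\xi'}=\{A\in\mathcal{X}_\xi\mid \bigcup A=\kappa_{\xi'}\}$. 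Since $\langle\kappa_{\xi'}\mid\xi'<\xi\rangle$ is a ${<}\kappa$-sequence of ordinals, it belongs to $\Ult(V,U_\xi)$ by $\kappa_\xi$-closure, so $\langle U_{\xi'}\mid\xi'<\xi\rangle$ is definable in $\Ult(V,U_\xi)$ from $\mathcal{X}_\xi$ and this sequence, giving $U_{\xi'}\in\Ult(V,U_\xi)$ for all $\xi'<\xi$ as required. (Tagging each $A\in U_{\xi'}$ with the ordinal $\kappa_{\xi'}$ works equally well.) With this modification your recursion goes through, and your closing remarks about why the preceding proposition cannot be bypassed are accurate.
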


%This consideration is important as we will see in the next lines since the cardinal $\lambda$ remains being the least weakly compact cardinal above $\kappa$ in $\Ult(V, U_\alpha)$ and thus admits a nice representation. Otherwise, it would be difficult to represent $\lambda$ in $\Ult(V[\mathcal{A}],U_\alpha)$ as $\lambda>\kappa^{+\alpha}$ and we do not have any specific property defining $\lambda$\footnote{Notice that $\lambda$ is not even strong limit in $V[\mathcal{A}]$}. 

\begin{notation}\label{Notation1}
$ $
\rm{\begin{itemize}

%\item If $\eta$ is an ordinal we will denote by $\eta_*$ , if exists, the least weakly compact cardinal above $\eta$. Otherwise  by convention we set $\eta_*=\eta$.
%\item If $U_\xi$ is some supercompact measure over $\mathcal{P}_\kappa(\kappa_\xi)$ we write $\mathfrak{M}_\xi$ for the transitive collapse of $\Ult(V,U_\xi)$. We denote by $j_\xi$ the corresponding elementary embedding.
%\item For each $\xi<\mu$, $\mathbb{C}_\xi$ denotes the forcing $\Coll(\kappa_*^{+\theta+1}, <j_\xi(\kappa))^{\mathfrak{M}_\xi}$.
%\item Given two measures $U$, $\tilde{U}$ in $V$ -non necessarily over the same set, though- we write $U\lhd \tilde{U}$ if $U\in \Ult(V,\tilde{U})$.
\item For $\xi<\mu$, $x\in\mathcal{P}_\kappa(\kappa_\xi)$ and $\kappa\leq \tau\leq \kappa_\xi$,  $\tau_x:=\otp(\tau\cap x)$.%\footnote{Notice that this gives the value at $x$ of the function representing $\tau$ within the ultrapower by $U_\xi$.} 
%\item Let $\xi<\mu$ and $x\in\mathcal{P}_{\kappa}(\kappa_\xi)$. We denote by $\lambda_x$ the ordinal $\otp(x)$. %and not $\otp(\lambda\cap x)$.%\footnote{Notice that the function $x\mapsto \lambda_x$ defined on $\mathcal{P}_{\kappa}(\kappa_\xi)$ represents $\kappa_\xi$ in the ultrapower by the supercompact measure $U_\xi$. }
\item For $\xi<\mu$ and $x,y\in\mathcal{P}_{\kappa}(\kappa_\xi)$, $x\prec y$ iff $x\subseteq y$ and ${\kappa_{\xi}}_x<\kappa_y$.
\end{itemize}} 
\end{notation}
Let $\mathfrak{U}=\langle U_\xi\mid \xi<\mu\rangle$ and $\mathfrak{F}=\langle F^\xi_\eta\mid \xi<\mu,\eta<\delta\rangle$ be witness for Proposition \ref{MeasuresMitchelOrder}. Since $\mathfrak{U}$ is a $\lhd$-chain , for each $\zeta<\xi<\mu$, there is a function $x\mapsto \overline{U}^{\zeta}_{\xi,x}$,  over $\mathcal{P}_{\kappa}(\kappa_\xi)$ representing $U_\zeta$ in the ultrapower by $U_\xi$. 
%$U_\zeta=[x\mapsto \overline{U}^{\zeta}_{\xi,x}]_{U_\xi}.$ 
Moreover, by restricting this function to a $U_\xi$-large set, we may assume that each $\overline{U}^{\zeta}_{\xi,x}$ is  a $\kappa_{\zeta_x}$-supercompact measure on $\mathcal{P}_{\kappa_x}({\kappa_\zeta}_x)$. %To proceed in a coherent fashion along the construction we shall restrict the domain of our sets functions to the following $U_\xi$-large set:  %and a generic filter for $\Coll((\kappa_*)_x^{+\theta+1},<j^\eta_{\xi,x}(\kappa_x))^{\mathfrak{M}^\eta_{\xi,x}}$ over $\mathfrak{M}^\eta_{\xi,x}$, respectively. Here $\mathfrak{M}^{\eta}_{\xi,x}=\Ult(V,\overline{U}^{\eta}_{\xi,x})$ and $j^{\eta}_{\xi,x}=j_{\overline{U}^{\eta}_{\xi,x}}$.

\begin{defi}\label{DefinitioOfXxi}
For $\xi<\mu$, let $X_\xi$ be the $U_\xi$-large set of  $x\in \mathcal{P}_\kappa(\kappa_\xi)$ such that
\begin{itemize}
\item[($\alpha$)] $\kappa_x$ is a $(\kappa_\xi)_x$-supercompact cardinal above $\mu$.
%\item[($\beta$)] For each $\zeta\leq \xi$, ${\kappa_\zeta}_x=\otp(\kappa_\zeta\,\cap\, x)$. In particular, $\otp(x)=\kappa_x^{+\xi}$.
\item[($\beta$)] For each $\zeta\leq\xi$,
$\kappa_{\zeta_x}^{<\kappa_x}\leq {\kappa_{\zeta}}^+_x$. If $\xi$ is limit,  $\sup_{\zeta<\xi}\kappa_{\zeta_x}=\kappa_{\xi_x}$. %and $\otp(x\cap \kappa_\zeta)={\kappa_{\zeta}}_x$.
\item[($\gamma$)] $\kappa_x<{\kappa_{\xi_x}}$.\footnote{This means that our choice of the $x$'s is coherent with the fact that $\kappa<\kappa_\xi$.}
\end{itemize}
\end{defi}

Analogously to other Prikry-type forcing, Sinapova forcing is   articu\-lated by two components: the first one (\textit{stem}) is responsible of adding a generic club on $\kappa$, and the second one (\textit{large set part}) %is composed by measure-one sets that 
plays the role of supplying the stem with new extensions. For technical reasons it is standard to require the stems to be $\prec$-increasing sequences. Roughly, this constraint guarantees that these stems are sound promises for a generic club in $\kappa$ and also that two different \textit{local versions}  of the forcing do not interfere between them.

Let $\zeta<\xi$ and $x\in X_\xi$ and let $\pi^{\zeta,x}: \mathcal{P}_{\kappa_x}(\kappa_\zeta\cap x) \rightarrow \mathcal{P}_{\kappa_x}(\kappa_{\zeta_x})$ be the usual projection.  Set $U^\zeta_{\xi,x}:=\{A\subseteq \mathcal{P}_{\kappa_x}(\kappa_\zeta\cap x)\mid \pi^{\zeta,x} [A]\in \overline{U}^\zeta_{\xi,x}\}$. This lifting yields a supercompact measure over  $\mathcal{P}_{\kappa_x}(\kappa_\zeta\cap x)$. %Set $\sigma^{\zeta,x}=(\pi^{\zeta,x})^{-1}$ and $U^{\zeta}_{\xi,x}=\sigma^{\zeta,x}_*(\overline{U}^{\zeta}_{\xi,x})$ be the Rudin-Keisler projection of $\overline{U}^{\zeta}_{\xi,x}$ by $\sigma^{\zeta,x}$ (see \cite[Lemma 22.3]{Jech} for definitions) and notice that $\sigma^{\zeta,x}_*(\overline{U}^{\zeta}_{\xi,x})$ yields a supercompact measure on $\mathcal{P}_{\kappa_x}(\kappa_\zeta\cap x)$.
In \cite[Section 2.2]{Sin} the following coherence properties are proved: %$\langle U^{\zeta}_{\xi,x}:\zeta<\xi\leq \mu,\,x\in X_\xi\rangle$ are proved:
\begin{prop}[Coherence properties]\label{TheBxisets}
$ $
\begin{itemize} 
\item[$(\xi)$] For each $\rho<\zeta<\xi<\mu$ and for $U_\xi$-many $x$'s, $U^{\rho}_{\xi,x}\lhd U^{\zeta}_{\xi,x}.$
\item[$(\xi')$] For each $\xi<\mu$, $$B_\xi=\{x\in X_\xi\mid \forall\zeta,\eta\in\xi \,(\zeta<\eta\rightarrow \overline{U}^{\zeta}_{\xi,x}=[y\mapsto \overline{U}^{\zeta}_{\eta,y}]_{U^{\eta}_{\xi,x}} )\}\in U_\xi.$$
\item[($\star$)] For $\zeta<\xi$ and $A\in U_\zeta$,
$\forall_{U_\xi} x\,(A\cap \mathcal{P}_{\kappa_x}(x\cap \kappa_\zeta))\in U^\zeta_{\xi,x}.$
\item[($\diamond$)] For each $\zeta<\eta<\xi$, $z\in B_\xi$ and $A\in U^{\zeta}_{\xi,z}$,
$$\forall_{U^{\eta}_{\xi,z}}x\, (A\cap \mathcal{P}_{\kappa_x}(x\cap \kappa_\eta))\in U^\zeta_{\eta,x}.$$
\end{itemize}
Set $\mathfrak{B}=\langle B_\xi\mid \xi<\mu\rangle$.
\end{prop}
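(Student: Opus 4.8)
The plan is to derive all four clauses by a single mechanism: transporting a \emph{global} coherence fact about the $\lhd$-chain $\mathfrak{U}=\langle U_\xi\mid\xi<\mu\rangle$ and its representing functions, via \L{}o\'s's theorem applied to the ultrapower maps $j_{U_\xi}$ (and, for $(\xi')$ and $(\diamond)$, to a second ultrapower formed inside $\Ult(V,U_\xi)$), down to the corresponding \emph{local} statement about the $\overline{U}^\zeta_{\xi,x}$'s and the $U^\zeta_{\xi,x}$'s. The two inputs are: (i) by the choice of the representing functions, $[x\mapsto\overline{U}^\zeta_{\xi,x}]_{U_\xi}=U_\zeta$ whenever $\zeta<\xi<\mu$; and (ii) since $\mathfrak{U}$ is a $\lhd$-chain, $U_\rho\in\Ult(V,U_\zeta)$ for $\rho<\zeta$, and $\Ult(V,U_\xi)$ --- being $\kappa_\xi$-closed --- computes this relation correctly. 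The first thing I would isolate is a bookkeeping lemma: on one $U_\xi$-large set of $x$'s the usual projections $\pi^{\zeta,x}\colon\mathcal{P}_{\kappa_x}(\kappa_\zeta\cap x)\to\mathcal{P}_{\kappa_x}(\kappa_{\zeta_x})$ can be arranged so that for $\rho<\zeta$ the collapse of $\kappa_\rho\cap x$ refines that of $\kappa_\zeta\cap x$, and so that $[x\mapsto\pi^{\zeta,x}]_{U_\xi}$ is (induced by) the transitive collapse of $j_{U_\xi}[\kappa_\zeta]$. An immediate consequence, used throughout, is that $[x\mapsto U^\zeta_{\xi,x}]_{U_\xi}$ is the pushforward of $U_\zeta$ along $a\mapsto j_{U_\xi}[a]$, which I will freely identify with $U_\zeta$.

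For $(\xi)$, fix $\rho<\zeta<\xi$. From $U_\rho\lhd U_\zeta$ one gets, inside $\Ult(V,U_\xi)$, that $[x\mapsto\overline{U}^\rho_{\xi,x}]_{U_\xi}\in\Ult\bigl(V,[x\mapsto\overline{U}^\zeta_{\xi,x}]_{U_\xi}\bigr)$; by \L{}o\'s, $\overline{U}^\rho_{\xi,x}\lhd\overline{U}^\zeta_{\xi,x}$ for $U_\xi$-almost every $x$. On the large set where $\pi^{\rho,x}$ refines $\pi^{\zeta,x}$, lifting along compatible projections preserves the Mitchell order, so $U^\rho_{\xi,x}\lhd U^\zeta_{\xi,x}$ there as well. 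For $(\xi')$: the identity $U_\zeta=[y\mapsto\overline{U}^\zeta_{\eta,y}]_{U_\eta}$ merely says that $\overline{U}^\zeta_{\eta,\cdot}$ represents $U_\zeta$ in the $U_\eta$-ultrapower; I would push this through $j_{U_\xi}$. The left side becomes $[x\mapsto\overline{U}^\zeta_{\xi,x}]_{U_\xi}$; and since $[x\mapsto U^\eta_{\xi,x}]_{U_\xi}$ is (the copy of) $U_\eta$, the right side becomes $\bigl[x\mapsto[y\mapsto\overline{U}^\zeta_{\eta,y}]_{U^\eta_{\xi,x}}\bigr]_{U_\xi}$. \L{}o\'s then gives $\overline{U}^\zeta_{\xi,x}=[y\mapsto\overline{U}^\zeta_{\eta,y}]_{U^\eta_{\xi,x}}$ for $U_\xi$-almost every $x$; since there are $<\mu<\kappa$ pairs $\zeta<\eta<\xi$ and $U_\xi$ is $\kappa$-complete, intersecting the resulting large sets with $X_\xi$ yields $B_\xi\in U_\xi$.

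For $(\star)$: given $A\in U_\zeta$, one computes $[x\mapsto A\cap\mathcal{P}_{\kappa_x}(x\cap\kappa_\zeta)]_{U_\xi}=j_{U_\xi}(A)\cap\mathcal{P}_\kappa(j_{U_\xi}[\kappa_\zeta])=\{\,j_{U_\xi}[a]\mid a\in A\,\}$, using $\crit(j_{U_\xi})=\kappa>|a|$ for $a\in A$. This is the pushforward of $A$ along $a\mapsto j_{U_\xi}[a]$, hence lies in the copy of $U_\zeta$, i.e.\ in $[x\mapsto U^\zeta_{\xi,x}]_{U_\xi}$; by \L{}o\'s, $A\cap\mathcal{P}_{\kappa_x}(x\cap\kappa_\zeta)\in U^\zeta_{\xi,x}$ for $U_\xi$-almost every $x$. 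Clause $(\diamond)$ is the same argument one level up: for $z\in B_\xi$ and $\zeta<\eta<\xi$, the equality defining $B_\xi$, once lifted along the coherent projections, says that $x\mapsto U^\zeta_{\eta,x}$ represents $U^\zeta_{\xi,z}$ inside $\Ult(V,U^\eta_{\xi,z})$. I would then rerun the $(\star)$ computation with $(U_\xi,U_\zeta,j_{U_\xi})$ replaced by $(U^\eta_{\xi,z},U^\zeta_{\xi,z},j_{U^\eta_{\xi,z}})$ --- whose critical point is $\kappa_z$ --- so that for $A\in U^\zeta_{\xi,z}$ the set $\{\,j_{U^\eta_{\xi,z}}[a]\mid a\in A\,\}$ is the pushforward of $A$ into $\mathcal{P}_{\kappa_x}(x\cap\kappa_\eta)$ and lies in the copy of $U^\zeta_{\xi,z}$; \L{}o\'s then yields $A\cap\mathcal{P}_{\kappa_x}(x\cap\kappa_\eta)\in U^\zeta_{\eta,x}$ for $U^\eta_{\xi,z}$-almost every $x$.

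The conceptual skeleton is thus routine reflection; the step I expect to be the genuine obstacle is the bookkeeping lemma --- showing, simultaneously for all the boundedly-many pairs and triples in play, that the collapses $\pi^{\zeta,x}$ can be chosen coherently on a single $U_\xi$-large set, that every lifted measure $U^\zeta_{\xi,x}$ is well defined and supercompact there, and that lifting along coherent projections both commutes with ultrapowers and preserves the Mitchell order. For $(\xi')$ and $(\diamond)$ there is the further subtlety of checking that the relevant $\lhd$-relations and representing functions are computed correctly inside the (sufficiently closed) intermediate ultrapowers $\Ult(V,U_\xi)$ and $\Ult(V,U^\eta_{\xi,z})$, which is where their closure is used. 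This is precisely the content of \cite[\S 2.2]{Sin}, whose development I would follow.
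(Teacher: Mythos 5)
The paper offers no proof of Proposition~\ref{TheBxisets} at all --- it simply cites \cite[Section 2.2]{Sin} --- and your sketch reconstructs precisely the standard argument developed there: apply \L{}o\'s's theorem to $j_{U_\xi}$ (and, for $(\diamond)$, to the second ultrapower by $U^\eta_{\xi,z}$, whose critical point is $\kappa_z$), identify $[x\mapsto U^\zeta_{\xi,x}]_{U_\xi}$ with the pushforward copy of $U_\zeta$ on $\mathcal{P}_\kappa(j_{U_\xi}[\kappa_\zeta])$ via the canonical collapses, and use the closure of the ultrapowers to transfer the $\lhd$-relations and representing-function identities. So your proposal is correct in substance and takes essentially the same route as the paper's (cited) proof; the coherence/closure bookkeeping you flag is exactly where the work lies, and it is carried out in \cite[\S2.2]{Sin}.
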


%defined by $y\mapsto \bar{y}=\{\otp(x\cap \rho): \rho\in y\}$. Notice that  $\pi^{\zeta, x}$ is simply the canonical projection between $\mathcal{P}_{\kappa_x}(\kappa_{\eta}\cap x)$ and $\mathcal{P}_{\kappa_x}(\kappa_{\eta_x})$. Now set $\sigma^{\eta,x}=(\pi^{\eta,x})^{-1}$ and $U^{\eta}_{\xi,x}=\sigma^{\eta,x}_*(\overline{U}^{\eta}_{\xi,x})$ be the Rudin-Keisler projection of $\overline{U}^{\eta}_{\xi,x}$ by $\sigma^{\eta,x}$ (see \cite[Lemma 22.3]{Jech} for definitions). Standard arguments show that $\sigma^{\eta,x}_*(\overline{U}^{\eta}_{\xi,x})$ yields a supercompact measure on $\mathcal{P}_{\kappa_x}(\kappa_\eta\cap x)$ (see \cite[Exercise 7.5]{Jech}). It is convenient to mention that the new collection of measures $\langle U^{\eta}_{\xi,x}:\eta<\xi\leq \mu,\,x\in X_\xi\rangle$ forms a coherent system in a similar sense to that of proposition \ref{MeasuresMitchelOrder}. More precisely the following is proved in \cite[Section 2.2]{Sin}:

%We may now give the definition of Sinapova forcing.
\begin{defi}[Sinapova forcing]\label{SinapovaForcing}
Under the above conditions, Sinapova forcing with respect to $(\kappa,\mu, \mathfrak{U},\mathfrak{B})$ is the partial order $\mathbb{S}_{(\kappa,\mu, \mathfrak{U},\mathfrak{B})}$\footnote{Formally this definition depends also of the functions representing the different measures. }
whose conditions are pairs $( g, H)$ for which the follo\-wing hold:
\begin{enumerate}
\item $\dom(g)\in[\mu]^{<\omega}$ and $\dom(H)=\mu\setminus\dom(g)$.
\item For each $\xi\in\dom(g)$, $g(\xi)\in B_\xi$ and $\kappa_{g(\xi)}>\theta^{+\mu+1}$.\footnote{ Here $\theta$ is an inaccessible cardinal witnessing \cite[Lemma 2.7]{Sin}.  This requirement is technical and is necessary for the construction of the bad and the very good scale in the generic extension. } Also, $g$ is $\prec$-increasing.
\item For each $\xi\in\dom(H)$, 
\begin{enumerate}
\item If $\xi>\max(\dom(g))$, $H(\xi)\subseteq B_\xi$ and $H(\xi)\in U_\xi$;
\item If $\xi< \max(\dom(g))$ then, setting $\xi_g:=\min(\dom(g)\setminus\xi+1)$ and $x:=g(\xi_g)$,   $H(\xi)\in {U}^{\xi}_{\xi_g, x}$.
\end{enumerate}
\item For $\xi<\zeta$ with $\xi\in\dom(g)$ and $\zeta\in\dom(H)$, $g(\xi)\prec x$, for all $x\in H(\zeta)$. %Say $\xi=\max(\dom(g))$. For each $\zeta\in dom(H)$ and  every element of $x\in H(\zeta)$, $g(\xi)\prec x$.
%\item $\dom(f)=\dom(g)$ and for each $\xi\in\dom(f)$:
%\begin{enumerate}
%\item If $\xi<\max(\dom(g))$ then setting $\hat{\xi}=\min(\dom(g)\setminus\xi+1)$, $f(\xi)\in\Coll({(\kappa_{g(\xi)})}_\star^{+\theta+1}, <\kappa_{g(\hat{\xi})})$.
%\item Otherwise, $f(\xi)\in\Coll({(\kappa_g(\xi))}_\star^{+\theta+1}, <\kappa)$.
%\end{enumerate}
%\item $\dom(H)=\dom(F)$.
%\item For every $\xi\in\dom(H)$, $F(\xi)$ is a function with domain $H(\xi)$ such that for every $y\in H(\xi)$
%\begin{enumerate}
%\item If $\xi<\max(\dom(g))$ then setting $\hat{\xi}$ as before, $F(\xi)(y)\in\Coll((\kappa_y)^{+\theta+1}_\star, <\kappa_{g(\hat{\xi})})$ and $[\bar{F}(\xi)]_{U^{\xi}_{\bar{\xi}, g(\bar{\xi})}} \in \mathcal{C}^{\xi}_{\bar{\xi}, g(\bar{\xi})}$. Here $\bar{F}(\xi)$ is defined on $ \mathcal{P}_{\kappa_{g(\bar{\xi})}} (\kappa^{+\xi}_{g(\bar{\xi})})$ by $\bar{F}(\xi)(\bar{y})=F(\xi)(y)$.
%\item Otherwise, $F(\xi)(y)\in \Coll((\kappa_y)^{+\theta+1}_\star, <\kappa)$ and $[F(\xi)]_{U_\xi}\in \mathcal{C}_\xi$.
%\end{enumerate} 
\end{enumerate}
For a condition $p=(g, H)$ we say that $g$ is the stem and $H$ the large set of $p$. For $\eta\in\dom(g^p)$, denote $(g,H)_{\upharpoonright\eta}:=(g\upharpoonright\eta, H\upharpoonright\eta)$ and $(g,H)_{\setminus \eta}:=(g\setminus\eta, H\setminus \eta)$.
%We will say that $h$ is the stem of a condition $\langle g, f, H,F\rangle\in\mathbb{S}$ provided that $h=\langle g,f\rangle$. For a condition $p$ we will refer to $g$ as the $g$-part of $p$ and to $f$ as the $f$-part of $p$.
\end{defi}
%The order of $\mathbb{S}$ is defined as follows:
\begin{defi}\label{OrderSinapova}
Let $p,q\in\mathbb{S}$.
\begin{itemize}
\item[(a)] $p\leq q$ iff
\begin{enumerate}
\item $g^p\supseteq g^q$,
\item If $\xi\in\dom(g^p)\setminus\dom(g^q)$ then $g^p(\xi)\in H^q(\xi)$,
\item If $\xi\notin\dom(g^p)$, $H^p(\xi)\subseteq H^q(\xi)$,
%\item If $\xi\in\dom(f^p)$ and $\xi<\max(\dom(f^p))$, then setting $\hat{xi}=\min(\dom(f^p)\setminus\xi+1)$, $f^p(\xi)\leq f^q(\xi)\upharpoonright_{\kappa_{g^p(\hat{\xi})}}$,
%\item If $\xi=\max(\dom(f^p))$, then $f^p(\xi)\leq f^q(\xi)$,
%\item If $\xi\in\dom(f^p)\setminus\dom(f^q)$ and $\xi<\max(\dom(f^p))$, then setting $\hat{\xi}$ as before, $f^p(\xi)\leq F^q(\xi)(g^p(\xi))\upharpoonright \kappa_{g^p(\hat{\xi})}$,
%\item If $\xi\in\dom(f^p)\setminus\dom(f^q)$ and $\xi=\max(\dom(f^p))$, then $f^p(\xi)\leq F^q(\xi)(g^p(\xi))$,
%\item If $\xi\notin\dom(g^p)$ and $\xi<\max(\dom(g^p))$, then setting $\hat{\xi}$ as before, for each $y\in H^p(\xi)$,  $F^p(\xi)(y)\leq F^q(\xi)(y)\upharpoonright_{\kappa_{g^p(\hat{\xi})}}$,
%\item If $\xi\notin\dom(g^p)$ and $\xi>\max(\dom(g^p))$, then for each $y\in H^p(\xi)$, $F^p(\xi)(y)\leq F^q(\xi)(y)$.
\end{enumerate}
\item[(b)] $p\leq^* q$ iff $p\leq q$ and both conditions have the same stem.
\end{itemize}
Let $p,q\in\mathbb{S}$ with $g^p=g^q=g$. Define $p\wedge q$ as the condition $r:=(g, H^p\wedge H^q)$, where $H^p\wedge H^q$ is the function with domain $\dom(H^p)$ such that $\xi\mapsto H^p(\xi)\cap H^q(\xi)$.
\end{defi}

%Let $p\in\mathbb{S}$ be a condition. Provided $x$ is the maximal element of the stem of $p$, clause ($\star$) of definition \ref{TheBxisets} is necessary to define extensions of $p$ by using $y$ with $x\prec y$. Similarly, if $x$ and $y$ are two elements of the stem of $p$, condition ($\diamond$) is used to find suitable $z$ with $x\prec z\prec y$, that might be used to extend the stem of $p$.%suitable candidates for $g^q(\eta)$, provided $\zeta<\eta<\xi$ and $\zeta$ and $\xi$ were already coordinates in the stem of $p$.
%Let $\xi<\mu$ be an ordinal and $p\in \mathbb{S}$ and assume that $q\leq p$ is a condition that arises from extending the stem of $p$ at the ordinal $\xi$. %Condition ($\star$) in definition \ref{TheBxisets} is necessary to define the \textit{large set part} of $q$ at ordinals $\zeta<\xi$. Similarly, condition ($\diamond$) is needed for choosing suitable candidates for $g^q(\eta)$, provided $\zeta<\eta<\xi$ and $\zeta$ and $\xi$ were already coordinates in the stem of $p$.

An important feature of $\mathbb{S}$  is that, below any $p\in\mathbb{S}$,  $\mathbb{S}\downarrow p$ can be decomposed  as the product of two Sinapova forcings. This feature is shared with  other Prikry-type forcings, as Magidor or Radin, and is crucial to control the combinatorics of  $V^{\mathbb{S}}_\kappa$.  Let us formulate this in more formal terms. 

Let $(g ,G)\in\mathbb{S}$, $\{\langle\xi,x\rangle\}\s g$ and $\xi<\mu$ be limit. For each $\eta<\xi$, set $\mathcal{V}_\eta:=U^\eta_{\xi,x}$  and $\mathfrak{V}=\langle \overline{U^\eta_{\zeta,x}}\mid \eta<\zeta<\xi\r$. Also, for each $\zeta<\xi$, find a sequence $\mathfrak{C}=\langle C_\eta\mid \eta<\xi\rangle$ of $\mathcal{V}_\eta$-large sets witnessing Proposition~\ref{TheBxisets} with respect to $\mathfrak{V}$. Now let  $S_{\langle \xi, x\rangle}:=\{ (g,G) \mid \exists ( h,H) \in\mathbb{S}\, ( g,G) = (h,H)_{\upharpoonright\xi}, \wedge\, h(\xi)=x\}$, 
and set $\mathbb{S}_{\langle\xi,x\rangle}:=(S_{\langle \xi, x\rangle},\leq_{\langle\xi, x\rangle})$, where $\leq_{\langle \xi, x\rangle}$ is the induced order by $\leq$. One may argue that $\mathbb{S}_{\langle\xi, x\rangle}$ is Sinapova forcing with respect to $\langle \kappa_x, \xi, \mathfrak{V},\mathfrak{C}\rangle$,  $\mathbb{S}_{\langle \kappa_x, \xi, \mathfrak{V},\mathfrak{C}\rangle}$. The following is also  immediate.
\begin{prop}[Factorization]\label{FactorizationSinapova}
Let $( g ,G)\in\mathbb{S}$, $\{\langle\xi,x\rangle\}\s g$ and $\xi<\mu$ be limit. There is $( g,G')\leq^* ( g,G)$ such that the following hold:
\begin{enumerate}
\item The restriction map $\pi$ between  $\mathbb{S}\downarrow (g,G')$ and $ \mathbb{S}_{\langle \xi, x\rangle }\downarrow (\emptyset, G'\upharpoonright\xi) $  defines a projection.
\item $\mathbb{S}\downarrow (g,G')$ is isomorphic to $ \mathbb{S}_{\langle \xi, x\rangle }\downarrow (\emptyset, G'\upharpoonright\xi) \times \mathbb{S}_{\langle \kappa, \mu, \mathfrak{U}\setminus \xi+1, \mathfrak{B}\setminus \xi+1\rangle}\downarrow ( g\setminus \xi+1, G'\setminus \xi+1)$.
\end{enumerate}
\end{prop}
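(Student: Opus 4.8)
The plan is to exhibit the condition $(g,G')$ explicitly and then verify the two clauses by unravelling the definitions of $\mathbb{S}$, of its ordering, and of the auxiliary forcing $\mathbb{S}_{\langle\xi,x\rangle}$. First I would shrink the large set $G$ below $\xi$ to make it compatible with the measures $\mathcal{V}_\eta=U^\eta_{\xi,x}$ and with the coherence sets $\mathfrak{C}=\langle C_\eta\mid\eta<\xi\rangle$ chosen above: for $\eta<\xi$ put $G'(\eta):=G(\eta)\cap C_\eta$ (intersected, if $\eta<\max(\dom g)$ forces it, with the relevant local measure-one set) and leave $G'(\zeta):=G(\zeta)$ for $\zeta\geq\xi$ outside $\dom(g)$. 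Since each $C_\eta\in\mathcal{V}_\eta$ and, by clause $(\star)$ / $(\diamond)$ of Proposition~\ref{TheBxisets} together with the choice of $\mathfrak{V}$, the sets $G(\eta)$ already lie in (or restrict into) the appropriate measures, the intersections are still measure-one, so $(g,G')\in\mathbb{S}$ and $(g,G')\leq^*(g,G)$. The point of this shrinking is exactly that it makes the "coordinates below $\xi$" of any extension of $(g,G')$ a legitimate condition in $\mathbb{S}_{\langle\xi,x\rangle}=\mathbb{S}_{\langle\kappa_x,\xi,\mathfrak{V},\mathfrak{C}\rangle}$.

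For clause (1), I would define $\pi$ on $\mathbb{S}\downarrow(g,G')$ by $\pi(h,H):=\big((h,H)_{\upharpoonright\xi}\big)$, viewed as a condition in $\mathbb{S}_{\langle\xi,x\rangle}\downarrow(\emptyset,G'\upharpoonright\xi)$ after deleting the coordinate $\xi$ (where necessarily $h(\xi)=x$, since $\langle\xi,x\rangle\in g\subseteq h$). One checks $\pi$ is order-preserving directly from Definition~\ref{OrderSinapova}: extending a condition in $\mathbb{S}$ either adds points to the stem at coordinates $<\xi$ or shrinks large sets there, and both operations are mirrored verbatim in $\mathbb{S}_{\langle\xi,x\rangle}$; the $\prec$-increasing requirement on stems below $\xi$ is inherited because the ambient stem was $\prec$-increasing. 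For the projection property, given $(h,H)\in\mathbb{S}\downarrow(g,G')$ and $(h',H')\leq_{\langle\xi,x\rangle}\pi(h,H)$, I would build the amalgam $(h'',H'')$ whose coordinates $<\xi$ are those of $(h',H')$ (reinstating the coordinate $\xi$ with value $x$) and whose coordinates $\geq\xi$ are those of $(h,H)$; the compatibility requirement (4) of Definition~\ref{SinapovaForcing} — namely $g(\eta)\prec x'$ for stem-points $\eta<\xi$ below new large-set elements $x'$ — holds because the new stem-points were drawn from $G'\upharpoonright\xi$, which already sat below $x$ and hence below everything in the tail large sets. Then $(h'',H'')\leq(h,H)$ and $\pi(h'',H'')=(h',H')$, as required.

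For clause (2), I would write down the isomorphism
\[
\Phi\colon \mathbb{S}\downarrow(g,G')\ \longrightarrow\ \mathbb{S}_{\langle\xi,x\rangle}\downarrow(\emptyset,G'\upharpoonright\xi)\ \times\ \mathbb{S}_{\langle\kappa,\mu,\mathfrak{U}\setminus\xi+1,\mathfrak{B}\setminus\xi+1\rangle}\downarrow(g\setminus\xi+1,G'\setminus\xi+1)
\]
by $\Phi(h,H):=\big((h,H)_{\upharpoonright\xi}\,,\,(h,H)_{\setminus\xi+1}\big)$, again suppressing the frozen coordinate $\xi$ in the first factor. The inverse glues a pair of conditions back together at coordinate $\xi$ (value $x$). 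That $\Phi$ is a bijection is immediate from the fact that a condition below $(g,G')$ is completely determined by its restrictions to $[0,\xi)$, to $\{\xi\}$ (forced to be $x$), and to $(\xi,\mu)$. That $\Phi$ and $\Phi^{-1}$ preserve the ordering again reduces to Definition~\ref{OrderSinapova} coordinatewise, together with the observation that the cross-coordinate constraint (4) never couples a coordinate $<\xi$ to one $>\xi$: any stem-point at a coordinate $\eta<\xi$ is $\prec$-below $x=g(\xi)$, and $x$ in turn is $\prec$-below every element of every tail large set $H(\zeta)$, $\zeta>\xi$, by clause (4) applied to the original condition; so the product ordering and the ambient ordering literally agree.

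The main obstacle — the one place where real content enters rather than bookkeeping — is verifying that the shrinking $G\mapsto G'$ can be done so that the coordinates below $\xi$ genuinely become a condition of $\mathbb{S}_{\langle\kappa_x,\xi,\mathfrak{V},\mathfrak{C}\rangle}$ \emph{in a way that is respected by all further extensions}: this is where the coherence properties $(\xi)$, $(\xi')$, $(\star)$ and $(\diamond)$ of Proposition~\ref{TheBxisets}, transported along the representing functions to the local measures $U^\eta_{\zeta,x}$, are doing the work, ensuring that measure-one sets at the ambient level project to measure-one sets at the local level and that the Mitchell order among the $\mathcal{V}_\eta$ matches the one among the $U_\eta$. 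Everything else is a routine unwinding of the two orderings, and I would present it as such.
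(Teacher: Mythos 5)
Your plan is correct and is exactly the argument the paper has in mind: the paper states Proposition~\ref{FactorizationSinapova} without proof (``the following is also immediate''), relying on the same routine verification you spell out — shrink $G$ below $\xi$ into the sets $C_\eta$ so the lower part lands in $\mathbb{S}_{\langle\xi,x\rangle}$, then check that restriction is a projection and that splitting/gluing at the frozen coordinate $\xi$ is an order isomorphism, with clause (4) of Definition~\ref{SinapovaForcing} handled by $y\prec x\prec z$ transitivity. No gaps worth noting; this matches the intended (omitted) proof.
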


Let $S\subseteq \mathbb{S}$ be a generic filter for  Sinapova forcing. Set $g^*:=\bigcup_{p\in \mathcal{S}} g^p$, $\kappa^*_\xi:=\kappa_{g^*(\xi)}$ and $\vartheta_\xi:={\kappa_\xi}_{g^*(\xi)}$, for each $\xi<\mu$. The following is a summary of the main properties of $\mathbb{S}$ and $V[S]$:

\begin{theo}[Properties of $\mathbb{S}$]\label{PropertiesofSinapovaForcing}
$ $
\begin{enumerate}
\item $\mathbb{S}$ is a $\delta$-Knaster forcing notion.
\item $\mathbb{S}$ has the Prikry property: namely, for each $p\in\mathbb{S}$ and each sentence $\varphi$ in the language of forcing, there is $q\leq^*p$ so that $q$ decides $\varphi$.
\item Let $\rho<\kappa$ and let $\xi$ be a limit ordinal such that $\vartheta_\xi^+\leq \rho<\kappa^*_{\xi+1}$. Then, $\mathcal{P}(\rho)^{V[S]}=\mathcal{P}(\rho)^{V[S\upharpoonright\xi]}$. Further, if  $\rho\leq\kappa^*_0$, $\mathcal{P}(\rho)^{V[S]}=\mathcal{P}(\rho)^V$.
\end{enumerate}
\end{theo}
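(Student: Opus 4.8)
The plan is to obtain the three items via the standard toolkit for Prikry-type forcings; the bulk of the work is in~(2).

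\textbf{(1)} First I would observe that any two conditions of $\mathbb{S}$ with the \emph{same} stem are compatible: if $p=(g,H^p)$ and $q=(g,H^q)$, then $p\wedge q=(g,H^p\wedge H^q)$ lies below both — the only clauses of Definitions~\ref{SinapovaForcing}--\ref{OrderSinapova} that could fail concern either the untouched stem or the largeness of the $H^p(\xi)\cap H^q(\xi)$, and the latter holds because the ultrafilters $U_\xi$ and $U^{\xi}_{\xi_g,x}$ are closed under finite intersections. It then suffices to bound the number of stems: a stem is a finite partial function $g$ with $\dom(g)\in[\mu]^{<\omega}$ and $g(\xi)\in B_\xi\subseteq\mathcal{P}_\kappa(\kappa_\xi)$, and since $|[\mu]^{<\omega}|=\mu$ while, by the $\gch$-type hypotheses in force, $|\mathcal{P}_\kappa(\kappa_\xi)|\le\kappa_\xi^{<\kappa}\le\kappa_\xi^+<\delta$ for all $\xi<\mu$, there are fewer than $\delta$ of them. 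As $\delta$ is regular, every $\delta$-sized family of conditions has a $\delta$-sized subfamily with a common stem, which is pairwise compatible; so $\mathbb{S}$ is $\delta$-Knaster.

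\textbf{(2)} Here I would run the usual argument for diagonal supercompact Prikry forcings. Fix $p=(g,H)$ and a sentence $\varphi$. The core is a shrinking lemma producing $q\le^*p$ such that for every \emph{one-point extension} $q'$ of $q$ (i.e.\ $q'\le q$ obtained by moving one fresh coordinate into the stem), if some $r\le^*q'$ decides $\varphi$ then $q'$ already decides it, and moreover the side on which $\varphi$ gets decided depends only on the coordinate moved, not on the point chosen there. To produce $q$, one runs — for each finite ``shape'' of potential extension — the trichotomy splitting the relevant large set into the points whose one-point extension $\le^*$-decides $\varphi$ positively, negatively, or not at all; exactly one of the three pieces lies in the governing measure, by its completeness (it is $\kappa$-complete in the ``top'' case and at least $\mu^+$-complete in the ``local'' cases, by clause~(2) of Definition~\ref{SinapovaForcing}), and one keeps it; finally these per-coordinate shrinkings are amalgamated by a diagonal intersection over the $<\mu$ coordinates, legitimate since all these measures are at least $\mu^+$-complete. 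Given $q$, one argues that $q$ itself decides $\varphi$: otherwise density yields $r\le q$ deciding $\varphi$, $r$ adds only finitely many stem points to $g$, and an induction on their number together with the homogeneity built into $q$ produces a one-point extension of $q$ — hence $q$ — deciding $\varphi$, a contradiction. \emph{I expect the main obstacle to be the diagonal bookkeeping}: unlike in ordinary Prikry forcing, activating a late coordinate $\zeta$ changes which local measures $U^{\eta}_{\zeta,\cdot}$ govern the earlier coordinates $\eta<\zeta$, so the shrinkings at the various coordinates must be carried out compatibly with the coherence properties of Proposition~\ref{TheBxisets} — precisely where the sets $B_\xi$ and clauses $(\xi')$, $(\diamond)$ there come into play.

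\textbf{(3)} This I would derive from the Factorization Proposition~\ref{FactorizationSinapova} at $\xi$. Let $\dot A$ be a name with $p\Vdash\dot A\subseteq\rho$. Passing to a stronger condition we may assume $\xi\in\dom(g^p)$ and, writing $x:=g^p(\xi)$ and invoking Proposition~\ref{FactorizationSinapova}, that $\mathbb{S}\downarrow p$ is isomorphic to $\mathbb{S}_{\langle\xi,x\rangle}\downarrow p_0\times\mathbb{S}_{\langle\kappa,\mu,\mathfrak{U}\setminus\xi+1,\mathfrak{B}\setminus\xi+1\rangle}\downarrow p_1$, where the left factor (together with the point $x$) is exactly the forcing computing $S\upharpoonright\xi$; so it is enough to show that the tail forcing, below $p_1$, adds no new subset of $\rho$ over the left-factor extension. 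Two facts make this work. First, by further extending $p$ — using that the measures in play are fine and concentrate on sets containing $\rho$ — we may arrange that every point $y$ occurring in $p_1$ (in its stem or in its large sets) satisfies $x\prec y$ and $\kappa_y>\rho$; then the measures governing those large sets are $\rho^+$-complete, so the $\le^*$-order of the tail forcing below $p_1$ is $\rho^+$-closed. Second, the tail forcing has the Prikry property, being again a Sinapova forcing, by~(2). Both facts persist in the left-factor extension because $\mathbb{S}_{\langle\xi,x\rangle}$, being again a Sinapova forcing, is $\vartheta_\xi^+$-Knaster by the analogue of item~(1) for it (here $\sup_{\eta<\xi}(\kappa_\eta)_x=\vartheta_\xi$, by clause~$(\beta)$ of Definition~\ref{DefinitioOfXxi}), hence $\rho^+$-cc since $\vartheta_\xi^+\le\rho$, and a $\rho^+$-cc forcing preserves the $\rho^+$-completeness of the filter generated by a $\kappa$-complete ultrafilter. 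The standard argument then finishes: inside the left-factor extension build a $\le^*$-decreasing sequence of tail conditions of length $\rho$ whose $\alpha$-th term decides ``$\check\alpha\in\dot A$'', and take a lower bound, which decides all of $\dot A$; hence $\dot A\in V[S\upharpoonright\xi]$. The final clause is the case $\xi=0$, handled the same way with $V$ in place of $V[S\upharpoonright\xi]$: below a condition with $0\in\dom(g^p)$ every later stem or large-set point $y$ has $\kappa_y>\kappa^*_0\ge\rho$, and the coordinate-$0$ forcing (being $\kappa$-closed under $\le^*$ and having the Prikry property) adds no new subset of $\rho$ over $V$, so $\mathcal{P}(\rho)^{V[S]}=\mathcal{P}(\rho)^V$.
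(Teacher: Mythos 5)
First, a remark on the comparison: the paper itself does not prove Theorem~\ref{PropertiesofSinapovaForcing}; it is quoted as a summary of \cite[Chapter 2]{Sin}, so your proposal is being measured against the standard proofs there. Your item~(1) is correct and is the standard argument (same stems are compatible via $p\wedge q$ of Definition~\ref{OrderSinapova}, and there are at most $\varepsilon<\delta$ stems since $\kappa_\xi^{<\kappa}\le\kappa_\xi^+<\delta$). Your item~(2) is an accurate outline of the usual trichotomy-plus-diagonal-intersection proof, and you correctly identify that the real content is the coherence bookkeeping of Proposition~\ref{TheBxisets}; as a sketch it matches the argument in \cite{Sin} (and the Strong Prikry Property machinery reproduced in Section~3 of this paper).

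The gap is in item~(3), at the step ``both facts persist in the left-factor extension.'' Chain conditions plus preservation of $\rho^+$-completeness of the \emph{generated filters} do give you the $\rho^+$-closure of $\le^*$ for the tail over $V[S\upharpoonright\xi]$, but they do not give the Prikry property of the tail there: the Prikry argument must \emph{measure} sets of the form $\{y\mid\text{some direct extension with $y$ decides }\varphi\}$, and once $\varphi$ involves a $V[S\upharpoonright\xi]$-name these are new subsets of the measure spaces, so you need the generated filters to be ultrafilters (or a different arrangement of the argument), not merely $\rho^+$-complete. A Levy--Solovay transfer is the natural fix, but it requires the low factor to be smaller than the completeness of \emph{all} measures governing the tail, and that is not guaranteed: $\mathbb{S}_{\langle\xi,x\rangle}$ has size roughly $2^{\vartheta_\xi^+}$ (its conditions carry arbitrary $U^\eta_{\xi,x}$-large subsets of $\mathcal{P}_{\kappa_x}(\kappa_\eta\cap x)$), and since $\gch$ is not assumed below $\kappa$ and nothing forces $\kappa^*_{\xi+1}$ or the $\kappa_y$ of tail stem points to exceed $2^{\vartheta_\xi^+}$, the local measures $U^\eta_{\zeta,y}$ attached to tail conditions may have completeness below the size of the low factor. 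So the assertion ``the tail forcing has the Prikry property over $V[S\upharpoonright\xi]$'' is exactly the point that needs a real proof, and your justification does not supply it. The standard proofs avoid this transfer altogether: they carry out the reduction of a name for a subset of $\rho$ entirely in the ground model of $\mathbb{S}$, combining the factorization with the Strong Prikry Property and a R\"owbottom-style homogenization over the coordinates above $\xi$ (precisely the tools of Lemma~\ref{SPP}, Lemma~\ref{SPPimproved} and Lemma~\ref{DiagonalSupercompactRowottom}), using the completeness ${>}\rho$ of the measures above $\xi$ to stabilize the decisions of ``$\alpha\in\dot{A}$'' so that they depend only on the part of the condition below $\xi$, together with the $\vartheta_\xi^+$-cc of the lower factor to bound the antichains that must be handled. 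Your final clause (the case $\rho\le\kappa^*_0$) does not suffer from this problem, since there is no intermediate model and the closure-plus-Prikry argument runs in the ground model itself.
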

%The next result follows from a direct application of the previos theorem:
\begin{prop}\label{SomeBasicPropSinapova}
The following  hold in $V[S]$:
\begin{enumerate}
\item All cardinals and cofinalities ${\geq}\delta$ are preserved.
\item Let $\rho<\kappa$ be a $V$-cardinal such that for some limit $\xi<\mu$ and some $k<\omega$, $\vartheta_\xi^+\leq \rho< \kappa^*_{\xi+k}$. Then $\rho$ is preserved and $\cof(\rho)=\cof^V(\rho)$. In particular, for each $\xi<\mu$, $\kappa^*_\xi$ is preserved and thus $\kappa$ also.
\item  $\kappa$ is a strong limit cardinal with $\cof(\kappa)=\mu$ and $2^\kappa=\Theta$. Hence, the $\sch_\kappa$ fails.
\item If $\rho\in(\kappa, \varepsilon]$ is a $V$-regular cardinal,  $\cof(\rho)=\mu$. Thus, all $V$-cardinals $\rho\in(\kappa, \varepsilon]$ are collapsed to $\kappa$. 
%\item The function $g^*$ have domain $\mu$. %and $f^*$ have domain $\mu$. 
%\item For each $\xi<\mu$, set $\kappa^*_\xi=\kappa_{g^*(\xi)}$. Then $\langle g^*(\xi):\xi<\mu\rangle$ is a $\prec$-increasing and $\subseteq$-continuous sequence such that  $\bigcup_{\xi\in\mu} g^*(\xi)=\varepsilon$. 
%\item $\langle\kappa^*_\xi:\xi<\mu\rangle$ defines a club subset of $\kappa$, hence $\cof^{V[\mathcal{S}]}(\kappa)=\cof^{V[\mathcal{S}]}(\mu)$
\end{enumerate}
\end{prop}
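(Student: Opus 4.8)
All four clauses will be derived from the three facts already available: $\mathbb{S}$ is $\delta$-Knaster (Theorem~\ref{PropertiesofSinapovaForcing}(1)), $\mathcal P(\rho)$ is captured at an initial segment of the iteration (Theorem~\ref{PropertiesofSinapovaForcing}(3)), and the Factorization Proposition~\ref{FactorizationSinapova}. First I would record two density facts about the generic: $(\dagger)$ $\dom(g^*)=\mu$ and $\langle\kappa^*_\xi\mid\xi<\mu\rangle$ is nondecreasing and cofinal in $\kappa$ --- given $p$ and $\gamma<\kappa$, by fineness of the measures the set of $x\in B_\xi$ with $\kappa_x>\gamma$ is large, so $p$ extends by inserting such an $x$ at a fresh coordinate; and $(\ddagger)$ $\bigcup_{\xi<\mu}g^*(\xi)=\varepsilon$ --- again by fineness, for each $\beta<\varepsilon$ it is dense to insert an $x\ni\beta$ at some coordinate $\xi$ with $\kappa_\xi>\beta$. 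Also $|g^*(\xi)|<\kappa$ for all $\xi$. Item (1) is then immediate: $\delta$-Knaster implies the $\delta$-c.c., which preserves cofinalities and cardinals ${\geq}\delta$.

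The bulk of the work is item (2), which I would prove by induction on the length $\mu$ of the forcing, via the Factorization. Fix $S$ and a $V$-cardinal $\rho$ with $\vartheta_\xi^{+}\leq\rho<\kappa^*_{\xi+k}$ for some limit $\xi<\mu$ and $k<\omega$, and fix $p\in S$ with $\xi\in\dom(g^p)$. By Proposition~\ref{FactorizationSinapova} there is a $\leq^{*}$-extension $p'$ of $p$ with $\mathbb S\!\downarrow\!p'$ isomorphic to a product $\mathbb S_{\langle\xi,x\rangle}\!\downarrow\!(\cdots)\times\mathbb S_{\langle\kappa,\mu,\mathfrak U\setminus\xi+1,\mathfrak B\setminus\xi+1\rangle}\!\downarrow\!(\cdots)$, where $x=g^p(\xi)$, the first factor is Sinapova forcing on $\kappa^*_\xi=\kappa_x$ of shorter length and the second is Sinapova forcing on $\kappa$ from the measures indexed above $\xi$. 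The first factor preserves $\rho$ with its cofinality: by clause $(\beta)$ of Definition~\ref{DefinitioOfXxi} its ``$\varepsilon$'' equals $\sup_{\zeta<\xi}(\kappa_\zeta)_x=(\kappa_\xi)_x=\vartheta_\xi$, and by its own instance of item (4) it disturbs no cardinal ${>}\vartheta_\xi$, whereas $\rho\geq\vartheta_\xi^{+}$. The second factor likewise preserves $\rho$ with its cofinality, because its disruptive (collapsing) activity only begins at and above the limit coordinate $\xi+\omega>\xi+k$, so it touches no cardinal below $\kappa^*_{\xi+\omega}$ (here invoking the last clause of Theorem~\ref{PropertiesofSinapovaForcing}(3) for the second factor), while $\rho<\kappa^*_{\xi+k}\leq\kappa^*_{\xi+\omega}$. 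Since $\rho$ is a cardinal of cofinality $\cof^{V}(\rho)$ in $V[G]$ ($\mathbb{A}$ being ${<}\kappa$-closed), so it is in $V[S]$. For ``$\kappa^*_\xi$ preserved, for each $\xi<\mu$'': for limit $\xi$ the first factor preserves its own top cardinal $\kappa^*_\xi$ by the inductive hypothesis and the second does not collapse $\kappa^*_\xi<\kappa^*_{\xi+\omega}$; for $\xi=\zeta+n$ with $\zeta$ limit it is the above with $(\xi,k)$ replaced by $(\zeta,n+1)$; and the finitely many $\kappa^*_n$ are handled by the last clause of Theorem~\ref{PropertiesofSinapovaForcing}(3) and the induction for the length-$\omega$ factor at $\omega$. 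I expect the genuinely delicate point to be the bookkeeping that matches $[\vartheta_\xi^{+},\kappa^*_{\xi+k})$ to these two factors and checks that the successor coordinates between $\xi$ and the next limit are inert on the relevant $\rho$ --- this is where the coherence of Definition~\ref{DefinitioOfXxi} and Proposition~\ref{TheBxisets} is really used.

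Granting (2), I would obtain (3) thus. First, $\mu$ is regular in $V[S]$: since $g^*(0)\in B_0\subseteq X_0$, clause $(\alpha)$ of Definition~\ref{DefinitioOfXxi} gives $\kappa^*_0>\mu$, so by the last clause of Theorem~\ref{PropertiesofSinapovaForcing}(3), $\mathcal P(\mu)^{V[S]}=\mathcal P(\mu)^{V[G]}$, whence $\mu$ keeps its $V[G]$-cofinality $\mu$. Then $\cof^{V[S]}(\kappa)\leq\mu$ by $(\dagger)$; and if $f\colon\nu\to\kappa$ were cofinal in $V[S]$ with $\nu<\mu$, picking $\xi_\alpha<\mu$ with $f(\alpha)<\kappa^*_{\xi_\alpha}$, either $\sup_{\alpha<\nu}\xi_\alpha<\mu$ and $\ran(f)$ is bounded by $\kappa^*_{\sup_\alpha\xi_\alpha}<\kappa$ --- impossible --- or $\langle\xi_\alpha\mid\alpha<\nu\rangle$ is cofinal in $\mu$, against its regularity; so $\cof^{V[S]}(\kappa)=\mu$. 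Strong limitness: for $\rho<\kappa$, Theorem~\ref{PropertiesofSinapovaForcing}(3) (with the Factorization, for the ``gap'' intervals $[\kappa^*_\xi,\vartheta_\xi^{+})$) identifies $\mathcal P(\rho)^{V[S]}$ with its value in an extension of $V[G]$ by a forcing of size ${<}\kappa$, where $2^\rho<\kappa$ as $\kappa$ is there inaccessible; with (2) this gives $2^\rho<\kappa$ in $V[S]$. For $2^\kappa=\Theta$: ${\geq}$ holds since $\mathbb S$ is $\delta$-c.c.\ with $\delta\leq\Theta$, so it preserves $\Theta$ and the $\Theta$ functions $f_\eta$ stay pairwise distinct subsets of $\kappa$; ${\leq}$ is a nice-name count over $V[G]$ using the $\delta$-c.c., $(2^\kappa)^{V[G]}=\Theta$, and the cardinal arithmetic granted by $\gch_{\geq\kappa}$ and $\cof(\Theta)>\kappa$. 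Finally $\neg\sch_\kappa$ is immediate, $\kappa$ being singular strong limit with $2^\kappa=\Theta\geq\lambda>\kappa^{+}$.

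Item (4) I would prove directly. Let $\rho\in(\kappa,\varepsilon]$ be $V$-regular. By $(\ddagger)$, $\rho=\bigcup_{\xi<\mu}(\rho\cap g^*(\xi))$; each $\rho\cap g^*(\xi)$ has size ${<}\kappa<\rho$ and hence --- $\rho$ being $V$-regular and cardinals below $\kappa$ absolute between $V$ and $V[G]$ --- is bounded below $\rho$ in $V$, so $\langle\sup(\rho\cap g^*(\xi))\mid\xi<\mu\rangle$ witnesses $\cof^{V[S]}(\rho)\leq\mu$. The argument of the previous paragraph, now with $f(\alpha)+1\in g^*(\xi_\alpha)$ for suitable $\xi_\alpha$, shows there is no cofinal map into $\rho$ of length ${<}\mu$, so $\cof^{V[S]}(\rho)=\mu$. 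The same decomposition gives $|\rho|^{V[S]}\leq\mu\cdot\sup_{\xi}|g^*(\xi)|\leq\kappa$; since $\kappa$ is a $V[S]$-cardinal and $\rho>\kappa$, $|\rho|^{V[S]}=\kappa$. Hence every $V$-cardinal in $(\kappa,\varepsilon]$ is collapsed to $\kappa$.
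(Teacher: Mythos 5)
The paper itself offers no argument for this proposition (it is quoted as a summary of \cite[Chapter 2]{Sin}), so your proposal has to stand on its own. Your treatment of (1), (3) and (4), and the overall architecture for (2) (factorize at the limit coordinate $\xi$ via Proposition~\ref{FactorizationSinapova} and analyse the two factors separately), are sound and are essentially how this is done in Sinapova's thesis. But inside (2) there is a concrete gap for $k\geq 2$. After factoring at $\xi$, the only tool you invoke for the tail factor is the last clause of Theorem~\ref{PropertiesofSinapovaForcing}(3); applied to the tail, whose first generic point is $\kappa^*_{\xi+1}$, it only says that subsets of $\rho\leq\kappa^*_{\xi+1}$ are not touched. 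Nothing you cite shows that the tail is inert on cardinals in $[\kappa^*_{\xi+1},\kappa^*_{\xi+k})$, and you cannot iterate Proposition~\ref{FactorizationSinapova} at the coordinates $\xi+1,\dots,\xi+k-1$, since it is stated only for limit coordinates. The missing ingredient is not the coherence of Definition~\ref{DefinitioOfXxi} and Proposition~\ref{TheBxisets}, as you suggest, but the Prikry property (Theorem~\ref{PropertiesofSinapovaForcing}(2)) combined with a completeness computation: below a condition of the tail whose stem contains $\xi+1,\dots,\xi+k-1$ (a dense set of conditions), every unfixed coordinate carries a measure that is at least $\kappa^*_{\xi+k}$-complete, so the $\leq^*$-order there is $\kappa^*_{\xi+k}$-closed, and Prikry property plus closure yields that no new bounded subsets of $\kappa^*_{\xi+k}$ are added (with the usual transfer over the small first factor). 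In other words, one needs the strengthening of Theorem~\ref{PropertiesofSinapovaForcing}(3) with $\kappa^*_{\xi+k}$ in place of $\kappa^*_{\xi+1}$; the same device is also what your ``strong limit'' argument in (3) needs on the first block $(\kappa^*_0,\kappa^*_\omega)$, which none of the quoted statements reach. Relatedly, the preservation of cardinals above $\vartheta_\xi$ by the first factor should be justified by the local instance of the $\delta$-Knaster property (Theorem~\ref{PropertiesofSinapovaForcing}(1) for $\mathbb{S}_{\langle\xi,x\rangle}$, whose ``$\delta$'' is $\vartheta_\xi^+$), not by ``its own instance of item (4)'', which only says what is collapsed, not what is preserved.

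A second, smaller point concerns the blanket clause ``preserves $\rho$ with its cofinality''. Chain-condition arguments preserve $\cof(\rho)$ only when $\cof^V(\rho)\geq\vartheta_\xi^+$, and the no-new-subsets argument for the tail preserves $\cof(\rho)$ only because it does not touch $\rho$; neither covers a singular $\rho$ in the window whose $V$-cofinality lies inside one of the collapsed intervals $(\kappa^*_{\xi'},\vartheta_{\xi'}]$ for a limit $\xi'\leq\xi$ -- there the first factor itself changes $\cof(\rho)=\cof(\cof^V(\rho))$. So the cofinality assertion can only be established for $\rho$ whose $V$-cofinality is itself in a preserved region (in particular for regular $\rho$, which is all that the ``in particular'' clause and the rest of the paper use); your justification silently assumes this, and your write-up should either restrict the claim accordingly or treat $\cof^V(\rho)$ separately.
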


Another  remarkable property of Sinapova model is the existence  of a bad and a very good scale at $\kappa$. The concept of scale is the cornerstone of Shelah's PCF theory  \cite{SheCard}. For more information about these objects see \cite{SheCard}, \cite{CumForMag} or \cite{AbrMag}. In \cite[Section 2.5]{Sin} it is  showed how to define  in $V[S]$ these scales   by using the sequence $\mathfrak{F}$. %Altogether, one obtains the following:

%are the so called (very) good scales and bad scales. The notion of scale is the cornerstone of Shelah's PCF theory  \cite{SheCard}, which has been erected as one of the most relevant areas of Set theory. %Here we simply aim to refresh the concept of scale without going into the details. 
%For a complete account on these objects as well as on their interplay with large cardinals the reader is referred to  \cite{SheCard}, \cite{CumForMag} or \cite{AbrMag}. 
%For definitions and further information on these sort of objects the reader is referred to \cite{SheCard}, \cite{CumForMag} or \cite{AbrMag}.

%In \cite[Section 2.5]{Sin} it is showed how to define in $V[S]$ a very good and a bad scale at $\kappa$ by using the sequence $\mathfrak{F}$.  Combining this with proposition \ref{SomeBasicPropSinapova} one finally obtains the desired result:
\begin{theo}[Sinapova]\label{SinapovaTheorem}
In $V[S]$ the following hold true:
\begin{enumerate}
\item $\kappa$ is a strong limit cardinal with $\cof(\kappa)=\mu$ and $\delta=\kappa^+$.
\item  $2^\kappa\geq\Theta$, hence  $\sch_\kappa$ fails.
\item There is a  very good and a bad scale at $\kappa$.
\end{enumerate}
\end{theo}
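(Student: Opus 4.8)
The plan is to read off items (1) and (2) from the structural facts already established, and to prove item (3) directly from the sequence $\mathfrak F=\langle F^\xi_\eta\mid\xi<\mu,\,\eta<\delta\rangle$, along the lines of \cite[Section~2.5]{Sin}. For (1): Proposition~\ref{SomeBasicPropSinapova}(3) already gives that $\kappa$ is strong limit with $\cof(\kappa)=\mu$ in $V[S]$; for $\delta=\kappa^+$, note that by Proposition~\ref{SomeBasicPropSinapova}(4) every $V$-regular cardinal in $(\kappa,\varepsilon]$ acquires cofinality $\mu$ and is collapsed to $\kappa$, while by Proposition~\ref{SomeBasicPropSinapova}(1) all cardinals ${\geq}\delta$ are preserved, so a short argument shows that no $V$-cardinal strictly between $\kappa$ and $\delta=\varepsilon^+$ survives (a $V$-singular $\rho$ there collapses together with $\rho^+$, and $\varepsilon$ itself has size $\kappa$ in $V[S]$), whence $\delta=\kappa^+$. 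For (2): $2^\kappa=\Theta\geq\lambda$ by Proposition~\ref{SomeBasicPropSinapova}(3), and since $\lambda$ is inaccessible and $>\varepsilon$ we have $\lambda\geq\varepsilon^{++}=\delta^+$, which is $\kappa^{++}$ in $V[S]$; thus $2^\kappa\geq\kappa^{++}$, and together with $\kappa$ being singular strong limit this is exactly the failure of $\sch_\kappa$.

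For (3) I would work in $V[S]$ with the generic objects $g^*$, $\kappa^*_\xi=\kappa_{g^*(\xi)}$ and $\vartheta_\xi={\kappa_\xi}_{g^*(\xi)}$: here $\langle\kappa^*_\xi\mid\xi<\mu\rangle$ is increasing, continuous and cofinal in $\kappa$, each $\kappa^*_\xi$ and $(\kappa^*_\xi)^+$ is preserved (Proposition~\ref{SomeBasicPropSinapova}(2)), and by Theorem~\ref{PropertiesofSinapovaForcing}(3) the bounded subsets of $\kappa$ lying in the interval $[\vartheta_\xi^+,\kappa^*_{\xi+1})$ already appear in $V[S\upharpoonright\xi]$. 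For $\eta<\delta=\kappa^+$ I set $h_\eta(\xi):=F^\xi_\eta(\kappa^*_\xi)$. A Mathias-style genericity/density analysis of $\mathbb S$ --- using the Prikry property Theorem~\ref{PropertiesofSinapovaForcing}(2), in the spirit of the criterion developed later in the paper --- shows that, after thinning on a tail of $\xi$, one has $\vartheta_\xi^+\le h_\eta(\xi)<(\kappa^*_\xi)^+$ and that $\langle h_\eta\mid\eta<\kappa^+\rangle$ is $<^*$-increasing and cofinal in $\prod_{\xi<\mu}(\kappa^*_\xi)^+$: monotonicity follows from $j_{U_\xi}(F^\xi_\eta)(\kappa)=\eta$ (hence $\{\nu:F^\xi_\eta(\nu)<F^\xi_{\eta'}(\nu)\}\in U_\xi$ whenever $\eta<\eta'$, and the generic diagonalizes $U_\xi$), and cofinality is a density argument, since any element of the product lying in $V[S]$ is dominated on a tail by some $F^\xi_\eta$ with $\eta$ large. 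This yields a scale of length $\kappa^+$ at $\kappa$.

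It then remains to exhibit, inside this model, one scale which is bad and one which is very good. For \emph{badness} I would invoke the Cummings--Foreman reflection phenomenon: using that $\kappa$ is Laver-indestructibly supercompact in $V[G]$, lift a supercompact embedding through the two-step iteration yielding $V[S]$ and exploit that the generic club on $\kappa$ diagonalizes the supercompact measures, to conclude that for some fixed uncountable cofinality $<\kappa$ the set of points $\gamma<\kappa^+$ at which the scale above fails to be continuous is stationary --- so that scale is bad. For a \emph{very good} scale I would instead use the $\lhd$-chain structure of $\mathfrak U$ and the Coherence properties of Proposition~\ref{TheBxisets} (notably $\overline{U}^{\zeta}_{\xi,x}=[y\mapsto\overline{U}^{\zeta}_{\eta,y}]_{U^{\eta}_{\xi,x}}$), which make the local ultrapowers cohere so that, after re-threading the $F^\xi_\eta$ or passing to $\prod_{\xi<\mu}\vartheta_\xi^+$, one obtains a sequence genuinely increasing on a tail and continuous at limits below a club of $\eta<\kappa^+$. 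The main obstacle is carrying both of these out over a single $V[S]$: producing simultaneously a scale with a stationary set of bad points and another with a club of good points requires a careful interplay between the density analysis of $\mathbb S$ --- to pin down how $\langle F^\xi_\eta(\kappa^*_\xi)\mid\xi<\mu\rangle$ evolves along the generic --- and the reflection argument powered by the supercompactness of $\kappa$ in $V[G]$; the rest reduces to bookkeeping over Theorem~\ref{PropertiesofSinapovaForcing} and Proposition~\ref{SomeBasicPropSinapova}.
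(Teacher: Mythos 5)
The first thing to note is that the paper does not prove this statement at all: Theorem~\ref{SinapovaTheorem} is imported as a black box from Sinapova's work, with the scale construction explicitly delegated to \cite[Section 2.5]{Sin} (just as Theorem~\ref{PropertiesofSinapovaForcing} and Proposition~\ref{SomeBasicPropSinapova} are quoted, not proved). So there is no argument in the paper to compare yours with; what can be judged is whether your sketch would actually reprove the result. Items (1) and (2) of your proposal are fine: they are exactly the short cardinal-arithmetic unpacking of Proposition~\ref{SomeBasicPropSinapova}(1),(3),(4) ($\varepsilon$ and everything in $(\kappa,\varepsilon]$ acquires size $\kappa$, cardinals ${\geq}\delta$ survive, so $\delta=\kappa^+$, and $2^\kappa=\Theta>\kappa^+$ gives $\neg\sch_\kappa$).

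Item (3) is where the real content lies, and there your proposal has a genuine gap. The mechanism you offer for badness --- ``lift a supercompact embedding through the two-step iteration yielding $V[S]$'' --- cannot work as stated: $\mathbb{S}$ makes $\cof(\kappa)=\mu$, so no embedding with critical point $\kappa$ lifts to $V[S]$; the supercompactness of $\kappa$ is only usable over $V[G]$. The standard route (Gitik--Sharon \cite{GitSh}, Cummings--Foreman \cite{CumForGitikSharon}, adapted in \cite[Section 2.5]{Sin}) is a ground-model reflection argument: in $V[G]$, where $\kappa$ is still supercompact and $\varepsilon=\sup_{\xi<\mu}\kappa_\xi$ is singular of cofinality $\mu<\kappa$, one fixes a scale of length $\delta=\varepsilon^+$ at $\varepsilon$ and uses Shelah's theorem on scales above a supercompact to get \emph{stationarily many non-good points} of a fixed small cofinality (which one may take below $\kappa^*_0$, e.g.\ $\mu^+$); one then transfers this scale into $V[S]$ as a scale at $\kappa$ by composing with the order-type collapse along the generic sequence $g^*$, and checks, using the $\delta$-Knasterness of $\mathbb{S}$ and Theorem~\ref{PropertiesofSinapovaForcing}(3), that stationarity and the relevant small cofinalities are preserved, so those points remain bad. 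Your sketch contains neither the transfer step nor any argument producing stationarily many bad points, and it also conflates ``bad'' (i.e.\ not good) with ``not continuous''. On the very good side, $h_\eta(\xi)=F^\xi_\eta(\kappa^*_\xi)$ is indeed the right object, but the claims that it lands in the intended product, is ${<}^*$-increasing and cofinal, and is very good at points of cofinality in $(\mu,\kappa)$ are precisely the content of \cite[Section 2.5]{Sin}; deferring them to ``bookkeeping'' (as you acknowledge in your closing sentence) leaves the proof incomplete. As it stands, the honest options are to cite \cite{Sin} as the paper does, or to actually reproduce her argument along the lines just described.
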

%\begin{proof}
%(1) has already been proved. For (2) and (3) one has to apply the ideas developed in \cite[Section 2.5]{Sin} to the bad scale $\langle G_\beta:\beta<\delta\rangle$, the functions $\langle F^\xi_\eta:\eta<\delta,\xi<\mu\rangle$ (cf propositions  \ref{AuxiliarLemma} and \ref{MeasuresMitchelOrder}) and the generic sequence $\langle g^*(\xi):\xi<\mu\rangle$ in order to define a bad and a very good scale in $V[S]$. 
%\end{proof}

\section{Geometric criterion for genericity for $\mathbb{S}$:}
Hereafter $\mathbb{S}$ will be a shorthand for $\mathbb{S}_{(\kappa,\mu,\mathfrak{U},\mathfrak{B})}$.
The present section we will devoted to the proof a Mathias-like criterion of genericity for $\mathbb{S}$. Our exposition is inspired on \cite{Fuch}, where a similar characterization for Magidor forcing is proved.
\begin{notation}
\rm{ $ $
\begin{itemize}
\item $[\prod_{\xi<\mu}\mathcal{P}_\kappa(\kappa_\xi)]$ stands for the set of all $\prec$-increasing sequences in $\prod_{\xi<\mu}\mathcal{P}_\kappa(\kappa_\xi)$ (c.f. Notation \ref{Notation1}).
\item For $n<\omega$, $[\prod_{\xi<\mu}\mathcal{P}_\kappa(\kappa_\xi)]^{n}$ denote the set of $\prec$-sequences of length $n$ in $\prod_{\xi<\mu}\mathcal{P}_\kappa(\kappa_\xi)$ . Analogously, $[\prod_{\xi<\mu}\mathcal{P}_\kappa(\kappa_\xi)]^{<\omega}$ denotes the set of finite $\prec$-sequences.
\item For $g\in[\prod_{\xi<\mu} \mathcal{P}_\kappa(\kappa_\xi)]^{<\omega}$ we respectively denote by $\max(g)$ and $\min(g)$  the $\prec$-maximum and $\prec$-minimum value of $g$.
\end{itemize}}
\end{notation}

Let $S$ be a $\mathbb{S}$-generic filter over $V$.  This set $S$ yields a function \linebreak$g^*\in [\prod_{\xi<\mu} B_\xi]$, which we will call the Sinapova sequence induced by $S$. In particular, $V[g^*]\s V[S]$. As in Prikry forcing (see \cite[\S1.1]{Git}) there is a way to recover the generic $S$ from the induced sequence $g^*$. 
\begin{defi}
For each $g^*\in [\prod_{\xi<\mu} B_\xi]$, define
\begin{eqnarray*}
S(g^*):=\{ (h,H)\in \mathbb{S}\mid h\s g^*, \wedge\,\forall \xi\notin \dom(h)\,
\exists (f,F)\in \mathbb{S}\\ (f,F)\leq (h,H)\,\wedge\, \xi\in\dom(f)\,\wedge\, f(\xi)=g^*(\xi)\}.
\end{eqnarray*}
\end{defi}
\begin{prop}\label{RecoveringGenerics}
For each $g^*\in [\prod_{\xi<\mu} B_\xi]$, $S(g^*)$ is a filter on $\mathbb{S}$. Moreover, if $S\s \mathbb{S}$ is a generic filter and $g^*$ is the induced Sinapova sequence, $S(g^*)=S$.
\end{prop}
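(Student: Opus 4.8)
The plan is to prove the two assertions separately, starting with the easier one: that $S(g^*)$ is always a filter. That $S(g^*)$ is upward closed follows almost directly from its definition, since the defining condition "$h\subseteq g^*$ and for every $\xi\notin\dom(h)$ there is an extension inside $\mathbb{S}$ putting $g^*(\xi)$ into the stem" is manifestly preserved when we pass to a $\leq$-larger condition (a weaker condition has a shorter stem, still contained in $g^*$, and the witnessing extensions for the missing coordinates still work because enlarging the large sets only makes it easier). For downward directedness, given $(h_0,H_0),(h_1,H_1)\in S(g^*)$, I would first note $h_0,h_1\subseteq g^*$ are compatible stems (both are initial-segment-like restrictions of the single $\prec$-increasing function $g^*$), so I can take $h:=h_0\cup h_1\subseteq g^*$; then I would form the common refinement of the large sets coordinate by coordinate, being careful about the two regimes in Definition~\ref{SinapovaForcing}(3) (coordinates above $\max(\dom(h))$ versus coordinates interleaved below it, which must lie in the appropriate lifted measure $U^\xi_{\xi_h,x}$), and finally shrink once more so that the coherence clause (4) ($g(\xi)\prec x$ for $x$ in later large sets) holds. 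The resulting condition is below both and, because $g^*$ is genuinely $\prec$-increasing and each $g^*(\xi)\in B_\xi$, one checks it still satisfies the "$\exists(f,F)\le$ putting $g^*(\xi)$ in the stem" requirement, hence lies in $S(g^*)$.

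For the second assertion, let $S$ be generic with induced Sinapova sequence $g^*$, and show $S(g^*)=S$. The inclusion $S\subseteq S(g^*)$ is the routine direction: if $(h,H)\in S$ then $h\subseteq g^*$ by definition of $g^*$, and for any $\xi\notin\dom(h)$ genericity guarantees some condition in $S$ has $\xi$ in its stem with value $g^*(\xi)$; intersecting with $(h,H)$ inside $S$ yields the required witness $(f,F)$. For $S(g^*)\subseteq S$, fix $(h,H)\in S(g^*)$; since $S$ is a filter it suffices to show $(h,H)$ is compatible with every element of $S$, equivalently (by genericity and the fact that $S$ is generic) that $(h,H)\in S$. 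The key point is a density argument: I would show that the set of $q\in\mathbb{S}$ which either are $\le$ some fixed witness forcing $g^*$ to agree with $h$ on $\dom(h)$, or are incompatible with $(h,H)$, is dense; combined with the fact that, by definition of $S(g^*)$, for each finite set of "missing" coordinates of $h$ there are witnesses in $\mathbb{S}$ — and, crucially, using that those witnesses can be found \emph{inside $S$} because $g^*$ is the sequence read off from $S$ — one concludes that $(h,H)$ cannot be incompatible with $S$, so $(h,H)\in S$.

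The main obstacle I expect is the bookkeeping in the compatibility/closure arguments caused by the two-regime structure of the large-set part (clause (3a) vs (3b) of Definition~\ref{SinapovaForcing}) together with coherence clause (4): when amalgamating or refining conditions one must repeatedly invoke the coherence properties of Proposition~\ref{TheBxisets} (in particular $(\star)$ and $(\diamond)$, which say that restrictions of large sets land in the right lifted measures) to be sure that the shrunk large sets are still legitimate, i.e.\ still lie in the prescribed measures $U_\xi$ or $U^\xi_{\xi_g,x}$, and that the $\prec$-increasing constraint survives. A secondary subtlety is making the direction $S(g^*)\subseteq S$ fully rigorous: one must be careful that "for each $\xi\notin\dom(h)$ there is \emph{some} extension in $\mathbb{S}$" can be upgraded to finding such extensions \emph{in $S$}, which is where genericity of $S$ (and the fact that $g^*$ literally records the stems of conditions in $S$) does the real work; here the analogy with Prikry forcing from \cite[\S1.1]{Git} and with the Magidor-forcing treatment in \cite{Fuch} is the guide, but the diagonal/supercompact features of $\mathbb{S}$ mean each step needs its own verification rather than a direct citation.
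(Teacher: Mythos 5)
Your first part (that $S(g^*)$ is a filter) follows the same route the paper treats as a routine verification, but one caution: the witnesses $(f,F)\leq(h,H)$ in the definition of $S(g^*)$ are only required to satisfy $f(\xi)=g^*(\xi)$ at the one coordinate $\xi$, and they may carry \emph{extra} stem coordinates with values off $g^*$. Consequently, when you amalgamate $(h_0,H_0)$ and $(h_1,H_1)$ via the stem $h_0\cup h_1$ and intersect large sets, the requirement that, say, $H_0(\eta)\cap \mathcal{P}_{\kappa_x}(\kappa_\eta\cap x)$ lie in $U^\eta_{\xi,x}$ for a new stem coordinate $\langle\xi,x\rangle$ coming from $h_1$ is \emph{not} automatic at coordinates $\eta$ sitting below such an extra witness coordinate; and the coherence properties $(\star)$ and $(\diamond)$ of Proposition~\ref{TheBxisets} transfer largeness downward (from a bigger measure to most smaller ones), not upward, so they do not by themselves repair this. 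One has to either note that Proposition~\ref{OneStepProp}(1) needs exactly a one-point extension as witness, or be prepared to enlarge the common stem by further values of $g^*$ (or of the intersected large sets) at the finitely many problematic coordinates. This is still within the spirit of ``routine verification,'' but your appeal to $(\star)/(\diamond)$ as stated would not close it.

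The genuine gap is in your argument for $S(g^*)\subseteq S$. Your reduction (``it suffices that $(h,H)$ is compatible with every element of $S$'') is fine, but the density argument you then sketch does not work as described: the dense set you propose is not well specified, and the crucial claim that the witnesses for $(h,H)\in S(g^*)$ ``can be found inside $S$'' is unjustified --- those witnesses are conditions below $(h,H)$, and since $(h,H)$ is not yet known to be in $S$, genericity gives you no reason to find any of them in $S$; what genericity does give is conditions in $S$ whose stem contains $\langle\xi,g^*(\xi)\rangle$, but these need not extend $(h,H)$. The paper closes this direction in one line, and you already have all the ingredients: by your first part $S(g^*)$ is a filter, and you proved $S\subseteq S(g^*)$; hence any $(h,H)\in S(g^*)$ and any $q\in S$ lie in a common filter and so are compatible. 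Then, since $\{t\in\mathbb{S}: t\leq (h,H)\ \text{or}\ t\perp (h,H)\}$ is dense, genericity yields some $t\in S$ with $t\leq (h,H)$, so $(h,H)\in S$ by upward closure. This is exactly the ``maximality of generic filters'' step in the paper's proof; you should replace your sketched density argument by it.
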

\begin{proof}
The proof is a routine verification. The only point that it is worth mentioning is the following. Suppose that $g^*$ is the sequence induced by $S$, for some generic filter $S\s \mathbb{S}$. It is easy to check that $S\s S(g^*)$. In particular, by maximality of generic filters, $S=S(g^*)$.
\end{proof}
It follows from the above  that if $S$ is $\mathbb{S}$-generic over $V$ and $g^*$ is the corresponding Sinapova forcing then $V[S]=V[g^*]$. The previous proposition suggests the next concept:
\begin{defi}
Let $V$ be an inner model of $W$ and $\mathbb{S}\in V$. A sequence $g^*\in [\prod_{\xi<\mu} B_\xi]\cap W$ is $\mathbb{S}$-generic over $V$ if $S(g^*)$ is a $\mathbb{S}$-generic filter over $V$.
\end{defi}

\begin{prop}\label{GenericityCriterionFirstImplication}
Let $V$ be an inner model of $W$ and $\mathbb{S}\in V$. If $g^*\in[\prod_{\xi<\mu} B_\xi]\cap W$ is $\mathbb{S}$-generic over $V$ then the following hold:
\begin{enumerate}
\item\label{P1} For each sequence $H\in V\cap \prod_{\xi<\mu} U_\xi$, there is $\xi_H<\mu$ such that for all ordinal $\eta\in (\xi_H, \mu)$, $ g^*(\eta)\in H(\eta)$.
\item\label{P2} For each $\xi<\mu$ limit and each $H\in V\cap \prod_{\theta<\xi} U^\theta_{\xi,g^*(\xi)}$, there is $\xi_H<\xi$ such that for all ordinal $\eta\in(\xi_H,\xi)$, $g^*(\eta)\in H(\eta)$.
\end{enumerate}
\end{prop}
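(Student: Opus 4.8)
The plan is to prove both clauses by density arguments carried out inside the generic filter $S(g^*)$. The single fact about $S(g^*)$ that both arguments use is this: if $(g,G)\in S(g^*)$, then $g^*(\theta)\in G(\theta)$ for every $\theta\notin\dom(g)$. Indeed, by the definition of $S(g^*)$ there is $(f,F)\leq(g,G)$ with $\theta\in\dom(f)$ and $f(\theta)=g^*(\theta)$, and clause (2) of the ordering in Definition \ref{OrderSinapova} then yields $f(\theta)\in G(\theta)$. Thus in each case it suffices to find a single condition in $S(g^*)$ whose large set is already contained, coordinatewise, in the given $H$ past the relevant threshold.

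For clause (1) I would fix $H\in V\cap\prod_{\xi<\mu}U_\xi$ and let $D_H$ be the set of $(g,G)\in\mathbb{S}$ with $G(\xi)\subseteq H(\xi)$ for all $\xi>\max(\dom(g))$. Given an arbitrary $(g,G)\in\mathbb{S}$, replacing $G(\xi)$ by $G(\xi)\cap H(\xi)$ on the coordinates $\xi>\max(\dom(g))$ and leaving $G$ unchanged elsewhere produces a condition $(g,G')\leq^{*}(g,G)$ lying in $D_H$: each $U_\xi$ is an ultrafilter, so $G'(\xi)\in U_\xi$, and the clauses of Definition \ref{SinapovaForcing} are preserved because one only intersects with large sets. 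Hence $D_H$ is dense, and I pick $(g,G)\in S(g^*)\cap D_H$ and set $\xi_H:=\max(\dom(g))$ (taking $\xi_H:=0$ if $\dom(g)=\emptyset$). For every $\eta\in(\xi_H,\mu)$ one has $\eta\notin\dom(g)$, hence $g^*(\eta)\in G(\eta)\subseteq H(\eta)$.

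For clause (2) I would fix $\xi<\mu$ limit, write $x:=g^*(\xi)$, and fix $H\in V\cap\prod_{\theta<\xi}U^\theta_{\xi,x}$. First, using genericity against the (routinely dense) set $\{(g,G)\mid\xi\in\dom(g)\}$, I obtain $p_0=(g_0,G_0)\in S(g^*)$ with $\xi\in\dom(g_0)$; since $g_0\subseteq g^*$, necessarily $g_0(\xi)=x$. Working below $p_0$, let $E_H$ be the set of $(g,G)\leq p_0$ such that $G(\theta)\subseteq H(\theta)$ for every $\theta\in\dom(G)$ with $\theta<\xi$ and $\dom(g)\cap(\theta,\xi)=\emptyset$. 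For such a $\theta$ one has $\theta_g=\xi$ in the notation of Definition \ref{SinapovaForcing}(3)(b) and $g(\xi)=x$, so $G(\theta)\in U^\theta_{\xi,x}$ and $G(\theta)\cap H(\theta)$ is still $U^\theta_{\xi,x}$-large; shrinking $G$ on exactly these coordinates thus shows $E_H$ is dense below $p_0$. Picking $(g,G)\in S(g^*)\cap E_H$ and setting $\xi_H:=\max(\dom(g)\cap\xi)$ (and $\xi_H:=0$ if $\dom(g)\cap\xi=\emptyset$), every $\eta\in(\xi_H,\xi)$ satisfies $\eta\notin\dom(g)$ and $\dom(g)\cap(\eta,\xi)=\emptyset$, whence $g^*(\eta)\in G(\eta)\subseteq H(\eta)$.

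I expect the argument to be essentially bookkeeping, the only point needing attention being that the shrinking step never leaves $\mathbb{S}$. In clause (1) the modified coordinates lie above $\max(\dom(g))$, so the ``$\subseteq B_\xi$'' and ``$\in U_\xi$'' requirements of Definition \ref{SinapovaForcing}(3)(a) must be rechecked, and both hold since one only intersects with large subsets of $B_\xi$. In clause (2) the modified coordinates lie below the top of the stem of $p_0$, so they are governed by the local measures $U^\theta_{\xi,x}$; here the thing to get right is that ``no point of $\dom(g)$ lies strictly between $\theta$ and $\xi$'' is exactly the condition forcing $\theta_g=\xi$, so that $G(\theta)$ and $H(\theta)$ belong to the same measure and may be intersected. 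This is the easy half of the Mathias-style criterion; I would expect the converse implication to be where the real work lies.
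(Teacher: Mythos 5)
Your proof is correct and follows essentially the same route as the paper's: both clauses are handled by a density argument inside $S(g^*)$, using that $g^*(\theta)\in G(\theta)$ for every $(g,G)\in S(g^*)$ and $\theta\notin\dom(g)$, after shrinking the large sets into $H$ on the relevant coordinates. Your only (welcome) refinement is choosing the threshold $\xi_H$ as the maximum of the stem's domain below $\xi$, which makes explicit the step the paper dismisses as routine.
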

\begin{proof}
We shall just sketch the proof for property (2) as the proof for the (1) is analogous. Let $\xi<\mu$ be a limit ordinal and a function $H\in V\cap \prod_{\theta<\xi} U^\theta_{\xi,x}$. Since $g^*$ is generic, we may let $(g,G)\in S(g^*)$ with $g=\{\l \xi, g^*(\xi)\r\}$. Set 
$D_H:=\{(i,I)\leq (g,G)\mid \exists\theta\in\xi\,\forall\eta\in (\theta,\xi)\, I(\eta)\s H(\eta)\}.$
It is not hard to check that $D_H$ is dense below $(g,G)$, hence $D_H\cap S(g^*)\neq \emptyset$. Let $(i,I)$ be a condition in this set and $\theta_i<\xi$ be a witness for $(i,I)\in D_H$. Setting $\xi_H:= \theta_i$ it is routine to check that, for all $\eta\in (\xi_H, \xi)$, $g^*(\eta)\in H(\eta)$.
\end{proof}
The goal of this section is precisely to prove that the above properties already characterize those sequences which are $\mathbb{S}$-generic over $V$.
\begin{theo}[Criterion for genericity]\label{CriterionGenericity}
Let $V$ be an inner model of $W$ and $\mathbb{S}\in V$. For a sequence $g^*\in[\prod_{\xi<\mu} B_\xi]\cap W$,
$g^*$ is $\mathbb{S}$-generic over $V$ if and only if properties~\eqref{P1} and \eqref{P2} of Proposition~\ref{GenericityCriterionFirstImplication} hold.
\end{theo}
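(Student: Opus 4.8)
The forward direction is exactly Proposition~\ref{GenericityCriterionFirstImplication}, so the work lies entirely in the converse: from the combinatorial ``genericity'' properties \eqref{P1} and \eqref{P2} we must show that $S(g^*)$ meets every dense set $D\in V$. First I would reduce to dense \emph{open} sets, and then—using the Prikry property (Theorem~\ref{PropertiesofSinapovaForcing}(2)) together with the factorization (Proposition~\ref{FactorizationSinapova})—reduce further to the following local statement: given a condition $(g,H)\in S(g^*)$ and a $\leq^*$-open dense set $D$, there is a large set $H'\le H$ and a natural number $n$ such that for every $\prec$-increasing $n$-step ``extra stem'' $s$ chosen from the relevant measure-one sets of $H'$, the condition $(g{}^\frown s, H'')$ (with $H''$ the canonical large set living above $s$) already lies in $D$. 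This is the standard Mathias-style ``diagonalization over all possible extensions'' and is where the bulk of the argument sits; one proves it by repeatedly invoking normality of the supercompact measures $U_\xi$ and $U^\theta_{\xi,x}$ to homogenize the colouring ``does adding this stem land me in $D$?'', exactly as in the treatment of Magidor forcing in \cite{Fuch}. The finitely-many places where the stem $g$ already has coordinates (the factors $\mathbb{S}_{\langle\xi,x\rangle}$) are handled by an inner induction, appealing to the factorization to peel them off one at a time.

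Once this local statement is in hand, the conclusion is pulled out of properties \eqref{P1} and \eqref{P2} as follows. Fix a dense open $D\in V$ and a condition $(g,H)\in S(g^*)$; by the local statement choose $H'\le H$ and $n<\omega$ as above, inside $V$. The sequence of measure-one sets that $H'$ prescribes for the coordinates $\xi\notin\dom(g)$ is an element of $V\cap\prod_{\xi<\mu}U_\xi$ (for the tail, above $\max(\dom g)$) together with finitely many elements of $V\cap\prod_{\theta<\xi}U^\theta_{\xi,g^*(\xi)}$ (for the bounded blocks below coordinates of $g$). Applying \eqref{P1} to the tail part and \eqref{P2} to each of the finitely many bounded parts, there is $\xi_0<\mu$ past which every value $g^*(\eta)$ already lies in the set $H'$ prescribes for $\eta$; shrinking to the block structure, we may pick $n$ consecutive coordinates $\eta_1<\dots<\eta_n$ above $\max(\dom g)$ and above $\xi_0$ such that the tuple $s:=\langle g^*(\eta_1),\dots,g^*(\eta_n)\rangle$ is a legal $n$-step extension of $g$ through $H'$—here one uses that $g^*\in[\prod_{\xi<\mu}B_\xi]$ is genuinely $\prec$-increasing, so the relevant order constraints of Definition~\ref{SinapovaForcing}(4) and Definition~\ref{OrderSinapova}(a) are automatic. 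Then the local statement gives $(g{}^\frown s,H'')\in D$, and since $g{}^\frown s\s g^*$ and $H''$ can be taken to be (a superset of) the large set canonically determined by $g^*$ past $\eta_n$, the condition $(g{}^\frown s,H'')$ is compatible with everything in $S(g^*)$; a short check using the definition of $S(g^*)$ and the fact that $D$ is open shows $(g{}^\frown s,H'')\in S(g^*)$. Hence $D\cap S(g^*)\ne\emptyset$, and since we already know from Proposition~\ref{RecoveringGenerics} that $S(g^*)$ is a filter, it is $\mathbb{S}$-generic over $V$.

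\textbf{Main obstacle.} The delicate point is the local ``fix one large set and a uniform $n$'' statement, i.e.\ the genuine Mathias-type reflection. Two features of $\mathbb{S}$ make it harder than for Prikry or Magidor forcing: the stems are $\prec$-increasing tuples drawn from a \emph{diagonal} family $\langle B_\xi\mid\xi<\mu\rangle$ of supercompact measures at \emph{different} cardinals $\kappa_\xi$, so the homogenization argument must be run simultaneously over infinitely many coordinates while respecting the coherence properties $(\xi)$, $(\xi')$, $(\star)$, $(\diamond)$ of Proposition~\ref{TheBxisets}; and a single one-step extension involves choosing, for a limit coordinate $\xi$, \emph{both} a point $x\in B_\xi$ and (after the fact) a coherent sequence of measure-one sets on the bounded block below $\xi$, so the ``colouring'' whose homogeneity we want is not a plain $\mathcal{P}_\kappa(\kappa_\xi)$-colouring but one parametrized by such a descent. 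I expect the clean way through is to phrase everything in the ultrapower by $U_\xi$ (respectively $U^\theta_{\xi,x}$), where membership of the ``master condition'' in $j(D)$ becomes a statement one can pull back, and to organize the infinitely-many normality applications as a single application of normality to a suitable product/diagonal measure, precisely mirroring the bookkeeping already carried out in \cite[Section 2.2]{Sin} for proving the Prikry property; the remaining verifications (that the resulting $s$ and $H''$ satisfy Definition~\ref{SinapovaForcing} and that $(g{}^\frown s,H'')\in S(g^*)$) are routine but need the coherence of $\mathfrak B$ to be invoked carefully.
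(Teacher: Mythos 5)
The central problem lies in your ``local statement.'' For Sinapova forcing (as for Magidor forcing) stems are finite partial functions on $\mu$, so the correct strengthening of the Prikry property is not ``there is a uniform $n$ such that every $n$-step extension enters $D$'': the Strong Prikry Property proved in the paper produces a $\leq^*$-refinement $(g,G^*)$ together with a \emph{specific} finite set $s\in[\dom(G)]^{<\omega}$ of coordinates such that every extension whose stem-domain contains $\dom(g)\cup s$ lies in $D$. Your coordinate-free version is false: take $D$ to be the dense open set of conditions whose stem has $0$ in its domain; no extension made only at coordinates above a prescribed $\xi_0>0$ ever lands in $D$. Because of this, the second half of your argument breaks down: you cannot ``pick $n$ consecutive coordinates above $\xi_0$'' and use \eqref{P1} to place $g^*$-values there, since the coordinates at which $D$ can be entered may be forced to include bounded ones, and at bounded coordinates neither \eqref{P1} nor \eqref{P2} gives any containment of $g^*(\eta)$ in the prescribed measure-one sets --- both are purely tail statements (in $\mu$, respectively inside each limit block).

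What is missing is exactly the device the paper uses to handle those bounded coordinates: an induction on the cardinal, run as a minimal-counterexample argument. After refining via the Strong Prikry Property, diagonalizing the resulting large sets over \emph{all} stems, and applying \eqref{P1} to find a limit $\xi^*$ past which $g^*$ lies in the master large set, the paper factors $\mathbb{S}$ below a suitable pruned condition into a lower Sinapova forcing at $\kappa_{g^*(\xi^*)}<\kappa$ on the index set $\xi^*$ times an upper part; it then checks that $g^*\upharpoonright\xi^*$ satisfies \eqref{P1} and \eqref{P2} for the lower forcing --- this is where \eqref{P2} actually earns its keep --- and invokes minimality of $\kappa$ to conclude that $g^*\upharpoonright\xi^*$ is \emph{fully generic} for the lower forcing. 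Genericity of the restriction, not a tail property, is what supplies a condition meeting the projected dense set at the bounded coordinates; Lemma~\ref{SPPimproved} together with prunedness then allows one to splice that lower condition with $g^*$-values on the tail so as to land in $D\cap S(g^*)$. Your ``inner induction peeling off the factors $\mathbb{S}_{\langle\xi,x\rangle}$'' only concerns the finitely many coordinates of the stem you started from, and your normality/R\"owbottom homogenization in effect reproves the Strong Prikry Property; neither yields the genericity of the restricted sequence, so the proposal as it stands does not close the argument.
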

%The rest of the section we will devoted to present the auxiliary results that are necessary for the proof of Theorem~\ref{CriterionGenericity}.
We will tackle the proof of Theorem \ref{CriterionGenericity} in the next three subsections.
\subsection{One step extensions and pruned conditions}
%As in \cite{Fuch}, we begin defining the left and right operators.

\begin{defi}
For each $s\in[\mu]^{<\omega}$, define:
\begin{itemize}
\item The \emph{left} operator $\ell_s$ is the map $\ell_s\colon \mu \rightarrow \mu\cup\{-1\}$ defined by 
$$\ell_s(\xi):=\begin{cases}
\max(s\cap \xi), & \text{if $s\cap \xi\neq \emptyset$;}\\
-1, & \text{otherwise}.
\end{cases}
$$
\item The \emph{right} operator $r_s$ is the map $r_s\colon \mu \rightarrow \mu+1$ defined by $r_s(\xi):=\min((s\cup\{\mu\})\setminus \xi+1)$.
\end{itemize}
\end{defi}

\begin{defi}[One-step extension]
Let $(g,G)\in\mathbb{S}$, $\xi\in\dom(G)$ and $x\in G(\xi)$. Define $( g,G){}^\curvearrowright\{\langle\xi, x\rangle\}$ as the  pair $(f,F)$, where $f:=g\cup\{\langle\xi, x\rangle\}$ and $F$ is the function with $\dom(F)=\dom(G)\setminus\{\xi\}$ defined as 
$$F(\eta):=\begin{cases}
G(\eta)\cap \mathcal{P}_{\kappa_x}(\kappa_\eta\cap x), & \text{if $r_{\dom(f)}(\eta)=\xi$};\\
\{y\in G(\eta)\mid x\prec y\}, & \text{if $\ell_{\dom(f)}(\eta)=\xi$};\\
G(\eta), & \text{otherwise}.
\end{cases}
$$
For $1\leq n<\omega$ and a function $f\in [\prod_{\xi\in s} B_\xi]$ with $s\in[\dom(G)]^{n}$, $(g,G){}^\curvearrowright f$ is defined by recursion as $((g,G){}^\curvearrowright f\upharpoonright n-1){}^\curvearrowright\{\langle s_{n-1}, f(s_{n-1})\rangle\}$.\footnote{By convention, $(g,G){}^\curvearrowright \emptyset:=(g,G)$.}
\end{defi}
\begin{remark}
\rm{Observe that not for all functions $f\in [\prod_{\xi\in s} G(\xi)]$ the pair $(g,G){}^\curvearrowright f$ yields a condition in $\mathbb{S}$: it may be the case that, for some $\langle \xi, f(\xi)\rangle\in f$,  $G(\eta)\cap \mathcal{P}_{\kappa_{f(\xi)}}(\kappa_\eta\cap f(\xi))\notin U^\eta_{\xi, f(\xi)}$, for $r_{\dom(g\cup f)}(\eta)=\xi$. }
\end{remark}
\begin{prop}\label{OneStepProp}
Let $(g,G)\in\mathbb{S}$ and $\xi\in\dom(G)$.
\begin{enumerate}
\item If there is a condition $(f,F)\leq (g,G)$ with $g\,\cup\,\{\langle\xi, x\rangle\}= f$, then $(g,G){}^\curvearrowright\{\langle\xi, x\rangle\}\in\mathbb{S}$. Moreover, this is the $\leq$-greatest condition witnessing this property.
\item There is $(g,G^{\xi,+})\leq^*(g,G)$ such that for all $x\in G^{\xi,+}$, $$(g,G){}^\curvearrowright\{\langle\xi, x\rangle\}\in\mathbb{S}.$$
\end{enumerate}
\end{prop}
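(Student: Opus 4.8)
The plan is to analyze separately the two assertions, both of which amount to verifying Definition \ref{SinapovaForcing}(1)--(4) for the candidate pair $(f,F):=(g,G){}^\curvearrowright\{\langle\xi,x\rangle\}$, where $f=g\cup\{\langle\xi,x\rangle\}$ and $F$ is defined by the displayed case split. For item (1), I would start from a witnessing condition $(f,F')\leq (g,G)$ with $g^{F'}=f$ and compare $F'$ with $F$. The key observation is that $F$ is defined \emph{maximally}: in the first case $F(\eta)=G(\eta)\cap \mathcal{P}_{\kappa_x}(\kappa_\eta\cap x)$ is the largest subset of $G(\eta)$ that can appear as a large set in a condition with stem $f$ and $f(\xi)=x$, because any $(f,F')\leq(g,G)$ must have $F'(\eta)\subseteq G(\eta)$ (by clause (a)(3) of Definition \ref{OrderSinapova}, since $\eta\notin\dom(f)$) and $F'(\eta)\in U^\eta_{r_{\dom(f)}(\eta),x}$, which by the definition of the one-step extension forces $F'(\eta)\subseteq \mathcal{P}_{\kappa_x}(\kappa_\eta\cap x)$; similarly in the second case $\prec$-compatibility with $x=g(\xi)$ forces $F'(\eta)\subseteq\{y\in G(\eta)\mid x\prec y\}$ by clause (4) of Definition \ref{SinapovaForcing}, and in the remaining case $F'(\eta)\subseteq G(\eta)$. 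Hence $F'(\eta)\subseteq F(\eta)$ pointwise. From the existence of such an $F'$ one extracts that each $F(\eta)$ is measure-one for the relevant measure (being a superset of the measure-one set $F'(\eta)$), so $(f,F)$ genuinely satisfies clause (3), and the remaining clauses (1),(2),(4) are immediate since $g$ already satisfied them, $g(\xi)=x\in G(\xi)\subseteq B_\xi$, and $\kappa_x$ is large enough (this is where one uses $x\in G(\xi)$ together with the bookkeeping ensuring $\kappa_{g(\xi)}>\theta^{+\mu+1}$, which one may assume by shrinking $G(\xi)$ in advance, or note it is inherited from $F'$). That $(f,F)$ is the $\leq$-greatest such condition is exactly the pointwise maximality $F'\leq F$ just established, together with the fact that any condition of the form $(f,F')$ with $F'(\eta)\subseteq F(\eta)$ pointwise is indeed $\leq(g,G)$.

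For item (2) the strategy is to use the normality of the supercompact measures $U_\xi$ and $U^\eta_{\xi,x}$ to diagonalize out of the ``bad set'' described in the Remark preceding the proposition. Concretely, the obstruction is that for a given $x\in G(\xi)$ and some $\eta$ with $r_{\dom(g)\cup\{\xi\}}(\eta)=\xi$, the set $G(\eta)\cap\mathcal{P}_{\kappa_x}(\kappa_\eta\cap x)$ may fail to lie in $U^\eta_{\xi,x}$. Here there are only finitely many relevant values of $\eta$ to worry about for the ``$\ell$'' and ``$r$'' cases coming from $\dom(g)$ itself, but infinitely many $\eta<\xi$ falling in the first case when $\xi=r_{\dom(f)}(\eta)$, namely all $\eta$ below $\xi$ and above $\max(\dom(g)\cap\xi)$; these are governed by the measures $U^\eta_{\xi,x}$. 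The point is that, by Proposition \ref{TheBxisets}($\star$) (applied with the $\zeta,\xi$ roles suitably interpreted), for $U_\xi$-almost every $x$ one has $G(\eta)\cap\mathcal{P}_{\kappa_x}(\kappa_\eta\cap x)\in U^\eta_{\xi,x}$ for the relevant $\eta$'s simultaneously — this is precisely a coherence/normality statement about how the $U_\eta$-large set $G(\eta)$ reflects into the measures $U^\eta_{\xi,x}$. So I would set $G^{\xi,+}:=\{x\in G(\xi)\mid \text{for all relevant }\eta,\ G(\eta)\cap\mathcal{P}_{\kappa_x}(\kappa_\eta\cap x)\in U^\eta_{\xi,x}\ \text{and}\ \{y\in G(\eta)\mid x\prec y\}\in U^\eta_{\ell,x}\text{-etc.}\}$, argue this set is in $U_\xi$ (intersection of fewer than $\kappa$ many $U_\xi$-large sets, using $\kappa$-completeness and normality for the ``for all $\eta<\xi$'' part — here $|\xi|<\mu<\kappa$), and shrink $G(\xi)$ to it to obtain $(g,G^{\xi,+})\leq^*(g,G)$; then for every $x\in G^{\xi,+}$ part (1)'s hypothesis is met (the witnessing $F'$ being $F$ itself, now known to be a condition), so $(g,G){}^\curvearrowright\{\langle\xi,x\rangle\}\in\mathbb{S}$.

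The main obstacle I anticipate is item (2), specifically making precise \emph{which} coherence clause of Proposition \ref{TheBxisets} delivers the $U_\xi$-largeness of $G^{\xi,+}$, and checking that the ``$\ell_{\dom(f)}(\eta)=\xi$'' case (the requirement $\{y\in G(\eta)\mid x\prec y\}$ be large for the measure relevant to the pair $(\xi,\text{next point of }\dom(g)\text{ above})$) is also handled — this case only concerns the single $\eta=\ell_{\dom(f)}(\text{successor of }\xi\text{ in }\dom(g))$, i.e.\ finitely much data, and follows from $\prec$ being ``large'' in the appropriate sense, which is built into the definition of $B_\xi$ and the sets $X_\xi$ via Definition \ref{DefinitioOfXxi}. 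Item (1) I expect to be a routine but slightly tedious unwinding of the order; the genuinely load-bearing idea there is the maximality/uniqueness argument sketched above, which is clean once the case analysis of Definition \ref{OrderSinapova}(a) is laid out.
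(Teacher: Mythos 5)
Your overall strategy is the paper's: for item (1) the paper likewise observes that the only thing to check is $G(\eta)\cap\mathcal{P}_{\kappa_x}(\kappa_\eta\cap x)\in U^\eta_{\xi,x}$ for the relevant $\eta<\xi$, and that this follows at once from the existence of $(f,F')\leq(g,G)$, since $F'(\eta)$ is a measure-one subset of that intersection; your pointwise-maximality argument for the ``$\leq$-greatest'' clause is exactly the intended (and in the paper implicit) justification, and your handling of the stem clauses (inherited from $F'$) is fine. For item (2) the paper also shrinks only the $\xi$-th coordinate, leaving $G^{\xi,+}(\eta):=G(\eta)$ for $\eta\neq\xi$, and uses the coherence properties plus completeness, so the shape of your argument is right.

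The one step that would fail as written is the measure you attach to coordinate $\xi$ in item (2). You argue that the set of good $x$'s is $U_\xi$-large and invoke clause $(\star)$ of Proposition~\ref{TheBxisets}; this is correct only when $\xi>\max(\dom(g))$, for then indeed $G(\xi)\in U_\xi$. When $\xi<\max(\dom(g))$ --- precisely the case your ``roles suitably interpreted'' hedge and your closing remark leave open --- $G(\xi)$ is not measured by $U_\xi$ at all: by Definition~\ref{SinapovaForcing}(3)(b) it lies in $U^\xi_{\nu,g(\nu)}$ with $\nu:=r_{\dom(g)}(\xi)$, and is a subset of $\mathcal{P}_{\kappa_{g(\nu)}}(\kappa_\xi\cap g(\nu))$, a set of size ${<}\kappa$. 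The paper resolves exactly this obstacle: setting $\sigma:=\ell_{\dom(g)}(\xi)$, it applies clause $(\diamond)$ (not $(\star)$) with $z=g(\nu)$ and $A=G(\rho)\in U^\rho_{\nu,g(\nu)}$ for each $\rho\in(\sigma,\xi)$, obtaining $A_\rho\in U^\xi_{\nu,g(\nu)}$ consisting of those $x$ with $G(\rho)\cap\mathcal{P}_{\kappa_x}(\kappa_\rho\cap x)\in U^\rho_{\xi,x}$, and then puts $G^{\xi,+}(\xi):=G(\xi)\cap\bigcap_{\rho\in(\sigma,\xi)}A_\rho$, which stays in $U^\xi_{\nu,g(\nu)}$ because $|\xi|<\mu<\kappa_{g(\nu)}$ and the measure is sufficiently complete; the case $\nu=\mu$ is the one your $(\star)$/$U_\xi$ computation covers. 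Finally, your extra requirement that $\{y\in G(\eta)\mid x\prec y\}$ be large in the $\ell$-case needs no shrinking of $G(\xi)$: for every fixed $x$ it is measure-one for the (unchanged) measure at $\eta$ by fineness and normality, which is part of what the paper subsumes under ``routine to check.''
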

\begin{proof}
For (1), observe that it is enough with guaranteeing that $G(\eta)\cap \mathcal{P}_{\kappa_x}(\kappa_\eta\cap x)\in U^\eta_{\xi,x}$, for $\eta<\xi$. Notice that this outright follows from $(f,F)\leq (g,G)$. For (2) we argue as follows. For $\eta\in\dom(G)\setminus\{\xi\}$, set $G^{\xi,+}(\eta):=G(\eta)$. Now let $\nu:=r_{\dom(g)}(\xi)$ and $\sigma:=\ell_{\dom(g)}(\xi)$. Without loss of generality assume that $\nu<\mu$, as otherwise the argument is similar.  By using $(\diamond)$ of Proposition~\ref{TheBxisets} it follows that for each $\rho\in (\sigma, \xi)$, there is $A_\rho\in U^\xi_{\nu, g(\nu)}$ such that for each $x\in A_\rho$, $G(\rho)\cap \mathcal{P}_{\kappa_x}(\kappa_\rho\cap x)\in U^\rho_{\xi,x}$. Set $G^{\xi, +}(\xi):=G(\xi)\cap \bigcap_{\rho\in (\sigma,\xi)}A_\rho$. It is routine to check that $(g,G^{\xi,+})$ is as desired.
\end{proof}
One can appeal recursively to Proposition~\ref{OneStepProp}~(1) to obtain the analogous result for functions $f\in[\prod_{\xi\in s} B_\xi]$, $s\in[\dom(G)]^{<\omega}$.  The next concept will be useful in future arguments.
\begin{defi}
A condition $(g,G)\in\mathbb{S}$ is said to be pruned if for all $s\in[\dom(G)]^{<\omega}$ and all $f\in[\prod_{\xi\in s} G(\xi)]$, $(g,G){}^\curvearrowright f\in \mathbb{S}$. %Similarly, we will say that $(g,G)$ is weakly pruned if the above is true only for functions of the form $f=\{\langle\xi, x\rangle\}$.
\end{defi}
\begin{prop}\label{PrunedExtension}
A condition $(g,G)$ is pruned iff for each $\langle \xi, x\rangle\in G$, 
 $(g,G){}^\curvearrowright \{\langle\xi,x\rangle\}\in\mathbb{S}.$
\end{prop}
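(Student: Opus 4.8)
The plan is the following. The forward implication is trivial: if $(g,G)$ is pruned, then specializing the definition of pruned to $s=\{\xi\}$ and to the one-point function $\xi\mapsto x$ yields $(g,G){}^\curvearrowright\{\langle\xi,x\rangle\}\in\mathbb{S}$ for every $\langle\xi,x\rangle\in G$.

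For the converse, suppose $(g,G){}^\curvearrowright\{\langle\xi,x\rangle\}\in\mathbb{S}$ for all $\langle\xi,x\rangle\in G$, and fix $s=\{\xi_0<\dots<\xi_{n-1}\}\in[\dom(G)]^{<\omega}$ (the case $n=0$ being trivial) together with $f\in[\prod_{\xi\in s}G(\xi)]$; for $i<n$ put $(h_i,H_i):=(g,G){}^\curvearrowright\{\langle\xi_i,f(\xi_i)\rangle\}$, which by assumption is a condition below $(g,G)$. By the iterated form of Proposition~\ref{OneStepProp}(1) it suffices to exhibit a \emph{single} condition $(h,H)\leq(g,G)$ with $g^h=g\cup f$. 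I would take the naive candidate: $g^h:=g\cup f$ and $H(\eta):=\bigcap_{i<n}H_i(\eta)$ for $\eta\in\mu\setminus\dom(g^h)$. That $(h,H)\leq(g,G)$ is immediate from Definition~\ref{OrderSinapova}, and the ``stem'' requirements of Definition~\ref{SinapovaForcing} for $(h,H)$ (clauses~(1),~(2),~(4)) reduce at once to the corresponding facts about the $(h_i,H_i)$ together with $f$ being $\prec$-increasing. The whole content is therefore clause~(3): that each $H(\eta)$ lies in the measure prescribed to that coordinate inside $(h,H)$.

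To check clause~(3), fix $\eta\in\mu\setminus\dom(g^h)$ and let $\nu:=r_{\dom(g^h)}(\eta)$ be the least point of the stem $g\cup f$ above $\eta$, with the convention $\nu:=\mu$ if there is none. The key remark is that, since $\dom(g^h)\supseteq\dom(g)$, no $\xi_i$ lies strictly between $\eta$ and $\nu$; consequently, computed inside any $(h_i,H_i)$ the least stem point above $\eta$ is again $\nu$ whenever $\nu\in\dom(g)$, and inside $(h_j,H_j)$ it equals $\xi_j$ when $\nu=\xi_j\in\dom(f)$. I distinguish cases. If $\nu=\mu$, or $\nu\in\dom(g)$, then clause~(3) applied to each $(h_i,H_i)$ already places $H_i(\eta)$ in the prescribed measure — $U_\eta$, respectively $U^\eta_{\nu,g(\nu)}$ — and hence so does the finite intersection $H(\eta)$.

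The remaining case, $\nu=\xi_j\in\dom(f)$, is where the argument is delicate and is the step I expect to be the main obstacle: the prescribed measure for $\eta$ is now $U^\eta_{\xi_j,f(\xi_j)}$, and although $H_j(\eta)$ lies in it by clause~(3) for $(h_j,H_j)$, the sets $H_i(\eta)$ with $i\neq j$ are a priori measured by \emph{other} ultrafilters, so one must verify directly that each of them still belongs to $U^\eta_{\xi_j,f(\xi_j)}$. The point is that if $\xi_i>\eta$ then $\xi_i>\xi_j$, so $f(\xi_j)\prec f(\xi_i)$ forces $H_i(\eta)\supseteq H_j(\eta)$; whereas if $\xi_i<\eta$ then $\xi_i<\xi_j$, so $f(\xi_i)\prec f(\xi_j)$, and $H_i(\eta)$ is $G(\eta)$ intersected with at most $\{y\mid f(\xi_i)\prec y\}$ — the former being $U^\eta_{\xi_j,f(\xi_j)}$-large because it contains $H_j(\eta)$, and the latter by the standard fact that $\{y\mid z\prec y\}$ belongs to every relevant fine measure whenever $z$ is a $\prec$-predecessor of the ambient stem value. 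Thus $H(\eta)\in U^\eta_{\xi_j,f(\xi_j)}$ in this case as well, completing clause~(3), so $(h,H)\in\mathbb{S}$. The iterated Proposition~\ref{OneStepProp}(1) then gives $(g,G){}^\curvearrowright f\in\mathbb{S}$, and since $s$ and $f$ were arbitrary, $(g,G)$ is pruned.
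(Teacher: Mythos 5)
Your proof is correct, but it is organized a bit differently from the paper's. The paper handles the nontrivial direction by induction on $n=|s|$: using the recursive definition $(g,G){}^\curvearrowright f=\bigl((g,G){}^\curvearrowright f\upharpoonright n-1\bigr){}^\curvearrowright\{\langle s_{n-1},f(s_{n-1})\rangle\}$, it combines the induction hypothesis with the hypothesis on one-step extensions, and leaves the underlying measure computations implicit. You instead construct a single auxiliary condition $(h,H)\leq(g,G)$ with stem $g\cup f$ by intersecting the large-set parts of the one-step extensions $(h_i,H_i)$, and then invoke the iterated form of Proposition~\ref{OneStepProp}(1), which the paper records right after that proposition. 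What your route buys is that it isolates and actually verifies the only delicate point, namely that at a coordinate $\eta$ whose next stem point in $g\cup f$ is a newly added $\xi_j$, the sets $H_i(\eta)$ for $i\neq j$ remain $U^\eta_{\xi_j,f(\xi_j)}$-large: for $\xi_i>\eta$ via the inclusion $\mathcal{P}_{\kappa_{f(\xi_j)}}(\kappa_\eta\cap f(\xi_j))\subseteq\mathcal{P}_{\kappa_{f(\xi_i)}}(\kappa_\eta\cap f(\xi_i))$ coming from $f(\xi_j)\prec f(\xi_i)$, and for $\xi_i<\eta$ via the fineness fact that $\{y\mid z\prec y\}$ is large whenever $z\prec f(\xi_j)$. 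This is exactly the computation compressed in the paper's sentence that ``the inductive step follows by combining the recursive definition, the induction hypothesis and our assumption''; the same verification would be needed to spell out the paper's inductive step. Your appeal to the fineness fact as standard is at the same level of detail as the paper itself, which already uses it tacitly in the proof of Proposition~\ref{OneStepProp}(1) (the coordinates shrunk to $\{y\in G(\eta)\mid x\prec y\}$ are not discussed there either), so no genuine gap results.
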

\begin{proof}
The first implication is obvious. For the converse let us argue, by induction over $n\geq 1$, that for each $s\in[\dom(G)]^n$ and $f\in[\prod_{\xi\in s} G(\xi)]$, $(g,G){}^\curvearrowright f\in\mathbb{S}$. For $n=1$ this follows from our hypothesis. Also, the inductive step follows by combining the recursive definition of $(g,G){}^\curvearrowright f$, the induction hypothesis and our assumption.
\end{proof}
Arguing similarly to Proposition~\ref{OneStepProp} one can prove the next strengthening of clause (2).
\begin{prop}\label{ExistsPrunedExtension}
Let $(g,G)\in\mathbb{S}$. There is a condition $(g,G^*)\in\mathbb{S}$ $\leq^*$-below $(g,G)$ which is pruned.
\end{prop}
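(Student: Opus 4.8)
Let $(g,G)\in\mathbb{S}$. There is a condition $(g,G^*)\in\mathbb{S}$ with $(g,G^*)\leq^*(g,G)$ which is pruned.

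The plan is to invoke the characterization of prunedness given by Proposition~\ref{PrunedExtension}: it suffices to find $(g,G^*)\leq^*(g,G)$ such that $(g,G^*){}^\curvearrowright\{\langle\xi,x\rangle\}\in\mathbb{S}$ for every $\langle\xi,x\rangle\in G^*$. So I would reduce the problem to a one-coordinate-at-a-time task and then handle each coordinate by a diagonal intersection. Concretely, for each fixed $\xi\in\dom(G)$, Proposition~\ref{OneStepProp}(2) furnishes a measure-one shrinking $G^{\xi,+}(\xi)\subseteq G(\xi)$ (leaving the other coordinates untouched) with the property that $(g,G){}^\curvearrowright\{\langle\xi,x\rangle\}\in\mathbb{S}$ for all $x\in G^{\xi,+}(\xi)$. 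The naive idea would be to intersect these shrinkings over all $\xi$ simultaneously, but there are $\mu$-many coordinates, so a straight intersection need not land back inside the relevant measures. Instead I would perform the shrinking recursively/diagonally along $\mu$, which is exactly the kind of bookkeeping the coherence properties of Proposition~\ref{TheBxisets} were set up to support.

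The key steps, in order, are: (i) Fix an enumeration of $\dom(G)=\mu\setminus\dom(g)$ and build a $\leq^*$-decreasing (in the appropriate finite-support sense, since at each step only finitely many coordinates near $\xi$ are affected) sequence of conditions $(g,G_\alpha)$, where at stage $\alpha$ we apply Proposition~\ref{OneStepProp}(2) at the $\alpha$-th coordinate $\xi_\alpha$ to pass from $G_\alpha$ to $G_{\alpha+1}$, taking $G_{\alpha+1}(\xi_\alpha)\subseteq G_\alpha(\xi_\alpha)\cap\bigcap_{\rho}A_\rho$ with the $A_\rho\in U^{\xi_\alpha}_{\nu,g(\nu)}$ as in that proof (and $G_{\alpha+1}(\eta)=G_\alpha(\eta)$ for $\eta\neq\xi_\alpha$). (ii) At limit stages take pointwise intersections; because each coordinate $\eta$ gets shrunk at only the stage $\alpha$ with $\xi_\alpha=\eta$ together with finitely many later stages whose affected block straddles $\eta$ — precisely the coordinates $\rho$ with $\ell_{\dom(g)}(\xi_\beta)<\rho<\xi_\beta$ for some $\beta$ — one should check that each coordinate is in fact only altered boundedly often, so the pointwise intersections remain in the right measures. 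This is where I would lean on the fact that $\dom(g)$ is finite, so the "blocks" $(\ell_{\dom(g)}(\xi),\xi]$ partition $\mu\setminus\dom(g)$ into finitely many intervals, and within each interval the ordering can be arranged so the intersection is genuinely a diagonal intersection of a $<\kappa$-complete (indeed, supercompact) measure. (iii) After running through all of $\mu$, set $G^*$ to be the resulting function; by construction, for every $\langle\xi,x\rangle\in G^*$ the one-step extension $(g,G^*){}^\curvearrowright\{\langle\xi,x\rangle\}$ is legitimate, so Proposition~\ref{PrunedExtension} gives that $(g,G^*)$ is pruned, and $(g,G^*)\leq^*(g,G)$ since the stem was never changed and every coordinate of $G^*$ sits inside the corresponding coordinate of $G$.

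I expect the main obstacle to be step (ii): verifying that the diagonalization over $\mu$-many coordinates actually produces measure-one sets at every coordinate, rather than merely "small" sets. The subtlety is that shrinking coordinate $\xi_\alpha$ forces auxiliary shrinkings at the coordinates $\rho$ strictly between $\ell_{\dom(g)}(\xi_\alpha)$ and $\xi_\alpha$, and a given $\rho$ can be hit by such auxiliary shrinking from several different $\xi_\alpha$'s lying in the same $\dom(g)$-block. One must organize the recursion — say, processing coordinates within each of the finitely many blocks in increasing order, or taking a genuine diagonal intersection $\triangle$ with respect to the relevant $\kappa_x$-complete measure $U^\rho_{\xi,x}$ and using its $\kappa$-completeness plus normality — so that the cumulative shrinking at $\rho$ is still $U$-large. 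The coherence clauses $(\diamond)$ and $(\star)$ of Proposition~\ref{TheBxisets}, already used in the proof of Proposition~\ref{OneStepProp}(2), are precisely the tool that makes this work, so modulo careful bookkeeping the argument is "routine similarly to Proposition~\ref{OneStepProp}", as the excerpt's preceding sentence suggests.
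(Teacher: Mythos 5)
Your overall plan --- push the shrinking of Proposition~\ref{OneStepProp}(2) through all the coordinates and then quote Proposition~\ref{PrunedExtension} --- is exactly what the paper intends by ``arguing similarly to Proposition~\ref{OneStepProp}''. But your account of the mechanics contains a concrete error that leaves the key verification unproved. In the proof of Proposition~\ref{OneStepProp}(2) the only coordinate that gets shrunk is $\xi$ itself: the sets $A_\rho$ furnished by $(\diamond)$ (or by $(\star)$ when $\xi>\max(\dom(g))$) live in the measure attached to coordinate $\xi$, namely $U^\xi_{\nu,g(\nu)}$ (resp.\ $U_\xi$), and one intersects $G(\xi)$ with the fewer than $\mu$ many sets $A_\rho$; the intermediate coordinates $\rho\in(\ell_{\dom(g)}(\xi),\xi)$ are not touched at all. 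Hence in your recursion each coordinate is shrunk exactly once, there are no ``auxiliary shrinkings'' hitting a given $\rho$ from several $\xi_\alpha$'s, the limit stages are vacuous, and no diagonal intersection or appeal to normality is needed: the $\mu^+$-completeness of all measures involved (Definition~\ref{DefinitioOfXxi}($\alpha$)) already suffices. Your proposed remedy of taking a diagonal intersection with respect to $U^\rho_{\xi,x}$ addresses this non-existent problem, and in the wrong measure besides: the final set at coordinate $\rho$ must be large in $U^\rho_{\nu,g(\nu)}$ (or $U_\rho$), not in $U^\rho_{\xi,x}$.

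The genuine issue, which your step (iii) dismisses with ``by construction'', is different: the guarantee produced at stage $\xi_\alpha$ is relative to the sets $G_\alpha(\rho)$ present at that stage, and largeness in $U^\rho_{\nu,g(\nu)}$ (or $U_\rho$) does not transfer to the measures $U^\rho_{\xi_\alpha,x}$ under further shrinking. So if the enumeration in your step (i) is arbitrary, a later stage may shrink some $\rho<\xi_\alpha$ lying in the same block and destroy the already-certified one-step extensions at $\xi_\alpha$; nothing in (i)--(iii) rules this out. The repair is the ordering you mention only in passing, and for the wrong reason: process the coordinates in increasing order (it suffices to do so within each of the finitely many blocks, since the blocks do not interact), and at coordinate $\xi$ apply $(\diamond)$/$(\star)$ to the already-final sets $G^*(\rho)$ for $\rho\in(\ell_{\dom(g)}(\xi),\xi)$ --- which is automatic if at each stage you apply Proposition~\ref{OneStepProp}(2) to the current condition and never revisit lower coordinates. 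Since later stages only touch strictly higher coordinates, the one-step guarantee at $\xi$ then refers to the final $G^*$ and persists, so Proposition~\ref{PrunedExtension} yields that $(g,G^*)$ is pruned, and $(g,G^*)\leq^*(g,G)$ is clear. With this ordering made explicit and the spurious limit-stage worry removed, your argument becomes the intended proof.
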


\subsection{The Strong Prikry Property for $\mathbb{S}$}
In this section we will prove that the usual strengthening of the Prikry property known as Strong Prikry property holds for $\mathbb{S}$. For the sake of completeness we formulate this principle in the particular context of Sinapova forcing. %Before that, we will introduce the following notation:
\begin{notation}
\rm{For $(g,G)\in\mathbb{S}$ and $s\in[\dom(G)]^{<\omega}$,  set $S^{(g,G)}_{s}:=\{(i,I)\leq (g,G)\mid \dom(i)=\dom(g)\cup s\}$.} Let $\mathbb{S}^{(g,G)}_s$ be $S^{(g,G)}_s$ endowed with the induced order. Define $\mathbb{S}^{(g,G)}_{\supseteq s}$ analogously.
\end{notation}
\begin{defi}[Strong Prikry Property]\label{SPP}
We will say that $\mathbb{S}$ has the Strong Prikry Property (SPP, for short) if the following property  holds: For each condition $(g,G)\in\mathbb{S}$ and  each dense open set $D\s \mathbb{S}$,  there is  $(g,G^*)\leq^* (g,G)$ and $s\in[\dom(G)]^{<\omega}$ such that $\mathbb{S}^{(g,G^*)}_{\supseteq s}\s D$.
\end{defi}
\begin{lemma}\label{AlmostSPP}
Let $(g,G)\in\mathbb{S}$, $D\s \mathbb{S}$ be dense open and $s\in [\dom(G)]^{<\omega}$. There is a condition $(g,G_s)\leq^* (g,G)$ be such that
$$(*_s)\;\; \mathbb{S}^{(g,G_s)}_s\cap D\neq \emptyset\;\Longrightarrow\; \mathbb{S}^{(g,G_s)}_{\supseteq s}\s D.$$
\end{lemma}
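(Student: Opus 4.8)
The plan is to prove Lemma~\ref{AlmostSPP} by a standard fusion argument along the "large set" part of the condition, keeping the stem $g$ fixed and only shrinking the measure-one sets componentwise. The key observation is that, after passing to a pruned extension via Proposition~\ref{ExistsPrunedExtension}, every function $f\in[\prod_{\xi\in s}G(\xi)]$ yields a genuine condition $(g,G){}^\curvearrowright f$, and such a condition has stem with domain exactly $\dom(g)\cup s$; conversely, every condition in $\mathbb{S}^{(g,G)}_{s}$ below $(g,G)$ is of the form $(g,G){}^\curvearrowright f$ followed by a further $\leq^*$-shrinking of its large set. So the task reduces to a question purely about the "tails" of conditions of the form $(g,G){}^\curvearrowright f$.

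First I would replace $(g,G)$ by a pruned $\leq^*$-extension using Proposition~\ref{ExistsPrunedExtension}, so I may assume $(g,G)$ itself is pruned. Now consider the set $I$ of all $f\in[\prod_{\xi\in s}G(\xi)]$; each gives a condition $(g,G){}^\curvearrowright f$ whose large set I will denote $F_f$ (a function with domain $\mu\setminus(\dom(g)\cup s)$). For each such $f$, I ask whether there is a $\leq^*$-extension of $(g,G){}^\curvearrowright f$ lying in $D$. If yes, fix one, say with large set $F_f'\leq^* F_f$ (componentwise $\subseteq$); if no, set $F_f':=F_f$. Then I want to amalgamate all these choices back into a single large set $G_s$ with $G_s(\eta)\subseteq G(\eta)$ for every $\eta$, such that for every $f\in[\prod_{\xi\in s}G_s(\xi)]$ we have $((g,G_s){}^\curvearrowright f)$ already $\leq^*$-below the chosen $(g,F_f')$. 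This is the usual diagonal-intersection step: for $\eta\notin\dom(g)\cup s$ one intersects over the (boundedly many, by $\kappa$-completeness of the relevant ultrafilters) possible values of $f$ on the coordinates of $s$ below $\eta$, using the coherence properties ($\diamond$) and ($\star$) of Proposition~\ref{TheBxisets} to see the resulting sets still belong to the correct measures; for $\eta\in s$ one similarly diagonalizes over the coordinates of $s$ strictly below $\eta$. Completeness of the measures $U_\xi$ and $U^\theta_{\xi,x}$ (all $\kappa$-complete, and there are fewer than $\kappa$-many relevant restrictions since $s$ is finite and each $G(\xi)\subseteq B_\xi\subseteq\mathcal{P}_\kappa(\kappa_\xi)$ has size ${<}\kappa$-bounded pieces) is what makes the intersection nonempty in the right filter. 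Call the resulting condition $(g,G_s)$; by construction it is still pruned after one more application of Proposition~\ref{OneStepProp}(2)/Proposition~\ref{PrunedExtension}, which I fold into the same fusion.

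It remains to check $(*_s)$. Suppose $\mathbb{S}^{(g,G_s)}_s\cap D\neq\emptyset$, witnessed by some $(i,I)\leq(g,G_s)$ with $\dom(i)=\dom(g)\cup s$; write $i=g\cup f$ for the appropriate $f\in[\prod_{\xi\in s}G_s(\xi)]$. Since $D$ is open and $(i,I)\leq (g,G_s){}^\curvearrowright f$, the condition $(g,G_s){}^\curvearrowright f$ has a $\leq^*$-extension in $D$ — namely one can take $(i,I)$ itself up to identifying the stems — so in particular \emph{some} $\leq^*$-extension of $(g,G){}^\curvearrowright f$ lay in $D$, hence at the time of the construction we chose $F_f'$ with $(g,F_f')\in D$. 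By the amalgamation property, $(g,G_s){}^\curvearrowright f\leq^* (g,F_f')$, and since $D$ is open (downward closed) we get $(g,G_s){}^\curvearrowright f\in D$. But the same argument applies to \emph{every} $f'\in[\prod_{\xi\in s}G_s(\xi)]$: for each such $f'$, $(g,G_s){}^\curvearrowright f'$ is a $\leq^*$-extension of $(g,G){}^\curvearrowright f'$, and... here is the subtlety: it is \emph{not} automatic that $(g,G){}^\curvearrowright f'$ had a $\leq^*$-extension in $D$ just because $(g,G){}^\curvearrowright f$ did for our particular $f$. The point is that being in $D$ for one stem extension does not transfer to another. The fix, and this is the real content, is that the fusion must be run so that the \emph{decision} "does $(g,G){}^\curvearrowright f$ have a $\leq^*$-extension in $D$?" is made uniformly: after shrinking, \emph{either all $f$ over the shrunk sets have a $\leq^*$-extension in $D$, or none do} — and that in turn is arranged by one more round of diagonalization using that the set of "good" $f$ (those with a $\leq^*$-extension in $D$) is measure-one-or-null in the product of the relevant ultrafilters, again by $\kappa$-completeness and the coherence of the $U^\theta_{\xi,x}$'s. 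So if $\mathbb{S}^{(g,G_s)}_s\cap D\neq\emptyset$, the "all" alternative holds, and then for every $f'$, $(g,G_s){}^\curvearrowright f'\in D$; since every element of $\mathbb{S}^{(g,G_s)}_{\supseteq s}$ extends some $(g,G_s){}^\curvearrowright f'$ and $D$ is downward closed, $\mathbb{S}^{(g,G_s)}_{\supseteq s}\subseteq D$, which is $(*_s)$.

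The main obstacle, and the step that deserves the most care, is exactly this last uniformization: ensuring that after the fusion the predicate "$(g,G){}^\curvearrowright f$ has a $\leq^*$-extension in $D$" is constant in $f$ over the shrunk large sets. This requires running the componentwise shrinking not greedily but as a genuine diagonal intersection indexed by the finitely many coordinates of $s$, invoking $(\diamond)$ and $(\star)$ of Proposition~\ref{TheBxisets} at each coordinate to keep the sets in the correct measures $U^\theta_{\xi,x}$ after restriction, and using $\kappa$-completeness of those measures to intersect over the ${<}\kappa$ many possible "histories" $f\upharpoonright(s\cap\eta)$. Everything else — pruning, the identification of conditions in $\mathbb{S}^{(g,G)}_s$ with one-step-extension-plus-$\leq^*$-shrink, openness of $D$ giving downward closure — is routine bookkeeping. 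Once $(*_s)$ is established for each fixed $s$, Lemma~\ref{AlmostSPP} is proved, and the Strong Prikry Property (Definition~\ref{SPP}) will follow by a further fusion over all $s\in[\dom(G)]^{<\omega}$ in the expected way.
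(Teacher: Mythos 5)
Your high-level plan differs in packaging from the paper's proof: you colour whole $s$-tuples $f$ by the predicate ``$(g,G){}^\curvearrowright f$ has a $\leq^*$-extension in $D$'', try to homogenize that colouring, and amalgamate the chosen witnesses into one $G_s$; the paper instead argues by induction on $|s|$, peeling off $\delta=\min(s)$, letting the single ultrafilter $U^{\delta}_{\xi,g(\xi)}$ decide whether the set $X$ of ``good'' $y$'s is large, and then diagonalizing the family $\{(g_y,G_{y,t})\mid y\in Y\}$; the dichotomy plus the induction hypothesis then yields $(*_s)$ directly. Your derivation of $(*_s)$ from ``homogenized plus amalgamated'' is fine, and you correctly isolate the uniformization as the real content of the lemma.

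The gap is in how you justify that uniformization and the amalgamation. There is no ``product of the relevant ultrafilters'' deciding subsets of the set of $\prec$-increasing $s$-tuples, so ``measure-one-or-null in the product'' is not an argument; and the parenthetical ``boundedly many possible values of $f$ on the coordinates of $s$ below $\eta$, by $\kappa$-completeness'' is false: each $G(\theta)\subseteq\mathcal{P}_\kappa(\kappa_\theta)$ has at least $\kappa_\theta>\kappa$ elements, so plain intersections over all histories do not stay in the measures. What actually makes both steps work is normality — diagonal intersections in which, for a candidate $y$, one only constrains the fewer than $\kappa$ many tuples $\prec$-below $y$ — together with the coherence clauses $(\star)$ and $(\diamond)$ of Proposition~\ref{TheBxisets}, which are needed because after a stem extension the sets at intermediate coordinates must land in the shifted measures $U^{\eta}_{\tau,f(\tau)}$ rather than in the measures attached to $(g,G)$. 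Concretely, the homogenization you invoke is the two-colour case of the generalized R\"owbottom lemma (Lemma~\ref{DiagonalSupercompactRowottom}), and the simultaneous amalgamation of the witnesses $F'_f$ is a multi-coordinate diagonalization; both are themselves proved by induction on the number of coordinates, which is exactly the induction the paper runs directly in Lemma~\ref{AlmostSPP}. So your route can be completed by citing or reproving those two facts, but as written the decisive step is asserted rather than proved, and the reasons offered for it are not the ones that make it true.
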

\begin{proof}
We argue by induction over $n=|s|$. If $n=0$, then we ask whether there is $(g,\tilde{G})\leq^*(g,G)$ witnessing $(*_\emptyset)$.  If the answer to our query is affirmative then we let $G_\emptyset$  be such $\tilde{G}$. Otherwise, set $G_\emptyset:=G$.  It is easy to check that $(g,G_\emptyset)$ is as desired.

Now assume that for $(h,H)\in\mathbb{S}$ and each $t\in[\dom(G)]^n$, there is $(h,G_t)\leq^*(h,H)$ witnessing $(*_t)$. Let $s$ be with $|s|=n+1$. Also, say with $\delta:=\min(s)$. Set $t:=s\setminus\{\delta\}$ and $\xi:=r_{\dom(g)}(\delta)$. For each $y\in G(\delta)$, let $(g_y,G_y):=(g,G){}^\curvearrowright\{\langle \delta, y\rangle\}$ and $(g_y, G_{y, t})\leq^*(g_y,G_y)$ witnessing $(*_t)$. Now look at the set of $y\in G(\delta)$ for which the property $(*_t)$ is non-trivial. Namely, set $X:=\{y\in G(\delta)\mid \mathbb{S}^{(g_y,G_{y,t})}_t\cap D\neq\emptyset\}$. If $X\in U^\delta_{\xi,g(\xi)}$, set $Y:=X$ and, otherwise, let $Y$ to be the complement. Let $(g,G_s)\leq^* (g,G)$ be the diagonalization of $\{(g_y,G_{y,t})\mid y\in Y\}$ (see \cite[Proposition~2.12]{Sin}).

\begin{claim}
$(g,G_s)\leq^* (g,G)$ and witnesses $(*_s)$.
\end{claim}
\begin{proof}[Proof of claim]
The first property is obvious so we are left with verifying that $(*_s)$ holds. Without loss of generality, assume that $\mathbb{S}^{(g,G_s)}_{s}\cap D\neq\emptyset$.
Let $(i,I)\in \mathbb{S}^{(g,G_s)}_{s}\cap D$. By definition of diagonalization, $(i,I)\leq (g_y,G_{y,t})$, where $y=i(\delta)\in Y$. Hence, $(i,I)\in \mathbb{S}^{(g_y, G_{y,t})}_t\cap D$, and thus $y\in X\cap Y$. This shows that $Y=X$. 

Now let $(f,F)\in\mathbb{S}^{(g,G_s)}_{\supseteq s}$. Again, by the definition of diagonalization, $(f,F)\in \mathbb{S}^{(g_y,G_{y,t})}_{\supseteq t}$, for $y=f(\delta)\in Y$. Since $X=Y$,  $\mathbb{S}_t^{(g_y,G_{y,t})}\cap D\neq \emptyset$, hence, by $(*_t)$, $\mathbb{S}^{(g_y,G_{y,t})}_{\supseteq t}\s D$, and thus $(f,F)\in D$. Altogether, $\mathbb{S}^{(g,G_s)}_{\supseteq s}\s D$, which yields $(*_s)$.
\end{proof}
\end{proof}

\begin{lemma}\label{SPP}
Let $(g,G)\in\mathbb{S}$ and $D\s \mathbb{S}$ be dense open. There is a condition $(g,G)^*\leq^* (g,G)$ such that
$$(*)\;\;\forall s\in[\dom(G)]^{<\omega}\,( \mathbb{S}^{(g,G)^*}_s\cap D\neq \emptyset\;\Longrightarrow\; \mathbb{S}^{(g,G)^*}_{\supseteq s}\s D).$$
In particular, $\mathbb{S}$ has the SPP.
\end{lemma}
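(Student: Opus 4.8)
The plan is to derive Lemma~\ref{SPP} from Lemma~\ref{AlmostSPP} by a diagonalization over all the finite subsets $s\in[\dom(G)]^{<\omega}$ simultaneously, and then to observe that the conclusion $(*)$ immediately yields the SPP of Definition~\ref{SPP}. The point is that Lemma~\ref{AlmostSPP} gives us, for each \emph{individual} $s$, a $\leq^*$-extension $(g,G_s)$ witnessing the implication $(*_s)$; what we need is a single $\leq^*$-extension witnessing $(*_s)$ for \emph{all} $s$ at once. Since there are $\mu$-many (in fact $<\kappa$-many, because $\mu<\kappa$) finite subsets of $\dom(G)\subseteq\mu$, and $\leq^*$ on $\mathbb{S}$ is suitably closed (the large sets are drawn from $<\kappa$-complete measures $U_\xi$ and the liftings $U^\eta_{\xi,x}$, so we may intersect $<\kappa$-many large sets coordinatewise and stay in the forcing), a bookkeeping/fusion argument will produce the desired condition.

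Concretely, I would enumerate $[\dom(G)]^{<\omega}$ in a $\leq^*$-decreasing transfinite recursion of length $\mathrm{otp}([\dom(G)]^{<\omega})<\kappa$: start with $(g,G^0):=(g,G)$; at a successor stage, given $(g,G^\alpha)$ and the $\alpha$-th finite set $s_\alpha$, apply Lemma~\ref{AlmostSPP} to $(g,G^\alpha)$ and $s_\alpha$ to get $(g,G^{\alpha+1})\leq^*(g,G^\alpha)$ witnessing $(*_{s_\alpha})$ (relative to $G^{\alpha+1}$); at a limit stage $\lambda<\kappa$ take coordinatewise intersections, i.e. $G^\lambda(\eta):=\bigcap_{\alpha<\lambda}G^\alpha(\eta)$, which is still a legitimate large set by $<\kappa$-completeness of the relevant measures. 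Finally let $(g,G)^*:=(g,G^{|[\dom(G)]^{<\omega}|})$, the condition obtained at the end of the recursion (again a coordinatewise intersection). The key verification is \emph{persistence}: the property $(*_s)$ is downward absolute along $\leq^*$ in the sense that if $(g,G')\leq^*(g,G_s)$ and $(g,G_s)$ witnesses $(*_s)$, then so does $(g,G')$ — because $\mathbb{S}^{(g,G')}_s\subseteq\mathbb{S}^{(g,G_s)}_s$, so if the left-hand side of $(*_s)$ holds for $G'$ it holds for $G_s$, giving $\mathbb{S}^{(g,G_s)}_{\supseteq s}\subseteq D$, whence $\mathbb{S}^{(g,G')}_{\supseteq s}\subseteq\mathbb{S}^{(g,G_s)}_{\supseteq s}\subseteq D$. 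This is exactly what makes the recursion work: once $(*_{s_\alpha})$ is arranged at stage $\alpha+1$, it survives all later shrinkings, so $(g,G)^*$ witnesses $(*)$ for every $s$.

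For the last sentence of the lemma I would argue: given a dense open $D$ and a condition $(g,G)$, pass to $(g,G)^*\leq^*(g,G)$ as above. Since $D$ is dense, there is some $(i,I)\leq(g,G)^*$ in $D$; let $s:=\dom(i)\setminus\dom(g)\in[\dom(G)]^{<\omega}$. Then $(i,I)\in\mathbb{S}^{(g,G)^*}_s\cap D$, so this intersection is nonempty, and $(*)$ gives $\mathbb{S}^{(g,G)^*}_{\supseteq s}\subseteq D$ — which is precisely the SPP witnessed by $(g,G)^*$ and $s$.

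The main obstacle is making the limit and final stages of the recursion legitimate, i.e.\ checking that a $<\kappa$-indexed coordinatewise intersection of large-set functions is again (the large set of) a condition $\leq^*(g,G)$. One must confirm that each of the measure families involved — the $U_\xi$ for $\xi>\max(\dom(g))$ and the liftings $U^\xi_{\xi_g,x}$ (and, on limit coordinates below $\max(\dom(g))$, the $U^\eta_{\xi,x}$) — is $<\kappa$-complete, so that an intersection of $<\kappa$-many members stays positive, and that clause~(4) of Definition~\ref{SinapovaForcing} (the $\prec$-compatibility between $g$ and the large sets) is preserved under intersection, which it trivially is since shrinking large sets only helps. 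Because $|[\dom(G)]^{<\omega}|\le\mu<\kappa$, the completeness of these measures (all of which concentrate on sets $\mathcal{P}_\nu(\cdot)$ with $\nu\ge\kappa$, hence are at least $\kappa$-complete on their respective domains) is more than enough. Once this closure of $(\mathbb{S},\leq^*)$-restricted-to-a-stem under $<\kappa$-sized descending sequences is in hand, the recursion and the persistence observation finish the proof.
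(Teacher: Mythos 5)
Your proof is correct and is essentially the paper's argument: both deduce the lemma by diagonalizing the conditions supplied by Lemma~\ref{AlmostSPP} over all $s\in[\dom(G)]^{<\omega}$, using $\mu^{<\aleph_0}=\mu$ together with the completeness of the measures to intersect the large sets coordinatewise (the paper does this in one simultaneous intersection, you via a $\leq^*$-decreasing recursion with an explicit persistence step, which is a purely organizational difference). The only slip is your justification of completeness: the lifted measures $U^{\eta}_{\xi,x}$ live on $\mathcal{P}_{\kappa_x}(\kappa_\eta\cap x)$ with $\kappa_x<\kappa$, so they are only $\kappa_x$-complete, but since $\kappa_x>\mu$ by Definition~\ref{DefinitioOfXxi}($\alpha$) this is still more than enough to intersect the $\leq\mu$-many sets involved.
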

\begin{proof}
For each $s\in[\dom(G)]^{<\omega}$, let $(g,G_s)\leq (g,G)$ be given by Lemma~\ref{AlmostSPP}. For each $\xi\in \dom(G)$, set $G^*(\xi):=\bigcap\{G_s(\xi)\mid \xi\in s\}$. Observe that $(g,G^*)\in\mathbb{S}$ by Definition~\ref{DefinitioOfXxi}($\alpha$) and $\mu^{<\aleph_0}=\mu$. Evidently, $(g,G)^*:=(g,G^*)$ satisfies $(*)$. For the last clause, since $D$ is dense, there is $s$ with $\mathbb{S}^{(g,G)^*}_s\cap D\neq \emptyset$, so that $S^{(g,G)^*}_{\supseteq s}\s D$.
\end{proof}
One can be a bit more ambitious and require that $(g,G)^*$ and $(g,G)$ would be equal up to some $\xi\in\dom(g)$. More formally, $(g,G)^*_{\upharpoonright\xi+1}=(g,G)_{\upharpoonright\xi+1.}$ This more general result follows by combining Lemma~\ref{SPP} with  the following result: %a version of \cite[Proposition~2.12]{Sin} in which the diagonalization $(g,\tilde{G})\leq (g,G)$ satisfies  the above condition. In more concrete words, we need the following lemma.
\begin{lemma}[Diagonalization]
Let $(g,G)\in\mathbb{S}$, $\xi\in\dom(G)$ and $\eta\in\dom(g)\cap \xi$. Assume that $A\in U_\xi$ and $\mathcal{A}=\langle (g_x, G_x)\mid x\in A\rangle$ is a family of conditions below $(g,G)$ with $g_x:=g\cup \{\langle\xi, x\rangle\}$ and $(g_x,G_x)_{\upharpoonright\eta+1}=(g,G)_{\upharpoonright\eta+1}$. Then, there is $(g,G^*)\leq (g,G)$ such that $(g,G^*)_{\upharpoonright\eta+1}=(g,G)_{\upharpoonright\eta+1}$ which diagonalizes the family $\mathcal{A}$.
\end{lemma}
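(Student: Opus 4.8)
The plan is to build $G^*$ as a diagonal intersection of the large sets $\langle G_x\mid x\in A\rangle$, following the diagonalization argument from the proof of \cite[Proposition~2.12]{Sin}, the one new feature being that every coordinate at which we shrink lies strictly above $\eta$, so that $(g,G^*)_{\upharpoonright\eta+1}=(g,G)_{\upharpoonright\eta+1}$ comes for free. First I would fix notation: set $\sigma:=\ell_{\dom(g)}(\xi)=\max(\dom(g)\cap\xi)$ (so $\eta\le\sigma<\xi$) and $\nu:=r_{\dom(g)}(\xi)$. By Proposition~\ref{OneStepProp}(1), applied with the given $(g_x,G_x)$ as witness, each one-step extension $(g,G){}^\curvearrowright\{\langle\xi,x\rangle\}$ is already a condition, indeed the $\leq$-largest condition with stem $g_x=g\cup\{\langle\xi,x\rangle\}$ lying below $(g,G)$, so $(g_x,G_x)\leq (g,G){}^\curvearrowright\{\langle\xi,x\rangle\}$ for every $x\in A$; and shrinking the $U_\xi$-large set $A$ I may assume $A\subseteq B_\xi\cap G(\xi)$, that $\kappa_x>\theta^{+\mu+1}$ for $x\in A$, and that $g(\sigma)\prec x$ for $x\in A$. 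The relevant feature of a one-step extension at $\xi$ is that it changes the large set only on the coordinates $\rho\in(\sigma,\xi)$, where $G(\rho)$ is cut to $G(\rho)\cap\mathcal{P}_{\kappa_x}(\kappa_\rho\cap x)$, and on $\rho\in(\xi,\nu)$, where $G(\rho)$ is cut to $\{y\in G(\rho):x\prec y\}$; the coordinates $\leq\sigma$, in particular all coordinates $\leq\eta$, are left alone.

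Next I would define $G^*$ on $\dom(G)$ as follows: $G^*(\rho):=G(\rho)$ for $\rho\leq\eta$ (this is what secures the lower part); for $\sigma<\rho<\xi$, let $G^*(\rho)$ be the set of $z\in G(\rho)$ such that $z\in G_x(\rho)$ for every $x\in A$ with $z\in\mathcal{P}_{\kappa_x}(\kappa_\rho\cap x)$; for $\xi<\rho<\nu$, let $G^*(\rho)$ be the set of $y\in G(\rho)$ such that $y\in G_x(\rho)$ for every $x\in A$ with $x\prec y$; on the remaining, inert coordinates ($\eta<\rho<\sigma$ and $\rho>\nu$) $G^*(\rho)$ is chosen to refine all the $G_x(\rho)$ simultaneously, exactly as in \cite[Proposition~2.12]{Sin}; and finally I would let $G^*(\xi)$ consist of those $x\in A\cap G(\xi)$ for which every entry of $(g,G^*){}^\curvearrowright\{\langle\xi,x\rangle\}$ lands in the measure prescribed by Definition~\ref{SinapovaForcing}(3) and clause (4) of that definition is respected. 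The largeness of $G^*(\rho)$ for $\rho\neq\xi$ is the routine part, handled as in \cite[Proposition~2.12]{Sin}: for the shift-type sets it is normality of the measure, and for the projection-type sets it follows from the coherence clauses $(\star)$ and $(\diamond)$ of Proposition~\ref{TheBxisets}. The largeness of $G^*(\xi)$ in $U_\xi$ is proved by \L o\'s's theorem in $\Ult(V,U_\xi)$, again using $(\star)$ and $(\diamond)$ to see that the measure-membership requirements hold for $U_\xi$-almost all $x$, and using fineness and normality of $U_\xi$ for the $\prec$-clause.

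It then remains to verify the routine points — that $(g,G^*)\in\mathbb{S}$, that $(g,G^*)\leq^*(g,G)$, and that $(g,G^*)_{\upharpoonright\eta+1}=(g,G)_{\upharpoonright\eta+1}$, the last being immediate since $G^*$ and $G$ agree on $\dom(G)\cap(\eta+1)$ — together with the diagonalization property: for $x\in G^*(\xi)\subseteq A$ the one-step extension $(g,G^*){}^\curvearrowright\{\langle\xi,x\rangle\}$ shares its stem $g_x$ with $(g_x,G_x)$, and its $\rho$-th entry is contained in $G_x(\rho)$ for every $\rho$ directly by the definition of $G^*(\rho)$ (testing with $z\in\mathcal{P}_{\kappa_x}(\kappa_\rho\cap x)$ for $\sigma<\rho<\xi$, with $x\prec y$ for $\xi<\rho<\nu$, and with plain containment otherwise); hence $(g,G^*){}^\curvearrowright\{\langle\xi,x\rangle\}\leq(g_x,G_x)$, and by transitivity of $\leq$ any extension placing $x$ at coordinate $\xi$ also lies below $(g_x,G_x)$, which is what it means for $(g,G^*)$ to diagonalize $\mathcal{A}$. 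I expect the genuine difficulty to be concentrated in the $U_\xi$-largeness of $G^*(\xi)$: unlike the diagonal intersections at the other coordinates it does not reduce to a one-line normality argument, and it is exactly there that the full strength of the Mitchell-order coherence of $\mathfrak{U}$ packaged in Proposition~\ref{TheBxisets} must be used.
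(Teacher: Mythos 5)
Your overall template (build $G^*$ coordinatewise as in \cite[Proposition~2.12]{Sin}, noting that every coordinate you touch lies above $\eta$, so $(g,G^*)_{\upharpoonright\eta+1}=(g,G)_{\upharpoonright\eta+1}$ is automatic) is exactly the paper's intent, since the paper simply refers to that proposition. But your construction at the coordinates strictly between $\eta$ and $\xi$ does not work. For $\sigma<\rho<\xi$ you put into $G^*(\rho)$ only those $z\in G(\rho)$ with $z\in G_x(\rho)$ for \emph{every} $x\in A$ such that $z\in\mathcal{P}_{\kappa_x}(\kappa_\rho\cap x)$. This full intersection need not be $U_\rho$-large; it can be empty. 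For instance, let $G_x(\rho):=G(\rho)\cap\mathcal{P}_{\kappa_x}(\kappa_\rho\cap x)\cap\{w\mid \beta_x\in w\}$, which is $U^\rho_{\xi,x}$-large by fineness, and choose, for each $z\in\mathcal{P}_\kappa(\kappa_\rho)$, a distinct $x_z\in A$ with $z\subseteq x_z$, $|z|<\kappa_{x_z}$, and $\beta_{x_z}\in(x_z\cap\kappa_\rho)\setminus z$ (possible since a $U_\xi$-large set has more than $|\mathcal{P}_\kappa(\kappa_\rho)|$ members and $\otp(x_z\cap\kappa_\rho)>\kappa_{x_z}>|z|$); then every $z$ is rejected by $x_z$ and your $G^*(\rho)$ is empty. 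The clauses $(\star)$ and $(\diamond)$ of Proposition~\ref{TheBxisets} say that one \emph{fixed} large set restricts well for $U_\xi$-many $x$; they give nothing about this intersection over all $x\in A$. The same problem occurs at the inner coordinates $\eta<\rho<\sigma$: there the relevant measure is only $\kappa_{g(\rho_g)}$-complete with $\kappa_{g(\rho_g)}<\kappa\leq|A|$, so a set ``refining all the $G_x(\rho)$ simultaneously'' need not exist when the $G_x(\rho)$ genuinely depend on $x$. (Your treatment of the coordinates above $\xi$ via the $\prec$-diagonal intersection and normality is fine.)

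The proof you are meant to reproduce handles these coordinates differently: at each such $\rho$ one puts into $G^*(\rho)$ the $z$ accepted by $U_\xi$-\emph{many} $x$, i.e. $\{z\in G(\rho)\mid \{x\in A\mid z\in G_x(\rho)\}\in U_\xi\}$. For $\sigma<\rho<\xi$ this set is $U_\rho$-large by \L o\'s together with the coherence $[x\mapsto\overline{U}^\rho_{\xi,x}]_{U_\xi}=U_\rho$ (it is essentially the set represented by $x\mapsto G_x(\rho)$ in $\Ult(V,U_\xi)$), and for $\eta<\rho<\sigma$ it is large in $U^\rho_{\rho_g,g(\rho_g)}$ by $\kappa$-completeness of $U_\xi$ against the ${<}\kappa$-sized underlying set. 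The price is that one must then perform an \emph{additional} shrink at coordinate $\xi$, by a diagonal intersection/normality argument with $U_\xi$, so that every surviving $x$ accepts all $z\in G^*(\rho)\cap\mathcal{P}_{\kappa_x}(x\cap\kappa_\rho)$ (respectively all $z\in G^*(\rho)$ at the inner coordinates); only then does $(g,G^*){}^\curvearrowright\{\langle\xi,x\rangle\}\leq(g_x,G_x)$ hold for $x\in G^*(\xi)$. Your final shrink at $\xi$ only guarantees that the one-step extensions are conditions, not these containments, so in your write-up the whole diagonalization burden rests on the flawed lower-coordinate definitions. You correctly sensed that the real work sits at coordinate $\xi$ and in the Mitchell-order coherence, but the construction as written does not carry it out.
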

We omit the proof of the above as it is identical to the proof of \cite[Proposition 2.12]{Sin}. Bearing this in mind, one can use Lemma~\ref{SPP} to prove the following: 
\begin{lemma}\label{SPPimproved}
Let $(g,G)\in\mathbb{S}$, $D\s\mathbb{S}$ be dense open and $\eta\in\dom(g)$. There is $(g,G)^{*,\eta}\leq (g,G)$ such that if $(i,I)\leq (g,G)^{*,\eta}$ is in $D$ then, for each $(j,J)\leq (i,I)_{\upharpoonright\eta+1}{}^\smallfrown (g, G)^{*,\eta}_{\setminus \eta+1}$, $(j,J)\in D$.
\end{lemma}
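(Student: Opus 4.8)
The plan is to deduce Lemma~\ref{SPPimproved} from Lemma~\ref{SPP} together with the Diagonalization Lemma, using the factorization philosophy behind Proposition~\ref{FactorizationSinapova}. The point of the target statement is that the $\leq^*$-shrinking provided by the bare SPP can be arranged to leave the stem below $\eta$ untouched and, moreover, to behave uniformly on the ``tail'' part $(g,G)_{\setminus\eta+1}$. So first I would pass to the natural factorization of $\mathbb{S}\downarrow (g,G)$ along the coordinate $\eta$: conditions below $(g,G)$ decompose into a piece living below $\eta+1$ (which, after the measure lifting and the coherence clauses $(\xi')$, $(\diamond)$ of Proposition~\ref{TheBxisets}, is itself a Sinapova-type forcing on the shorter interval) and a piece on $[\eta+1,\mu)$. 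The dense open $D$ pulls back to each factor.

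Second, I would run Lemma~\ref{SPP} not on $\mathbb{S}$ itself but ``uniformly over the bottom factor''. Concretely: for each one-step extension datum $x$ admissible at $\eta$ (i.e.\ $x\in G(\eta)$ with $(g,G){}^\curvearrowright\{\langle\eta,x\rangle\}\in\mathbb{S}$, which we may assume for $U_\eta$-almost all $x$ after first $\leq^*$-shrinking via Proposition~\ref{OneStepProp}(2)), and more generally for each finite extension $f$ of the stem below $\eta+1$, apply Lemma~\ref{SPP} inside the corresponding tail forcing $\mathbb{S}_{\langle\eta,x\rangle}$-style factor to obtain a $\leq^*$-extension of the tail part that satisfies the SPP-dichotomy $(*)$ relative to $D$ restricted to that fibre. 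This produces a family of tail-extensions indexed by the bottom data. Then I would invoke the Diagonalization Lemma (applied with the distinguished coordinate $\eta$, so that $(g,G^*)_{\upharpoonright\eta+1}=(g,G)_{\upharpoonright\eta+1}$ is preserved) to amalgamate this whole family into a single condition $(g,G)^{*,\eta}\leq (g,G)$ whose tail simultaneously refines every member of the family and which agrees with $(g,G)$ up to $\eta+1$.

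Third, I would verify the conclusion. Suppose $(i,I)\leq (g,G)^{*,\eta}$ lies in $D$. Write $(i,I)_{\upharpoonright\eta+1}$ for its restriction; by construction of the diagonalization, $(i,I)$ refines the tail-extension attached to that restriction, so by the SPP-dichotomy $(*)$ for that fibre — since $(i,I)$ witnesses $\mathbb{S}^{\cdots}_{s}\cap D\neq\emptyset$ for the relevant $s$ — the whole cone $\mathbb{S}^{\cdots}_{\supseteq s}\s D$ in that fibre. Now any $(j,J)\leq (i,I)_{\upharpoonright\eta+1}{}^\smallfrown (g,G)^{*,\eta}_{\setminus\eta+1}$ has the same bottom restriction as $(i,I)$ and a tail that still refines the attached tail-extension, hence $(j,J)$ falls inside that same cone and therefore $(j,J)\in D$. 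This is exactly the asserted property.

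The main obstacle, I expect, is bookkeeping the measure lifts across the coordinate $\eta$ so that ``the tail forcing below each bottom datum'' really is (isomorphic to, or at least projects from) a Sinapova forcing to which Lemma~\ref{SPP} literally applies — this is where clauses $(\xi)$, $(\xi')$, $(\star)$, $(\diamond)$ of Proposition~\ref{TheBxisets} and the one-step-extension analysis of Proposition~\ref{OneStepProp} and Proposition~\ref{PrunedExtension} have to be marshalled carefully, exactly as in the passage from Lemma~\ref{AlmostSPP} to Lemma~\ref{SPP}. A secondary subtlety is checking that the single diagonalized condition still lies in $\mathbb{S}$ (using Definition~\ref{DefinitioOfXxi}($\alpha$) and $\mu^{<\aleph_0}=\mu$, as in the proof of Lemma~\ref{SPP}) and that the diagonalization genuinely preserves the initial segment up to $\eta+1$; both are handled by the stated Diagonalization Lemma, so at the level of this plan they are routine. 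I would therefore present the argument as: factor at $\eta$, apply Lemma~\ref{SPP} fibrewise, diagonalize via the Diagonalization Lemma, and read off the conclusion — omitting the repetitive verifications that mirror the earlier lemmas.
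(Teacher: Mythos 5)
Your overall route --- apply Lemma~\ref{SPP} to each finite extension of the stem at coordinates below $\eta$ and amalgamate the resulting $\leq^*$-refinements into a single condition, not disturbing the part up to $\eta+1$ --- is exactly what the paper intends (its proof is the transcription of \cite[Corollary 2.14]{Sin} with Lemma~\ref{SPP} in place of \cite[Proposition 2.13]{Sin}). The genuine gap is in your third step. For the fibre determined by the lower stem of $(i,I)$, the dichotomy $(*)$ of Lemma~\ref{SPP} yields $\mathbb{S}^{q^*}_{\supseteq s}\s D$, where $s$ is the set of coordinates at which the stem of $(i,I)$ properly extends $g$; membership in that cone requires a condition's stem to occupy \emph{every} coordinate of $s$. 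A general $(j,J)\leq (i,I)_{\upharpoonright\eta+1}{}^\smallfrown (g,G)^{*,\eta}_{\setminus\eta+1}$ need not pick up the stem coordinates of $(i,I)$ lying above $\eta$ (the tail $(g,G)^{*,\eta}_{\setminus\eta+1}$ has stem $g\setminus(\eta+1)$ only), so it need not lie in the cone, and the inference ``$(j,J)$ falls inside that same cone'' breaks down. This is not just a write-up issue: the unrestricted conclusion is simply unprovable. Take $D$ to be the set of conditions whose stem contains a coordinate in $(\eta,\mu)\setminus\dom(g)$; this is dense open, any candidate $(g,G)^{*,\eta}$ with stem $g$ has an extension $(i,I)\in D$, and then $(j,J):=(i,I)_{\upharpoonright\eta+1}{}^\smallfrown(g,G)^{*,\eta}_{\setminus\eta+1}$ is a condition below the displayed one that is not in $D$. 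What your construction does deliver --- and all that is used later, in the proof of Theorem~\ref{CriterionGenericity}, where the recombined condition $(f^*,F^*)$ keeps exactly the stem domain of $(f,F)$ --- is the version in which $(j,J)$ is additionally required to satisfy $\dom(j)\supseteq\dom(i)$. State and prove it with that proviso; with it, your cone argument is the right one.

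Some secondary points. Since $\eta\in\dom(g)$ there is no $G(\eta)$ and no measure at $\eta$, so your opening clause about ``$x\in G(\eta)$, $U_\eta$-almost all $x$'' is vacuous; the fibres are indexed by finite stem extensions at coordinates in $\dom(G)\cap\eta$ only. Also, you cannot literally apply Lemma~\ref{SPP} ``inside the tail factor to $D$ restricted to that fibre'': that restriction need not be dense there. The clean move is to apply Lemma~\ref{SPP} in $\mathbb{S}$ itself to the condition $(g,G){}^\curvearrowright f$ (available after a preliminary pruning, Proposition~\ref{ExistsPrunedExtension}) and to the original $D$; only the dichotomy $(*)$ is needed. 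Finally, the Diagonalization Lemma as stated in the paper amalgamates a family indexed by one-step extensions at a single coordinate above $\eta$, whereas your family is indexed by all lower stems; turning your fibrewise refinements into one condition requires either iterating that lemma or a direct counting/diagonal-intersection argument, using that the number of lower stems is smaller than the completeness of every measure occurring above $\eta$ (which follows from $g$ being $\prec$-increasing together with Definition~\ref{DefinitioOfXxi}($\alpha$)). That is the bookkeeping you flagged and the paper delegates to \cite{Sin}; I only note that the stated Diagonalization Lemma alone does not cover it.
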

The proof runs in parallel to \cite[Corollary 2.14]{Sin}: here instead of appealing to \cite[Proposition 2.13]{Sin} one invokes Lemma~\ref{SPP}.

\subsection{The proof of the criterion}
We are now in conditions to complete the proof of Theorem~\ref{CriterionGenericity}. Recall that we are left with showing that if $g^*\in[\prod_{\xi<\mu} B_\xi]\cap W$ witnesses properties \eqref{P1} and \eqref{P2} of Proposition~\ref{GenericityCriterionFirstImplication} then $g^*$ is $\mathbb{S}$-generic over $V$.

\begin{proof}[Proof of Theorem~\ref{CriterionGenericity}]
Towards a contradiction, assume that the implication was false. Let $\kappa$ be the first cardinal for which we can define a Sinapova forcing $\mathbb{S}:=\mathbb{S}_{(\kappa,\mu, \mathfrak{U}, \mathfrak{B})}$ and for which there is some $g^*\in[\prod_{\xi<\mu} \mathfrak{B}(\xi)]$ satisfying \eqref{P1} and \eqref{P2} but not being generic. 

Henceforth $D\s \mathbb{S}$ will be an arbitrary but fixed dense open set. We aim to prove that $D\cap S(g^*)\neq \emptyset$. We will be arguing in a similar fashion to \cite[Theorem 1.12]{Git}.

Set $\mathrm{St}:=\{ g\in [\prod_{\xi<\mu} B_\xi]^{<\omega}\mid \exists G\,(g,G)\in\mathbb{S}\}$. For each $g\in \mathrm{St}$, set $(g,G_g):=\one{}^\curvearrowright g$ and\footnote{Since $g\in \mathrm{St}$ observe that Proposition~\ref{OneStepProp} and the subsequent comments guarantee that $\one{}^\curvearrowright g\in \mathbb{S}$.} 
$$(g,\tilde{G}_g):=\begin{cases}
(g,G_g)^*, & \text{if $g=\emptyset$};\\
(g,G_g)^{*, \xi} & \text{if $\max(\dom(g))=\xi$},
\end{cases}
$$
where $(g,G_g)^*$ and $(g,G_g)^{*, \xi}$ are the conditions given by Lemma~\ref{SPP} and Lemma~\ref{SPPimproved}, respectively.

For each $\xi<\mu$ and $x\in\mathcal{P}_\kappa(\kappa_\xi)$ set $\mathrm{St}_{\xi, x}:=\{g\in\mathrm{St}\mid \dom(g)\s\xi,\, \max(g)\prec x\}$. Observe that $|\mathrm{St}_{\xi,x}|\leq |\mathcal{P}_{\kappa_x}(x)|<\kappa$. Thus, 
 $G^*(\xi):=\bigtriangleup_{x\in\mathcal{P}_\kappa(\kappa_\xi)}\left(\bigcap_{g\in\mathrm{St}_{\xi,x}} \tilde{G}_g(\xi)\right)\in U_\xi$. This process yields a function $G^*\in V\cap \prod_{\xi<\mu}U_\xi$. Set $s:=(\emptyset, G^*)$. %By using Proposition \ref{ExistsPrunedExtension} we may assume that $s$ and all of its extensions are pruned. 
 Appea\-ling to property \eqref{P1} we find $\xi^*<\mu$ limit such that $g^*(\eta)\in G^*(\eta)$, for each $\eta\in(\xi^*,\mu)$.  Set $g^*_{-}:=g^*\upharpoonright\xi^*$, $\mathcal{V}_\eta:=\overline{U}^\eta_{\xi^*,g^*(\xi^*)}$ and $C_\eta:=\overline{B_\eta\cap \mathcal{P}_{\kappa_{g^*(\xi^*)}}(\kappa_\eta\cap g^*(\xi^*))}$, for each $\eta<\xi^*$. Set $\mathfrak{V}:=\langle \mathcal{V}_\eta\mid \eta<\xi^*\rangle$, $\mathfrak{C}:=\langle C_\eta\mid \eta<\xi^*\rangle$ and $\mathbb{S}_{(\kappa_{g^*(\xi^*)}, \xi^*,\mathfrak{V}, \mathfrak{C})}$ be the corresponding Sinapova forcing. Clearly, $g^*_{-}$ witnesses \eqref{P1} and \eqref{P2}, and  $\kappa_{g^*(\xi^*)}<\kappa$, hence $S(g^*_{-})$ is a generic filter for $\mathbb{S}_{(\kappa_{g^*(\xi^*)}, \xi^*,\mathfrak{V}, \mathfrak{C})}$. Let $p^*_{-}:=(\emptyset, \overline{I\upharpoonright\xi^*})\in S(g^*_{-})$. Define $p^*:=(\{\langle \xi^*, g^*(\xi^*)\rangle, H^*)$, where $\dom(H^*):=\mu\setminus\{\xi^*\}$ and
$$H^*(\eta):=\begin{cases}
I(\eta), & \text{if $\eta<\xi^*$},\\
\{x\in G^*(\eta)\mid g^*(\xi^*)\prec x\}, & \text{if $\xi^*<\eta$},
\end{cases}
$$
where $I(\eta)$ denotes the lifting of $\overline{I}(\eta)$ to $\mathcal{P}_{\kappa_{g^*(\xi^*)}}(\kappa_\eta\cap g^*(\xi^*))$. Clearly, $p^*\in\mathbb{S}$. Moreover, by appealing to Proposition \ref{ExistsPrunedExtension}, we may assume that $p^*$ is pruned.
 %Define $p:=s{}^\curvearrowright \{\langle \xi^*, g^*(\xi^*)\rangle\}$. %where  \marginpar{Revisar}
% $$
% H(\eta):=\begin{cases}
% G^*(\eta)\cap \mathcal{P}_{\kappa_{g^*(\xi^*)}}(\kappa_\eta\cap g^*(\xi^*)), & \text{if $\eta<\xi^*$};\\
% \{x\in G^*(\eta)\mid g^*(\xi^*)\prec x\}, & %\text{if $\xi^*<\eta$}.
% \end{cases}
% $$
%Since $g^*(\xi^*)\in B_{\xi^*}$, the first case in the above definition yields a set in $U^\eta_{\xi^*, g^*(\xi^*)}$ and thus $p\in\mathbb{S}$. 
%By Proposition~\ref{FactorizationSinapova}~(1), there is $p^*=(\{\langle\xi^*, g^*(\xi^*)\rangle\}, H^*)$, $p^*\leq^* p$ such that $\mathbb{S}\downarrow p^*$ projects onto $\mathbb{S}_{\langle\xi^*,g^*(\xi^*)\rangle}\downarrow (\emptyset, H^*\upharpoonright\xi^*)$. %By appealing to Lemma~\ref{PrunedExtension} we may assume that this condition is pruned. 
By a very similar argument to Proposition \ref{FactorizationSinapova} (1), there is a projection between $\mathbb{S}\downarrow p^*$ and $\mathbb{S}_{(\kappa_{g^*(\xi^*)}, \xi^*,\mathfrak{V}, \mathfrak{C})}\downarrow p^*_{-}$. Let $\pi$ be such projection and set $D_{p^*}:=D\cap \mathbb{S}\downarrow p^*$. Clearly, $\pi[D_{p^*}]$ is dense and open in $\mathbb{S}_{(\kappa_{g^*(\xi^*)}, \xi^*,\mathfrak{V}, \mathfrak{C})}\downarrow p^*_{-}$. Since $p^*_{-}\in S(g^*_{-})$, it follows that $S(g^*_{-})\cap \pi[D_{p^*}]\neq \emptyset$. Let $(f,F)\in D_{p^*}$ be such that $\pi(f,F)\in S(g^*_{-})\cap \pi[D_{p^*}]$.
%.  Since $\xi^*$ is limit, observe that  $g_{-}^*:=g^*\upharpoonright\xi^*$ is a sequence in $[\prod_{\eta<\xi^*} B_\eta]$ satisfying properties \eqref{P1} and \eqref{P2} with respect to the measures $\mathfrak{V}:=\langle \overline{U}^\eta_{\xi^*, g^*(\xi^*)}\mid \eta<\xi^*\rangle$ and $\langle U^{\eta}_{\eta', g^*(\eta')}\mid \eta<\eta'<\xi^*\rangle$. Thus, as $\kappa_{g^*(\xi^*)}<\kappa$, $g^*_{-}$ is $\mathbb{S}_{\langle\xi^*, g^*(\xi^*)\rangle}$-generic over $V$. Moreover, $(\emptyset,H^*\upharpoonright\xi^*)\in S(g^*_{-})$, so that $S(g^*_{-})\cap \pi[D_{p^*}]\neq \emptyset$. Let $(f,F)\in D_{p^*}$ be such that $\pi(f,F)\in S(g^*_{-})\cap \pi[D_{p^*}]$.
 
\begin{claim}
$(f,F)\leq (g,\tilde{G}_g)$, where $g:=f\upharpoonright\xi^*+1$. 
\end{claim}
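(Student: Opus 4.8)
The plan is to verify that the condition $(f,F)$ produced from the projection argument lies below the "prepared" condition $(g,\tilde G_g)$ attached to its stem $g=f\upharpoonright\xi^*+1$. The key point is that everything below $\xi^*+1$ in $(f,F)$ is controlled by $p^*_{-}$ (and hence by the recovered generic $S(g^*_{-})$), while everything above is controlled by $G^*$ through the diagonal intersection that was built into $H^*$.

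First I would unpack what $(g,\tilde G_g)$ is: since $\max(\dom(g))=\xi^*$, by definition $(g,\tilde G_g)=(g,G_g)^{*,\xi^*}$, where $(g,G_g)=\one{}^\curvearrowright g$ and the superscript refers to Lemma~\ref{SPPimproved} applied to the dense open set $D$. So to show $(f,F)\leq (g,\tilde G_g)$ I must check the three clauses of Definition~\ref{OrderSinapova}(a): (1) $g^f\supseteq g$, which is immediate since $g=f\upharpoonright\xi^*+1$ and $\dom(g)=\dom(f)\cap(\xi^*+1)$ with $f\in\mathrm{St}$; (2) for $\eta\in\dom(g^f)\setminus\dom(g)$, i.e. $\eta>\xi^*$, that $f(\eta)\in \tilde G_g(\eta)$; and (3) for $\eta\notin\dom(g^f)$, that $F(\eta)\subseteq \tilde G_g(\eta)$.

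For clauses (2) and (3) above $\xi^*$, the argument is: $(f,F)\leq p^*$, so $f$ and $F$ above $\xi^*$ are constrained by $H^*(\eta)=\{x\in G^*(\eta)\mid g^*(\xi^*)\prec x\}\subseteq G^*(\eta)$. By construction $G^*(\eta)=\bigtriangleup_{x}\big(\bigcap_{h\in\mathrm{St}_{\eta,x}}\tilde G_h(\eta)\big)$; in particular, since $g\in\mathrm{St}$ with $\dom(g)\subseteq\eta$ (as $\xi^*<\eta$) and $\max(g)=g^*(\xi^*)\prec f(\eta)$ — which holds because $(f,F)\leq p^*$ forces $g^*(\xi^*)\prec f(\eta)$ — the relevant coordinate $f(\eta)$ of the generic sequence already witnesses membership in $\tilde G_g(\eta)$ by the defining property of the diagonal intersection; and for clause (3), $F(\eta)\subseteq H^*(\eta)\subseteq G^*(\eta)\subseteq \tilde G_g(\eta)$ by the same coordinate-$g$ factor of the diagonal intersection, using $g^*(\xi^*)\prec$ everything in $F(\eta)$. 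Below $\xi^*$, the relevant part of $(f,F)$ agrees with (extends) $p^*_{-}=(\emptyset,\overline{I\upharpoonright\xi^*})$, and since $\pi(f,F)\in S(g^*_{-})$ and $I$ was chosen inside that generic, compatibility with the lower part of $(g,\tilde G_g)$ — namely with $(g,G_g)^{*,\xi^*}_{\upharpoonright\xi^*}=(g,G_g)_{\upharpoonright\xi^*}=\one{}^\curvearrowright g$ restricted below $\xi^*$ — follows; here one uses that $\one{}^\curvearrowright g$ below $\xi^*$ imposes only the trivial large sets $B_\eta$ (intersected with the relevant supercompact space), which the $C_\eta$ used to build $\mathfrak{C}$, and hence $I$, already refine.

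The main obstacle I expect is the bookkeeping at the coordinate $\xi^*$ itself and immediately around it: one must check that $f(\xi^*)=g^*(\xi^*)\in \tilde G_g(\xi^*)$ is not actually required (since $\xi^*\in\dom(g)$, clause (2) only concerns $\eta\notin\dom(g)$), and more delicately that the one-step-extension reshuffling built into $\one{}^\curvearrowright g$ at the coordinate $r_{\dom(g)}(\eta)$ for $\eta$ just below $\xi^*$ is compatible with what $p^*_{-}$ and the lifting maps $I(\eta)$ give. This is the place where Proposition~\ref{OneStepProp} and the coherence properties ($\diamond$), ($\star$) of Proposition~\ref{TheBxisets} — which guarantee the liftings land in the correct measures — must be invoked carefully; the rest of the verification is a routine unwinding of the definitions of $\leq$, of the diagonal intersection, and of the projection $\pi$.
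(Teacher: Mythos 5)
Your proposal is correct and follows essentially the same route as the paper: above $\xi^*$ you use $(f,F)\leq p^*$ together with the definition of $H^*$ and the diagonal intersection $G^*(\eta)=\bigtriangleup_{x}\bigl(\bigcap_{h\in\mathrm{St}_{\eta,x}}\tilde G_h(\eta)\bigr)$ to get $f(\eta)\in\tilde G_g(\eta)$ and $F(\eta)\subseteq\tilde G_g(\eta)$, and below $\xi^*$ you rely on the fact that $(g,\tilde G_g)$ agrees with $\one{}^\curvearrowright g$ there, exactly as in the paper's proof. The only difference is that for the coordinates $\xi<\xi^*$ the paper is more direct: since $f\upharpoonright\xi^*+1=g$, one has $F(\xi)\in U^\xi_{\eta,g(\eta)}$ with $\eta=r_{\dom(g)}(\xi)$, so $F(\xi)\subseteq\mathcal{P}_{\kappa_{g(\eta)}}(\kappa_\xi\cap g(\eta))=\tilde G_g(\xi)$ immediately, without any appeal to $p^*_{-}$, $S(g^*_{-})$ or the sets $C_\eta$ and $I$ that your write-up routes through.
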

\begin{proof}[Proof of claim]
Clearly, $g\s f$. 

$\blacktriangleright$ Let $\xi\in\dom(f)\setminus\dom(g)$. Then $\xi^*<\xi$, so that, since $(f,F)\leq p^*$, $F(\xi)\s H^*(\xi)$. By definition of diagonal intersection, and since $\max(g)=g^*(\xi^*)$, $H^*(\xi)\s \tilde{G}_g(\xi)$.

$\blacktriangleright$ Let $\xi\in\dom(\tilde{G}_g)$. If $\xi^*<\xi$ then one may argue as before that $F(\xi)\s \tilde{G}_g(\xi)$. Thus, assume $\xi<\xi^*$. Since $(g,\tilde{G}_g)_{\upharpoonright \xi+1}= (g,G_g)_{\upharpoonright \xi+1}=(\one{}^\curvearrowright g)_{\upharpoonright\xi+1}$, we have $\tilde{G}_g(\xi)=G_g(\xi)=\mathcal{P}_{\kappa_{g(\eta)}}(\kappa_\xi\cap g(\eta))$, where $\eta:=r_{\dom(g)}(\xi)$. Since $f\upharpoonright \xi^*+1=g\upharpoonright\xi^*+1$, clearly $F(\xi)\in U^\xi_{\eta, g(\eta)}$ and thus $F(\xi)\s \tilde{G}_g(\xi)$.
\end{proof}
Now let $(f^*,F^*)$ be defined as $$(f,F)_{\upharpoonright\xi^*+1}{}^\smallfrown (p^*{}^\curvearrowright g^*\upharpoonright(\dom(f)\setminus \xi^*+1))_{\setminus\xi^*+1}$$
This gives a condition in $\mathbb{S}$, because $p^*$ was pruned and $g^*(\xi)\prec g^*(\eta)\in G^*(\eta)$, for $\eta\in(\xi^*,\mu)$. Observe that $(f^*,F^*)$ is also pruned.
\begin{claim}
$(f^*,F^*)\in D\cap S(g^*)$.
\end{claim}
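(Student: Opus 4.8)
The plan is to verify the two memberships separately, namely $(f^*,F^*)\in S(g^*)$ and $(f^*,F^*)\in D$.

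\medskip

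\textbf{Membership in $S(g^*)$.} First I would check that $f^*\subseteq g^*$. Below $\xi^*+1$ we have $f^*\upharpoonright\xi^*+1=f\upharpoonright\xi^*+1=g\upharpoonright\xi^*+1$, and since $\pi(f,F)\in S(g^*_{-})$ one reads off that $f\upharpoonright\xi^*=g^*_{-}\upharpoonright(\dom(f)\cap\xi^*)$ is an initial segment of $g^*$; moreover $f(\xi^*)=g^*(\xi^*)$ because $(f,F)\leq p^*$ already fixes this coordinate. Above $\xi^*$, by construction the stem of $(f^*,F^*)$ is exactly $g^*\upharpoonright(\dom(f)\setminus\xi^*+1)$. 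So $f^*\subseteq g^*$. It then remains to verify the ``extendibility'' clause in the definition of $S(g^*)$: for every $\xi\notin\dom(f^*)$ there is a condition below $(f^*,F^*)$ with $\xi$ in its stem and taking value $g^*(\xi)$. For $\xi>\xi^*$ this follows from property \eqref{P1}, since $g^*(\eta)\in G^*(\eta)$ for $\eta\in(\xi^*,\mu)$ and $(f^*,F^*)$ is pruned, so the one-step extension ${}^\curvearrowright\{\langle\xi,g^*(\xi)\rangle\}$ is legal; for $\xi<\xi^*$ one invokes that $S(g^*_{-})$ is $\mathbb{S}_{(\kappa_{g^*(\xi^*)},\xi^*,\mathfrak{V},\mathfrak{C})}$-generic, so the analogous statement holds below $\xi^*$, and lifts back up via the projection $\pi$. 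Hence $(f^*,F^*)\in S(g^*)$.

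\medskip

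\textbf{Membership in $D$.} Here the key is the previous Claim, which gives $(f,F)\leq(g,\tilde G_g)$ with $g=f\upharpoonright\xi^*+1$. Recall $(g,\tilde G_g)=(g,G_g)^{*,\xi^*}$ is the condition furnished by Lemma~\ref{SPPimproved} applied to the stem $g$ and the dense open set $D$, with $\eta:=\xi^*=\max(\dom(g))$. Since $(f,F)\leq(g,\tilde G_g)$ and $(f,F)\in D$, Lemma~\ref{SPPimproved} tells us that every condition $(j,J)\leq (f,F)_{\upharpoonright\xi^*+1}{}^\smallfrown (g,G_g)^{*,\xi^*}_{\setminus\xi^*+1}$ lies in $D$. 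So it suffices to check that $(f^*,F^*)$ has this form, i.e. that its part above $\xi^*+1$ refines the corresponding part of $(g,G_g)^{*,\xi^*}=(g,\tilde G_g)$. This is essentially the second bullet of the previous Claim read in the other direction: above $\xi^*$ the large set of $(f^*,F^*)$ is $p^*$'s large set intersected down along the one-step extensions by $g^*$, and $H^*(\eta)\subseteq\tilde G_g(\eta)$ for $\eta>\xi^*$ by the diagonal-intersection definition of $G^*$ together with $\max(g)=g^*(\xi^*)$. Thus $(f^*,F^*)\leq (f,F)_{\upharpoonright\xi^*+1}{}^\smallfrown(g,\tilde G_g)_{\setminus\xi^*+1}$, and Lemma~\ref{SPPimproved} yields $(f^*,F^*)\in D$.

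\medskip

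Combining the two parts, $(f^*,F^*)\in D\cap S(g^*)$, so $D\cap S(g^*)\neq\emptyset$. As $D$ was an arbitrary dense open subset of $\mathbb{S}$, this shows $S(g^*)$ is $\mathbb{S}$-generic over $V$, contradicting the choice of $g^*$, and completes the proof of Theorem~\ref{CriterionGenericity}. The step I expect to be the main obstacle is the bookkeeping in the verification that $(f^*,F^*)$ genuinely factors through $(g,\tilde G_g)^{*,\xi^*}$ above $\xi^*+1$: one has to be careful that splicing the $g^*$-stem on top does not enlarge any large set beyond what $\tilde G_g$ allows, which is exactly where prunedness of $p^*$ and the containments $H^*(\eta)\subseteq\tilde G_g(\eta)$ are used; everything else is a routine unwinding of the definitions of $S(g^*)$, one-step extensions, and the projection.
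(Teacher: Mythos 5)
Your argument is correct and follows essentially the same route as the paper: membership in $D$ via Lemma~\ref{SPPimproved} applied to $(g,\tilde G_g)$ together with the previous claim and $(f,F)\in D$ (using $H^*(\eta)\subseteq\tilde G_g(\eta)$ from the diagonal intersection), and a routine verification of membership in $S(g^*)$ using prunedness, property~\eqref{P1}, and the genericity of $S(g^*_{-})$ via the projection $\pi$. You simply spell out the details the paper compresses into one sentence (and your reference to the ``second bullet'' should be the first bullet of the preceding claim, a harmless mislabel).
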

\begin{proof}[Proof of claim]
By combining the definition of $(g,\tilde{G}_g)$, the above claim and the fact that $(f,F)\in D$, it follows that $(f^*,F^*)\in D$. The verification that $(f^*,F^*)\in S(g^*)$ is mere routine.
\end{proof}
From the above arguments we infer that $D\cap S(g^*)\neq\emptyset$ hence, $g^*$ is $\mathbb{S}$-generic over $V$. This produces a contradiction with our initial assumption on $\kappa$ and $\mathbb{S}$.
\end{proof}

For future reference we also include the proof of a general version of the classical R\"owbottom lemma \cite[Theorem 7.17]{Kan}.
\begin{defi}
Let %$\vec{\xi}\in [\mu]^{<\omega}$, 
$g\in [\prod_{\xi\in\dom(g)}B_\xi]$ and $s\in[\mu\setminus \dom(g)]^{<\omega}$. A sequence $\langle H_\theta\mid \theta\in s\rangle$, is \emph{amenable} to $\langle g,s\rangle$ if for each $\theta\in s$, if $\eta:=r_{\dom(g)}(\theta)<\mu$, then $H_\theta\in U^\theta_{\eta, g(\eta)}$ and, otherwise, $H_\theta\in U_\theta$.

A sequence $\langle H_\theta\mid \theta\in \mu\setminus \dom(g)\rangle$ is said to be \emph{amenable} to $\langle g\rangle$ if, for each $s\in[\mu\setminus\dom(g)]^{<\omega}$, $\langle H_\theta\mid \theta\in s\rangle$ is amenable to $\langle g,s\rangle$.
\end{defi}

\begin{lemma}[Generalized R\"{o}wbottom's lemma]\label{DiagonalSupercompactRowottom}
%Assume the hypotheses displayed at proposition \ref{Mathiascriterion} and let %Let $\kappa$ be a supercompact cardinal, $\mu<\kappa$ and $\langle \kappa_\xi:\xi<\mu\rangle$ be an increasing sequence of cardinals above $\kappa$. Let $\langle U_\xi:\xi<\mu\rangle$ be a sequence of supercompact measures over $\mathcal{P}_\kappa(\kappa_\xi)$, each $\xi<\mu$
%Let $\mu<\kappa$ and $\langle U_\xi:\,\xi<\mu\rangle$ be a collection of supercompact measures over $\mathcal{P}_\kappa(\kappa_\xi)$, $\xi<\mu$.
Let
$g$ be a sequence in $[\prod_{\xi\in\dom(g)}B_\xi]$ and $\langle H_\theta\mid \theta\in \mu\setminus \dom(g)\rangle$ be amenable to $\langle g\rangle$.

For each function $c:[\prod_{\theta\in \mu\setminus \dom(g)} H_\theta]^{<\omega}\rightarrow \vartheta$  with $\vartheta\leq \mu$, there is $\langle H^*_\theta\mid \theta\in \mu\setminus \dom(g)\rangle$ amenable to $\langle g\rangle$ such that the following hold:
\begin{enumerate}
\item for each $\theta\in\mu\setminus\dom(g)$, $H^*_\theta\subseteq H_\theta$;
\item $\langle H^*_\theta\mid \theta\in \mu\setminus \dom(g)\rangle$ is  homogeneous for $c$: namely, for each $n<\omega$ and each $s\in[\mu\setminus \dom(g)]^n$,  the function $c\upharpoonright[\prod_{\theta\in s} H^*_\theta]$ is constant.
\end{enumerate}
\end{lemma}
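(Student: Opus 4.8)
The plan is to prove this by induction on $n$, where we homogenize $c$ restricted to $\prec$-sequences of length exactly $n$, one $n$ at a time, and then take a suitable intersection/diagonalization of the resulting sequences at the end. For a fixed $n<\omega$ and a fixed $s=\{\theta_0<\dots<\theta_{n-1}\}\in[\mu\setminus\dom(g)]^n$, the goal is to shrink each $H_{\theta_i}$ to some $H^{s}_{\theta_i}$ (still amenable to $\langle g,s\rangle$) so that $c\restriction[\prod_{i<n}H^{s}_{\theta_i}]$ is constant. The key tool is that each relevant measure in the list — namely $U^{\theta_i}_{\eta_i,g(\eta_i)}$ when $\eta_i:=r_{\dom(g)}(\theta_i)<\mu$, and $U_{\theta_i}$ otherwise — is a $\kappa_{(\cdot)}$-supercompact (hence $\kappa$-complete, normal, fine) ultrafilter, and all targets $\vartheta\le\mu<\kappa$ have size $<\kappa$. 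So each measure is strongly "partition-ready": for a coloring of $\mathcal{P}_{\kappa}(\cdot)$ into $<\kappa$ pieces there is a one-element-color large set.

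First I would do the base case $n=1$: for a single $\theta\in\mu\setminus\dom(g)$, the map $x\mapsto c(\langle x\rangle)$ colors the appropriate $\mathcal{P}_{\kappa_x}(\cdot)$ (or $\mathcal{P}_\kappa(\kappa_\theta)$) into $\vartheta<\kappa$ colors, so by completeness of the relevant measure there is an amenable $H^{\{\theta\}}_\theta\subseteq H_\theta$ on which $c$ is constant. For the inductive step with $|s|=n+1$ and $\delta:=\min(s)$, $t:=s\setminus\{\delta\}$: for each legal value $x$ of the $\delta$-coordinate, consider the induced coloring $c_x$ on $[\prod_{\theta\in t}(\cdot)]$ of length $n$; by the induction hypothesis (applied inside the ultrapower, or uniformly via normality/fineness and the coherence properties $(\xi),(\xi'),(\star),(\diamond)$ of Proposition~\ref{TheBxisets}) we get a homogeneous amenable system for $c_x$ with constant color $\gamma(x)<\vartheta$; then color the $\delta$-coordinate by $x\mapsto\gamma(x)$, apply completeness of the $\delta$-measure to get a large set on which $\gamma$ is constant, and diagonalize the surviving $t$-systems over that large set using the diagonalization procedure of \cite[Proposition~2.12]{Sin}. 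This yields $H^{s}_\theta$ for $\theta\in s$ with $c\restriction[\prod_{\theta\in s}H^{s}_\theta]$ constant. Finally, for each $\theta\in\mu\setminus\dom(g)$ set $H^*_\theta:=\bigcap\{H^{s}_\theta\mid s\in[\mu\setminus\dom(g)]^{<\omega},\ \theta\in s\}$ — well, more precisely a diagonal intersection indexed by the relevant stems, exactly as in the proof of Lemma~\ref{SPP} and in the construction of $G^*$ in the proof of Theorem~\ref{CriterionGenericity} — using $\kappa$-completeness together with $\mu^{<\aleph_0}=\mu$ and Definition~\ref{DefinitioOfXxi}($\alpha$) to ensure the result is still amenable and large.

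The main obstacle will be \emph{uniformizing the inductive step coherently across all choices of the first coordinate}: when we shrink the $t$-coordinates depending on the value $x$ of the $\delta$-coordinate, we must produce a \emph{single} amenable system $\langle H^s_\theta\mid\theta\in t\rangle$ that works simultaneously for all $x$ in a large set, and the $t$-measures themselves depend on $x$ (they are $\overline{U}^{\theta}_{\delta,x}$-liftings, not fixed ultrafilters). This is precisely where the coherence clauses $(\xi')$ and $(\diamond)$ of Proposition~\ref{TheBxisets} — which say that the measures $\overline U^{\zeta}_{\eta,x}$ cohere under the maps $y\mapsto\overline U^\zeta_{\eta,y}$ and restrict correctly — must be invoked, so that the "pointwise" homogeneous systems obtained for each $x$ glue, via a diagonal intersection, into one honest amenable system. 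I expect the remaining bookkeeping (checking amenability is preserved under the diagonalizations, verifying the domain arithmetic $r_{\dom(g)}$ behaves under passing to one-step extensions) to be routine and parallel to the corresponding verifications already carried out in Section~3 and in \cite[Section~2]{Sin}, so I would only sketch those.
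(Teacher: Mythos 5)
Your proposal is correct and is essentially the paper's own argument with the roles of the extreme coordinates swapped. The paper's inductive step splits off $\eta_s:=\max(s)$: for each lower tuple it homogenizes the top coordinate by the one-dimensional case, takes a guarded diagonal intersection at the top coordinate over the lower tuples (with guard $\max(\cdot)\prec x$, justified by normality of $U^{\eta_s}_{\xi_s,g(\xi_s)}$), and then feeds the induced colour map on the lower block into the induction hypothesis. You split off $\delta:=\min(s)$ instead: for each value $x$ at $\delta$ you apply the induction hypothesis to the upper block, homogenize the colour map $x\mapsto\gamma(x)$ by completeness of the measure at $\delta$, and then diagonal-intersect the upper-coordinate sets over $x$ with guard $x\prec y$. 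Both directions work, and the concluding step is the same: for each $\theta$ there are only $\mu$-many relevant $s$, so plain intersection together with $\kappa$-completeness ($\mu<\kappa$) already gives an amenable homogeneous system; no diagonal intersection is needed at that stage.

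One correction, although it does not invalidate your plan: the ``main obstacle'' you single out is not actually there. Amenability in this lemma is always with respect to the \emph{fixed} stem $g$, so the measures attached to the coordinates $\theta\in t$ are $U^{\theta}_{r_{\dom(g)}(\theta),\,g(r_{\dom(g)}(\theta))}$ (or $U_\theta$ when $r_{\dom(g)}(\theta)=\mu$) and do not depend on the value $x$ placed at $\delta$. The $x$-dependent liftings $U^{\theta}_{\delta,x}$ would only become relevant if the stem were extended by $\langle\delta,x\rangle$ — which never happens in this combinatorial lemma — and even then they govern coordinates \emph{below} $\delta$, whereas $t$ lies above it. Consequently neither the coherence clauses $(\xi')$, $(\diamond)$ of Proposition~\ref{TheBxisets} nor any argument ``inside the ultrapower'' is needed: the gluing step you describe is just a guarded diagonal intersection of sets all belonging to one fixed normal measure, exactly parallel to the diagonal intersection the paper performs at the top coordinate.
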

\begin{proof}
Arguing by induction over $n<\omega$, we will prove that for each function $\bar{c}:[\prod_{\theta\in \mu\setminus \dom(g)} H_\theta]^{n}\rightarrow \vartheta$ and  $s\in [\mu\setminus \dom(g)]^n$, there is a sequence  $\mathcal{H}^s=\langle H^s_\theta\mid\theta\in s\rangle$ which is amenable to $\langle g,s\rangle$ and such that $c\upharpoonright [\prod_{\theta\in s}H^s_\theta]$ is constant. If $n=1$ the claim  follows by appealing to the $\mu^+$-completedness of all the measures involved (see Definition~\ref{DefinitioOfXxi}($\alpha$)). %Observe that, for each such $\theta$, this argument already implies that any $\bar{c}: H_\theta \rightarrow \vartheta$ has some $\langle H^*_\theta\rangle$ which is homogeneous and  amenable to $\langle g, \{\theta\}\rangle$.
Thus,
%for each $\bar{c}:[\prod_{\xi\in\mu} \mathcal{P}_\kappa(\kappa^{+\xi})]^{[1]}\rightarrow \vartheta$  and each $s\in[\mu]^{1}$, say $s=\{\xi\}$, we can use the $\kappa$-completedness of $U_\xi$ to get $\mathcal{H}^s=\langle H^s_\xi\rangle$ where $\bar{c}\upharpoonright[\prod_{\xi\in s} H^s_\xi]$ is constant. 
we shall assume that the result holds for each $1\leq m\leq n$ and will infer from this that it holds for $n+1$.

Fix $\bar{c}:[\prod_{\theta\in \mu\setminus \dom(g)} H_\theta ]^{n+1}\rightarrow \vartheta$ be a function and let $s\in [\mu\setminus \dom(g)]^{n+1}$. Set $\max(s)=\eta_s$. Say, $\xi_s:=r_{\dom(g)}(\eta_s)$ and assume, for instance, that $\xi_s<\mu$. Thus, $H_{\eta_s}\in U^{\eta_s}_{\xi_{\eta_s}, g(\xi_{\eta_s})}$. For each $g\in[\prod_{\theta\in s\cap \eta_s} H_\theta]$, let  $c_g: H_{\eta_s}\rightarrow \vartheta$ be the function defined  by $x\mapsto \bar{c}(g\cup \{\langle \eta_s, x\rangle\})$, provided  $\max(g)\prec x$, or $0$ otherwise. Appealing to the case $n=1$, for each such $g$ we obtain $\langle H^s_g\rangle $ which is amenable to $\langle g,\{\eta_s\}\rangle$ and homogeneous with respect to $c_g$. Pick $\vartheta_g\in \vartheta$ be the constant value of $c_g$ witnessing this. %Since $\vartheta<\kappa$ we may assume without loss of generality that $\vartheta_g=\vartheta^*$, for each such $g$.  
Let $H^s_{\eta_s}=\bigtriangleup\{H^s_g:\,g\in[\prod_{\theta\in s\cap \eta_s} H_\theta]\}$, where recall that this diagonal intersection is defined as
$$
\{x\in \mathcal{P}_{\kappa_{g(\xi_s)}}(\kappa_{\eta_s}\cap g(\xi_s)) \mid \, \forall g\in[\prod_{\theta\in s\cap \eta_s} H_\theta]\;(\max(g)\prec x\,\rightarrow\, x\in H^s_g)\}.
$$
By normality of the measure $U^{\eta_s}_{\xi_s, g(\xi_s)}$, $H^s_{\eta_s}\in U^{\eta_s}_{\xi_{\eta_s}, g(\xi_{\eta_s})}$. On the other hand, let  $c^*:[\prod_{\theta\in \mu\setminus \dom(g)} H_\theta ]^{n}\rightarrow \vartheta$ be the function sending each $g$ to $\vartheta_g$, in case $g \in[\prod_{\theta\in s\cap \eta_s}H_\theta  ]$, or $0$ otherwise. By the induction hypothesis %for each $s\in[\mu]^{n}$ 
there is $\mathcal{H}^{s\cap \eta_s}=\langle H^{s\cap \eta_s}_\theta\mid\,\theta\in s\cap \eta_s\rangle$ which is amenable to $\langle g, s\cap \eta_s\rangle$ and $c^\star\upharpoonright[\prod_{\theta\in s\cap \eta_s} H^{s\cap \eta_s}_\theta]$ has constant value $\vartheta^*$. 

We claim that $\mathcal{H}^s=\mathcal{H}^{s\cap \eta_s}\cup \{\langle \eta_s, H^s_{\eta_s}\rangle \}$ witnesses the inductive step relative to the function $\bar{c}$ and the set $s$. It is easy to check that $\mathcal{H}^s$ is amenable to $\langle g, s\rangle$. For homogeneity, %ndeed, fix $s\in[\mu]^{n+1}$ and 
let $f\in [\prod_{\theta\in s} H^s_\theta]$ and say that $f=g \cup\{\langle\eta_s, x\rangle\}$, where $g\in [\prod_{\theta\in s\cap \eta_s} H^s_\theta]$. Since $x\in {H}^s_{\eta_s}$ and $\max(g)\prec x$, by definition of diagonal intersection, $x\in H^s_{g}$. Thus, $c_g(x)=\vartheta_g=\vartheta^*$. On the other hand, $\bar{c}(f)=c_{g}(x)$, so that $\bar{c}(f)=\vartheta^*$. Since the choice of $s$ was arbitrary, the inductive step follows.

For each $n<\omega$ use the previous argument to obtain a sequence $\langle \mathcal{H}^s\mid s\in[\mu\setminus\dom(g)]^n\rangle$, $\mathcal{H}^s=\langle H^s_\theta\mid \theta\in s\rangle$, such that $\mathcal{H}^s$ is amenable to $\langle g,s\rangle$ and  $c\upharpoonright [\prod_{\theta\in s} H^s_\theta]$ is constant. Define $\langle H^*_\theta\mid \theta\in\mu\setminus \dom(g)\rangle$ as  $H^*_\theta:=\bigcap\{H^s_\theta: s\in[\mu\setminus \dom(g)]^{<\omega}, \theta\in s\}.$
Since all the measures involved are $\mu^+$-complete this process yields a sequence 
$\langle H^*_\theta\mid \theta\in\mu\setminus \dom(g)\rangle$ which is amenable to $\langle g\rangle$. Finally,
%and we are taking intersections of at most $\mu$-many sets. 
%Finally, the definition of the sequences $\mathcal{H}^s$ guarantees that $H^*$ is a homogeneous set for the function $c$. 
it is routine to check that this sequence is  homogeneous  for  $c$.
\end{proof}

\section{The main forcing construction}\label{SectionTheMainConstruction}
The present section will be devoted to introduce the main forcing construction of the paper. This forcing is a variation of the forcings appe\-aring in \cite{Ung} or in \cite{GolPov}, where the Supercompact Prikry/ Magidor forcing is replaced by Sinapova forcing. This new choice will be the responsible of the very good and the bad scale in the generic extension. For enlightening  the argument we will simply give  details for the construction in case $\Theta=\lambda^{+}$.  %Theorem \ref{MainTheorem1} holds for $\gamma=\lambda^{+}$. 
The general definition can be easily inferred from our arguments. For more details we refer the reader to \cite[\S4]{FriHon}.

%Throughout the section we will  rely on the hypotheses and the terminology adopted at the beginning of Section \ref{SinapovaPreliminaries}. We need to set some more notation:
\begin{notation}\label{CohenForcingNotation}
\rm{$ $
\begin{itemize}
\item For each $x\subseteq \lambda^+$,  $\mathbb{A}_x:=(\Add(\kappa,x),\supseteq)$. %will denote the forcing $\Add(\kappa, x)$: i.e., the set  the set of partial functions $p: \kappa\times x\rightarrow 2$ with $\dom(p)\in[\kappa\times x]^{<\kappa}$, endowed with the reverse end-extension ordering.
\item For each $y\subseteq x\subseteq\lambda^+
$ and $H\s\mathbb{A}_x$ a generic filter, $H\upharpoonright y$ will denote the generic filter induced by $H$ and the standard projection between $\mathbb{A}_x$ and $\mathbb{A}_y$.
\end{itemize}
 
}
\end{notation}
Let $G\subseteq \mathbb{A}_{\lambda^+}$ generic over $V$. Since $\kappa$ is Laver indestructible there is in $V[G]$ a $\lhd$-increasing sequence  $\mathfrak{U}_{\lambda^+}=\langle U_\xi\mid \xi<\mu\rangle$ of supercompact measures on $\mathcal{P}_\kappa(\kappa_\xi)$, $\xi<\mu$. With $\mathfrak{U}_{\lambda^+}$ we find a sequence  $\mathfrak{B}_{\lambda^+}=\langle B_\xi\mid \xi<\mu\rangle$ witnessing Proposition~\ref{TheBxisets} and later define the corresponding 
Sinapova forcing $\mathbb{S}_{\lambda^+}:=\mathbb{S}_{(\kappa,\mu,\mathfrak{U},\mathfrak{B})}\in V[G]$. %Let $\mathfrak{U}_{\lambda^+}=\langle U_\xi:\xi<\mu\rangle$, $\mathfrak{B}_{\lambda^+}=\langle B_\xi:\xi<\mu\rangle$ be the sequences of measures and large sets necessary to define $\mathbb{S}_{\lambda^+}$ in $V[G]$. 
For each such $\xi$, let $\dot{U}_\xi$ and $\dot{B}_\xi$ be $\mathbb{A}_{\lambda^+}$-nice names for each of such objects. The next result shows that there are many intermediate extensions of $V[G]$ where $(\mathfrak{U}_{\lambda^+}, \mathfrak{B}_{\lambda^+})$ \textit{projects}. For details the reader is referred to \cite[Lemma 3.3]{FriHon} or to \cite[Lemma 3.1]{GolPov} where a similar result is proved.

\begin{lemma}\label{DefinitionOFA}
There is an unbounded set of ordinals $\mathcal{A}\subseteq \lambda^+$, closed under taking limits of ${\geq}\delta$-se\-quences, such that, for each $\alpha\in\mathcal{A}$ and each generic filter $G\subseteq \mathbb{A}_{\lambda^+}$, $\langle ({\dot{U}_\xi})_G\cap V[G\upharpoonright \alpha]\mid \xi<\mu\rangle$, 
$\langle ({\dot{B}_\xi})_G\cap V[G\upharpoonright \alpha]\mid\xi<\mu\rangle$ %and $\langle ({\dot{F}^\xi_\eta})_G\cap V[G\upharpoonright \alpha]:\xi<\mu, \eta<\delta\rangle$
%$$(\star)\;\;\;\langle ({\dot{U}_\xi})_G\cap V[G\upharpoonright \alpha]:\xi<\mu\rangle$$
are suitable to define Sinapova forcing in $V[G\upharpoonright \alpha]$.
\end{lemma}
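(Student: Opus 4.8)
The plan is to construct the set $\mathcal{A}$ by a reflection argument and then verify that the restricted measures remain supercompact measures with the required coherence. First I would observe that, since $\mathbb{A}_{\lambda^+}=\Add(\kappa,\lambda^+)$ has the $\kappa^+$-cc (indeed it is $\kappa^+$-Knaster under $\gch_{\geq\kappa}$) and each $\dot U_\xi$, $\dot B_\xi$ is a nice name, each such name is essentially a $\lambda^+$-indexed family of antichains, hence can be coded by a subset of $\lambda^+$. The key closure point is: $\alpha\in\mathcal{A}$ should mean that $\alpha$ is closed under the Skolem functions witnessing "the nice name $\dot U_\xi$ (resp. $\dot B_\xi$) mentions only coordinates below $\alpha$ when restricted appropriately" for all $\xi<\mu$ simultaneously. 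Since $\mu<\kappa<\lambda$ and we only need to absorb $\mu$-many names each of size ${\leq}\lambda^+$, a standard Löwenheim–Skolem/closing-off argument produces a club-like unbounded $\mathcal{A}\subseteq\lambda^+$; intersecting with the club of ordinals closed under $\geq\delta$-limits (which is itself closed unbounded) gives the stated extra closure property.

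Next I would fix $\alpha\in\mathcal{A}$ and a generic $G$, write $G_\alpha:=G\upharpoonright\alpha$, and analyze the quotient $\mathbb{A}_{\lambda^+}/\mathbb{A}_\alpha\cong\Add(\kappa,\lambda^+\setminus\alpha)$ (up to the obvious reindexing), which in $V[G_\alpha]$ is again $\kappa^+$-cc and ${<}\kappa$-closed. The point of the closure of $\mathcal{A}$ is that $U_\xi\cap V[G_\alpha]$ and $B_\xi\cap V[G_\alpha]$ actually lie in $V[G_\alpha]$: membership of a set $A\in V[G_\alpha]$ in $U_\xi$ is decided by the name $\dot U_\xi$ using only coordinates below $\alpha$, so $V[G_\alpha]$ can compute $U_\xi\cap V[G_\alpha]$. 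Then I would check the three things needed for "suitable to define Sinapova forcing": (i) $U_\xi\cap V[G_\alpha]$ is a $\kappa_\xi$-supercompact (normal, fine, $\mu^+$-complete) measure on $\mathcal{P}_\kappa(\kappa_\xi)^{V[G_\alpha]}$ — here one uses that $\mathcal{P}_\kappa(\kappa_\xi)^{V[G_\alpha]}=\mathcal{P}_\kappa(\kappa_\xi)^{V[G]}$ since the quotient is ${<}\kappa$-closed, hence adds no new such subsets, so restriction loses nothing; (ii) the $\lhd$-chain condition $U_\zeta\cap V[G_\alpha]\in\Ult(V[G_\alpha],U_\xi\cap V[G_\alpha])$ for $\zeta<\xi$, which transfers from the corresponding fact in $V[G]$ because the ultrapower by the restricted measure embeds into the ultrapower by the full measure and the closure of $\mathcal{A}$ guarantees the relevant representing functions are already in $V[G_\alpha]$; (iii) the coherence properties $(\xi)$, $(\xi')$, $(\star)$, $(\diamond)$ of Proposition~\ref{TheBxisets} for $\langle B_\xi\cap V[G_\alpha]\rangle$, each of which is a statement quantifying over sets in $\mathcal{P}_\kappa(\kappa_\xi)$ and hence, by the no-new-subsets observation and the definability of $U_\xi\cap V[G_\alpha]$ inside $V[G_\alpha]$, reflects down from $V[G]$.

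I would present (i)–(iii) in that order, doing (i) carefully (the ${<}\kappa$-closure of the quotient is the workhorse) and then noting that (ii) and (iii) are "elementary" consequences once (i) and the absoluteness of $\mathcal{P}_\kappa(\kappa_\xi)$ are in place, exactly as in \cite[Lemma 3.3]{FriHon} and \cite[Lemma 3.1]{GolPov}, so I would mostly refer to those and only indicate the modifications caused by $\mu$ possibly being uncountable (all completeness bounds become $\mu^+$, the club $\mathcal{A}$ must be closed under $\geq\delta$-limits rather than just $\geq\omega_1$-limits, and one uses $\mu^{<\kappa}=\mu$ from $\gch_{\geq\kappa}$).

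The main obstacle I anticipate is (ii): verifying that the Mitchell-order chain survives restriction requires knowing that the function $x\mapsto\overline U^\zeta_{\xi,x}$ representing $U_\zeta$ in $\Ult(V[G],U_\xi)$ is already an element of $V[G_\alpha]$ and still represents $U_\zeta\cap V[G_\alpha]$ in $\Ult(V[G_\alpha],U_\xi\cap V[G_\alpha])$ — this is precisely what forces the closure condition on $\mathcal{A}$ to absorb not only the names $\dot U_\xi,\dot B_\xi$ but also names for these representing functions (and, transitively, for the liftings $U^\zeta_{\xi,x}$). So in constructing $\mathcal{A}$ I would close off under all of these auxiliary $\mu$-many nice names at once; since each has size ${\leq}\lambda^+$ and $\cof(\lambda^+)=\lambda^+>\mu$, the closing-off still yields an unbounded, suitably closed $\mathcal{A}$, and the rest goes through.
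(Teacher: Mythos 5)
Your proposal follows essentially the same route as the paper: the paper gives no argument for Lemma~\ref{DefinitionOFA} beyond referring to \cite[Lemma 3.3]{FriHon} and \cite[Lemma 3.1]{GolPov}, and your closing-off-plus-restriction sketch is precisely the argument of those sources (the $\kappa^+$-cc of $\mathbb{A}_{\lambda^+}$ to bound the deciding data, the ${<}\kappa$-distributivity of the tail forcing to keep $\mathcal{P}_\kappa(\kappa_\xi)$ unchanged, and a deferral to the cited lemmas for the Mitchell-order and coherence clauses). Two formulations would need repair before this could stand as a proof. First, you cannot ``absorb the names $\dot{U}_\xi,\dot{B}_\xi$, each of size $\lambda^+$'' below an ordinal $\alpha<\lambda^+$, nor do you need to: what the closure must capture is, for each $\xi<\mu$ and each nice $\mathbb{A}_\beta$-name $\dot{A}$ ($\beta<\lambda^+$) for a subset of $\mathcal{P}_\kappa(\kappa_\xi)$, a maximal antichain deciding ``$\dot{A}\in\dot{U}_\xi$''; by $\kappa^+$-cc this has size ${\leq}\kappa$, hence bounded support, and there are at most $\lambda$-many such $\dot{A}$ for each $\beta$, so the resulting closure function maps $\lambda^+$ into $\lambda^+$. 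You state this mechanism correctly in your second paragraph, but the counting in your first and last paragraphs (``$\mu$-many names of size ${\leq}\lambda^+$'') is what would actually have to appear, and the same remark applies to the auxiliary names for the representing functions $x\mapsto \overline{U}^\zeta_{\xi,x}$, which have size ${<}\delta$ and hence bounded supports. Second, ``the club of ordinals closed under ${\geq}\delta$-limits'' does not parse: closure under ${\geq}\delta$-limits is a property of the set $\mathcal{A}$, and it is obtained by taking the closure points of the above functions of cofinality ${\geq}\delta$; this cofinality restriction is also what guarantees that every $A\in V[G\upharpoonright\alpha]$ with $A\subseteq\mathcal{P}_\kappa(\kappa_\xi)$ has a nice name of size ${<}\delta$ with support bounded below $\alpha$, a fact your argument tacitly uses. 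With these repairs your step (i) is fine, and steps (ii)--(iii) are exactly the points the paper itself delegates to \cite{FriHon} and \cite{GolPov}.
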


\begin{notation}
\rm{For each $\alpha\in\mathcal{A}$, let $\mathfrak{U}_\alpha$ and $\mathfrak{B}_\alpha$ %and $\mathfrak{F}_\alpha$ 
be the sequences witnessing Lemma \ref{DefinitionOFA}. Let $\dot{\mathbb{S}}_\alpha$ be a $\mathbb{A}_\alpha$-name representing the Sinapova forcing $\mathbb{S}_{(\kappa,\mu,\mathfrak{U}_\alpha,\mathfrak{B}_\alpha)}\in V[G\upharpoonright\alpha]$. %defined with respect to the tuple $(\kappa,\mu,\mathfrak{U}_\alpha, \mathfrak{B}_{\alpha})$.
}
\end{notation}
\begin{prop}\label{PropOnProjectionOfGenerics}
Work in $V[G]$. For each $\alpha\in\mathcal{A}$, $\mathbb{S}_{\lambda^+}$ projects onto $\mathbb{S}_\alpha$.
\end{prop}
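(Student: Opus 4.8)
The plan is to exhibit an explicit projection map $\pi\colon \mathbb{S}_{\lambda^+}\to \mathbb{S}_\alpha$ that acts on conditions by intersecting their large sets with the corresponding ground model $V[G\upharpoonright\alpha]$. More precisely, given a condition $(g,H)\in\mathbb{S}_{\lambda^+}$, first note that the stem $g$ is a finite $\prec$-increasing sequence with $g(\xi)\in B_\xi^{\lambda^+}$ for $\xi\in\dom(g)$; by Lemma~\ref{DefinitionOFA} the sets $B_\xi^\alpha:=B_\xi^{\lambda^+}\cap V[G\upharpoonright\alpha]$ belong to $U_\xi^\alpha$, and one checks that (after possibly shrinking $H$ along a $\leq^*$-extension, which is harmless for a projection) each $g(\xi)$ already lies in $B_\xi^\alpha$, since the relevant $x$'s are members of $\mathcal{P}_\kappa(\kappa_\xi)$ computed the same way in both models — $\kappa_\xi<\lambda$ so no new subsets of $\kappa_\xi$ are added between $V[G\upharpoonright\alpha]$ and $V[G]$ by the closure/chain-condition of $\Add(\kappa,\cdot)$. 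Then set $\pi(g,H):=(g, H')$ where $H'(\xi):=H(\xi)\cap V[G\upharpoonright\alpha]$ for each $\xi\in\dom(H)$; by the choice of $\mathcal{A}$ (and nice names), $H'(\xi)$ is $U_\xi^\alpha$-large, resp.\ $(U^\xi_{\xi_g,x})^\alpha$-large in the relevant case of Definition~\ref{SinapovaForcing}(3), so $\pi(g,H)\in\mathbb{S}_\alpha$.

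The verification that $\pi$ is a projection has the three standard components. First, $\pi$ is order-preserving: if $(g^p,H^p)\leq(g^q,H^q)$ in $\mathbb{S}_{\lambda^+}$ then clauses (1)--(3) of Definition~\ref{OrderSinapova}(a) are inherited after intersecting with $V[G\upharpoonright\alpha]$, using that $g^p(\xi)\in H^q(\xi)$ implies $g^p(\xi)\in H^q(\xi)\cap V[G\upharpoonright\alpha]$ because $g^p(\xi)\in V[G\upharpoonright\alpha]$ (same point as above). Second, $\pi(\one)=\one$, trivially. Third — the crucial clause — given $(g,H)\in\mathbb{S}_{\lambda^+}$ and $(g',H')\leq\pi(g,H)$ in $\mathbb{S}_\alpha$, I must find $(g'',H'')\leq(g,H)$ in $\mathbb{S}_{\lambda^+}$ with $\pi(g'',H'')\leq(g',H')$. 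Take $g'':=g'$ (this is legitimate: $g'$ is $\prec$-increasing, extends $g$, and each new value $g'(\xi)$ for $\xi\in\dom(g')\setminus\dom(g)$ lies in $\pi(g,H)$'s large set, hence in $H(\xi)$), and for $\xi\in\dom(H'')=\mu\setminus\dom(g')$ set $H''(\xi)$ to be \emph{any} $V[G]$-large set contained in $H(\xi)$ whose trace on $V[G\upharpoonright\alpha]$ is contained in $H'(\xi)$ — e.g.\ $H''(\xi):=H(\xi)$ works if one instead only demands $\pi(g'',H'')\leq(g',H')$, which after the one-step-extension bookkeeping of Definition~\ref{SinapovaForcing} reduces to $H(\xi)\cap\mathcal{P}_{\kappa_x}(\kappa_\xi\cap x)\cap V[G\upharpoonright\alpha]\subseteq H'(\xi)$ for the appropriate $x$; since $H'(\xi)\subseteq H(\xi)\cap V[G\upharpoonright\alpha]$ this may fail, so one genuinely takes $H''(\xi):=$ a $V[G]$-name-definable shrinking with trace $\subseteq H'(\xi)$, available because $H'(\xi)\in V[G\upharpoonright\alpha]\subseteq V[G]$ and it is itself $(U^\xi_{\cdot})^\alpha$-large hence $U^\xi_\cdot$-large by Lemma~\ref{DefinitionOFA}.)

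The main obstacle I anticipate is purely bookkeeping: making sure the one-step-extension operations built into the order on Sinapova forcing (the case split in Definition~\ref{SinapovaForcing}(3) between indices $\xi$ above and below $\max(\dom(g))$, and the measures $U^\xi_{\xi_g,x}$ lifted along projections $\pi^{\zeta,x}$) are respected simultaneously in $V[G\upharpoonright\alpha]$ and $V[G]$ — i.e.\ that the ``traces commute'' with lifting measures and with restricting to $\mathcal{P}_{\kappa_x}(\kappa_\xi\cap x)$. This is exactly the content of Lemma~\ref{DefinitionOFA} (the measures and their liftings are computed by nice names whose $\alpha$-traces are the measures of $V[G\upharpoonright\alpha]$), so no new idea is needed; it is the analogue of \cite[Lemma 3.3]{FriHon} and \cite[Lemma 3.1]{GolPov}, and I would simply cite those and indicate the verification is routine. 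I expect the final proof to be one short paragraph: define $\pi$ by intersecting large sets with $V[G\upharpoonright\alpha]$, observe that stems transfer because $\Add(\kappa,\cdot)$ adds no bounded subsets of $\kappa_\xi$ strictly between the two models, and check the three projection axioms, pointing to Lemma~\ref{DefinitionOFA} for the measure-theoretic coherence.
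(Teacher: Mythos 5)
Your proposed map is not well defined, and this is not a bookkeeping issue but the heart of the matter. For a condition $(g,H)\in\mathbb{S}_{\lambda^+}$, the set $H(\xi)$ is an arbitrary $U_\xi$-large subset of $\mathcal{P}_\kappa(\kappa_\xi)$ lying in $V[G]$. Since $\Add(\kappa,\cdot)$ is ${<}\kappa$-closed, every \emph{element} of $H(\xi)$ already lies in $V$, so the ``trace'' $H(\xi)\cap V[G\upharpoonright\alpha]$ is literally $H(\xi)$ itself; the real question is whether $H(\xi)$, as a set, belongs to $V[G\upharpoonright\alpha]$ and is measured by $U_\xi\cap V[G\upharpoonright\alpha]$ there. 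In general it does not, and worse, it need not contain \emph{any} $U^\alpha_\xi$-large set of $V[G\upharpoonright\alpha]$: the tail forcing $\Add(\kappa,\lambda^+\setminus\alpha)$ adds a subset $c\subseteq\kappa$ which is $\Add(\kappa,1)$-generic over $V[G\upharpoonright\alpha]$, and whichever of $\{x\mid \kappa_x\in c\}$ or its complement is $U_\xi$-large is a large set containing no unbounded set of $V[G\upharpoonright\alpha]$ (by a density argument), hence no old measure-one set. So neither your trace map nor any ``canonical shrinking'' repairs it, and the third projection axiom as you attempt it (choosing $H''(\xi)\subseteq H(\xi)$ with trace inside $H'(\xi)$) cannot even get started. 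There simply is no condition-wise projection of this pointwise form from $\mathbb{S}_{\lambda^+}$ onto $\mathbb{S}_\alpha$; the relation between the two forcings lives at the level of Boolean completions and induced generics (this is why Lemma~\ref{ProjectionsCohenPart} is phrased with $\mathrm{RO}^+$).

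The paper's proof is entirely different and is the reason Section 3 exists: given an $\mathbb{S}_{\lambda^+}$-generic filter, pass to the induced Sinapova sequence $g^*$ and verify that it satisfies properties \eqref{P1} and \eqref{P2} of Proposition~\ref{GenericityCriterionFirstImplication} with respect to the restricted measures $\mathfrak{U}_\alpha$ of Lemma~\ref{DefinitionOFA} (this is immediate, since every $U^\alpha_\xi$-large set of $V[G\upharpoonright\alpha]$ is $U_\xi$-large in $V[G]$). Then the Mathias-style criterion, Theorem~\ref{CriterionGenericity}, yields that $g^*$ is $\mathbb{S}_\alpha$-generic over $V[G\upharpoonright\alpha]$, so every $\mathbb{S}_{\lambda^+}$-generic canonically induces an $\mathbb{S}_\alpha$-generic; that is the sense in which $\mathbb{S}_{\lambda^+}$ projects onto $\mathbb{S}_\alpha$. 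Your proposal never invokes the genericity criterion, which is precisely the missing idea: without it (or some substitute argument about induced generics) the statement cannot be obtained by an explicit map on conditions of the kind you describe.
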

\begin{proof}
Let $\alpha\in\mathcal{A}$.
For $g^*\in[\prod_{\xi<\mu} B_\xi]$ being a $\mathbb{S}_{\lambda^+}$-generic sequence  set $h^*_\alpha:=\langle g^*(\xi)\cap V[G\upharpoonright\alpha]\mid \xi<\mu\rangle$. Clearly, $h^*_\alpha\in[\prod_{\xi<\mu}\mathfrak{B}_\alpha(\xi)]$. By appealing to Theorem~\ref{CriterionGenericity} we infer that $h^*_\alpha$ is $\mathbb{S}_\alpha$-generic over $V$. In particular, each $\mathbb{S}_{\lambda^+}$-generic filter induces a $\mathbb{S}_\alpha$-generic filter, hence $\mathbb{S}_{\lambda^+}$ projects onto $\mathbb{S}_\alpha$.
\end{proof}

Before presenting our forcing it is convenient to discuss a technical issue that we will have to overcome. If one looks at Mitchell's original proof of $\TP(\aleph_2)$ \cite{Mit} one will immediately realize that both the \textit{Cohen component} and the \textit{collapsing component} need to have the same length. More formally, if we aim to add $\lambda^+$-many subsets to $\kappa$ (i.e. the Cohen part is $\Add(\kappa,\lambda^+)$) then the collapsing component will collapse the interval $(\kappa,\lambda^+)$. Thus, if one  pretends to preserve $\lambda$,  the corresponding Mitchell forcing should exhibit a 
mismatch between both components. To overcome this difficulty we shall proceed as in \cite{FriHon} and \cite{GolPov} defining a system of projections between $\mathbb{A}_{\lambda^+}\ast \dot{\mathbb{S}}_{\lambda^+}$ and a family of intermediate forcings.% which will be used in the collapsing component of our forcing.

Let $\beta_0\in\mathcal{A}\setminus \lambda+1$ and $\pi:\beta_0\rightarrow \mathrm{Even}(\lambda)$ be a bijection\footnote{For an ordinal ${\alpha}$, $\mathrm{Even}(\alpha)$ stands for the set of all even and limit ordinals ${\leq} \alpha$. }.  Hereafter, $\beta_0$ will be fixed. The particular choice of this ordinal is not relevant, we could just have taken any other in $\mathcal{A}\setminus \lambda+1$. Clearly, $\pi$ entails an $\in$-isomorphism between $V^{\mathbb{A}_{\beta_0}}$ and $V^{\mathbb{A}_{\mathrm{Even}(\lambda)}}$. Thus, defining $\dot{\mathfrak{U}}^\pi_{\beta_0}:= \pi(\dot{\mathfrak{U}}_{\beta_0})$, $(\mathfrak{U}^\pi_{\beta_0})_{\pi[G\upharpoonright\beta_0]}=(\dot{\mathfrak{U}}_{\beta_0})_{G\upharpoonright\beta_0}=\mathfrak{U}_{\beta_0}$. Similarly with $\mathfrak{B}_{\beta_0}$. Say that $U^\pi_\xi$ and $B^\pi_\xi$ %, $\xi<\mu$ and $\eta<\delta$, 
are the components of these sequences. For the ease of notation, let  $H$ be the $\mathbb{A}_{\mathrm{Even}(\lambda)}$-generic filter generated by $\pi[G\upharpoonright\beta_0]$.  The proof of the next result is analogous to Lemma \ref{DefinitionOFA}.
\begin{lemma}\label{DefinitionOFB}
There is an unbounded set of cardinals $\mathcal{B}\subseteq \lambda$ closed under taking limits of  ${\geq}\delta$-se\-quences, such that for each $\alpha\in\mathcal{B}$ and each generic filter $K\subseteq \mathbb{A}_{\mathrm{Even}(\lambda)}$, the sequences $\langle ({\dot{U}^\pi_\xi})_G\cap V[K\upharpoonright \mathrm{Even}(\alpha)]\mid \xi<\mu\rangle$ and
$\langle ({\dot{B}^\pi_\xi})_G\cap V[K\upharpoonright \mathrm{Even}(\alpha)]\mid \xi<\mu\rangle$ %and $\langle ({{\dot{F}^{\xi,\pi}_\eta}})_G\cap V[K\upharpoonright \mathrm{Even}(\alpha)]:\xi<\mu, \eta<\delta\rangle$
%$$(\star)\;\;\;\langle ({\dot{U}_\xi})_G\cap V[G\upharpoonright \alpha]:\xi<\mu\rangle$$
are suitable to define Sinapova forcing in   $V[K\upharpoonright \mathrm{Even}(\alpha)]$.
\end{lemma}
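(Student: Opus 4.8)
The plan is to mimic the proof of Lemma~\ref{DefinitionOFA} (equivalently \cite[Lemma 3.3]{FriHon} or \cite[Lemma 3.1]{GolPov}), transported along the $\in$-isomorphism $\pi$. First I would recall how $\mathcal{A}$ was produced in Lemma~\ref{DefinitionOFA}: using a reflection/L\"owenheim--Skolem argument, together with the fact that $\mathbb{A}_{\lambda^+}=\Add(\kappa,\lambda^+)$ is a product and its name $\dot U_\xi$, $\dot B_\xi$ are nice, one finds club-many $\alpha<\lambda^+$ such that the restriction of each generic measure to the intermediate model $V[G\upharpoonright\alpha]$ is already an $\alpha$-level supercompact measure there, and the coherence package of Proposition~\ref{TheBxisets} localizes correctly. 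The same argument, carried out inside $V^{\mathbb{A}_{\beta_0}}$, produces an unbounded $\mathcal{A}_0\subseteq\beta_0$, closed under ${\geq}\delta$-limits, that works for $\dot{\mathfrak{U}}_{\beta_0}$, $\dot{\mathfrak{B}}_{\beta_0}$.

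Second I would apply $\pi$: since $\pi$ is an $\in$-isomorphism between $V^{\mathbb{A}_{\beta_0}}$ and $V^{\mathbb{A}_{\mathrm{Even}(\lambda)}}$, sending $G\upharpoonright\beta_0$ to $H$ and $\dot{\mathfrak{U}}_{\beta_0},\dot{\mathfrak{B}}_{\beta_0}$ to $\dot{\mathfrak{U}}^\pi_{\beta_0},\dot{\mathfrak{B}}^\pi_{\beta_0}$, the image $\mathcal{B}:=\pi[\mathcal{A}_0]\subseteq\mathrm{Even}(\lambda)$ inherits every relevant property. More precisely, for $\alpha\in\mathcal{A}_0$ the intermediate model $V[G\upharpoonright\alpha]$ is sent by $\pi$ to $V[K\upharpoonright\mathrm{Even}(\beta)]$ where $\beta$ is the ordinal with $\pi\upharpoonright\alpha$ a bijection onto $\mathrm{Even}(\beta)$ — here one uses that $\pi$ was chosen as an order-respecting-enough bijection $\beta_0\to\mathrm{Even}(\lambda)$, so relabelling $\mathcal{B}$ as a subset of $\lambda$ via $\beta\mapsto\beta$ gives an unbounded subset of $\lambda$, closed under limits of ${\geq}\delta$-sequences (closure is preserved because $\mathcal{A}_0$ is closed under such limits and $\pi$ respects suprema of long enough sequences). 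Then for any generic $K\subseteq\mathbb{A}_{\mathrm{Even}(\lambda)}$ and $\alpha\in\mathcal{B}$, pulling back along $\pi^{-1}$ reduces the assertion that $\langle(\dot U^\pi_\xi)_K\cap V[K\upharpoonright\mathrm{Even}(\alpha)]\mid\xi<\mu\rangle$ and the corresponding $\dot B^\pi$-sequence define Sinapova forcing to the already-established statement for $\dot{\mathfrak{U}}_{\beta_0},\dot{\mathfrak{B}}_{\beta_0}$ and $\pi^{-1}[K]$ in $V^{\mathbb{A}_{\beta_0}}$.

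Finally I would note that there is nothing special about the particular generic filter used to define $H$: the clause ``for each generic filter $K$'' is handled exactly as in Lemma~\ref{DefinitionOFA}, since the name-level reflection argument does not depend on the choice of generic. The remaining bookkeeping is routine.

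The main obstacle I anticipate is purely notational: one must be careful that $\pi$, although described only as ``a bijection'' $\beta_0\to\mathrm{Even}(\lambda)$, is in fact (or may be taken to be) sufficiently continuous that images of initial segments $\pi[\alpha]$ are of the form $\mathrm{Even}(\beta)$ for appropriate $\beta$, and that the closure-under-${\geq}\delta$-limits property transfers. If $\pi$ is not canonical enough, the cleanest fix is to observe that for the purposes of this lemma one only needs \emph{some} unbounded, suitably closed $\mathcal{B}$, and to construct it directly inside $V[K\upharpoonright\mathrm{Even}(\lambda)]$ by the same reflection argument as in Lemma~\ref{DefinitionOFA}, using that $\mathbb{A}_{\mathrm{Even}(\lambda)}$ is again a product forcing of the same shape. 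Either way no new idea is required beyond the proof of Lemma~\ref{DefinitionOFA}; this is why the excerpt simply says ``analogous.''
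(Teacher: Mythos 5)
Your fallback is exactly the paper's proof: the paper simply says the result is proved ``analogously to Lemma~\ref{DefinitionOFA}'', i.e.\ one runs the same reflection/nice-name argument of \cite[Lemma 3.3]{FriHon}, \cite[Lemma 3.1]{GolPov} directly for the poset $\mathbb{A}_{\mathrm{Even}(\lambda)}$ and the names $\dot{U}^\pi_\xi$, $\dot{B}^\pi_\xi$, producing an unbounded $\mathcal{B}\subseteq\lambda$ closed under limits of ${\geq}\delta$-sequences. So the proposal, read with its last paragraph as the actual argument, is correct and matches the paper.

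Your primary route, transporting a set $\mathcal{A}_0\subseteq\beta_0$ along $\pi$, does not work as stated, and it is worth seeing concretely why. Since $\beta_0\in\mathcal{A}\setminus(\lambda+1)$ we have $\beta_0>\lambda=\otp(\mathrm{Even}(\lambda))$, so $\pi\colon\beta_0\to\mathrm{Even}(\lambda)$ is merely a bijection between sets of size $\lambda$ and can never be order-preserving, let alone continuous; in general $\pi[\alpha]$ is a scattered subset of $\mathrm{Even}(\lambda)$, not a set of the form $\mathrm{Even}(\beta)$. Consequently the $\in$-isomorphism induced by $\pi$ carries $V[G\upharpoonright\alpha]$ to an intermediate model of the form $V[H\upharpoonright\pi[\alpha]]$, which is not one of the models $V[K\upharpoonright\mathrm{Even}(\alpha)]$ the lemma is about, and the closure of $\mathcal{A}_0$ under ${\geq}\delta$-limits also fails to transfer along a non-continuous map. (A further, smaller point: the lemma is a statement about \emph{all} generic filters $K\subseteq\mathbb{A}_{\mathrm{Even}(\lambda)}$, i.e.\ a name-level statement, so it is in any case cleanest to argue with the names $\dot{U}^\pi_\xi$, $\dot{B}^\pi_\xi$ directly rather than via the particular generic $H=\pi[G\upharpoonright\beta_0]$.) You anticipated precisely this obstacle, and your proposed fix --- rerun the Lemma~\ref{DefinitionOFA} argument for $\mathbb{A}_{\mathrm{Even}(\lambda)}$, which is again a Cohen product of the same shape --- is the intended proof; the route through $\pi$ should simply be dropped.
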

\begin{notation}
\rm{
For each $\alpha\in\mathcal{B}$, let  $\mathfrak{U}^\pi_\alpha$ and $\mathfrak{B}^\pi_\alpha$ denote the sequences witnessing Lemma \ref{DefinitionOFB}. By convention, $\mathfrak{U}^\pi_\lambda:=\mathfrak{U}_{\beta_0}$ and $\mathfrak{B}^\pi_\lambda:=\mathfrak{B}_{\beta_0}$.
For each $\alpha\in\mathcal{B}\cup \{\lambda\}$,  let $\dot{\mathbb{S}}^\pi_\alpha$ be a $\mathbb{A}_{\mathrm{Even}(\alpha)}$-name for the Sinapova forcing $\mathbb{S}_{(\kappa,\mu,\mathfrak{U}^\pi_\alpha,\mathfrak{B}^\pi_\alpha)}\in V[H\upharpoonright\mathrm{Even}(\alpha)]$. %defined with respect to the tuple $(\kappa,\mu,\mathfrak{U}^\pi_\alpha, \mathfrak{B}^\pi_{\alpha})$. %, where $\mathfrak{F}^\pi_{\alpha}=\pi''\mathfrak{F}_{\beta_0}\cap V[H\upharpoonright\mathrm{Even}(\alpha)]$ and $\mathfrak{B}^\pi_\alpha=\pi'' \mathfrak{B}_{\beta_0}\cap V[H\upharpoonright\mathrm{Even}(\alpha) ]$.
}
\end{notation}
The next lemma follows essentially from Proposition~\ref{PropOnProjectionOfGenerics}. For details see \cite[Lemma 3.8]{FriHon}.
\begin{lemma}\label{ProjectionsCohenPart}
Let $\hat{\mathcal{A}}=(\mathcal{A}\cap (\beta_0,\lambda^+))\cup\{\lambda^+\}$. 
\begin{enumerate}
\item For every $\gamma,\tilde{\gamma}\in\hat{\mathcal{A}}$ with $\gamma<\tilde{\gamma}$, there is a projection
$$
\sigma^{\tilde{\gamma}}_\gamma: \mathbb{A}_{\tilde{\gamma}}\ast \dot{\mathbb{S}}_{\tilde{\gamma}}\rightarrow \mathrm{RO}^+(\mathbb{A}_{\gamma}\ast \dot{\mathbb{S}}_\gamma).
$$
\item For every $\gamma\in\hat{\mathcal{A}}$ and $\alpha\in\mathcal{B}$, there is a projection
$$
\sigma^{\gamma}_\alpha: \mathbb{A}_\gamma\ast \dot{\mathbb{S}}_\gamma\rightarrow \mathrm{RO}^+(\mathbb{A}_{\rm{Even}(\alpha)}\ast \dot{\mathbb{S}}^\pi_\alpha).
$$
\item For every $\gamma\in\hat{\mathcal{A}}$ and $\alpha\in\mathcal{B}$, let $\hat{\sigma}^\gamma_\alpha$ be the extension of $\sigma^\gamma_\alpha$ to the Boolean completion of $\mathbb{A}_{\gamma}\ast \dot{\mathbb{S}}_\gamma$
$$
\hat{\sigma}^\gamma_\alpha: \mathrm{RO}^+(\mathbb{A}_\gamma\ast \dot{\mathbb{S}}_\gamma)\rightarrow \mathrm{RO}^+(\mathbb{A}_{ \mathrm{Even}(\alpha)}\ast \dot{\mathbb{S}}^\pi_\alpha).
$$
Then the projections commute with $\sigma^{\lambda^{+}}_\alpha$:
$$
\sigma^{\lambda^{+}}_\alpha=\hat{\sigma}^{\gamma}_\alpha\circ \sigma^{\lambda^{+}}_\gamma.$$
\end{enumerate}
\end{lemma}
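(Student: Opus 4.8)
The plan is to build the three projections in Lemma~\ref{ProjectionsCohenPart} by iterating the two-step projections already in hand --- Proposition~\ref{PropOnProjectionOfGenerics} on the Sinapova side and the canonical projections between $\Add(\kappa,x)$-forcings on the Cohen side --- and then checking the commutativity clause by a diagram chase. For item~(1), given $\gamma<\tilde\gamma$ in $\hat{\mathcal A}$, the projection $\mathbb{A}_{\tilde\gamma}\to\mathbb{A}_\gamma$ is the standard restriction map $p\mapsto p\upharpoonright\gamma$ coming from $\gamma\subseteq\tilde\gamma\subseteq\lambda^+$ (Notation~\ref{CohenForcingNotation}); in the extension by $\mathbb{A}_{\tilde\gamma}$ one has, by Proposition~\ref{PropOnProjectionOfGenerics} applied with parameter $\gamma\in\mathcal A$, a projection from $\mathbb{S}_{\tilde\gamma}$ onto $\mathbb{S}_\gamma$, and one combines the two into a projection of the two-step iteration $\mathbb{A}_{\tilde\gamma}\ast\dot{\mathbb{S}}_{\tilde\gamma}$ onto $\mathbb{A}_\gamma\ast\dot{\mathbb{S}}_\gamma$, passing to the regular open algebra $\mathrm{RO}^+$ so that the target is a complete Boolean algebra and the map extends canonically. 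Here one uses the standard fact that if $\pi_0:\mathbb P\to\mathbb Q$ is a projection and, in $V^{\mathbb P}$, $\dot\pi_1:\dot{\mathbb R}\to\pi_0(\dot{\mathbb S})$ is a projection with $\dot{\mathbb S}$ a $\mathbb Q$-name pulled back along $\pi_0$, then $(p,\dot r)\mapsto(\pi_0(p),\dot\pi_1(\dot r))$ is a projection of $\mathbb P\ast\dot{\mathbb R}$ onto $\mathbb Q\ast\dot{\mathbb S}$.

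For item~(2), fix $\gamma\in\hat{\mathcal A}$ and $\alpha\in\mathcal B$. On the Cohen side the point is that $\mathrm{Even}(\alpha)$ is, via the fixed bijection $\pi:\beta_0\to\mathrm{Even}(\lambda)$, identified with an initial segment of $\beta_0$, which is itself an element of $\mathcal A$ below $\gamma$ (recall $\beta_0\in\mathcal A\setminus\lambda+1$ and $\hat{\mathcal A}=(\mathcal A\cap(\beta_0,\lambda^+))\cup\{\lambda^+\}$); so composing the restriction $\mathbb{A}_\gamma\to\mathbb{A}_{\beta_0}$ with the $\pi$-transport $\mathbb{A}_{\beta_0}\cong\mathbb{A}_{\mathrm{Even}(\lambda)}$ and the restriction $\mathbb{A}_{\mathrm{Even}(\lambda)}\to\mathbb{A}_{\mathrm{Even}(\alpha)}$ yields a projection. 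On the Sinapova side, the $\pi$-transport carries $\mathfrak U_{\beta_0},\mathfrak B_{\beta_0}$ to $\mathfrak U^\pi_{\beta_0},\mathfrak B^\pi_{\beta_0}$ and hence $\mathbb{S}_{\beta_0}$ to $\mathbb{S}^\pi_\lambda=\mathbb{S}_{(\kappa,\mu,\mathfrak U_{\beta_0},\mathfrak B_{\beta_0})}$ transported, and then Proposition~\ref{PropOnProjectionOfGenerics} (in the $\pi$-image, via Lemma~\ref{DefinitionOFB} and the criterion Theorem~\ref{CriterionGenericity}) gives a projection of $\mathbb{S}^\pi_\lambda$ onto $\mathbb{S}^\pi_\alpha$; chaining with the projection $\mathbb{S}_\gamma\to\mathbb{S}_{\beta_0}$ from item~(1) produces $\sigma^\gamma_\alpha$, again after passing to $\mathrm{RO}^+$.

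For item~(3), one defines $\hat\sigma^\gamma_\alpha$ as the canonical extension of $\sigma^\gamma_\alpha$ to Boolean completions (a projection of posets extends uniquely to a complete homomorphism of their $\mathrm{RO}^+$'s), and one must verify $\sigma^{\lambda^+}_\alpha=\hat\sigma^\gamma_\alpha\circ\sigma^{\lambda^+}_\gamma$. This is a coherence check: both sides are built by the same recipe of restricting Cohen conditions and transporting/projecting Sinapova sequences, so it suffices to see that the underlying restriction maps on the $\mathbb{A}$'s commute (immediate from $\mathrm{Even}(\alpha)\subseteq\mathrm{Even}(\lambda)\cong\beta_0\subseteq\gamma\subseteq\lambda^+$ as sets of ordinals) and that the induced projections of Sinapova forcings commute, which in turn reduces via Theorem~\ref{CriterionGenericity} to the trivial fact that intersecting a generic sequence $g^*$ first with $V[G\upharpoonright\gamma]$ and then with $V[G\upharpoonright\mathrm{Even}(\alpha)]$ gives the same result as intersecting directly with $V[G\upharpoonright\mathrm{Even}(\alpha)]$. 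The main obstacle I expect is bookkeeping: matching up the three different indexing schemes ($\mathcal A$, $\mathcal B$, and the $\pi$-image $\mathrm{Even}(\lambda)$ inside $\beta_0$) carefully enough that the names $\dot{\mathbb S}_\gamma$, $\dot{\mathbb S}^\pi_\alpha$ are literally pullbacks of one another along the Cohen projections, so that the two-step composition lemma for projections of iterations applies verbatim; once that is set up, each individual projection and the commutativity are routine, and the detailed version is exactly the argument of \cite[Lemmas 3.3, 3.8]{FriHon} with Supercompact Magidor forcing replaced by Sinapova forcing and Theorem~\ref{CriterionGenericity} replacing the Mathias-style criterion used there.
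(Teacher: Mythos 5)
Your proposal takes essentially the same route as the paper: the paper proves this lemma exactly by combining the standard Cohen restriction maps with the Sinapova-side projections of Proposition~\ref{PropOnProjectionOfGenerics} (which rests on the genericity criterion, Theorem~\ref{CriterionGenericity}), and for the bookkeeping and the commutativity clause it simply refers to \cite[Lemma 3.8]{FriHon}, which is precisely the template you describe. Two harmless imprecisions worth fixing in a written-up version: $\pi^{-1}[\mathrm{Even}(\alpha)]$ need not be an initial segment of $\beta_0$ (but restriction to an arbitrary set of coordinates is still a Cohen projection, cf.\ Notation~\ref{CohenForcingNotation}), and the canonical extension $\hat{\sigma}^\gamma_\alpha$ to $\mathrm{RO}^+(\mathbb{A}_\gamma\ast\dot{\mathbb{S}}_\gamma)$ is again a projection, not a complete Boolean homomorphism.
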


%We are now in conditions to present the definition the main forcing.
\begin{defi}[Main forcing]\label{MainForcingR}
A condition in $\mathbb{R}$ is a triple $(p,\dot{q},r)$ for which all the following hold:
\begin{enumerate}
\item $(p,\dot{q})\in\mathbb{A}_{\lambda^+}\ast\dot{\mathbb{S}}_{\lambda^+}$;
\item $r$ is a partial function with $\dom(r)\in [\mathcal{B}]^{< \delta}$;
\item For every $\gamma\in\dom(r)$, $r(\gamma)$ is a $\mathbb{A}_{\rm{Even}(\gamma)}\ast \dot{\mathbb{S}}^\pi_\gamma$-name such that
$$\one_{\mathbb{A}_{\rm{Even}(\gamma)}\ast \dot{\mathbb{S}}^\pi_\gamma}\Vdash_{\mathbb{A}_{\rm{Even}(\gamma)}\ast \dot{\mathbb{S}}^\pi_\gamma}\text{$``r(\gamma)\in \dot{\Add}(\delta, 1)$''}.$$
\end{enumerate}
For conditions $(p_0,\dot{q}_0,r_0), (p_1,\dot{q}_1, r_1)$ in $\mathbb{R}$ we will write $(p_0,\dot{q}_0,r_0)\leq_\mathbb{R} (p_1,\dot{q}_1, r_1)$ iff $(p_0,\dot{q}_0)\leq_{\mathbb{A}_{\lambda^+}\ast \dot{\mathbb{S}}_{\lambda^+}}(p_1,\dot{q}_1)$, $\dom(r_1)\subseteq \dom(r_0)$ and for each $\gamma\in\dom(r_1)$,
$\sigma^{\lambda^{+}}_\gamma(p_0,q_0)\Vdash_{\mathbb{A}_{\rm{Even}(\gamma)}\ast \dot{\mathbb{S}}^\pi_\gamma}\text{$``r_0(\gamma)\leq r_1(\gamma)$''}.$
\end{defi}
\begin{defi}
$\mathbb{U}$ will denote the pair $(U,\leq)$ where $U:=\{(\one,\dot{\one},r)\mid (\one,\dot{\one},r)\in\mathbb{R}\}$ and $\leq$ is the  order inherited from $\mathbb{R}$. Set $\bar{\mathbb{R}}:=(\mathbb{A}_{\lambda^+}\ast\dot{\mathbb{S}}_{\lambda^+})\times \mathbb{U}$.
\end{defi}
The next result follows from standard arguments.
\begin{prop}$ $\label{projectionU}
\begin{enumerate}
\item $\mathbb{U}$ is $\delta$-directed closed.
\item The function $\rho: \bar{\mathbb{R}}\rightarrow \mathbb{R}$ given by $\langle(p,\dot{q}), (\one,\dot{\one},r)\rangle\mapsto (p,\dot{q},r)$ entails a projection. In particular,
$V^{\mathbb{A}_{\lambda^+}\ast \dot{\mathbb{S}}_{\lambda^+}}\subseteq V^\mathbb{R}\subseteq V^{\bar{\mathbb{R}}}.$
\item $V^{\mathbb{A}_{\lambda^+}\ast \mathbb{S}_{\lambda^+}}$ and $V^\mathbb{R}$ have the same ${<}\delta$-sequences. 
\end{enumerate}
\end{prop}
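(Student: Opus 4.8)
The plan is to prove Proposition~\ref{projectionU} by treating each of its three clauses in turn, reducing everything to standard facts about two-step iterations, products, and projections that have already been set up in the excerpt.

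For clause (1), the goal is to show $\mathbb{U}$ is $\delta$-directed closed. First I would fix a $\leq$-directed family $\{(\one,\dot\one,r_i)\mid i\in I\}$ with $|I|<\delta$. The domains $\dom(r_i)$ are each in $[\mathcal{B}]^{<\delta}$, and since $\delta=\varepsilon^+$ is regular, the union $D:=\bigcup_i \dom(r_i)$ still lies in $[\mathcal{B}]^{<\delta}$. For each $\gamma\in D$, I would observe that the conditions $r_i(\gamma)$ (for those $i$ with $\gamma\in\dom(r_i)$) are forced by $\one$ to form a directed subset of $\dot{\Add}(\delta,1)$ of size $<\delta$; since $\Add(\delta,1)$ is $\delta$-directed closed in any forcing extension, one can pick a name $r(\gamma)$ for a lower bound. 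The only subtlety is that the ordering on the $r_i(\gamma)$ is witnessed over $\mathbb{A}_{\mathrm{Even}(\gamma)}\ast\dot{\mathbb{S}}^\pi_\gamma$ rather than in $V$; but directedness in $\mathbb{U}$ forces directedness of the images under $\sigma^{\lambda^+}_\gamma$, and then one builds $r(\gamma)$ in $V$ as a name for the greatest lower bound in the Boolean completion of $\Add(\delta,1)$. Then $(\one,\dot\one,r)$ is the desired lower bound.

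For clause (2), I would verify directly that $\rho\colon\bar{\mathbb{R}}\to\mathbb{R}$ is a projection: it is clearly order-preserving and surjective, so the only thing to check is that if $(p,\dot q,r)\leq_{\mathbb{R}}\rho\langle(p',\dot q'),(\one,\dot\one,r')\rangle=(p',\dot q',r')$, then there is $\langle(p'',\dot q''),(\one,\dot\one,r'')\rangle\leq_{\bar{\mathbb{R}}}\langle(p',\dot q'),(\one,\dot\one,r')\rangle$ with $\rho$-image $\leq_{\mathbb{R}}(p,\dot q,r)$. Taking $(p'',\dot q''):=(p,\dot q)$ and $r'':=r$ works, because $(p,\dot q)\leq(p',\dot q')$ and $\dom(r')\subseteq\dom(r)$ with the compatibility of the $r(\gamma)$'s below $r'(\gamma)$ being exactly what $\leq_{\mathbb{R}}$ already gives us; one must also confirm that $(p,\dot q,r)\in\mathbb{R}$ and $(\one,\dot\one,r)\in\mathbb{U}$, which is immediate from the definitions. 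The inclusions of models follow formally: $V^{\mathbb{A}_{\lambda^+}\ast\dot{\mathbb{S}}_{\lambda^+}}\subseteq V^{\mathbb{R}}$ because $\mathbb{R}$ projects onto $\mathbb{A}_{\lambda^+}\ast\dot{\mathbb{S}}_{\lambda^+}$ via $(p,\dot q,r)\mapsto(p,\dot q)$, and $V^{\mathbb{R}}\subseteq V^{\bar{\mathbb{R}}}$ because of the projection $\rho$.

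For clause (3), I would argue that $\mathbb{R}$ adds no new $<\delta$-sequences over $V^{\mathbb{A}_{\lambda^+}\ast\dot{\mathbb{S}}_{\lambda^+}}$. Since $\mathbb{R}$ is isomorphic to a subforcing of $\bar{\mathbb{R}}=(\mathbb{A}_{\lambda^+}\ast\dot{\mathbb{S}}_{\lambda^+})\times\mathbb{U}$ via the projection $\rho$, and forcing with $\bar{\mathbb{R}}$ over $V$ is the same as first forcing with $\mathbb{A}_{\lambda^+}\ast\dot{\mathbb{S}}_{\lambda^+}$ and then with $\mathbb{U}$, it suffices to note that $\mathbb{U}$ is $\delta$-directed closed in $V^{\mathbb{A}_{\lambda^+}\ast\dot{\mathbb{S}}_{\lambda^+}}$ — but this needs a little care, since $\mathbb{U}$ as defined lives in $V$, not the extension. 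The cleanest route is: given a name $\dot{x}$ for a $<\delta$-sequence of ordinals in $V^{\mathbb{R}}$, pull it back to $V^{\bar{\mathbb{R}}}$; in $V^{\mathbb{A}_{\lambda^+}\ast\dot{\mathbb{S}}_{\lambda^+}}$ the remaining forcing is $\mathbb{U}$, which is $\delta$-closed there (the same argument as in clause (1), now carried out over that model), so $\dot{x}$ is decided by a $\delta$-closed tail and hence is already in $V^{\mathbb{A}_{\lambda^+}\ast\dot{\mathbb{S}}_{\lambda^+}}$; then the projection $V^{\mathbb{R}}\subseteq V^{\bar{\mathbb{R}}}$ together with the reverse inclusion from clause (2) pins it down. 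The main obstacle throughout is the bookkeeping around the $r(\gamma)$'s being names relative to the \emph{intermediate} forcings $\mathbb{A}_{\mathrm{Even}(\gamma)}\ast\dot{\mathbb{S}}^\pi_\gamma$ and the fact that the order on $\mathbb{R}$ is witnessed by the projections $\sigma^{\lambda^+}_\gamma$ of Lemma~\ref{ProjectionsCohenPart}; one must consistently use that these projections commute (clause (3) of that lemma) to see that lower bounds assembled coordinate-by-coordinate genuinely form conditions.
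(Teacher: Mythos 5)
The verification of the projection property in your clause (2) has a genuine gap: taking $(p'',\dot q''):=(p,\dot q)$ and $r'':=r$ does \emph{not} in general give a condition of $\bar{\mathbb{R}}$ below $\langle(p',\dot q'),(\one,\dot\one,r')\rangle$. The order on $\mathbb{U}$ is the one inherited from $\mathbb{R}$ with trivial first two coordinates, so $(\one,\dot\one,r)\leq_{\mathbb{U}}(\one,\dot\one,r')$ requires $\sigma^{\lambda^+}_\gamma(\one,\dot\one)=\one\Vdash r(\gamma)\leq r'(\gamma)$ for every $\gamma\in\dom(r')$, whereas $(p,\dot q,r)\leq_{\mathbb{R}}(p',\dot q',r')$ only guarantees this \emph{below} $\sigma^{\lambda^+}_\gamma(p,\dot q)$. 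Nothing prevents $r(\gamma)$ from being a name which, on conditions incompatible with $\sigma^{\lambda^+}_\gamma(p,\dot q)$, is not below (or even incompatible with) $r'(\gamma)$; for such an $r$ your candidate is not $\leq_{\bar{\mathbb{R}}}$ the original condition, so the projection property is not witnessed. The standard repair --- and the real content of this decomposition in \cite{Ung}, \cite{FriHon}, \cite{GolPov} --- is a mixing argument: keep $(p,\dot q)$, but replace $r$ by $r''$ with $\dom(r'')=\dom(r)$, where $r''(\gamma)$ is a name chosen so that $\sigma^{\lambda^+}_\gamma(p,\dot q)$ forces $r''(\gamma)=r(\gamma)$ while every condition incompatible with $\sigma^{\lambda^+}_\gamma(p,\dot q)$ forces $r''(\gamma)=r'(\gamma)$ (and $r''(\gamma)$ is trivial for $\gamma\notin\dom(r')$). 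Then $\one\Vdash r''(\gamma)\leq r'(\gamma)$, so $\langle(p,\dot q),(\one,\dot\one,r'')\rangle\leq_{\bar{\mathbb{R}}}\langle(p',\dot q'),(\one,\dot\one,r')\rangle$, while $\sigma^{\lambda^+}_\gamma(p,\dot q)\Vdash r''(\gamma)=r(\gamma)$ gives $\rho\langle(p,\dot q),(\one,\dot\one,r'')\rangle=(p,\dot q,r'')\leq_{\mathbb{R}}(p,\dot q,r)$, as required.

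There is also a misstep in clause (3): $\mathbb{U}$ need not remain $\delta$-closed in $V^{\mathbb{A}_{\lambda^+}\ast\dot{\mathbb{S}}_{\lambda^+}}$, and your clause-(1) argument cannot simply be ``carried out over that model'': a directed family of $\mathbb{U}$-conditions living in the extension need not lie in $V$, so its coordinatewise amalgam need not be coded by a member of $\mathbb{U}\in V$, and in general closure is not preserved by forcing. What the squeeze argument actually needs is only that $\mathbb{U}$ is $\delta$-distributive over $V^{\mathbb{A}_{\lambda^+}\ast\dot{\mathbb{S}}_{\lambda^+}}$, and this follows from Easton's lemma: $\mathbb{A}_{\lambda^+}\ast\dot{\mathbb{S}}_{\lambda^+}$ is $\delta$-cc (indeed $\delta$-Knaster; $\mathbb{A}_{\lambda^+}$ is $\kappa^+$-Knaster and $\dot{\mathbb{S}}_{\lambda^+}$ is forced to be $\delta$-Knaster by Theorem~\ref{PropertiesofSinapovaForcing}(1)), while $\mathbb{U}$ is $\delta$-directed closed in $V$ by clause (1). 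With these two repairs --- the mixing argument for the projection and Easton's lemma in place of preserved closure --- your outline (clause (1) as written, the inclusions of models, and the squeeze $V^{\mathbb{A}_{\lambda^+}\ast\dot{\mathbb{S}}_{\lambda^+}}\subseteq V^{\mathbb{R}}\subseteq V^{\bar{\mathbb{R}}}$) is exactly the standard argument the paper has in mind; the paper itself omits the proof, citing standard arguments from those references.
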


Let $\bar{R}\subseteq \bar{\mathbb{R}}$ a generic filter whose projection onto $\mathbb{A}_{\lambda^+}$ generates the generic filter $G$. Also, let $R\subseteq \mathbb{R}$ be the generic filter generated by $\rho[\bar{R}]$ and $S\subseteq\mathbb{S}_{\lambda^+}$ be the generic filter over $V[G]$ induced by $\bar{R}$. %and $\vec{C}$ be the Sinapova sequence generating $S$ (see Section \ref{SinapovaPreliminaries}). For each $\alpha<\mu$, set $\kappa_\alpha=\kappa_{\vec{C}(\alpha)}$.
%\\
%Recall that for some cardinal $\eta$ a forcing $\mathbb{Q}$ is called $\eta$-Knaster  if for every $\mathcal{C}\subseteq [\mathbb{Q}]^\eta$ of cardinality $\eta$ there is a subset $\mathcal{C}'\in[\mathcal{C}]^\eta$ of compatible conditions. %Some examples of forcings enjoying this property are given for instance by Cohen forcing $\Add(\eta, \theta)$ or the Prikry forcing with respect to some normal measure on $\eta$.
%Is easy to check that $\eta$-Knasterness is a productive productive and further that it implies $\eta$-ccness. Nonetheless the converse implication is easily refutable since for instance a Suslin line $\mathbb{X}$ is ccc but its product not.

%We finish the current section with a proposition that collects some of the basic properties of the generic extension $V[R]$:
\begin{prop}[Some properties of $\mathbb{R}$]$ $\label{PropertiesOfV[R]}
\begin{enumerate}
\item $\mathbb{R}$ is $\lambda$-Knaster. In particular, all $V$-cardinals ${\geq}\lambda$ are preserved.
\item $\mathbb{R}$ preserves $\kappa$ and $\delta$. Also, it collapses all the $V$-cardinals of $(\kappa,\delta)$ to $\kappa$ and all the $V$-cardinals of $(\delta,\lambda)$ to $\delta$. In particular, $V[R]\models \text{$``\delta=\kappa^+\,\wedge\,\lambda=\kappa^{++}$''}$.
\item $V[R]\models\text{$``2^\kappa= \lambda^{+}=\kappa^{+3}$''}$.
\item $V[R]\models \text{$``\kappa$ is strong limit with $\cof(\kappa)=\mu$''}$.
\item In $V[R]$ there is a bad and a very good scale at $\kappa$. In particular, $\square^*_\kappa$ fails and thus there are no special $\kappa^+$-Aronszajn trees.
\end{enumerate}
\end{prop}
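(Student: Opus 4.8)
\textbf{Proof plan for Proposition~\ref{PropertiesOfV[R]}.}
The plan is to deduce each item by combining the projection analysis of Proposition~\ref{projectionU} with the known properties of the Sinapova factor (Theorem~\ref{PropertiesofSinapovaForcing}, Proposition~\ref{SomeBasicPropSinapova}) and a quotient analysis in the style of \cite{Ung}, \cite{FriHon} and \cite{GolPov}. For (1), I would argue that $\mathbb{A}_{\lambda^+}$ has the $\lambda$-cc (indeed the $\delta^+$-cc after the ground preparation), that $\dot{\mathbb{S}}_{\lambda^+}$ is forced to be $\delta$-Knaster by Theorem~\ref{PropertiesofSinapovaForcing}(1), and that the $\mathbb{U}$-coordinate, being built from $<\delta$-supported conditions into $\Add(\delta,1)$-names over forcings that are themselves $\lambda$-cc, contributes no long antichains; a $\Delta$-system argument on the $\mathbb{A}$-supports and the $\dom(r)$'s then yields $\lambda$-Knasterness. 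Cardinal preservation $\geq\lambda$ is then immediate.

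For (2), the key is the factorization $V^{\mathbb{R}}$ sits between $V^{\mathbb{A}_{\lambda^+}\ast\dot{\mathbb{S}}_{\lambda^+}}$ and $V^{\bar{\mathbb{R}}}$ (Proposition~\ref{projectionU}(2)). Preservation of $\kappa$ and $\delta$: by Proposition~\ref{projectionU}(3) no new $<\delta$-sequences appear over $V^{\mathbb{A}_{\lambda^+}\ast\dot{\mathbb{S}}_{\lambda^+}}$, where $\kappa$ and $\delta=\varepsilon^+=(\sup_\xi\kappa_\xi)^+$ are already cardinals by Proposition~\ref{SomeBasicPropSinapova}(1)--(2) (recall $\mathbb{A}_{\lambda^+}=\Add(\kappa,\lambda^+)$ is $\kappa^+$-cc and $\kappa$-closed, hence preserves $\kappa$ and everything below, while $\mathbb{S}_{\lambda^+}$ preserves $\delta$ and all cardinals $\geq\delta$). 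The collapses: $\mathbb{S}_{\lambda^+}$ already collapses each $V$-regular $\rho\in(\kappa,\varepsilon]$ to $\kappa$ by Proposition~\ref{SomeBasicPropSinapova}(4), so the interval $(\kappa,\delta)$ becomes $\kappa$; for the interval $(\delta,\lambda)$, I would use the system of projections from Lemma~\ref{ProjectionsCohenPart}: for each $\alpha\in\mathcal{B}$ the quotient of $\mathbb{R}$ by the image of $\sigma^{\lambda^+}_\alpha$ absorbs a $\Coll(\delta,\alpha)$-like factor coming from the mismatch between the Cohen length $\lambda^+$ and the collapsing apparatus indexed by $\mathrm{Even}(\lambda)$, exactly as in \cite[\S3--4]{FriHon}; since $\mathcal{B}$ is unbounded in $\lambda$ this collapses every $V$-cardinal in $(\delta,\lambda)$ to $\delta$, while $\lambda$ itself survives by item (1). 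Hence $\delta=\kappa^+$ and $\lambda=\kappa^{++}$ in $V[R]$.

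For (3), in $V^{\mathbb{A}_{\lambda^+}\ast\dot{\mathbb{S}}_{\lambda^+}}$ one has $2^\kappa=\Theta=\lambda^+$ by Proposition~\ref{SomeBasicPropSinapova}(3) (using $\gch_{\geq\kappa}$ in the ground model and that $\mathbb{A}_{\lambda^+}$ adds $\lambda^+$ subsets of $\kappa$); since $\mathbb{R}$ adds no new subsets of $\kappa$ beyond that intermediate model (it lies between $V^{\mathbb{A}_{\lambda^+}\ast\dot{\mathbb{S}}_{\lambda^+}}$ and $V^{\bar{\mathbb{R}}}$, and $\mathbb{U}$ is $\delta$-closed so adds none either), $2^\kappa=\lambda^+$ persists, and with $\lambda=\kappa^{++}$ this is $\kappa^{+3}$. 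Item (4): $\kappa$ strong limit and $\cof(\kappa)=\mu$ hold in $V^{\mathbb{A}_{\lambda^+}\ast\dot{\mathbb{S}}_{\lambda^+}}$ by Proposition~\ref{SomeBasicPropSinapova}(2)--(3), and this is downward absolute to the smaller model $V[R]$ since $V[R]$ computes $\cof(\kappa)$ and bounded subsets of $\kappa$ the same way. Item (5): the very good and bad scales at $\kappa$ are constructed in $V[S]$ from the sequence $\mathfrak{F}$ by Theorem~\ref{SinapovaTheorem}(3); since $V[S]\subseteq V[R]$ and all relevant cardinals ($\kappa$, $\delta=\kappa^+$, the $\kappa^*_\xi$'s) are preserved and $V[R]$ adds no new bounded subsets of $\kappa^+$ beyond $V[S]$, these scales remain very good resp. bad in $V[R]$; the existence of a bad scale refutes $\square^*_\kappa$ (Cummings--Foreman), and by Specker's theorem the failure of $\square^*_\kappa$ precludes special $\kappa^+$-Aronszajn trees.

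The main obstacle I expect is item (2), specifically verifying that the quotient forcing $\mathbb{R}/\sigma^{\lambda^+}_\alpha[\cdot]$ really is (absorbed by) a $\delta$-closed collapse of $\alpha$ to $\delta$ for cofinally many $\alpha<\lambda$, and simultaneously that $\lambda$ is \emph{not} collapsed — this is the delicate balancing act between the Cohen component of length $\lambda^+$ and the collapsing/term-forcing component indexed by $\mathrm{Even}(\lambda)$, and it is precisely where one must invoke Lemma~\ref{ProjectionsCohenPart} together with a careful commuting-projections argument as in \cite[\S3]{FriHon} and \cite[\S3]{GolPov}. All the other items reduce, modulo routine absoluteness and closure bookkeeping, to facts already established for the Sinapova factor.
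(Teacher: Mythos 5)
Your proposal is correct and follows essentially the same route as the paper: everything is reduced, via the sandwich $V^{\mathbb{A}_{\lambda^+}\ast\dot{\mathbb{S}}_{\lambda^+}}\subseteq V^{\mathbb{R}}\subseteq V^{\bar{\mathbb{R}}}$ of Proposition~\ref{projectionU} and the projections of Lemma~\ref{ProjectionsCohenPart}, to the Sinapova-side facts (Theorem~\ref{PropertiesofSinapovaForcing}, Proposition~\ref{SomeBasicPropSinapova}, Theorem~\ref{SinapovaTheorem}) together with the \cite{FriHon}/\cite{GolPov}-style chain-condition argument for (1). The differences are cosmetic: for the $(\delta,\lambda)$-collapses the paper projects $\mathbb{R}$ onto $\mathrm{RO}^+(\mathbb{A}_{\even(\eta)}\ast\dot{\mathbb{S}}^{\pi}_\eta)\ast\dot{\Add}(\delta,1)$ for some $\eta\in\mathcal{B}$ above the given cardinal and uses that $\Add(\delta,1)$ there collapses $2^\kappa\,({\geq}\eta)$ to $\delta$, which is the precise form of your ``$\Coll(\delta,\alpha)$-like factor''; and for (3) the paper counts nice names plus the projection onto $\mathbb{A}_{\lambda^+}$, which is equivalent to your Easton-style argument. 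Two small slips worth fixing: $V[R]$ \emph{extends} $V^{\mathbb{A}_{\lambda^+}\ast\dot{\mathbb{S}}_{\lambda^+}}$, so in (4) and (5) the transfer is upward and is justified exactly by Proposition~\ref{projectionU}(3) (same ${<}\delta$-sequences), not by ``downward absoluteness to the smaller model $V[R]$''; and the implication from $\neg\square^*_\kappa$ to the nonexistence of special $\kappa^+$-Aronszajn trees rests on Jensen's equivalence of $\square^*_\kappa$ with the existence of such a tree, not on Specker's theorem.
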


\begin{proof}
$ $
\begin{enumerate}
\item It follows from a similar argument to \cite[Lemma 3.6]{GolPov}.%First of all if $(p,\dot{q})\in\mathbb{A}_{\lambda^+}\ast\dot{\mathbb{S}}_{\lambda^+}$ and $p\Vdash_\mathbb{P} \dot{q}=\langle \dot{g},\dot{H}\rangle$ we may assume without loss, maybe passing to an extension of $p$, that $\dot{g}$ is a function in $V$. Let $\{( p_\alpha,\dot{q}_\alpha, r_\alpha)\}_{\alpha\in\lambda}$ be a set of conditions in $\mathbb{R}$ and set $\dot{q}_\alpha=\langle g_\alpha, \dot{H}_\alpha\rangle$. Since $\mathbb{A}_{\lambda^+}$ has the $\kappa$-Knaster property we can find a set $\mathcal{I}\in [\lambda]^\lambda$ such that $p_\alpha\parallel p_\beta$, all $\alpha,\beta\in\mathcal{I}$. It is not hard to check that
%$$\{(g,f):\, \exists p\in\mathbb{A}_{\lambda^+}(p\Vdash_{\mathbb{A}_{\lambda^+}}\exists \dot{H}\, (g,\dot{H})\in\dot{\mathbb{S}}_{\lambda^+}\}$$
%has cardinality smaller than $\lambda$. Therefore there is $\mathcal{I}'\subseteq \mathcal{I}$ with $\mathcal{I}'\in[\lambda]^\lambda$ and a function $g$ such that $g_\alpha=g$ for each $\alpha\in\mathcal{I}'$, hence $\{(p_\alpha,\dot{q}_\alpha))\}_{\alpha\in\mathcal{I}'}$ are compatible. On the other hand, one can apply the usual $\Delta$-system arguments to the set $\{r_\alpha\}_{\alpha\in\mathcal{I}'}$ and find a further refinement $\mathcal{I}^*\subseteq \mathcal{I}'$ in such a way that all condition in $\{r_\alpha\}_{\alpha\in\mathcal{I}^*}$ have a fixed value on its common domain. Altogether this entails the compatibility of the family $\{(p_\alpha,\dot{q}_\alpha, r_\alpha)\}_{\alpha\in\mathcal{I}^*}$.

\item  Let $\theta\in\{\kappa,\delta\}\cup (\kappa,\delta)\cup (\delta, \lambda)$ and let us discuss what happens in each case. If $\theta=\kappa$ it is enough to prove that $\mathbb{A}_{\lambda^+}\ast \dot{\mathbb{S}}_{\lambda^+}$ preserves it, and this follows from a standard argument combining the $\kappa$-closedness of $\mathbb{A}_{\lambda^+}$ with the Prikry property and the $\kappa$-closedness of $\langle \dot{\mathbb{S}}_{\lambda^+}, \leq^*\rangle$. If $\theta=\delta$ the argument is similar but now appealing to Easton's lemma (see e.g. \cite{Kun}). If $\theta\in (\kappa,\delta)$, it is clear that $\mathbb{R}$ collapses $\theta$ because there is a projection between $\mathbb{R}$ and $\mathbb{A}_{\lambda^+}\ast \dot{\mathbb{S}}_{\lambda^+}$, and this last forcing collapses the interval $(\kappa,\delta)$ (cf Proposition \ref{SomeBasicPropSinapova}(4)). Finally, assume that $\theta\in (\delta,\lambda)$ and let $\eta\in\mathcal{B}\cap (\delta,\lambda)$ with $\eta>\theta$. It is easy to see that there is a projection between $\mathbb{R}$ and $\mathrm{RO}^+(\mathbb{A}_{\even(\eta)}\ast \dot{\mathbb{S}}^\pi_\eta)\ast \dot{\mathrm{Add}}(\delta,1)$. By standard arguments this latter iteration collapses the interval $(\delta, \eta]$ and thus $\theta$.

\item[(3)] The first equality follows by counting $\mathbb{R}$-nice names and from the existence of a projection between $\mathbb{R}$ and $\mathbb{A}_{\lambda^+}$. For the latter, use item (2).

\item[(4)] Clearly it suffices to argue that in $V[G\ast \dot{S}]$ the property holds. Nevertheless, observe that this is true by Proposition~\ref{SomeBasicPropSinapova}(3). %joint with the above claims.
 %Since $\mathbb{U}$ is $\delta$-closed and there is a projection between $\bar{\mathbb{R}}$ and $\mathbb{R}$ it is enough to prove that $\kappa$ is strong limit in $V[G\ast \dot{S}]$. Let $\eta<\kappa$ be a cardinal and use proposition \ref{CombinatoricsBelowkappa} to find a limit ordinal $\xi<\mu$ such that $\mathcal{P}(\eta)^{V[G\ast \dot{S}]}=\mathcal{P}(\eta)^{V[G\ast \dot{S}\upharpoonright \xi]}$. Since $\kappa$ is supercompact in $V[G]$, in particular strong limit, and $\mathbb{S}_{\xi,g^*(\xi)}$ has cardinality less than $\kappa$, then $(2^\eta)^{V[G\ast\dot{S}\upharpoonright\xi]}<\kappa$ and thus $(2^\eta)^{V[G\ast\dot{S}]}<\kappa$.

%Regarding the issue of the cofinality,  $\cof^{(\mathbb{A}_{\lambda^+}\ast \mathbb{S}_{\lambda^+})\times \mathbb{U}}(\kappa)=\cof^{(\mathbb{A}_{\lambda^+}\ast \mathbb{S})}(\kappa)=\mu$,  as $\mathbb{U}$ is $\delta$-closed, and  $$\cof^{(\mathbb{A}_{\lambda^+}\ast \mathbb{S})\times \mathbb{U}}(\kappa)\leq\cof^{\mathbb{R}}(\kappa)\leq \cof^{(\mathbb{A}_{\lambda^+}\ast \mathbb{S})}(\kappa),$$ hence $\cof^\mathbb{R}(\kappa)=\mu$.
\item[(5)] This follows from the existence of a very good (resp. bad) scale in $V[G\ast \dot{S}]$ (see Theorem \ref{SinapovaTheorem}), $(\kappa^+)^{V[G\ast \dot{S}]}=(\kappa^+)^{V[R]}=\delta$ and the fact that $V[G\ast\dot{S}]$ and $V[R]$ have the same ${<}\delta$-sequences.
\end{enumerate}
\end{proof}

\section{$\TP(\kappa^{++})$ holds}\label{TPkappa++Section}
In the present section we will prove that $V[R]\models \TP(\kappa^{++})$. For enlightening  the presentation, once again, we will simply give details for the proof in case $\gamma=\lambda^+$. A sketch of the main ideas involved in the proof of the more general result can be found in \cite{FriHon} or in \cite{GolPov}.

\medskip

Let us briefly summarize the structure of the argument. First we beging proving that any counterexample for $\TP(\lambda)$ in $V[R]$ lies in an intermediate extension of $\mathbb{R}$. More formally, % we first define the notion of truncations of $\mathbb{R}$ and then we show that 
any $\lambda$-Aronszajn tree in $V[R]$ is a $\lambda$-Aronszajn tree in a generic extension given by some truncation of $\mathbb{R}$ (see Proposition~\ref{TruncationsAndAronszajnTree}).  These truncations have the important feature that they are isomorphic to a Mitchell forcing $\mathbb{R}^*$ without  mismatches between the Cohen and the collapsing component.

In latter arguments we shall again consider truncations of $\mathbb{R}^*$, $\mathbb{R}^*\upharpoonright\gamma$, and use the weak compactness of $\lambda$ to prove that any $\lambda$-Aronszajn tree in  $V^{\mathbb{R}^*}$ reflects to a $\gamma$-Aronszajn tree in $V^{\mathbb{R}^*\upharpoonright\gamma}$ (see Lemma \ref{AronszajnWeakComp}). Then, we will be in conditions to use Unger's ideas \cite{Ung} to show that there are no $\gamma$-Aronszajn trees in $V^{\mathbb{R}^*\upharpoonright \gamma}$, and thus that $V[R]\models \TP(\lambda)$. Let $\beta_0\in\mathcal{A}\setminus\lambda+1$ be the ordinal fixed in the previous section.%, which will definitely yield the proof of $\TP(\lambda)$ in $V[R]$.
%Thereby, we will be in condition to use the ideas of Unger \cite{Ung} for proving that there are no $\lambda$-Aronszajn trees in $V^{\mathbb{R}\upharpoonright \alpha}$, which will yield the proof of $\TP(\lambda)$ in $V[R]$.

%The reason for this is that these truncations will be forcings where both the Cohen part and the collapsing part will have length $\lambda$

 %hence they will be suitable to apply the ideas of \cite{Ung} to prove that $\TP(\lambda)$  holds.

%given by a truncation of $\mathbb{R}$ where both components have length $\lambda$. Afterwards we shall apply the ideas of \cite{Ung} to these truncation aiming to prove that tree property on $\lambda$ holds in their corresponding generic extensions. Altogether this will yield to the proof of $\TP(\kappa^{++})$ in $V^\mathbb{R}$.

%The next definition gives a precise meaning to what we understand by truncations of $\mathbb{R}$:
\begin{defi}[Truncations of $\mathbb{R}$]\label{truncationsR}
Let $\alpha\in\mathcal{A}\cap (\beta_0,\lambda^+)$. % with $\beta_0<\alpha<\lambda^+$. 
A condition in $\mathbb{R}\upharpoonright\alpha$ is a triple $(p,\dot{q},r)$ for which all the following hold:
\begin{enumerate}
\item $(p,q)\in \mathbb{A}_\alpha\ast \dot{\mathbb{S}}_\alpha$; 
\item $r$ is a partial function with $\dom(r)\in[\mathcal{B}]^{<\delta}$;
\item For every $\beta\in\dom(r)$, $r(\beta)$ is a $\mathbb{A}_{\rm{Even}(\beta)}\ast\dot{\mathbb{S}}^\pi_\beta$-name such that
$$\one_{\mathbb{A}_{\mathrm{Even}(\beta)}\ast \dot{\mathbb{S}}^\pi_\beta}\Vdash_{\mathbb{A}_{\mathrm{Even}(\beta)}\ast\dot{\mathbb{S}}^\pi_\beta} \text{$``\dot{r}(\beta)\in\dot{\Add}(\delta,1)$''}.$$
\end{enumerate}
For conditions $(p_0,\dot{q}_0,r_0),(p_1,\dot{q}_1,r_1)$ in $\mathbb{R}\upharpoonright \alpha$ we will write $(p_0,\dot{q}_0,r_0)\leq (p_1,\dot{q}_1,r_1)$ in case $(p_0,\dot{q}_0)\leq_{\mathbb{A}_{\mathrm{Even}(\beta)}\ast\dot{\mathbb{S}}^\pi_\beta} (p_1,\dot{q}_1)$, $\dom(r_1)$ $\subseteq\dom(r_0)$ and for each $\beta\in\dom(r_1)$,
$\sigma^\alpha_\beta(p_0,q_0)\Vdash_{\mathbb{A}_{\mathrm{Even}(\beta)}\ast \dot{\mathbb{S}}^\pi_\beta}\text{$``\dot{r}_0(\beta)\leq \dot{r}_1(\beta)$''}. $
\end{defi}
The proof of the next result is essentially the same as \cite[Lemma 3.13]{FriHon} or \cite[Lemma 3.8]{GolPov}.
\begin{prop}\label{ProjectionRandTruncations}
Let $\alpha\in\mathcal{A}\cap (\beta_0,\lambda^+)$. Then there is a projection between $\mathbb{R}$ and $\mathrm{RO}^+(\mathbb{R}\upharpoonright\alpha)$.
\end{prop}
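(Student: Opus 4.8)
The plan is to construct the projection map explicitly and verify the three defining properties of a projection: it is order-preserving, it maps $\one$ to $\one$, and it has the density/refinement property that below any image of a condition one can find a further refinement whose preimage sits below the given condition. The natural candidate is the map $\tau \colon \mathbb{R} \to \mathrm{RO}^+(\mathbb{R}\upharpoonright\alpha)$ obtained by restricting the ``Cohen-and-Sinapova'' coordinate to $\mathbb{A}_\alpha \ast \dot{\mathbb{S}}_\alpha$ and keeping the $r$-coordinate unchanged. Concretely, given $(p,\dot q,r)\in\mathbb{R}$, one lets $\tau(p,\dot q,r)$ be (the Boolean value of) $(p\upharpoonright\alpha,\dot q\upharpoonright\alpha, r)$, where $p\upharpoonright\alpha$ is the obvious restriction of the Cohen condition, and $\dot q\upharpoonright\alpha$ is obtained via Proposition~\ref{PropOnProjectionOfGenerics}, which tells us that $\mathbb{S}_{\lambda^+}$ projects onto $\mathbb{S}_\alpha$ (this is why $\alpha\in\mathcal{A}$ is required, so that $\mathfrak{U}_\alpha,\mathfrak{B}_\alpha$ actually define a Sinapova forcing). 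One must pass to $\mathrm{RO}^+$ because the restriction of a Sinapova name need not be literally a Sinapova condition, only a positive Boolean value below one.

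First I would set up the restriction of the first coordinate carefully: on $\mathbb{A}_{\lambda^+}$ the projection onto $\mathbb{A}_\alpha$ is the standard one from Notation~\ref{CohenForcingNotation}, and then in the extension $V[G\upharpoonright\alpha]$ one uses the projection $\sigma$ of Proposition~\ref{PropOnProjectionOfGenerics} on the Sinapova part; the composite gives a projection $\mathbb{A}_{\lambda^+}\ast\dot{\mathbb{S}}_{\lambda^+}\to\mathrm{RO}^+(\mathbb{A}_\alpha\ast\dot{\mathbb{S}}_\alpha)$, which is precisely $\sigma^{\lambda^+}_\alpha$ of Lemma~\ref{ProjectionsCohenPart}(1) (taking $\tilde\gamma=\lambda^+$ and $\gamma=\alpha$). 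Next I would check order-preservation: if $(p_0,\dot q_0,r_0)\leq(p_1,\dot q_1,r_1)$ in $\mathbb{R}$, then $(p_0,\dot q_0)\leq(p_1,\dot q_1)$ gives $\sigma^{\lambda^+}_\alpha(p_0,\dot q_0)\leq\sigma^{\lambda^+}_\alpha(p_1,\dot q_1)$, we have $\dom(r_1)\subseteq\dom(r_0)$, and for $\beta\in\dom(r_1)$ the forcing statement $\sigma^{\lambda^+}_\beta(p_0,q_0)\Vdash ``r_0(\beta)\leq r_1(\beta)\textrm{''}$ must be transferred to the statement phrased with $\sigma^{\alpha}_\beta$ applied to the $\alpha$-restricted condition; this is exactly where the commutation identity $\sigma^{\lambda^+}_\beta=\hat\sigma^{\alpha}_\beta\circ\sigma^{\lambda^+}_\alpha$ from Lemma~\ref{ProjectionsCohenPart}(3) does the work, for every $\beta\in\mathcal{B}$. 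The $\one\mapsto\one$ clause is immediate.

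The main obstacle will be the density/refinement clause: given $(p,\dot q,r)\in\mathbb{R}$ and a condition $(p',\dot q',r')\leq\tau(p,\dot q,r)$ in $\mathrm{RO}^+(\mathbb{R}\upharpoonright\alpha)$, I must produce $(p'',\dot q'',r'')\in\mathbb{R}$ refining $(p,\dot q,r)$ with $\tau(p'',\dot q'',r'')\leq(p',\dot q',r')$. The $r$-coordinate is unproblematic because it is literally shared between $\mathbb{R}$ and $\mathbb{R}\upharpoonright\alpha$ and the ordering on it is phrased through $\sigma^{\cdot}_\beta$ which by the commutation identity is unaffected by whether we work over $\alpha$ or over $\lambda^+$. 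The work is entirely in the first coordinate: one needs that $\sigma^{\lambda^+}_\alpha$ is itself a projection (with the relevant target $\mathrm{RO}^+(\mathbb{A}_\alpha\ast\dot{\mathbb{S}}_\alpha)$), so that the $\mathbb{A}_{\lambda^+}\ast\dot{\mathbb{S}}_{\lambda^+}$-part of the desired refinement exists; once it is obtained, one glues it with $r'$ — after possibly shrinking $r'$ on finitely (or $<\delta$) many coordinates so that the compatibility condition in Definition~\ref{MainForcingR} between the first coordinate and $r''$ is met — and checks the whole triple lies in $\mathbb{R}$ and projects below $(p',\dot q',r')$. Since this is structurally identical to the situation treated in \cite[Lemma 3.13]{FriHon} and \cite[Lemma 3.8]{GolPov}, I would, after exhibiting $\tau$ and recording how Lemma~\ref{ProjectionsCohenPart} supplies order-preservation and the commutation needed for the $r$-coordinate, refer to those sources for the remaining bookkeeping in the first coordinate, noting only the single new point that the Sinapova projection $\sigma$ now comes from the genericity criterion (Theorem~\ref{CriterionGenericity}) via Proposition~\ref{PropOnProjectionOfGenerics} rather than from a factorization of a supercompact Prikry/Magidor forcing.
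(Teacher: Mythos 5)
Your proposal is correct and matches the paper's approach: the paper itself gives no details here, simply noting that the proof is essentially that of \cite[Lemma 3.13]{FriHon} or \cite[Lemma 3.8]{GolPov}, and your explicit map $(p,\dot q,r)\mapsto(\sigma^{\lambda^+}_\alpha(p,\dot q),r)$ together with the commutation identity of Lemma~\ref{ProjectionsCohenPart}(3) is exactly the construction those sources carry out. The only cosmetic remark is that no shrinking of $r'$ is actually needed in the refinement step (one takes $r'':=r'$ outright, since $\mathcal{B}\subseteq\lambda<\alpha$ makes the $r$-coordinates literally the same in $\mathbb{R}$ and $\mathbb{R}\upharpoonright\alpha$).
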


%The next proposition shows that a failure of $\TP(\lambda)$ in $V^\mathbb{R}$ entails a failure of the same property at some generic extension given by a truncation of $\mathbb{R}$:
\begin{prop}\label{TruncationsAndAronszajnTree}
Let $\dot{T}$ be a $\mathbb{R}$-name for a $\lambda$-Aronszajn tree. There is $\beta^*\in\mathcal{A}\cap (\beta_0,\lambda^+)$, such that $V^{\mathbb{R}\upharpoonright\beta^*}\models \text{$``T$ is a $\lambda$-Aronszajn tree''}$ %$V^{\mathbb{R}\upharpoonright\beta^*}$.
\end{prop}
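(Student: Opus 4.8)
The plan is to show that any $\mathbb{R}$-name $\dot T$ for a $\lambda$-Aronszajn tree can be reflected down into one of the truncations $\mathbb{R}\upharpoonright\beta$. The key point is a chain-condition argument: by Proposition~\ref{PropertiesOfV[R]}(1), $\mathbb{R}$ is $\lambda$-Knaster, so in particular it is $\lambda$-c.c. Since $T$ is forced to be a $\lambda$-tree, each level $T_\alpha$ (for $\alpha<\lambda$) has size $<\lambda$, so each node of $T$ can be identified with a pair $(\alpha,\beta)$ with $\alpha,\beta<\lambda$, and the tree order is a subset of $\lambda\times\lambda$ of size $\lambda$. Each of the $\lambda$-many statements ``$(\alpha_0,\beta_0)<_T(\alpha_1,\beta_1)$'' is decided by a maximal antichain of size $<\lambda$. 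Collecting all the conditions appearing in these antichains gives a set $C$ of conditions of $\mathbb{R}$ with $|C|\le\lambda$.

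Next I would use the structure of conditions in $\mathbb{R}$ to localize $C$. A condition $(p,\dot q,r)\in\mathbb{R}$ has a Cohen part $p\in\mathbb{A}_{\lambda^+}=\Add(\kappa,\lambda^+)$ with support of size ${<}\kappa$, a name $\dot q$ for a Sinapova condition (whose stem and large-set data is, after the analysis of Section~\ref{SinapovaPreliminaries}, coded by a small amount of information together with a name referring to $G$), and a function $r$ with $\dom(r)\in[\mathcal{B}]^{<\delta}$ whose values $r(\gamma)$ are $\mathbb{A}_{\mathrm{Even}(\gamma)}\ast\dot{\mathbb{S}}^\pi_\gamma$-names. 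Crucially $\mathbb{B}\subseteq\lambda$, so the $r$-parts already live below $\lambda$. Each condition thus mentions only ${<}\kappa$-many coordinates of $\mathbb{A}_{\lambda^+}$, and nice names for Sinapova conditions only involve a bounded piece of $\mathbb{A}_{\lambda^+}$; since $\lambda$ is regular and $\lambda>\kappa$, the union of all coordinates mentioned by conditions in $C$ (together with whatever bounded pieces of $\mathbb{A}_{\lambda^+}$ are used by their Sinapova-names) is a set of ordinals of size $\le\lambda<\lambda^+$, hence bounded in $\lambda^+$ with bound $\beta'<\lambda^+$. By Lemma~\ref{DefinitionOFA} the set $\mathcal{A}$ is unbounded in $\lambda^+$ and closed under ${\ge}\delta$-limits, so I can pick $\beta^*\in\mathcal{A}\cap(\beta_0,\lambda^+)$ with $\beta^*>\beta'$ — in fact one takes $\beta^*$ to be a limit of an appropriate increasing $\omega_1$- or $\delta$-sequence of elements of $\mathcal{A}$ above $\beta'$, so that the relevant names truly live in $V^{\mathbb{R}\upharpoonright\beta^*}$.

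Having fixed $\beta^*$, I would argue that $\dot T$ is (equivalent to) an $\mathbb{R}\upharpoonright\beta^*$-name, and that being Aronszajn transfers. Concretely: all conditions in $C$ are in (the image in $\mathrm{RO}^+$ of) $\mathbb{R}\upharpoonright\beta^*$, so the tree order of $T$ is decided by $\mathbb{R}\upharpoonright\beta^*$; by Proposition~\ref{ProjectionRandTruncations} there is a projection $\mathbb{R}\to\mathrm{RO}^+(\mathbb{R}\upharpoonright\beta^*)$, so $V^{\mathbb{R}\upharpoonright\beta^*}\subseteq V^{\mathbb{R}}$ and $T\in V^{\mathbb{R}\upharpoonright\beta^*}$, and $T$ remains a $\lambda$-tree there (levels have the same sizes since no new subsets of the small level-index sets are added going up — or simply because $\mathbb{R}\upharpoonright\beta^*$ already decides everything about $T$). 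Finally $T$ has no cofinal branch in $V^{\mathbb{R}\upharpoonright\beta^*}$: a cofinal branch there would still be a cofinal branch in the larger model $V^{\mathbb{R}}$, contradicting that $V^{\mathbb{R}}\models$``$T$ is Aronszajn''. Hence $V^{\mathbb{R}\upharpoonright\beta^*}\models$``$T$ is a $\lambda$-Aronszajn tree''.

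The main obstacle I anticipate is the bookkeeping that shows a nice $\mathbb{R}$-name for the tree really is captured by bounded-support data: one must verify that $\mathbb{A}_{\lambda^+}$-nice names for Sinapova conditions $\dot q$, and the $\mathbb{A}_{\mathrm{Even}(\gamma)}\ast\dot{\mathbb{S}}^\pi_\gamma$-names $r(\gamma)$, only refer to a bounded segment of the Cohen forcing, and that taking $\beta^*\in\mathcal{A}$ — rather than an arbitrary ordinal — is legitimate, i.e. that the measure/club data $(\mathfrak{U}_{\beta^*},\mathfrak{B}_{\beta^*})$ genuinely project as in Lemma~\ref{DefinitionOFA} so that $\mathbb{R}\upharpoonright\beta^*$ is a bona fide forcing with the stated projection to and from $\mathbb{R}$. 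This is where the closure of $\mathcal{A}$ under ${\ge}\delta$-limits is used, and where I would lean on the cited analogues \cite[Lemma 3.13]{FriHon}, \cite[Lemma 3.8]{GolPov}.
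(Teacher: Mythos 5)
Your proposal is correct and follows essentially the same route as the paper: use the $\lambda$-Knaster (hence $\lambda$-cc) property of $\mathbb{R}$ to collect $\lambda$-many maximal antichains deciding the tree (the paper decides ``$\alpha\in\dot{T}$'' after assuming $\dot{T}\subseteq\lambda$, you decide the order statements, which is the same idea), bound the supports of the $\le\lambda$-many conditions involved by some $\beta^*\in\mathcal{A}\cap(\beta_0,\lambda^+)$, and conclude that $T$ is decided by $\mathbb{R}\upharpoonright\beta^*$ and remains Aronszajn there since a cofinal branch in the intermediate extension would persist to $V^{\mathbb{R}}$. Your extra bookkeeping about the Sinapova-name and $r$-parts only spells out what the paper's terse proof leaves implicit.
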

\begin{proof}
Let $\dot{T}$ be a $\mathbb{R}$-name for a $\lambda$-Aronszajn tree $T$. Without loss of generality, $\one_\mathbb{R}\Vdash_\mathbb{R} \dot{T}\subseteq\lambda$. % and that $\dot{T}$ is a nice name for a subset of $\lambda$. 
Let $\{{A}_\alpha\}_{\alpha<\lambda}$ be a family of maximal antichains deciding $\text{$``\alpha\in \dot{T}$''}$. Set %that the $\lambda$-Knaster property of $\mathbb{R}$ implies that 
${A}^*:=\bigcup_{\alpha\in\lambda} A_\alpha$ and observe that $|A^*|\leq \lambda$. In particular, there is some $\beta^*\in\mathcal{A}\cap (\beta_0,\lambda^+)$ be such that  $\dom(p)\subseteq \kappa\times\beta^*$, for any condition $(p,\dot{q},r)\in A^*$. Clearly $\{A_\alpha\}_{\alpha<\lambda}$ is a family of maximal antichains in $\mathbb{R}\upharpoonright\beta^*$ deciding the same properties, hence $V^{\mathbb{R}\upharpoonright\beta^*}\models \text{$``T$ is $\lambda$-Aronszajn''}$.
\end{proof}
Let $\pi^*:\beta^*\rightarrow \lambda$ be a bijection extending $\pi$. We use $\pi^*$ to define an $\in$-isomorphism between $V^{\mathbb{A}_{\beta^*}}$ and $V^{\mathbb{A}_\lambda}$.%and lift it to an isomorphism between $\mathbb{A}_{\beta^*}$ and $\mathbb{A}_\lambda$ 
\footnote{This choice will guarantee that our future construction coheres with the previous one.} 
Again, $\mathfrak{U}_\lambda^{\pi^*}:=\pi^*{(\dot{\mathfrak{U}}_{\beta^*})}_{\pi^*[G\upharpoonright\beta^*]}$ is a $\lhd$-increasing sequence of measures %supercompact measure over $\mathcal{P}_\kappa(\gamma^+)^{V[\pi^*(G\upharpoonright \beta^*)]}$ 
which (pointwise) extends the sequence $\mathfrak{U}^\pi_{\lambda}$. Similarly, define $\mathfrak{B}^{\pi^*}_\lambda:=\pi^*(\mathfrak{B}_{\beta^*})_{\pi^*[G\upharpoonright\beta^*]}$. 
Let $\mathbb{S}^{\pi^*}_\lambda:=\mathbb{S}_{(\kappa,\mu, \mathfrak{U}^{\pi^*}_\lambda, \mathfrak{B}^{\pi^*}_\lambda)}$. For the ease of notation, let  $H^*$ be the $\mathbb{A}_{\mathrm{Even}(\lambda)}$-generic filter generated by $\pi^*[G\upharpoonright\beta^*]$. %be the Sinapova forcing defined in $V[\pi^*(G\upharpoonright \beta^*)]$ using $(\kappa,\mu, \mathfrak{U}^{\pi^*}_\lambda, \mathfrak{B}^{\pi^*}_\lambda)$. 

\begin{prop}\label{R*RtruncatedAreIsomorphic} $ $
\begin{enumerate}
\item There is an isomorphism $\varphi:\mathbb{A}_{\beta^*}\ast \dot{\mathbb{S}}_{\beta^*}\rightarrow\mathbb{A}_{\lambda}\ast \dot{\mathbb{S}}^{\pi^*}_{\lambda}$.
\item For each $\beta\in\mathcal{B}$ the function $\varrho^{\lambda}_{\beta}=\sigma^{\beta^*}_{\beta}\circ \varphi^{-1}$ establishes a projection between $\mathbb{A}_{\lambda}\ast \dot{\mathbb{S}}^{\pi^*}_{\lambda}$ and $\mathrm{RO}^+(\mathbb{A}_{\even(\beta)}\ast \dot{\mathbb{S}}^{\pi}_\beta)$.
\end{enumerate}
\end{prop}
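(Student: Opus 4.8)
The plan is to construct the isomorphism $\varphi$ in part (1) by transporting, via the bijection $\pi^*$, the canonical isomorphism that the map $\pi^*$ induces between the $\mathbb{A}_{\beta^*}$-side and the $\mathbb{A}_\lambda$-side of the iteration. First I would observe that $\pi^*\colon\beta^*\to\lambda$ induces an isomorphism $\varphi_0\colon \mathbb{A}_{\beta^*}\to\mathbb{A}_\lambda$ of the Cohen forcings (since $\mathrm{Add}(\kappa,\beta^*)\cong\mathrm{Add}(\kappa,\lambda)$ canonically under a bijection of the index sets, the order $\supseteq$ being preserved). This $\varphi_0$ induces in turn an $\in$-isomorphism between $V^{\mathbb{A}_{\beta^*}}$ and $V^{\mathbb{A}_\lambda}$, and by definition $\mathfrak{U}^{\pi^*}_\lambda=\pi^*(\dot{\mathfrak U}_{\beta^*})_{\pi^*[G\upharpoonright\beta^*]}$ and $\mathfrak{B}^{\pi^*}_\lambda=\pi^*(\mathfrak B_{\beta^*})_{\pi^*[G\upharpoonright\beta^*]}$ are precisely the images under this $\in$-isomorphism of the data $\mathfrak U_{\beta^*},\mathfrak B_{\beta^*}$ used to define $\dot{\mathbb S}_{\beta^*}$. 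Hence the name $\dot{\mathbb S}^{\pi^*}_\lambda=\mathbb{S}_{(\kappa,\mu,\mathfrak U^{\pi^*}_\lambda,\mathfrak B^{\pi^*}_\lambda)}$ is, up to the translation by $\varphi_0$, literally the image of the name $\dot{\mathbb S}_{\beta^*}$. Therefore $\varphi_0$ lifts to an isomorphism $\varphi\colon\mathbb{A}_{\beta^*}\ast\dot{\mathbb S}_{\beta^*}\to\mathbb{A}_\lambda\ast\dot{\mathbb S}^{\pi^*}_\lambda$ of the two-step iterations, sending $(p,\dot q)\mapsto(\varphi_0(p),\varphi_0(\dot q))$, where $\varphi_0(\dot q)$ denotes the $\mathbb{A}_\lambda$-name obtained by applying the induced $\in$-isomorphism of forcing names. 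One must check this preserves the ordering of the two-step iteration, but that is immediate from the fact that $\varphi_0$ and the induced name translation commute with the forcing relation.

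For part (2), I would simply compose: $\varphi^{-1}\colon \mathbb{A}_\lambda\ast\dot{\mathbb S}^{\pi^*}_\lambda\to\mathbb{A}_{\beta^*}\ast\dot{\mathbb S}_{\beta^*}$ is an isomorphism (hence in particular a projection), and by Lemma~\ref{ProjectionsCohenPart}(2) (instantiated with $\gamma=\beta^*\in\hat{\mathcal A}$) there is a projection $\sigma^{\beta^*}_\beta\colon\mathbb{A}_{\beta^*}\ast\dot{\mathbb S}_{\beta^*}\to\mathrm{RO}^+(\mathbb{A}_{\even(\beta)}\ast\dot{\mathbb S}^\pi_\beta)$ for each $\beta\in\mathcal B$. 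Since the composition of a projection after an isomorphism is again a projection, $\varrho^\lambda_\beta:=\sigma^{\beta^*}_\beta\circ\varphi^{-1}$ is the desired projection. Here I would also note that the target $\dot{\mathbb S}^\pi_\beta$ genuinely lives in $V[H^*\upharpoonright\even(\beta)]=V[H\upharpoonright\even(\beta)]$: this is the whole point of having chosen $\pi^*$ to extend $\pi$, so that the data $\mathfrak U^\pi_\beta,\mathfrak B^\pi_\beta$ defining $\dot{\mathbb S}^\pi_\beta$ are unchanged.

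The main obstacle I anticipate is purely bookkeeping: making precise the claim that applying the $\in$-isomorphism induced by $\pi^*$ to the $\mathbb{A}_{\beta^*}$-name $\dot{\mathbb S}_{\beta^*}$ yields (a name for) exactly $\mathbb{S}_{(\kappa,\mu,\mathfrak U^{\pi^*}_\lambda,\mathfrak B^{\pi^*}_\lambda)}$, and that this translation respects the two-step iteration order. This requires unwinding that Sinapova forcing $\mathbb{S}_{(\kappa,\mu,\mathfrak U,\mathfrak B)}$ is defined uniformly and absolutely from the parameters $(\kappa,\mu,\mathfrak U,\mathfrak B)$ — which it is, by Definitions~\ref{SinapovaForcing} and \ref{OrderSinapova}, since $\crit(\pi^*\text{-induced isomorphism})$ issues are moot ($\kappa$ and $\mu$ are fixed and below $\beta_0$, hence untouched). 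Beyond that, the argument is a routine verification, and I would present it as such, referring to the analogous passages in \cite{FriHon} and \cite{GolPov} for the fully spelled-out name calculus rather than reproducing it.
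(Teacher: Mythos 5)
Your proposal is correct and follows essentially the same route as the paper: for (1) transport the two-step iteration along the $\in$-isomorphism induced by $\pi^*$ (using that $\mathfrak{U}^{\pi^*}_\lambda,\mathfrak{B}^{\pi^*}_\lambda$ are by definition the $\pi^*$-images of $\mathfrak{U}_{\beta^*},\mathfrak{B}_{\beta^*}$ and that Sinapova forcing is defined uniformly from its parameters), and for (2) compose $\sigma^{\beta^*}_\beta$ with $\varphi^{-1}$. The only cosmetic difference is that the paper handles the name bookkeeping you flag by passing to the dense subposet of conditions $(p,(\check g,\dot H))$ with ground-model stems and defining $\varphi$ there by $(p,(\check g,\dot H))\mapsto(\pi^*(p),(\check g,\pi^*(\dot H)))$, rather than translating arbitrary names for conditions.
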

\begin{proof}
For (1), observe that the subposet of $\mathbb{A}_{\beta^*}\ast \dot{\mathbb{S}}_{\beta^*}$ formed by conditions of the form $(p,(\check{g},\dot{H}))$, is dense. Analogously, for $\mathbb{A}_{\lambda}\ast \dot{\mathbb{S}}^{\pi^*}_{\lambda}$. It is routine to check that $(p, (\check{g},\dot{H}))\mapsto (\pi^*(p), (\check{g}, \pi^*(\dot{H})))$ defines an isomorphism between these two dense subposets. Observe that now (2) is immediate as $\sigma^{\beta^*}_\beta$ is a projection.
\end{proof}

\begin{defi}\label{DefinitionR*}
A condition in $\mathbb{R}^*$ is a triple $(p,\dot{q},r)$ for which all the following hold:
\begin{enumerate}
\item $(p,q)\in \mathbb{A}_\lambda \ast \dot{\mathbb{S}}^{\pi^*}_\lambda$;
\item $r$ is a partial function with $\dom(r)\in[\mathcal{B}]^{<\delta}$;
\item  For every $\beta\in\dom(r)$, $r(\beta)$ is a $\mathbb{A}_{\mathrm{Even}(\beta)}\ast \dot{\mathbb{S}}^\pi_\beta$-name such that 
$$\one_{\mathbb{A}_{\mathrm{Even}(\beta)}\ast \dot{\mathbb{S}}^\pi_\beta}\Vdash_{\mathbb{A}_{\mathrm{Even}(\beta)}\ast \dot{\mathbb{S}}^\pi_\beta}\text{$``\dot{r}(\beta)\in\dot{\Add}(\delta,1)$''}.$$
\end{enumerate}
For conditions $(p_0,\dot{q}_0,r_0),(p_1,\dot{q}_1,r_1)$  in $\mathbb{R}^*$ we will write $(p_0,\dot{q}_0,r_0)\leq (p_1,\dot{q}_1,r_1)$ in case $(p_0,\dot{q}_0)\leq_{\mathbb{A}_{\mathrm{Even}(\beta)}\ast \dot{\mathbb{S}}^\pi_\beta}(p_1,\dot{q}_1)$, $\dom(r_1)$ $\subseteq\dom(r_0)$ and for each $\beta\in\dom(r_1)$,
$\varrho^\lambda_\beta(p_0,q_0)\Vdash_{\mathbb{A}_{\mathrm{Even}(\beta)}\ast \dot{\mathbb{S}}^\pi_\beta}\dot{r}_0(\beta)\leq \dot{r}_1(\beta). $
\end{defi}
\begin{prop}\label{R*RtruncatedAreIsomorphic2}
$\mathbb{R}^*$ and $\mathbb{R}\upharpoonright\beta^*$ are isomorphic. In particular, $\mathbb{R}^*$ forces that $\dot{T}$ is a $\lambda$-Aronszajn tree.
\end{prop}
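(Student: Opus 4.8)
The plan is to exhibit an explicit isomorphism $\Psi\colon \mathbb{R}^*\to \mathbb{R}\upharpoonright\beta^*$ built piece by piece out of the isomorphism $\varphi\colon \mathbb{A}_{\beta^*}\ast\dot{\mathbb{S}}_{\beta^*}\to\mathbb{A}_\lambda\ast\dot{\mathbb{S}}^{\pi^*}_\lambda$ of Proposition~\ref{R*RtruncatedAreIsomorphic}, applied in the \emph{reverse} direction on the first two coordinates, while leaving the third coordinate (the $\mathrm{Add}(\delta,1)$-names indexed by $\mathcal{B}$) literally fixed. Concretely, given a condition $(p,\dot q,r)\in\mathbb{R}^*$, one has $(p,\dot q)\in\mathbb{A}_\lambda\ast\dot{\mathbb{S}}^{\pi^*}_\lambda$, so $\varphi^{-1}(p,\dot q)\in\mathbb{A}_{\beta^*}\ast\dot{\mathbb{S}}_{\beta^*}$; set $\Psi(p,\dot q,r):=(\varphi^{-1}(p,\dot q),r)$. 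The first two clauses of Definition~\ref{truncationsR} and Definition~\ref{DefinitionR*} match under $\varphi^{\pm1}$, and the third clause is verbatim identical in the two definitions, so $\Psi$ is a well-defined bijection between the underlying sets.

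The one genuine point to check is that $\Psi$ is order-preserving in both directions, and this reduces entirely to the compatibility of the two families of projections into the intermediate forcings $\mathbb{A}_{\mathrm{Even}(\beta)}\ast\dot{\mathbb{S}}^\pi_\beta$, $\beta\in\mathcal{B}$. In $\mathbb{R}^*$ the order on the third coordinate is governed by $\varrho^\lambda_\beta=\sigma^{\beta^*}_\beta\circ\varphi^{-1}$ (Proposition~\ref{R*RtruncatedAreIsomorphic}(2)), whereas in $\mathbb{R}\upharpoonright\beta^*$ it is governed by $\sigma^{\beta^*}_\beta$ (Definition~\ref{truncationsR}). Thus for conditions of $\mathbb{R}^*$ we have $\varrho^\lambda_\beta(p_0,q_0)=\sigma^{\beta^*}_\beta(\varphi^{-1}(p_0,q_0))$, which is exactly the value used to compare the $r$-coordinates in $\mathbb{R}\upharpoonright\beta^*$ after applying $\Psi$. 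Hence the forced inequality $``\dot r_0(\beta)\le\dot r_1(\beta)"$ is asserted below the \emph{same} Boolean value in both posets, and likewise the $\le$ on the $\mathbb{A}\ast\dot{\mathbb{S}}$-coordinate transfers because $\varphi$ is an isomorphism. So $(p_0,\dot q_0,r_0)\le_{\mathbb{R}^*}(p_1,\dot q_1,r_1)$ iff $\Psi(p_0,\dot q_0,r_0)\le_{\mathbb{R}\upharpoonright\beta^*}\Psi(p_1,\dot q_1,r_1)$.

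I expect the main obstacle to be purely bookkeeping: one must confirm that $\varphi$, which by its construction acts as the identity on stems and as $\pi^*$ on the Cohen part and on $\dot{\mathbb{S}}$-names, genuinely commutes with the restriction maps $\sigma^{\beta^*}_\beta$ in the precise sense stated in Proposition~\ref{R*RtruncatedAreIsomorphic}(2) — i.e. that $\varrho^\lambda_\beta$ was \emph{defined} to make this diagram commute, so there is nothing further to verify beyond unwinding the definitions. For the final sentence, once $\mathbb{R}^*\cong\mathbb{R}\upharpoonright\beta^*$ is established, Proposition~\ref{TruncationsAndAronszajnTree} gives $V^{\mathbb{R}\upharpoonright\beta^*}\models ``T$ is a $\lambda$-Aronszajn tree''; transporting along $\Psi$ (equivalently, interpreting the $\mathbb{R}\upharpoonright\beta^*$-name for $T$ as an $\mathbb{R}^*$-name via $\Psi$) yields that $\mathbb{R}^*$ forces $\dot T$ to be a $\lambda$-Aronszajn tree, as claimed.
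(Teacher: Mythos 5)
Your proposal is correct and is essentially the paper's own argument: the paper also defines the isomorphism by applying $\varphi$ (equivalently $\varphi^{-1}$ in the other direction) on the $\mathbb{A}\ast\dot{\mathbb{S}}$-coordinate while fixing $r$, with order-preservation coming from $\varrho^\lambda_\beta=\sigma^{\beta^*}_\beta\circ\varphi^{-1}$, and then transports the Aronszajn-tree statement via Proposition~\ref{TruncationsAndAronszajnTree}. You merely spell out the verification that the paper leaves as routine.
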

\begin{proof}
It is not hard to check that $(p,\dot{q},r)\mapsto (\varphi(p,\dot{q}),r)$ defines an isomorphism between both forcings.
\end{proof}
%As we done with $\mathbb{R}$ we shall need to define truncations of $\mathbb{R}^*$. For doing so one needs to consider projections of the measure $U_{\beta^*}$ onto many intermediate generic extensions $V[\pi^*(G\upharpoonright\beta^*)\upharpoonright\alpha]$, called them $U^{\pi^*}_\alpha$ and $\mathbb{S}^{\pi^*}_\alpha$ the respective Sinapova forcing, and define the corresponding projections between the Cohen components $\mathbb{A}_\alpha\ast \dot{\mathbb{S}}^{\pi^*}_\alpha$ and $\mathbb{A}_{\even(\beta)}\ast  \dot{\mathbb{S}}^\pi_{\beta}$. 

Given a weakly compact cardinal $\theta$ the weakly compact filter on $\theta$, $\mathcal{F}_\theta$, is the filter defined by all subsets $X\subseteq\theta$ such that \textit{$\theta\setminus X$ is not $\Pi^1_1$-indescribable in $\theta$}. The filter $\mathcal{F}_\theta$ is proper and normal (see \cite[Proposition 6.11]{Kan}), hence it extends $\mathrm{Club}(\theta)$, and thus concentrates on the set of Mahlo cardinals below $\theta$. %Recall that a set $X\subseteq\theta$ is called $\Pi^1_1$-indescribable in $\theta$ if for any $R\subseteq V_\theta$ and any $\Pi^1_1$ sentence $\varphi$, 
%$$\langle V_\theta,\in, R\rangle\models\varphi\,\Longrightarrow \exists\alpha\in X\,\langle V_\alpha,\in, R\cap V_\alpha\rangle\models\varphi. $$
%The filter $\mathcal{F}_\theta$ is proper and normal (see \cite[Proposition 6.11]{Kan}), hence it extends the club filter on $\theta$, and thus concentrates on the set of Mahlo cardinals below $\theta$.
%such that $\langle V_\theta,\in, R\rangle\models\varphi$, there is $\alpha\in X$ such that $\langle V_\alpha,\in, R\cap V_\alpha\rangle\models\varphi$. 
%If $\theta$ is weakly compact, hence $\Pi^1_1$-indescribable, the filter $\mathcal{F}_\theta$ is proper and normal (see \cite[Proposition 6.11]{Kan}), hence extending the club filter on $\theta$, and contains the set of Mahlo cardinals below $\theta$. To prove this last claim let $S$ be the set of all inaccessible cardinals below $\theta$, hence a stationary set, and notice that there is a $\Pi^1_1$ sentence $\varphi$ true in the structure $\langle V_\theta,\in,S,\theta\rangle$  asserting that $S$ is stationary and that $\theta$ is an inaccessible cardinal: 
%$$\text{``$\forall C\,(C\subseteq\theta\,\wedge\,\text{$C$ club}\rightarrow C\cap S\neq \emptyset)\,\wedge\, \text{$\theta$ is inaccessible}$''}.$$
%Thereby, since $\theta$ is weakly compact, the set of inaccessible cardinals $\eta\in\theta$ for what $\text{$S\cap \eta$ is stationary}$ is in $\mathcal{F}_\theta$ and thus, noticing this $\eta$ are Mahlo cardinals, the set of Mahlo cardinals below $\theta$ lies in the weakly compact filter.
\begin{lemma}\label{DefofB*}
There is $\mathcal{B}^*\in(\mathcal{F}_\lambda)^V$,  $\mathcal{B}^*\subseteq \mathcal{B}$, with $\delta<\min\mathcal{B}^*$ such that for every $\alpha\in\mathcal{B}^*$, the sequences  $\langle (\dot{U}^{\pi^*}_{\xi})_{H^*}\cap V[H \upharpoonright\alpha]\mid \xi<\mu\rangle$, $\langle (\dot{B}^{\pi^*}_{\xi})_{H^*}\cap V[H^*\upharpoonright\alpha]\mid \xi<\mu\rangle$ are suitable to define Sinapova forcing $V[H^*\upharpoonright\alpha]$.
%Here $\pi^*(G\upharpoonright \beta^*)\upharpoonright\alpha$ stands for the natural projection of $\pi^*(G\upharpoonright \beta^*)$ onto $\mathbb{A}_\alpha$ .
\end{lemma}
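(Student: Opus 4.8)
The plan is to follow the template of Lemma~\ref{DefinitionOFA} and Lemma~\ref{DefinitionOFB}, the only new feature being that the set of good indices will be placed inside the weakly compact filter $(\mathcal{F}_\lambda)^V$; this is where the $\Pi^1_1$-indescribability of $\lambda$ enters. First I would record the relevant size bounds: since $\gch_{\geq\kappa}$ holds and $\kappa,\kappa_\xi,\mu<\lambda$, for each $\xi<\mu$ one has $|\mathcal{P}_\kappa(\kappa_\xi)|\leq\kappa_\xi^+<\lambda$, hence $|\mathcal{P}(\mathcal{P}_\kappa(\kappa_\xi))|\leq\kappa_\xi^{++}<\lambda$. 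Consequently every $\kappa_\xi$-supercompact measure, every ultrapower needed to verify the $\lhd$-relation and the coherence clauses of Proposition~\ref{TheBxisets}, the sequences $\mathfrak{U}^{\pi^*}_\lambda,\mathfrak{B}^{\pi^*}_\lambda$, the bijection $\pi^*$, the forcing $\mathbb{A}_{\mathrm{Even}(\lambda)}=\bigcup_{\alpha<\lambda}\mathbb{A}_{\mathrm{Even}(\alpha)}$ and the names $\langle\dot{\mathbb{S}}^\pi_\beta\mid\beta\in\mathcal{B}\rangle$ all have transitive closure of size $\leq\lambda$. I would then fix in $V$ a single predicate $R\subseteq V_\lambda$ coding all of these parameters, together with $\dot T$ and the antichains that were used to extract $\beta^*$.

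Next I would unwind ``suitable to define Sinapova forcing'' into a local assertion. For an inaccessible $\alpha\in(\delta,\lambda)$ with $(V_\alpha,\in,R\cap V_\alpha)\prec(V_\lambda,\in,R)$ one has $\mathbb{A}_{\mathrm{Even}(\alpha)}\subseteq V_\alpha$, the poset $\mathbb{A}_{\mathrm{Even}(\alpha)}$ is $\kappa^+$-cc, and $V[H^*\upharpoonright\alpha]\cap V_\alpha=V_\alpha[H^*\upharpoonright\alpha]$; thus the statement that $\langle(\dot{U}^{\pi^*}_\xi)_{H^*}\cap V[H^*\upharpoonright\alpha]\mid\xi<\mu\rangle$ and $\langle(\dot{B}^{\pi^*}_\xi)_{H^*}\cap V[H^*\upharpoonright\alpha]\mid\xi<\mu\rangle$ are suitable to define Sinapova forcing in $V[H^*\upharpoonright\alpha]$ becomes first order over $(V_\alpha[H^*\upharpoonright\alpha],\in,R\cap V_\alpha)$: it asks only that the restricted measures are still $\kappa_\xi$-supercompact normal measures on $\mathcal{P}_\kappa(\kappa_\xi)$, that they form a $\lhd$-chain, and that the clauses of Definition~\ref{DefinitioOfXxi} and Proposition~\ref{TheBxisets} hold, all of whose witnesses live below $\alpha$ by the above size bounds. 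Treating the generic filter as a second order object over $V_\lambda$, the corresponding assertion at $\lambda$ itself — that $\mathfrak{U}^{\pi^*}_\lambda$ and $\mathfrak{B}^{\pi^*}_\lambda$ are suitable to define Sinapova forcing in $V[H^*]$ — is then a $\Pi^1_1$ statement over $(V_\lambda,\in,R)$, true by the very construction of $\mathbb{S}^{\pi^*}_\lambda$ (see Proposition~\ref{R*RtruncatedAreIsomorphic}).

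Finally, since $\lambda$ is weakly compact, hence $\Pi^1_1$-indescribable, the reflection set $\mathcal{C}:=\{\alpha<\lambda\mid(V_\alpha[H^*\upharpoonright\alpha],\in,R\cap V_\alpha)\text{ satisfies the reflected assertion}\}$ belongs to $(\mathcal{F}_\lambda)^V$. I would then let $\mathcal{B}^*$ be the intersection of $\mathcal{C}$ with the club of $\alpha>\delta$ for which $(V_\alpha,\in,R\cap V_\alpha)\prec(V_\lambda,\in,R)$, with the set of inaccessible cardinals below $\lambda$ (which lies in $\mathcal{F}_\lambda$, since $\mathcal{F}_\lambda$ concentrates on the Mahlo cardinals below $\lambda$), and with $\mathcal{B}$ (which belongs to $\mathcal{F}_\lambda$, being a superset of the club of its own inaccessible limit points below $\lambda$); by $\lambda$-completeness and normality of $\mathcal{F}_\lambda$ this yields $\mathcal{B}^*\in(\mathcal{F}_\lambda)^V$ with $\mathcal{B}^*\subseteq\mathcal{B}$ and $\delta<\min\mathcal{B}^*$, and by construction every $\alpha\in\mathcal{B}^*$ is as required. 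The hard part will be the verification that suitability is genuinely first order over $V_\alpha[H^*\upharpoonright\alpha]$ — concretely, that all relevant instances of supercompactness and of the coherence properties of Proposition~\ref{TheBxisets} are witnessed below $\alpha$, and that $V_\alpha[H^*\upharpoonright\alpha]$ computes the restricted measures correctly — which is exactly what the size bounds of the first step, combined with $\alpha$ being inaccessible and $R$-elementary in $V_\lambda$, are designed to secure.
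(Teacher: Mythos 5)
There is a genuine gap, and it sits exactly at the point you defer as ``the hard part''. The content of Lemma~\ref{DefofB*} (as of Lemmas~\ref{DefinitionOFA} and~\ref{DefinitionOFB}) is that the \emph{restrictions} of the already fixed measures, $(\dot{U}^{\pi^*}_\xi)_{H^*}\cap V[H^*\upharpoonright\alpha]$, are elements of the intermediate model $V[H^*\upharpoonright\alpha]$ and are suitable there. Your $\Pi^1_1$-reflection scheme does not deliver this: what reflects to $\alpha$ is a statement about the truncated names $\dot{U}^{\pi^*}_\xi\cap V_\alpha$ and the truncated forcing $\mathbb{A}_{\mathrm{Even}(\alpha)}$, and even granting the (unverified) first-order formulation, the reflected statement only says that these truncated names name suitable sequences in $V[H^*\upharpoonright\alpha]$; it does not say that their interpretations coincide with $(\dot{U}^{\pi^*}_\xi)_{H^*}\cap V[H^*\upharpoonright\alpha]$, i.e.\ that for a set with a nice name in $V_\alpha$ the membership decision made by $H^*$ is already made, in the same way, by $H^*\upharpoonright\alpha$. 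That identification is the nontrivial content of all three lemmas (it is what the closure/catch-your-tail construction of \cite[Lemma 3.3]{FriHon}, \cite[Lemma 3.1]{GolPov} provides), and it is indispensable later, e.g.\ for the ``moreover'' clause $\varrho^\gamma_\alpha=\sigma^\gamma_\alpha$ of Lemma~\ref{ProjectionsPistaronEven} and the quotient analysis. Your size bounds do not secure it either: they are computed in $V$ (where $|\mathcal{P}(\mathcal{P}_\kappa(\kappa_\xi))|\le\kappa_\xi^{++}$), whereas the measures live in $V[H^*]$, where $2^{\kappa_\xi}=\lambda$, so the restriction to $V[H^*\upharpoonright\alpha]$ is a size-$|\alpha|$ family of sets and its membership in that model is not a cardinality issue. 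Finally, ``treating the generic filter as a second-order object over $V_\lambda$'' is not available when reflecting in $V$ (where $\mathcal{B}^*$ must live); one has to phrase everything through the forcing relation, which only sharpens the problem above.

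Note also that the paper does not need any $\Pi^1_1$ reflection of suitability here: it simply reruns the construction of $\mathcal{B}$ (Lemma~\ref{DefinitionOFB}, following \cite{FriHon},\cite{GolPov}) inside $\mathcal{B}$, obtaining an unbounded set closed under limits of increasing sequences of length ${\ge}\delta$. Such a set contains every inaccessible limit point of itself above $\delta$, hence contains the intersection of a club with the inaccessibles below $\lambda$, and therefore lies in $(\mathcal{F}_\lambda)^V$ --- precisely the observation you yourself make when arguing $\mathcal{B}\in\mathcal{F}_\lambda$. So weak compactness enters only through ``$\mathcal{F}_\lambda$ extends the club filter and concentrates on inaccessibles''; the suitability at each $\alpha\in\mathcal{B}^*$ comes from the closure construction, which your proposal replaces by a reflection argument that does not prove the statement actually claimed.
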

\begin{proof}
The construction of $\mathcal{B}^*$ is the same as for $\mathcal{B}$ but starting from $\mathcal{B}$ instead of $\lambda$. By construction, $\mathcal{B}^*$ is an unbounded set closed by increasing sequences of length $\geq\delta$, hence $\mathcal{B}^*\in (\mathcal{F}_\lambda)^V$.
\end{proof}
\begin{notation}
\rm{
For each $\alpha\in\mathcal{B}^*$, let $\mathfrak{U}^{\pi^*}_\alpha$ and $\mathfrak{B}^{\pi^*}_\alpha$ denote the sequences witnessing Lemma \ref{DefofB*} and set
$\mathbb{S}^{\pi^*}_\alpha:=\mathbb{S}_{(\kappa,\mu, \mathfrak{U}^{\pi^*}_\alpha, \mathfrak{B}^{\pi^*}_\alpha)}$. 
}
\end{notation}

\begin{lemma}\label{ProjectionsPistaronEven}
Let $\hat{\mathcal{B}^*}=\mathcal{B}^*\cup \{\lambda\}$ and $\alpha<\gamma\in\hat{\mathcal{B}}^*$. There are projections
\begin{enumerate}
\item $\varrho^\gamma_\alpha:\mathbb{A}_\gamma\ast \dot{\mathbb{S}}^{\pi^*}_\gamma\rightarrow\mathrm{RO}^+(\mathbb{A}_{\even(\alpha)}\ast\dot{\mathbb{S}}^{\pi}_\alpha)$,
\item $\hat{\varrho}^\gamma_\alpha:\mathrm{RO}^+(\mathbb{A}_\gamma\ast \dot{\mathbb{S}}^{\pi^*}_\gamma)\rightarrow\mathrm{RO}^+(\mathbb{A}_{\even(\alpha)}\ast\dot{\mathbb{S}}^{\pi}_\alpha)$.
\end{enumerate}
Moreover, for each $\alpha<\gamma\in\mathcal{B}^*$, $\varrho^\gamma_\alpha=\sigma^\gamma_\alpha$.
\end{lemma}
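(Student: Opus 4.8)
The plan is to run, in the $\pi^{\ast}$-world, exactly the argument of Lemma~\ref{ProjectionsCohenPart}; the only genuinely new ingredient is a $\pi^{\ast}$-version of Proposition~\ref{PropOnProjectionOfGenerics}, and that is where Theorem~\ref{CriterionGenericity} does the work. Fix $\alpha<\gamma$ in $\hat{\mathcal{B}}^{\ast}$. For $\gamma=\lambda$ the map $\varrho^{\lambda}_{\alpha}$ is already the one produced in Proposition~\ref{R*RtruncatedAreIsomorphic}(2), so the substantive cases are $\gamma\in\mathcal{B}^{\ast}$, together with part~(2) and the coherence of the whole family.

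First I would establish the Sinapova-side projection: over the relevant Cohen extension, $\dot{\mathbb{S}}^{\pi^{\ast}}_{\gamma}$ projects onto $\dot{\mathbb{S}}^{\pi}_{\alpha}$. Given any $\mathbb{S}^{\pi^{\ast}}_{\gamma}$-generic sequence $g^{\ast}\in[\prod_{\xi<\mu}\mathfrak{B}^{\pi^{\ast}}_{\gamma}(\xi)]$, put $h^{\ast}_{\alpha}:=\langle g^{\ast}(\xi)\cap M_{\alpha}\mid \xi<\mu\rangle$, where $M_{\alpha}$ is the intermediate Cohen extension over which $\mathbb{S}^{\pi}_{\alpha}$ is defined (as in Lemma~\ref{DefofB*} and Lemma~\ref{DefinitionOFB}). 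By the very construction of those sequences, the measures occurring at level $\alpha$ --- both the $U^{\pi}_{\alpha}$'s and, when $\alpha$ is limit, the internal measures attached to coordinates of $g^{\ast}(\alpha)$ --- are the traces on $M_{\alpha}$ of the corresponding measures at level $\gamma$; hence $h^{\ast}_{\alpha}\in[\prod_{\xi<\mu}\mathfrak{B}^{\pi}_{\alpha}(\xi)]$, and any large-set appearing in an instance of property~\eqref{P1} or~\eqref{P2} for $h^{\ast}_{\alpha}$ over $M_{\alpha}$ is covered by a large-set in the corresponding instance for $g^{\ast}$ over the level-$\gamma$ model. Since $g^{\ast}$ witnesses~\eqref{P1} and~\eqref{P2} there, so does $h^{\ast}_{\alpha}$, and Theorem~\ref{CriterionGenericity} gives that $h^{\ast}_{\alpha}$ is $\mathbb{S}^{\pi}_{\alpha}$-generic. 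Thus every $\mathbb{S}^{\pi^{\ast}}_{\gamma}$-generic filter induces an $\mathbb{S}^{\pi}_{\alpha}$-generic one, i.e. $\mathbb{S}^{\pi^{\ast}}_{\gamma}$ projects onto $\mathbb{S}^{\pi}_{\alpha}$.

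Next I would glue this to the Cohen side. The coordinate restriction $\mathbb{A}_{\gamma}\to\mathbb{A}_{\even(\alpha)}$ composed with the previous step yields $\varrho^{\gamma}_{\alpha}\colon\mathbb{A}_{\gamma}\ast\dot{\mathbb{S}}^{\pi^{\ast}}_{\gamma}\to\mathrm{RO}^{+}(\mathbb{A}_{\even(\alpha)}\ast\dot{\mathbb{S}}^{\pi}_{\alpha})$, with a condition $(p,\dot{q})$ sent to the Cohen-restriction of $p$ together with a name, read off from $p$, for the induced stem and large set --- this is verbatim \cite[Lemma~3.8]{FriHon}, and the only place the Prikry-type structure of $\mathbb{S}$ is needed in order to see the map preserves order and incompatibility and has dense range. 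Part~(2) is then the standard extension of a projection into a Boolean algebra to the Boolean completion of its domain. Finally, for $\gamma\in\mathcal{B}^{\ast}$ the identity $\varrho^{\gamma}_{\alpha}=\sigma^{\gamma}_{\alpha}$ is obtained by unwinding both definitions: because $\pi^{\ast}$ extends $\pi$ and the families $\mathfrak{U}_{\beta_{0}}$ and $\mathfrak{U}_{\beta^{\ast}}$ cohere (the former being the trace of the latter on $V[G\upharpoonright\beta_{0}]$), the level-$\gamma$ data $(\mathfrak{U}^{\pi^{\ast}}_{\gamma},\mathfrak{B}^{\pi^{\ast}}_{\gamma})$ coincide with the data underlying $\sigma^{\gamma}_{\alpha}$, so the two maps are given by the same formula; the commutation of the resulting $\hat{\varrho}^{\gamma}_{\alpha}$'s with $\varrho^{\lambda}_{\gamma}$ and $\varrho^{\lambda}_{\alpha}$ then follows as in Lemma~\ref{ProjectionsCohenPart}(3).

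The step that deserves real care, and the one I expect to be the main obstacle, is the middle paragraph: one must check that Lemma~\ref{DefofB*} can indeed be --- and was --- carried out \emph{coherently} enough that the level-$\alpha$ measures are honest restrictions of the level-$\gamma$ ones, so that the two genericity properties genuinely transfer downwards. This is precisely what Lemma~\ref{DefinitionOFA}/\ref{DefinitionOFB} and Proposition~\ref{PropOnProjectionOfGenerics} accomplish in the unstarred setting; here one must additionally keep track of the bijection $\pi^{\ast}$ and of its compatibility with $\pi$, which is exactly the reason $\pi^{\ast}$ was chosen to extend $\pi$.
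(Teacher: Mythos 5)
Your proposal is correct and follows essentially the same route as the paper: the paper's proof constructs $\varrho^\gamma_\alpha$ and $\hat{\varrho}^\gamma_\alpha$ by repeating the argument of Lemma~\ref{ProjectionsCohenPart} with a $\pi^*$-version of Proposition~\ref{PropOnProjectionOfGenerics} (i.e.\ tracing a generic Sinapova sequence to the intermediate Cohen extension and invoking Theorem~\ref{CriterionGenericity}), exactly as in your first two paragraphs. The only presentational difference is the ``moreover'' clause, which you argue directly from the coherence of the $\pi^*$- and $\pi$-data, whereas the paper simply cites \cite[Lemma 3.18]{FriHon} for it.
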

\begin{proof}
The construction of $\varrho^\gamma_\alpha$ and $\hat{\varrho}^\gamma_\alpha$ is analogous to Lemma \ref{ProjectionsCohenPart}, again using a suitable version of Proposition~\ref{PropOnProjectionOfGenerics}.  A proof for the moreover part can be found in \cite[Lemma 3.18]{FriHon}.
\end{proof}
The moreover clause of the previous lemma is crucial since it guarantees that there are no disagreements between the projections %$\sigma^\gamma_\alpha$ 
defining $\mathbb{R}^*$ and the projections %$\varrho^\gamma_\alpha$ 
intended to define its truncations.
\begin{defi}[Truncations of $\mathbb{R}^*$]\label{TruncationsR*}
Let $\gamma\in\mathcal{B}^*$. A condition in $\mathbb{R}^*\upharpoonright\gamma$ is a triple $(p,\dot{q},r)$ for which all the following hold:
\begin{enumerate}
\item $(p,q)\in \mathbb{A}_\gamma \ast \dot{\mathbb{S}}^{\pi^*}_\gamma$, 
\item $r$ is a partial function with $\dom(r)\in[\mathcal{B}^*\cap \gamma]^{<\delta}$;
\item For every $\alpha\in\dom(r)$, $r(\alpha)$ is a $\mathbb{A}_{\mathrm{Even}(\alpha)}\ast \dot{\mathbb{S}}^\pi_\alpha$-name such that
$$\one_{\mathbb{A}_{\mathrm{Even}(\alpha)}\ast \dot{\mathbb{S}}^\pi_\alpha}\Vdash_{\mathbb{A}_{\mathrm{Even}(\alpha)}\ast \dot{\mathbb{S}}^\pi_\alpha}\text{$``\dot{r}(\alpha)\in\Add(\delta,1)$''}.$$
\end{enumerate}
For conditions $(p_0,\dot{q}_0,r_0), (p_1,\dot{q}_1,r_1)$ in $\mathbb{R}^*\upharpoonright\gamma$ we will write $(p_0,\dot{q}_0,r_0)\leq (p_1,\dot{q}_1,r_1)$ in case $(p_0,\dot{q}_0)\leq (p_1,\dot{q}_1)$, $\dom(r_1)$ $\subseteq\dom(r_0)$ and for each $\alpha\in\dom(r_1)$,
$\varrho^\gamma_\alpha(p_0,q_0)\Vdash_{\mathbb{A}_{\mathrm{Even}(\alpha)}\ast \dot{\mathbb{S}}^\pi_\alpha}\dot{r}_0(\alpha)\leq \dot{r}_1(\alpha). $
\end{defi}
The proof of the next result is analogous to Proposition \ref{ProjectionRandTruncations}.
\begin{prop}
For each $\gamma\in\mathcal{B}^*$, there is a projection between $\mathbb{R}^*$ and $\mathrm{RO}^+(\mathbb{R}^*\upharpoonright\gamma)$. In particular, $\mathbb{R}^*$ is isomorphic to the iteration $\mathbb{R}^*\upharpoonright\gamma\ast (\mathbb{R}^*/\mathbb{R}^*\upharpoonright\gamma)$.
\end{prop}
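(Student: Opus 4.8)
The statement asserts, for each $\gamma\in\mathcal{B}^*$, the existence of a projection from $\mathbb{R}^*$ onto $\mathrm{RO}^+(\mathbb{R}^*\restriction\gamma)$, with the consequent factorization. I would model the argument on Proposition~\ref{ProjectionRandTruncations}, which in turn follows \cite[Lemma 3.13]{FriHon} and \cite[Lemma 3.8]{GolPov}, and exploit the coherence established in Lemma~\ref{ProjectionsPistaronEven} (especially the moreover clause $\varrho^\gamma_\alpha=\sigma^\gamma_\alpha$ for $\alpha<\gamma$ in $\mathcal{B}^*$), which is precisely what makes the truncation $\mathbb{R}^*\restriction\gamma$ compatible with the projection system used to define $\mathbb{R}^*$.

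\emph{Defining the map.} Fix $\gamma\in\mathcal{B}^*$. I would define $\pi_\gamma:\mathbb{R}^*\to\mathbb{R}^*\restriction\gamma$ on a condition $(p,\dot q,r)\in\mathbb{R}^*$ by sending it to $(\,\varrho^\lambda_\gamma(p,\dot q),\; r\restriction(\mathcal{B}^*\cap\gamma)\,)$; here $\varrho^\lambda_\gamma$ is the projection of Lemma~\ref{ProjectionsPistaronEven}~(1) — more precisely one works with $\hat\varrho^\lambda_\gamma$ on the Boolean completion and then descends to $\mathrm{RO}^+(\mathbb{R}^*\restriction\gamma)$, which is harmless since $\mathbb{R}^*\restriction\gamma$ is dense in its own completion. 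One first checks this lands in $\mathbb{R}^*\restriction\gamma$: clause (2) of Definition~\ref{TruncationsR*} holds because $\dom(r)\in[\mathcal{B}]^{<\delta}$ gives $\dom(r)\cap\gamma\in[\mathcal{B}^*\cap\gamma]^{<\delta}$, and clause (3) is inherited verbatim from Definition~\ref{DefinitionR*}~(3) since for $\alpha\in\mathcal{B}^*\cap\gamma$ the relevant forcing $\mathbb{A}_{\mathrm{Even}(\alpha)}\ast\dot{\mathbb{S}}^\pi_\alpha$ is unchanged. Order-preservation and preservation of $\one$ are routine: given $(p_0,\dot q_0,r_0)\le(p_1,\dot q_1,r_1)$ in $\mathbb{R}^*$, the first coordinate condition passes through $\varrho^\lambda_\gamma$ because it is a projection, the domains nest correctly after intersecting with $\gamma$, and for $\alpha\in\dom(r_1)\cap\gamma$ the forcing statement $\varrho^\gamma_\alpha(\varrho^\lambda_\gamma(p_0,\dot q_0))\Vdash\dot r_0(\alpha)\le\dot r_1(\alpha)$ follows from $\varrho^{\lambda}_\alpha(p_0,\dot q_0)\Vdash\dot r_0(\alpha)\le\dot r_1(\alpha)$ together with the commutation $\varrho^\lambda_\alpha=\hat\varrho^\gamma_\alpha\circ\varrho^\lambda_\gamma$ — which is exactly the analogue of Lemma~\ref{ProjectionsCohenPart}~(3), and this is where the $\varrho^\gamma_\alpha=\sigma^\gamma_\alpha$ equality is used to avoid any mismatch.

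\emph{The projection property.} The substantive point is: given $(p,\dot q,r)\in\mathbb{R}^*$ and $(p',\dot q',r')\le\pi_\gamma(p,\dot q,r)$ in $\mathbb{R}^*\restriction\gamma$, one must find $(p'',\dot q'',r'')\le(p,\dot q,r)$ in $\mathbb{R}^*$ with $\pi_\gamma(p'',\dot q'',r'')\le(p',\dot q',r')$. Since $\varrho^\lambda_\gamma$ is a projection, one first lifts $(p',\dot q')\le\varrho^\lambda_\gamma(p,\dot q)$ to some $(p'',\dot q'')\le(p,\dot q)$ in $\mathbb{A}_\lambda\ast\dot{\mathbb{S}}^{\pi^*}_\lambda$ with $\varrho^\lambda_\gamma(p'',\dot q'')\le(p',\dot q')$. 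For the third coordinate I take $r'':=r'\cup(r\restriction(\mathcal{B}\setminus\gamma))$; this is a legitimate $\mathbb{R}^*$-condition because $\dom(r'')\in[\mathcal{B}]^{<\delta}$ and its entries below $\gamma$ come from $r'$, its entries at and above $\gamma$ from $r$ — and there is no overlap. One then verifies $(p'',\dot q'',r'')\le(p,\dot q,r)$: the coordinates in $\mathcal{B}\setminus\gamma$ are handled exactly as in $\mathbb{R}^*$ using that $(p'',\dot q'')\le(p,\dot q)$ and $r''$ agrees with $r$ there; the coordinates in $\mathcal{B}^*\cap\gamma$ that lie in $\dom(r)$ need $r'(\alpha)\le r(\alpha)$ forced by $\varrho^\gamma_\alpha(\varrho^\lambda_\gamma(p'',\dot q''))$, which follows from $(p',\dot q',r')\le\pi_\gamma(p,\dot q,r)$ via the commutation of projections again. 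Finally $\pi_\gamma(p'',\dot q'',r'')=(\varrho^\lambda_\gamma(p'',\dot q''),r')\le(p',\dot q',r')$. Standard forcing theory then yields that $\mathbb{R}^*\cong(\mathbb{R}^*\restriction\gamma)\ast(\mathbb{R}^*/\dot{\mathbb{R}^*\restriction\gamma})$.

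\emph{Main obstacle.} The only delicate point is the bookkeeping around the commuting square of projections — making sure $\varrho^\lambda_\alpha=\hat\varrho^\gamma_\alpha\circ\varrho^\lambda_\gamma$ holds for all $\alpha<\gamma$ in $\mathcal{B}^*$ and that the $r$-coordinates are compared against the \emph{correct} restrictions of the first coordinate, with no discrepancy between the projection $\sigma^\gamma_\alpha$ used to define $\mathbb{R}^*\restriction\gamma$ and the $\varrho^\gamma_\alpha$ threaded through $\mathbb{R}^*$. This is precisely the content of the moreover clause of Lemma~\ref{ProjectionsPistaronEven}, and once it is invoked the remaining verifications are the routine ones already carried out in \cite[Lemma~3.13]{FriHon}; I would simply cite that reference for the details rather than reproduce them.
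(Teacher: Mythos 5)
Your overall strategy is the paper's own: the paper proves this proposition only by declaring it ``analogous to Proposition~\ref{ProjectionRandTruncations}'' (hence to \cite[Lemma~3.13]{FriHon}), and your explicit map--plus--lifting argument, resting on the commuting system of projections and on $\varrho^\gamma_\alpha=\sigma^\gamma_\alpha$, is exactly the intended filling-in of that citation. However, two concrete points in your write-up do not work as stated. First, the first coordinate of your map: by Lemma~\ref{ProjectionsPistaronEven}, $\varrho^\lambda_\gamma$ has codomain $\mathrm{RO}^+(\mathbb{A}_{\even(\gamma)}\ast\dot{\mathbb{S}}^{\pi}_\gamma)$, whereas a condition of $\mathbb{R}^*\upharpoonright\gamma$ has first coordinate in $\mathbb{A}_\gamma\ast\dot{\mathbb{S}}^{\pi^*}_\gamma$ (Definition~\ref{TruncationsR*}(1)); so $\bigl(\varrho^\lambda_\gamma(p,\dot q),\,r\upharpoonright(\mathcal{B}^*\cap\gamma)\bigr)$ does not land in $\mathrm{RO}^+(\mathbb{R}^*\upharpoonright\gamma)$. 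What you need for the first coordinate is the $\pi^*$-analogue of $\sigma^{\lambda^+}_\gamma$, i.e.\ a projection from $\mathbb{C}_\lambda=\mathbb{A}_\lambda\ast\dot{\mathbb{S}}^{\pi^*}_\lambda$ onto $\mathrm{RO}^+(\mathbb{C}_\gamma)=\mathrm{RO}^+(\mathbb{A}_\gamma\ast\dot{\mathbb{S}}^{\pi^*}_\gamma)$, which exists by the same argument as Lemma~\ref{ProjectionsCohenPart}(1) (restrict the Cohen part to $\mathbb{A}_\gamma$ and project the Sinapova name via Lemma~\ref{DefofB*} and the genericity criterion, as in Proposition~\ref{PropOnProjectionOfGenerics}); the commuting square you invoke, $\varrho^\lambda_\alpha=\hat\varrho^\gamma_\alpha\circ(\text{this projection})$ for $\alpha\in\mathcal{B}^*\cap\gamma$, is then the correct analogue of Lemma~\ref{ProjectionsCohenPart}(3), and this is where the moreover clause of Lemma~\ref{ProjectionsPistaronEven} enters, as you say.

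Second, and more substantively, your lifting step fails as written. In $\mathbb{R}^*$ one has $\dom(r)\in[\mathcal{B}]^{<\delta}$, while the truncation only records coordinates in $\mathcal{B}^*\cap\gamma$; since $\mathcal{B}^*$ is a proper thinning of $\mathcal{B}$ (e.g.\ $\min\mathcal{B}^*>\delta$), a condition $(p,\dot q,r)\in\mathbb{R}^*$ may have $\dom(r)$ meeting $(\mathcal{B}\cap\gamma)\setminus\mathcal{B}^*$. Your choice $r'':=r'\cup\bigl(r\upharpoonright(\mathcal{B}\setminus\gamma)\bigr)$ discards those coordinates, so $\dom(r)\not\subseteq\dom(r'')$ and $(p'',\dot q'',r'')$ does not extend $(p,\dot q,r)$ in the order of Definition~\ref{DefinitionR*}, which is precisely what the projection property requires. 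The repair is immediate: take $r'':=r'\cup\bigl(r\upharpoonright(\dom(r)\setminus(\mathcal{B}^*\cap\gamma))\bigr)$; then $r''$ agrees with $r$ on all coordinates not recorded by the truncation (so the corresponding clauses of $(p'',\dot q'',r'')\leq(p,\dot q,r)$ are trivial or handled as before), $r''\upharpoonright(\mathcal{B}^*\cap\gamma)=r'$, and the rest of your verification goes through. With these two corrections your argument is the standard one the paper defers to.
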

%The ultimate reason to consider these restrictions of $\mathbb{R}^*$ is that any counterexample for $\TP(\lambda)$ in $V^{\mathbb{R}^*}$ must \textit{reflect} onto some intermediate extension $V^{\mathbb{R}^*\upharpoonright\gamma}$. More formally: %if $T$ is a $\lambda$-Aronszajn tree in $V^{\mathbb{R}^*}$ THEN there is a $\gamma\in\mathcal{B}^*$ such that $T\cap \gamma$ is a $\gamma$-Aronszajn tree in $V^{\mathbb{R}^*\upharpoonright\gamma}$. The proof of the following lemma witnesses this claim:
\begin{lemma}\label{AronszajnWeakComp}
Assume there is a $\lambda$-Aronszajn tree $T$ in $V^{\mathbb{R}^*}$. Then there is $\gamma\in\mathcal{B}^*$ such that $T\cap \gamma$ is a $\gamma$-Aronszajn tree in $V^{\mathbb{R}^*\upharpoonright\gamma}$.
\end{lemma}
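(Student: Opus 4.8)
The plan is to exploit the weak compactness of $\lambda$ together with the structural properties of $\mathbb{R}^*$ established above, namely that $\mathbb{R}^*$ is $\lambda$-Knaster (inherited from Proposition~\ref{PropertiesOfV[R]}(1) via the isomorphism of Proposition~\ref{R*RtruncatedAreIsomorphic2}) and that for each $\gamma\in\mathcal{B}^*$ there is a projection from $\mathbb{R}^*$ onto $\mathrm{RO}^+(\mathbb{R}^*\upharpoonright\gamma)$, so that $\mathbb{R}^*\cong \mathbb{R}^*\upharpoonright\gamma\ast(\mathbb{R}^*/\mathbb{R}^*\upharpoonright\gamma)$. Since $\mathbb{R}^*$ is $\lambda$-Knaster it is $\lambda$-cc, and so $\lambda$ remains a cardinal in $V^{\mathbb{R}^*}$; moreover a standard name-counting argument (together with the fact that each truncation $\mathbb{R}^*\upharpoonright\gamma$ has size $<\lambda$ when $\gamma<\lambda$, using $\gamma$ inaccessible and $\gch$ above $\kappa$) will let us capture the tree $T$ by a single name whose antichains are spread out along $\mathcal{B}^*$.

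First I would fix an $\mathbb{R}^*$-name $\dot T$ for a $\lambda$-Aronszajn tree, with $\one\Vdash \dot T\subseteq \lambda\times\lambda$ (identifying the $\alpha$-th level with an interval). For each $\alpha<\lambda$ let $A_\alpha$ be a maximal antichain deciding the finitely-much-information ``$(\beta,\gamma)\in\dot T$'' for the relevant pairs, and using $\lambda$-cc each $A_\alpha$ has size $<\lambda$. Next I would form, for each $\gamma\in\mathcal{B}^*$, the truncation $\dot T_\gamma$ of the name obtained by restricting to those antichains and conditions living inside $\mathbb{R}^*\upharpoonright\gamma$; concretely $\dot T\cap\gamma$ is an $\mathbb{R}^*\upharpoonright\gamma$-name. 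The point is that if $\gamma$ is closed enough — and here the weakly compact filter enters: the set of $\gamma\in\mathcal{B}^*$ such that all the antichains $A_\alpha$ for $\alpha<\gamma$ are contained in $\mathbb{R}^*\upharpoonright\gamma$ and such that $\mathbb{R}^*\upharpoonright\gamma$ correctly computes the first $\gamma$ levels of $\dot T$ is in $\mathcal{F}_\lambda$ — then $\dot T\cap\gamma$ is forced by $\mathbb{R}^*\upharpoonright\gamma$ to be a $\gamma$-tree.

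The heart of the argument is to arrange that for some such $\gamma$ this $\gamma$-tree has no cofinal branch in $V^{\mathbb{R}^*\upharpoonright\gamma}$. I would argue by reflection: suppose toward a contradiction that for every $\gamma$ in a set $\mathcal{F}_\lambda$-positive (indeed club-large inside $\mathcal{B}^*$) the tree $T\cap\gamma$ has a cofinal branch in $V^{\mathbb{R}^*\upharpoonright\gamma}$. Using the factorization $\mathbb{R}^*\cong \mathbb{R}^*\upharpoonright\gamma\ast(\mathbb{R}^*/\mathbb{R}^*\upharpoonright\gamma)$ and the fact (from the projection system) that the quotient $\mathbb{R}^*/\mathbb{R}^*\upharpoonright\gamma$ is sufficiently closed below $\gamma$ for $\gamma\in\mathcal{B}^*$ Mahlo (it decomposes as a $\delta$-closed $\mathbb{U}$-like part times a Cohen-plus-Sinapova part whose relevant chunk is $\gamma$-closed, so it adds no new $\gamma$-branches to a $\gamma$-tree from $V^{\mathbb{R}^*\upharpoonright\gamma}$), such a cofinal branch would survive into $V^{\mathbb{R}^*}$; collecting these along a stationary set and using normality of $\mathcal{F}_\lambda$ (to thin out so that the branches cohere) produces a cofinal branch through $T$ in $V^{\mathbb{R}^*}$, contradicting $\lambda$-Aronszajnness. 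Hence there is $\gamma\in\mathcal{B}^*$ with $T\cap\gamma$ a $\gamma$-Aronszajn tree in $V^{\mathbb{R}^*\upharpoonright\gamma}$.

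The main obstacle I anticipate is the branch-absorption step: one must show that the quotient forcing $\mathbb{R}^*/\mathbb{R}^*\upharpoonright\gamma$ does not add a cofinal branch to $T\cap\gamma$, so that a branch appearing in the full extension can be traced back to $V^{\mathbb{R}^*\upharpoonright\gamma}$, and conversely that the weakly compact indescribability genuinely delivers a $\gamma$ where $\gamma$-Aronszajnness is witnessed. This requires a careful analysis of the quotient — isolating the $\delta$-closed part and the part that is $\gamma$-closed below the relevant inaccessible — exactly as in Unger's and Friedman–Honzik–Stejskalov\'a's treatments; I would cite the closure of $\mathbb{U}$ (Proposition~\ref{projectionU}(1)), the $\leq^*$-closure and Prikry property of $\dot{\mathbb{S}}^{\pi^*}_\gamma$, and Easton-type arguments to push this through, leaving the detailed quotient computation (which is routine but notationally heavy) to the reader as the excerpt does elsewhere.
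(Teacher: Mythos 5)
Your proposal has a genuine gap at precisely the point you call ``the heart of the argument.'' Your contradiction step assumes that if, for stationarily many $\gamma\in\mathcal{B}^*$, the tree $T\cap\gamma$ has a cofinal branch in $V^{\mathbb{R}^*\upharpoonright\gamma}$, then one can ``collect these along a stationary set and use normality of $\mathcal{F}_\lambda$ to thin out so that the branches cohere'' and thereby produce a cofinal branch of $T$ in $V^{\mathbb{R}^*}$. This does not work: for a $\lambda$-tree $T$, \emph{every} level-$\gamma$ node already gives a cofinal branch through $T\cap\gamma$, so the existence of such branches (in any model) carries no contradiction with $\lambda$-Aronszajnness, and there is no mechanism---normality of $\mathcal{F}_\lambda$ acts on regressive ordinal functions, not on families of branches living in different intermediate models---that forces the branches $b_\gamma$ to cohere into a single branch of $T$. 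Moreover, the quotient analysis you invoke is pointed in the wrong direction (a branch in $V^{\mathbb{R}^*\upharpoonright\gamma}$ trivially ``survives'' into $V^{\mathbb{R}^*}$, no non-absorption lemma is needed for that), your description of the quotient as having a ``$\gamma$-closed'' Cohen-plus-Sinapova chunk is false (Sinapova forcing is not even countably closed in $\leq$; the actual later argument uses $\delta$-ccness of $\mathbb{P}_\gamma\times\mathbb{P}_\gamma$, Unger's lemma, $\delta$-closure of $\mathbb{Q}_\gamma$ and Silver's theorem), and in any case that quotient machinery is the content of the lemmas \emph{following} this one, so using it here would be circular in the structure of the section.

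The paper's proof is much more direct and is where the weak compactness is really spent: the statement ``$\one_{\mathbb{R}^*}\Vdash\dot T$ is a $\lambda$-Aronszajn tree'' (including the non-existence of a name for a cofinal branch) is expressed as a single $\Pi^1_1$ sentence $\Phi$ over the structure $\langle V_\lambda,\in,\mathbb{R}^*,\dot T,\lambda\rangle$; by $\Pi^1_1$-indescribability there is an $\mathcal{F}_\lambda$-large set of $\gamma$ with $\langle V_\gamma,\in,\mathbb{R}^*\cap V_\gamma,\dot T\cap\gamma,\gamma\rangle\models\Phi$, and intersecting with $\mathcal{B}^*$ and the Mahlo cardinals (so that $\mathbb{R}^*\cap V_\gamma=\mathbb{R}^*\upharpoonright\gamma$) one reads off directly that $\one_{\mathbb{R}^*\upharpoonright\gamma}$ forces $\dot T\cap\gamma$ to be a $\gamma$-Aronszajn tree. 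In other words, the Aronszajnness of the reflected tree is obtained by reflecting the $\Pi^1_1$ assertion itself, not by a branch-preservation argument; the quotient $\mathbb{R}^*/(\mathbb{R}^*\upharpoonright\gamma)$ only enters afterwards, to show that it cannot add the branch of $T\cap\gamma$ that does exist in $V^{\mathbb{R}^*}$, which is where the eventual contradiction is drawn. Your first two paragraphs (fixing the name, $\lambda$-cc antichain counting, and arranging that $\dot T\cap\gamma$ is an $\mathbb{R}^*\upharpoonright\gamma$-name for a $\gamma$-tree on a large set of $\gamma$) are fine, but they only deliver tree-ness, not Aronszajnness, and the missing half is exactly what the $\Pi^1_1$-reflection supplies.
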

\begin{proof}
Let $\dot{T}$ be a $\mathbb{R}^*$-name such that $\one_{\mathbb{R}^*}\Vdash_{\mathbb{R}^*}\text{``$\dot{T}$ is a $\lambda$-Aronszajn tree''}$. Without loss of generality $\dot{T}$ is a $\mathbb{R}^*$-name for a subset of $\lambda$. It is not hard to check that this is equivalent to a $\Pi^1_1$ sentence $\Phi$ in the language $\mathcal{L}=\{\in,\mathbb{R}^*,\dot{T},\lambda\}$. % that holds in  $\langle V_\lambda,\in, \mathbb{R}^*,\dot{T},\lambda\rangle$. 
Since $\lambda$ is weakly compact, hence $\Pi^1_1$-indescribable, there is a set $X\in\mathcal{F}_\lambda$ such that for each $\gamma\in X$, $\langle V_\gamma,\in, \mathbb{R}^*\cap V_\gamma, \dot{T}\cap \gamma, \gamma\rangle\models \Phi$. By Lemma \ref{DefofB*} and the former discussion we can assume that all these $\gamma$ are Mahlo and that $\gamma\in\mathcal{B}^*$. In particular, $\mathbb{R}^*\cap V_\gamma=\mathbb{R}^*\upharpoonright \gamma$, and thus $\langle V_\gamma,\in, \mathbb{R}^*\upharpoonright\gamma, \dot{T}\cap \gamma, \gamma\rangle\models \Phi$. Notice that $\Phi$ is absolute between the universe of sets and this structure, hence $\one_{\mathbb{R}^*\upharpoonright\gamma}\Vdash_{\mathbb{R}^*\upharpoonright\gamma}\text{``$\dot{T}\cap \gamma$ is a $\gamma$-Aronszajn tree''}$.
%witnessing the reflection of $\Phi$ is a Mahlo cardinal in $\mathcal{B}^*$, hence $\mathbb{R}^*\cap V_\gamma=\mathbb{R}^*\upharpoonright \gamma$,  and thus $\langle V_\gamma,\in, \mathbb{R}^*\upharpoonright\gamma, \dot{T}\cap \gamma, \gamma\rangle\models \Phi$. Since $\Phi$ is absolute between the universe $V$ and the aforementioned structure altogether this proves that $\Vdash_{\mathbb{R}^*\upharpoonright\gamma}\text{``$\dot{T}\cap \gamma$ is a $\gamma$-Aronszajn tree''}$ and thus the lemma follows.
\end{proof}

\begin{lemma}
Assume that there is a $\lambda$-Aronszajn tree $T\s \lambda$ in $V^{\mathbb{R}^*}$. Let $\gamma\in\mathcal{B}^*$ be as in the previous lemma. Then $\mathbb{R}^*/(\mathbb{R}^*\upharpoonright\gamma)$ adds $b_\gamma$, a  cofinal branch throughout $T\cap \gamma$.
\end{lemma}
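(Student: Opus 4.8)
The plan is to read the branch $b_\gamma$ off a node of $T$ at level $\gamma$ and then to invoke Lemma~\ref{AronszajnWeakComp} to see that $b_\gamma$ cannot already belong to $V^{\mathbb{R}^*\upharpoonright\gamma}$. Fix an $\mathbb{R}^*$-generic filter $G^*$ over $V$ with $T=\dot T_{G^*}$. Recall that $\mathbb{R}^*$ factors as $\mathbb{R}^*\upharpoonright\gamma\ast(\mathbb{R}^*/(\mathbb{R}^*\upharpoonright\gamma))$; let $G^*_\gamma$ be the induced $\mathbb{R}^*\upharpoonright\gamma$-generic filter, so that $V[G^*]$ is a generic extension of $V[G^*_\gamma]$ by $\mathbb{R}^*/(\mathbb{R}^*\upharpoonright\gamma)$. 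Since $T$ is a $\lambda$-tree in $V[G^*]$ and $\gamma<\lambda$, the $\gamma$-th level of $T$ is nonempty; fix a node $t$ lying on it and set $b_\gamma:=\{\,s\in T\mid s<_T t\,\}$. As for any tree, $b_\gamma$ is a $<_T$-chain, well-ordered by $<_T$ in order type $\gamma$, with exactly one element on each level $\alpha<\gamma$; and $b_\gamma$ is definable from $T$ and $t$, so $b_\gamma\in V[G^*]$.

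Next I claim $b_\gamma$ is a cofinal branch through the tree $T\cap\gamma$. This uses a harmless normalisation of the coding of $T$ as a subset of $\lambda$, arranged so that for club-many $\nu<\lambda$ the first $\nu$ levels of $T$ occupy precisely the ordinals below $\nu$. Since $\mathcal{F}_\lambda$ extends $\mathrm{Club}(\lambda)$, the set $\mathcal{B}^*\cap\{\nu\mid T\upharpoonright\nu=T\cap\nu\}$ still lies in $\mathcal{F}_\lambda$, so we may take our $\gamma\in\mathcal{B}^*$ with $T\upharpoonright\gamma=T\cap\gamma$. Then the $<_T$-predecessors of $t$ are exactly the elements of $T\cap\gamma$ below $t$, one on each of the $\gamma$ levels of $T\cap\gamma$; that is, $b_\gamma$ is a cofinal branch through $T\cap\gamma$.

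It remains to check $b_\gamma\notin V[G^*_\gamma]$. As in the proof of Lemma~\ref{AronszajnWeakComp}, the maximal antichains deciding ``$\alpha\in\dot T$'' for $\alpha<\gamma$ lie in $\mathbb{R}^*\cap V_\gamma=\mathbb{R}^*\upharpoonright\gamma$, so the restriction of $\dot T$ below $\gamma$ is equivalent to an $\mathbb{R}^*\upharpoonright\gamma$-name, whence $T\cap\gamma\in V[G^*_\gamma]$. By Lemma~\ref{AronszajnWeakComp} the tree $T\cap\gamma$ is $\gamma$-Aronszajn in $V[G^*_\gamma]$, hence has no cofinal branch there; as $b_\gamma$ is exactly such a branch, $b_\gamma\notin V[G^*_\gamma]$. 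Therefore $b_\gamma\in V[G^*]\setminus V[G^*_\gamma]$, i.e.\ $b_\gamma$ is added by the quotient forcing $\mathbb{R}^*/(\mathbb{R}^*\upharpoonright\gamma)$, as claimed. The argument is routine; the only point deserving attention is the coding bookkeeping in the second paragraph --- guaranteeing that ``$T\cap\gamma$'' really is the subtree formed by the first $\gamma$ levels of $T$, so that a node of $T$ at level $\gamma$ genuinely determines a cofinal branch of $T\cap\gamma$.
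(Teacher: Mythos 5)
Your argument is correct and follows essentially the same route as the paper: observe that since $T$ is a $\lambda$-tree in $V^{\mathbb{R}^*}$, the predecessors of any level-$\gamma$ node yield a cofinal branch through $T\cap\gamma$, which by Lemma~\ref{AronszajnWeakComp} cannot lie in $V^{\mathbb{R}^*\upharpoonright\gamma}$ and so must be introduced by the quotient. The paper states this more tersely and leaves the coding normalisation (that $T\cap\gamma$ is genuinely the subtree of the first $\gamma$ levels) implicit, whereas you flag it explicitly; this is a reasonable piece of hygiene but not a different argument.
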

\begin{proof}
Observe that in $V^{\mathbb{R}^*}$ there is a cofinal branch $b_\gamma$ for $T\cap\gamma$, %throughout $T\cap \gamma$,
 as $T$ is a $\lambda$-tree. Nonetheless, $T\cap \gamma$ is $\gamma$-Aronszajn in $V^{\mathbb{R}^*\upharpoonright\gamma}$ so that this branch must be added by the quotient   $\mathbb{R}^*/(\mathbb{R}^*\upharpoonright\gamma)$.
\end{proof}
By combining Proposition~\ref{TruncationsAndAronszajnTree} and \ref{R*RtruncatedAreIsomorphic2} with the above lemma  it follows that if the quotients  $\mathbb{R}^*/(\mathbb{R}^*\upharpoonright\gamma)$ do not add $\gamma$-branches then $\TP(\lambda)$ holds in $V[R]$. 
 
In the next series of lemmas we will prove that for each $\gamma\in\mathcal{B}^*$ there are forcings $\mathbb{P}_\gamma$ and $\mathbb{Q}_\gamma$ fulfilling the following properties:
\begin{itemize}
\item[($\alpha_\gamma$)] $\mathbb{P}_\gamma\times\mathbb{Q}_\gamma$ projects onto $\mathbb{R}^*/(\mathbb{R}^*\upharpoonright\gamma)$ in $V^{\mathbb{R}^*\upharpoonright\gamma}$.
\item[($\beta_\gamma$)] $\mathbb{P}_\gamma\times\mathbb{Q}_\gamma$ does not add new branches to $T\cap \gamma$ over $V^{\mathbb{R}^*\upharpoonright\gamma}$. 
\end{itemize}
Combining ($\alpha_\gamma$) and ($\beta_\gamma$) we would conclude that $\mathbb{R}^*/(\mathbb{R}^*\upharpoonright\gamma)$ does not add $\gamma$-branches to $T\cap \gamma$. In particular, if this is  true for each $\gamma\in\mathcal{B}^*$ then $V[R]\models \TP(\lambda)$.
\begin{defi}\label{DefiAfterPropertiesAlphaBeta}
For each $\gamma\in\mathcal{B}^*\cup\{\lambda\}$, define $\mathbb{C}_\gamma:=\mathbb{A}_\gamma\ast\dot{\mathbb{S}}^{\pi^*}_\gamma$ $\mathbb{P}_\gamma:=\mathbb{C}_\lambda/\mathbb{C}_\gamma$ and $\mathbb{U}_\gamma:=\{(\one,\dot{\one},r)\mid (\one,\dot{\one}, r)\in \mathbb{R}^*\upharpoonright\gamma\}$. Also, over $V^{\mathbb{R}^*\upharpoonright\gamma}$, define $\mathbb{Q}_\gamma:=\{(\one,\dot{\one},r)\mid (\one,\dot{\one},r)\in\mathbb{R}^*/\mathbb{R}^*\upharpoonright\gamma\}$.
\end{defi}
Standard arguments  shows that $\mathbb{Q}_\gamma$ is $\delta$-directed closed over $V^{\mathbb{R}^*\upharpoonright\gamma}$. Moreover, arguing as in Proposition~\ref{projectionU}  and Proposition \ref{PropertiesOfV[R]} one obtains the following:
\begin{prop}\label{projectionUgamma}
For each $\gamma\in\mathcal{B}^*\cup\{\lambda\}$, the following hold:
\begin{enumerate}
\item $\mathbb{U}_\gamma$ is $\delta$-directed closed.
\item $\mathbb{C}_\gamma\times\mathbb{U}_\gamma$ projects onto $\mathbb{R}^*\upharpoonright\gamma$ via the map $\langle (p,\dot{q}),(\one,\dot{\one},r)\rangle\mapsto (p,\dot{q},r)$.
\item $V^{\mathbb{C}_\gamma}$ and $V^{\mathbb{R}^*\upharpoonright\gamma}$ have the same ${<}\delta$-sequences
\end{enumerate}
\end{prop}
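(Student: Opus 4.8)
The items (1)--(3) are the $\gamma$-truncated versions of Proposition~\ref{projectionU} (with the convention $\mathbb{R}^*\upharpoonright\lambda=\mathbb{R}^*$, $\mathbb{C}_\lambda=\mathbb{A}_\lambda\ast\dot{\mathbb{S}}^{\pi^*}_\lambda$), so the plan is to replay the proof of that proposition, replacing $\lambda^+$ by $\gamma$, the projections $\sigma^{\lambda^{+}}_\alpha$ by the $\varrho^\gamma_\alpha$ of Lemma~\ref{ProjectionsPistaronEven}, and $\mathcal{B}$ by $\mathcal{B}^*\cap\gamma$. For (1) I would observe that, since each $\varrho^\gamma_\alpha$ fixes the Boolean maximum, the order on $\mathbb{U}_\gamma$ amounts to reverse inclusion of domains plus the clauses $\one\Vdash r_0(\alpha)\leq r_1(\alpha)$ for $\alpha\in\dom(r_1)$; thus $\mathbb{U}_\gamma$ is, up to this identification, the ${<}\delta$-support product of the name-posets $\dot{\Add}(\delta,1)$, $\alpha\in\mathcal{B}^*\cap\gamma$, each of which is (forced to be) $\delta$-directed closed since $\delta$ is regular and remains so after $\mathbb{A}_{\even(\alpha)}\ast\dot{\mathbb{S}}^\pi_\alpha$. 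Concretely, given a directed family $\{(\one,\dot{\one},r_i)\mid i<\eta\}$ with $\eta<\delta$, I would set $d:=\bigcup_i\dom(r_i)$, of size ${<}\delta$ by regularity of $\delta$, and for $\alpha\in d$ let $r^*(\alpha)$ name the union $\bigcup\{r_i(\alpha)\mid\alpha\in\dom(r_i)\}$; directedness forces this to be a condition of $\dot{\Add}(\delta,1)$ lying below every $r_i(\alpha)$, and $(\one,\dot{\one},\langle r^*(\alpha)\mid\alpha\in d\rangle)$ is the required lower bound.

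For (2) the candidate projection is $\varrho\colon\langle(p,\dot q),(\one,\dot{\one},r)\rangle\mapsto(p,\dot q,r)$; it is order preserving and fixes the top, again because the $\varrho^\gamma_\alpha$ fix the maximum. The substantive point is the pullback property: given $(p',\dot q',r')\leq_{\mathbb{R}^*\upharpoonright\gamma}(p,\dot q,r)=\varrho(\langle(p,\dot q),(\one,\dot{\one},r)\rangle)$, I would keep the first coordinate $(p',\dot q')$ and build $r''$ with $\dom(r''):=\dom(r')$ by mixing over the Boolean value $b_\alpha:=\varrho^\gamma_\alpha(p',q')\in\mathrm{RO}^+(\mathbb{A}_{\even(\alpha)}\ast\dot{\mathbb{S}}^\pi_\alpha)$: for $\alpha\in\dom(r')\setminus\dom(r)$ put $r''(\alpha):=r'(\alpha)$, and for $\alpha\in\dom(r)$ pick $r''(\alpha)$ with $b_\alpha\Vdash r''(\alpha)=r'(\alpha)$ and $\neg b_\alpha\Vdash r''(\alpha)=r(\alpha)$. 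Since $(p',\dot q',r')\leq(p,\dot q,r)$ gives $b_\alpha\Vdash r'(\alpha)\leq r(\alpha)$, a case split on $b_\alpha$ shows $\one\Vdash r''(\alpha)\leq r(\alpha)$, hence $(\one,\dot{\one},r'')\leq_{\mathbb{U}_\gamma}(\one,\dot{\one},r)$; and since $b_\alpha\Vdash r''(\alpha)=r'(\alpha)$ on $\dom(r)$ while $r''(\alpha)=r'(\alpha)$ outright elsewhere, one gets $\varrho^\gamma_\alpha(p',q')\Vdash r''(\alpha)\leq r'(\alpha)$, i.e.\ $\varrho(\langle(p',\dot q'),(\one,\dot{\one},r'')\rangle)=(p',\dot q',r'')\leq_{\mathbb{R}^*\upharpoonright\gamma}(p',\dot q',r')$, while $\langle(p',\dot q'),(\one,\dot{\one},r'')\rangle\leq_{\mathbb{C}_\gamma\times\mathbb{U}_\gamma}\langle(p,\dot q),(\one,\dot{\one},r)\rangle$. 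I expect this coherent choice of mixed names to be the only point in the proposition that needs genuine care.

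For (3) I would note that $(p,\dot q,r)\mapsto(p,\dot q)$ is a projection of $\mathbb{R}^*\upharpoonright\gamma$ onto $\mathbb{C}_\gamma$, so $V^{\mathbb{C}_\gamma}\subseteq V^{\mathbb{R}^*\upharpoonright\gamma}$, and by (2) also $V^{\mathbb{R}^*\upharpoonright\gamma}\subseteq V^{\mathbb{C}_\gamma\times\mathbb{U}_\gamma}=(V^{\mathbb{C}_\gamma})^{\mathbb{U}_\gamma}$. Now $\mathbb{C}_\gamma=\mathbb{A}_\gamma\ast\dot{\mathbb{S}}^{\pi^*}_\gamma$ is $\delta$-cc --- under $\gch_{\geq\kappa}$ the Cohen part $\mathbb{A}_\gamma=\Add(\kappa,\gamma)$ is $\kappa^+$-cc and $\dot{\mathbb{S}}^{\pi^*}_\gamma$ is $\delta$-Knaster by Theorem~\ref{PropertiesofSinapovaForcing}(1) --- while $\mathbb{U}_\gamma$ is ${<}\delta$-closed by (1); Easton's lemma then yields $\one_{\mathbb{C}_\gamma}\Vdash$ ``$\mathbb{U}_\gamma$ is $\delta$-distributive'', so $V^{\mathbb{C}_\gamma}$ and $V^{\mathbb{C}_\gamma\times\mathbb{U}_\gamma}$ have the same ${<}\delta$-sequences, and the intermediate model $V^{\mathbb{R}^*\upharpoonright\gamma}$ inherits this. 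Apart from the mixing step in (2), every step is routine bookkeeping, parallel to Proposition~\ref{projectionU} and Proposition~\ref{PropertiesOfV[R]}.
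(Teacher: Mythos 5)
Your proposal is correct and is exactly the argument the paper has in mind: the paper dispenses with Proposition~\ref{projectionUgamma} by saying it follows by ``standard arguments'' as in Proposition~\ref{projectionU} and Proposition~\ref{PropertiesOfV[R]}, and those standard arguments are precisely your coordinatewise unions for $\delta$-directed closure, the mixing over the Boolean values $\varrho^\gamma_\alpha(p',q')$ for the projection, and Easton's lemma (using that $\mathbb{C}_\gamma$ is $\delta$-cc, indeed $\delta$-Knaster, and $\mathbb{U}_\gamma$ is $\delta$-closed) for the ${<}\delta$-sequences. So you have simply written out in full what the paper leaves implicit; no divergence and no gap.
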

\begin{prop}
$ $
\begin{enumerate}
\item $\mathbb{R}^*\upharpoonright\gamma$ is $\gamma$-Knaster. In particular, all $V$-cardinals ${\geq}\gamma$ are preserved.
\item $V^{\mathbb{R}^*\upharpoonright\gamma}\models \text{$``\kappa$ is  strong limit  with $\cof(\kappa)=\mu$''}$.
\item $\mathbb{R}^*\upharpoonright\gamma$ collapses all the cardinals in the interval $(\kappa,\delta)\cup (\delta^+,\gamma)$. %while it preserves the others. 
In particular, $V^{\mathbb{R}^*\upharpoonright\gamma}\models\text{ $``\delta=\kappa^+\,\wedge\,\gamma=\kappa^{++}$''}$.
\item $V^{\mathbb{R}^*\upharpoonright\gamma}\models\text{$``2^\kappa\geq \gamma$''}$.
\end{enumerate}
\end{prop}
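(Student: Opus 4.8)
The four clauses mirror, at the level of the truncation $\mathbb{R}^{*}\upharpoonright\gamma$, the content of Proposition~\ref{PropertiesOfV[R]}, and the plan is to transfer each item from there, replacing $\lambda^{+}$, $\lambda$, $\mathcal{A}$, $\mathcal{B}$ and $\mathbb{A}_{\lambda^{+}}\ast\dot{\mathbb{S}}_{\lambda^{+}}$ by $\gamma$, $\gamma$, $\mathcal{A}\cap\gamma$, $\mathcal{B}^{*}\cap\gamma$ and $\mathbb{C}_\gamma=\mathbb{A}_{\gamma}\ast\dot{\mathbb{S}}^{\pi^{*}}_{\gamma}$, and crucially using that $\gamma$ is a Mahlo cardinal (recall $\mathcal{B}^{*}\in(\mathcal{F}_\lambda)^{V}$ and $\mathcal{F}_\lambda$ concentrates on the Mahlo cardinals below $\lambda$; see the discussion preceding Lemma~\ref{DefofB*}). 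Two ingredients will be used repeatedly: first, that $(p,\dot q,r)\mapsto(p,\dot q)$ is a projection of $\mathbb{R}^{*}\upharpoonright\gamma$ onto $\mathbb{C}_\gamma$ and, by Proposition~\ref{projectionUgamma}(2), that $\mathbb{C}_\gamma\times\mathbb{U}_\gamma$ projects onto $\mathbb{R}^{*}\upharpoonright\gamma$, so that $V^{\mathbb{C}_\gamma}\subseteq V^{\mathbb{R}^{*}\upharpoonright\gamma}\subseteq V^{\mathbb{C}_\gamma\times\mathbb{U}_\gamma}$; second, by Proposition~\ref{projectionUgamma}(3), that $V^{\mathbb{C}_\gamma}$ and $V^{\mathbb{R}^{*}\upharpoonright\gamma}$ have the same ${<}\delta$-sequences. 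Note $\mathbb{C}_\gamma$ is a Sinapova extension over $\Add(\kappa,\gamma)$ (with the role of $\Theta$ played by the inaccessible $\gamma$), so Proposition~\ref{SomeBasicPropSinapova} and Theorem~\ref{SinapovaTheorem} apply to it.

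For item~(1) I would run the $\gamma$-Knaster argument of \cite[Lemma~3.6]{GolPov} (cf.\ the proof of Proposition~\ref{PropertiesOfV[R]}(1)). Given $\gamma$ conditions $(p_i,\dot q_i,r_i)$, one first uses the $\kappa^{+}$-Knaster property of $\mathbb{A}_\gamma$ (valid under $\gch_{\geq\kappa}$) together with a $\Delta$-system argument on the supports $\dom(p_i)$ — legitimate since $\gamma$ is inaccessible — to retain $\gamma$-many $i$ with the $p_i$ pairwise compatible and agreeing on a common root; then one thins out further so that the (forced) stems of the $\dot q_i$ all coincide, which is possible since there are fewer than $\gamma$ possible stems and makes the $\dot q_i$ pairwise compatible, two Sinapova conditions with equal stem having a common lower bound; and finally one applies the $\Delta$-system lemma to the domains $\dom(r_i)\in[\mathcal{B}^{*}\cap\gamma]^{<\delta}$ and treats the values on the root exactly as in \cite{GolPov}. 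The ``in particular'' clause is then immediate. This is the step I expect to be the main obstacle: as in \cite[Lemma~3.6]{GolPov}, the delicate point is to carry the $\Delta$-system thinning through all three coordinates of a condition simultaneously while keeping a family of full size $\gamma$, and this is precisely where the inaccessibility (indeed Mahloness) of $\gamma$ is essential.

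For items~(2)--(4) the work is done in $V^{\mathbb{C}_\gamma}$ and then pushed up. In $V^{\mathbb{C}_\gamma}$, $\kappa$ is strong limit with $\cof(\kappa)=\mu$ and $2^{\kappa}=\gamma$, and every $V$-cardinal in $(\kappa,\varepsilon^{+}]$ other than $\delta$ is collapsed to $\kappa$ (Proposition~\ref{SomeBasicPropSinapova}, Theorem~\ref{SinapovaTheorem}; here using that $\mathbb{A}_\gamma$ is $\kappa$-closed). Since $V^{\mathbb{C}_\gamma}$ and $V^{\mathbb{R}^{*}\upharpoonright\gamma}$ have the same ${<}\delta$-sequences and $\kappa,\mu<\delta$: the power set of each $\rho<\kappa$ is computed identically in both models, so $\kappa$ stays strong limit; a cofinal map of length ${<}\mu$ into $\kappa$, or of length ${<}\delta$ into $\delta$, would already lie in $V^{\mathbb{C}_\gamma}$, so $\cof(\kappa)=\mu$ and $\delta$ is preserved (and $\kappa,\delta$ are also preserved by $\mathbb{R}^{*}\upharpoonright\gamma$ via the closure-plus-Prikry-property argument of Proposition~\ref{PropertiesOfV[R]}(2)); and $\mathcal{P}(\kappa)$ only grows, so $2^{\kappa}\geq\gamma$ persists, $\gamma$ still being a cardinal by item~(1). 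It remains to collapse the cardinals of $(\delta,\gamma)$ to $\delta$ in $V^{\mathbb{R}^{*}\upharpoonright\gamma}$: given such a $\theta$, fix $\eta\in\mathcal{B}^{*}$ with $\theta<\eta<\gamma$ and, as in the proof of Proposition~\ref{PropertiesOfV[R]}(2), note that $\mathbb{R}^{*}\upharpoonright\gamma$ projects onto $\mathrm{RO}^{+}(\mathbb{A}_{\even(\eta)}\ast\dot{\mathbb{S}}^{\pi}_\eta)\ast\dot{\Add}(\delta,1)$, an iteration which collapses $(\delta,\eta]$, so $\theta$ is collapsed. Combined with the preservation of $\kappa$ and $\delta$ this yields $\delta=\kappa^{+}$ and $\gamma=\kappa^{++}$ in $V^{\mathbb{R}^{*}\upharpoonright\gamma}$, completing items~(2)--(4).
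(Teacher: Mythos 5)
Your proposal is correct and takes essentially the approach the paper intends: the paper gives no separate proof of this proposition, treating it as the truncated analogue of Proposition~\ref{PropertiesOfV[R]} (via Proposition~\ref{projectionUgamma}), which is exactly what you carry out --- the \cite{GolPov}-style $\Delta$-system argument for $\gamma$-Knasterness using the inaccessibility (Mahloness) of $\gamma$, and the transfer of items (2)--(4) from $V^{\mathbb{C}_\gamma}$ using the agreement on ${<}\delta$-sequences together with the projections onto $\mathrm{RO}^+(\mathbb{A}_{\even(\eta)}\ast\dot{\mathbb{S}}^{\pi}_\eta)\ast\dot{\Add}(\delta,1)$. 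Note only that your argument collapses all cardinals in $(\delta,\gamma)$, slightly more than the stated $(\delta^+,\gamma)$, which is in fact what the ``in particular'' clause $\gamma=\kappa^{++}$ requires.
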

%Unless otherwise stated hereafter we will be working within $V^{\mathbb{R}^*\upharpoonright\gamma}$.
\begin{prop}
For each $\gamma\in\mathcal{B}^*$, $\mathbb{P}_\gamma\times \mathbb{Q}_\gamma$ satisfies $(\alpha_\gamma)$.
\end{prop}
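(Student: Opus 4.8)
The plan is to exhibit, working over $V^{\mathbb{R}^*\upharpoonright\gamma}$, an explicit projection $\rho_\gamma\colon\mathbb{P}_\gamma\times\mathbb{Q}_\gamma\to\mathbb{R}^*/(\mathbb{R}^*\upharpoonright\gamma)$, transcribing the proof of Proposition~\ref{projectionU}(2) (and of Proposition~\ref{projectionUgamma}(2)) to the relativized setting. First I would record the bookkeeping that makes $\mathbb{P}_\gamma$ a genuine forcing over $V^{\mathbb{R}^*\upharpoonright\gamma}$: the map $(p,\dot q,r)\mapsto(p,\dot q)$ is a projection from $\mathbb{R}^*\upharpoonright\gamma$ onto $\mathbb{C}_\gamma=\mathbb{A}_\gamma\ast\dot{\mathbb{S}}^{\pi^*}_\gamma$, so $V^{\mathbb{R}^*\upharpoonright\gamma}$ contains a $\mathbb{C}_\gamma$-generic $g_\gamma$; and $\mathbb{C}_\lambda$ projects onto $\mathrm{RO}^+(\mathbb{C}_\gamma)$ — which one checks exactly as in Proposition~\ref{PropOnProjectionOfGenerics}, invoking the genericity criterion (Theorem~\ref{CriterionGenericity}) to see that $\mathbb{S}^{\pi^*}_\lambda$ projects onto $\mathbb{S}^{\pi^*}_\gamma$ — so that $\mathbb{P}_\gamma=\mathbb{C}_\lambda/\mathbb{C}_\gamma=\mathbb{C}_\lambda/g_\gamma$ is well defined. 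Using the factorization $\mathbb{R}^*\cong(\mathbb{R}^*\upharpoonright\gamma)\ast(\mathbb{R}^*/(\mathbb{R}^*\upharpoonright\gamma))$ I will represent conditions of the quotient as triples $(p,\dot q,r)\in\mathbb{R}^*$ that are compatible with the $\mathbb{R}^*\upharpoonright\gamma$-generic.

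Next I would set $\rho_\gamma\bigl((p,\dot q),(\one,\dot\one,r)\bigr):=(p,\dot q,r)$ and verify the two easy clauses. The triple $(p,\dot q,r)$ satisfies the three requirements of Definition~\ref{DefinitionR*} (witnessed by $(p,\dot q)\in\mathbb{C}_\lambda$ and by the fact that $(\one,\dot\one,r)$ already lies in $\mathbb{Q}_\gamma\subseteq\mathbb{R}^*/(\mathbb{R}^*\upharpoonright\gamma)$), and it is compatible with the $\mathbb{R}^*\upharpoonright\gamma$-generic because $(p,\dot q)\in\mathbb{C}_\lambda/g_\gamma$ is compatible with $g_\gamma$ while $(\one,\dot\one,r)$ already is; there is no cross-interference between $(p,\dot q)$ and $r$, since the order of $\mathbb{R}^*$ only sees $\varrho^\lambda_\alpha(\one,\one)=\one$ on the collapse coordinates of $(\one,\dot\one,r)$. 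Order-preservation is then immediate from Definition~\ref{DefinitionR*}, again using $\varrho^\lambda_\alpha(\one,\one)=\one$ and that a forcing statement below $\one$ in the order of $\mathbb{Q}_\gamma$ entails the corresponding one in the quotient.

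The heart of the argument, and the step I expect to cost the most, is the projection property: given $(p',\dot q',r')\leq(p,\dot q,r)$ in $\mathbb{R}^*/(\mathbb{R}^*\upharpoonright\gamma)$, I must produce a condition of $\mathbb{P}_\gamma\times\mathbb{Q}_\gamma$ below $\bigl((p,\dot q),(\one,\dot\one,r)\bigr)$ whose image is $\leq(p',\dot q',r')$. I take $(p',\dot q')$ as the first coordinate (still a condition of $\mathbb{C}_\lambda/g_\gamma$), and for the second the mixed side condition $r''$ with $\dom(r'')=\dom(r')$ defined coordinatewise by: $r''(\alpha)=r'(\alpha)$ below $\varrho^\lambda_\alpha(p',q')$; $r''(\alpha)=r(\alpha)$ when $\alpha\in\dom(r)$ and the generic is not below $\varrho^\lambda_\alpha(p',q')$; and $r''(\alpha)=\one$ otherwise. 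Since $\varrho^\lambda_\alpha(p',q')$ forces $r'(\alpha)\leq r(\alpha)$, the name $r''(\alpha)$ is forced into $\dot{\Add}(\delta,1)$, we get $\one\Vdash r''(\alpha)\leq r(\alpha)$, hence $(\one,\dot\one,r'')\leq(\one,\dot\one,r)$ in $\mathbb{Q}_\gamma$; and $\rho_\gamma\bigl((p',\dot q'),(\one,\dot\one,r'')\bigr)=(p',\dot q',r'')\leq(p',\dot q',r')$ because below $\varrho^\lambda_\alpha(p',q')$ one has $r''(\alpha)=r'(\alpha)$. The one point needing care here is that $(\one,\dot\one,r'')$ genuinely lies in $\mathbb{Q}_\gamma$, i.e. that the restriction of $r''$ to coordinates in $\mathcal{B}^*\cap\gamma$ stays compatible with the $\mathbb{R}^*\upharpoonright\gamma$-generic; this uses the coherence $\varrho^\gamma_\alpha=\sigma^\gamma_\alpha$ of Lemma~\ref{ProjectionsPistaronEven}, which aligns the orders of $\mathbb{R}^*$ and of $\mathbb{R}^*\upharpoonright\gamma$ on the common coordinates. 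This yields $(\alpha_\gamma)$.

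The anticipated obstacles are exactly the two structural facts just flagged, both of which are reductions to earlier results rather than new work: that $\mathbb{C}_\lambda$ projects onto $\mathrm{RO}^+(\mathbb{C}_\gamma)$ (Theorem~\ref{CriterionGenericity}, via the argument of Proposition~\ref{PropOnProjectionOfGenerics}), and that the projections defining $\mathbb{R}^*$ restrict correctly to those defining $\mathbb{R}^*\upharpoonright\gamma$ on $\mathcal{B}^*\cap\gamma$ (the moreover clause of Lemma~\ref{ProjectionsPistaronEven}), without which the mixed side condition $r''$ need not be a legitimate element of the quotient. Once these are in hand, everything else is a routine rephrasing of the proof of Proposition~\ref{projectionU}(2).
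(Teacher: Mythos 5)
Your proposal is correct and is essentially the paper's own argument: the paper likewise observes that any condition of $\mathbb{R}^*/\mathbb{R}^*\upharpoonright\gamma$ has its $\mathbb{C}_\lambda$-part in $\mathbb{P}_\gamma$ and then takes exactly the map $\langle(p,\dot{q}),(\one,\dot{\one},r)\rangle\mapsto(p,\dot{q},r)$, declaring the verification that it is a projection immediate. You merely spell out that ``immediate'' step (the mixed name $r''$ and the appeal to the coherence $\varrho^\gamma_\alpha=\sigma^\gamma_\alpha$), which is the standard argument already invoked for Propositions~\ref{projectionU} and~\ref{projectionUgamma}.
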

\begin{proof}
By definition, a condition in $\mathbb{R}^*/\mathbb{R}^*\upharpoonright\gamma$ is a triple $(p,\dot{q},r)$ such that $(\pi^\lambda_\gamma(p,\dot{q}),r\upharpoonright\gamma)\in \mathbb{R}^*\upharpoonright\gamma$, where $\pi^\lambda_\gamma$ is the composition of $\varrho^\lambda_\gamma$ with the standard isomorphism between $\mathbb{C}_\gamma$ and $\mathrm{RO}^+(\mathbb{C}_\gamma)$. In particular, $(p,\dot{q})\in \mathbb{P}_\gamma$. Now, it is immediate to check that $\tau: \mathbb{P}_\gamma\times\mathbb{Q}_\gamma\rightarrow\mathbb{R}^*/\mathbb{R}^*\upharpoonright\gamma$ given by $\langle(p,\dot{q}),(\one,\one,r)\rangle\mapsto (p,\dot{q},r)$ defines a projection. 
\end{proof}
It thus remains to prove that $\mathbb{P}_\gamma\times\mathbb{Q}_\gamma$ satisfies $(\beta_\gamma)$.
\begin{prop}
Let $\gamma\in\mathcal{B}^*$. If $\mathbb{P}_\gamma\times\mathbb{P}_\gamma$ is $\delta$-cc over $V^{\mathbb{C}_\gamma}$ then $\mathbb{P}_\gamma\times\mathbb{Q}_\gamma$ witnesses $(\beta_\gamma)$.
\end{prop}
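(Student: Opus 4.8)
The plan is to reduce property $(\beta_\gamma)$ for the product $\mathbb{P}_\gamma\times\mathbb{Q}_\gamma$ to a branch-preservation argument carried out over the intermediate model $V^{\mathbb{C}_\gamma}$. Recall that $\mathbb{C}_\gamma=\mathbb{A}_\gamma\ast\dot{\mathbb{S}}^{\pi^*}_\gamma$, that $\mathbb{P}_\gamma=\mathbb{C}_\lambda/\mathbb{C}_\gamma$, and that $\mathbb{Q}_\gamma$ is $\delta$-directed closed over $V^{\mathbb{R}^*\upharpoonright\gamma}$ by the remarks preceding Proposition~\ref{projectionUgamma}. By Proposition~\ref{projectionUgamma}(3), $V^{\mathbb{C}_\gamma}$ and $V^{\mathbb{R}^*\upharpoonright\gamma}$ have the same ${<}\delta$-sequences, and in that model $\gamma=\kappa^{++}$ with $\delta=\kappa^+$, so $T\cap\gamma$ is a tree of height and size $\gamma$ whose levels have size ${<}\gamma$. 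First I would observe that it suffices to show $\mathbb{P}_\gamma\times\mathbb{Q}_\gamma$ adds no cofinal branch to $T\cap\gamma$ working over $V^{\mathbb{C}_\gamma}$: indeed $\mathbb{R}^*\upharpoonright\gamma$ is a projection of $\mathbb{C}_\gamma\times\mathbb{U}_\gamma$ (Proposition~\ref{projectionUgamma}(2)), $\mathbb{U}_\gamma$ is $\delta$-directed closed, and a $\delta$-closed forcing over $V^{\mathbb{C}_\gamma}$ adds no new ${<}\delta$-sequences, hence no new branch through a tree all of whose levels have already appeared; so $T\cap\gamma$ being Aronszajn over $V^{\mathbb{R}^*\upharpoonright\gamma}$ is inherited from it being Aronszajn over $V^{\mathbb{C}_\gamma}$, and conversely a branch added by $\mathbb{P}_\gamma\times\mathbb{Q}_\gamma$ over $V^{\mathbb{R}^*\upharpoonright\gamma}$ pulls back to one over $V^{\mathbb{C}_\gamma}$.

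The core of the argument is a two-stage application of the standard branch-lemma machinery. Stage one: by hypothesis $\mathbb{P}_\gamma\times\mathbb{P}_\gamma$ is $\delta$-cc over $V^{\mathbb{C}_\gamma}$, and the classical lemma (Silver; see e.g.\ the argument in \cite{Ung} or \cite{CumFor}) says that a forcing $\mathbb{P}$ with $\mathbb{P}\times\mathbb{P}$ having the $\delta$-cc adds no new cofinal branch to any tree of height $\delta^{+}$ --- or, more precisely in our setting, adds no new branch to $T\cap\gamma$, a tree of height $\gamma$ with ${<}\gamma$-sized levels, provided $\mathrm{cf}(\gamma)\geq\delta$, which holds since $\gamma$ is inaccessible in $V$. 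Thus $\mathbb{P}_\gamma$ alone adds no cofinal branch to $T\cap\gamma$. Stage two: after forcing with $\mathbb{P}_\gamma$, the remaining factor $\mathbb{Q}_\gamma$ is $\delta$-directed closed; since the product is a two-step iteration $\mathbb{P}_\gamma\ast\dot{\mathbb{Q}}_\gamma$ and $\mathbb{P}_\gamma$ is $\delta$-cc, $\delta$-closure of $\mathbb{Q}_\gamma$ is preserved, so $\mathbb{Q}_\gamma$ adds no ${<}\delta$-sequences over $V^{\mathbb{C}_\gamma\ast\mathbb{P}_\gamma}$ and in particular no new branch through $T\cap\gamma$ (every proper initial segment of a putative new branch, having length ${<}\gamma$, would be a bounded subset of $\gamma$ of size ${<}\delta$ in an intermediate model... here one has to be slightly careful and instead use that a cofinal branch, restricted appropriately, is determined by a $\delta$-sequence of approximations, using $\mathrm{cf}(\gamma)\geq\delta$ again). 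Concatenating the two stages, $\mathbb{P}_\gamma\times\mathbb{Q}_\gamma$ adds no cofinal branch to $T\cap\gamma$, which is exactly $(\beta_\gamma)$.

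The main obstacle I anticipate is the precise formulation of the branch lemma for a tree whose height $\gamma$ is weakly compact (hence of large cofinality) rather than literally a successor cardinal: the usual statements are phrased for trees of height $\delta^{+}$, and one needs the variant for trees of height $\gamma$ with $\mathrm{cf}(\gamma)=\gamma>\delta$ and levels of size ${<}\gamma$, together with the bookkeeping that $\mathbb{P}_\gamma$ genuinely is $\delta$-cc (which is where the hypothesis of the proposition is used) while $\mathbb{Q}_\gamma$ genuinely remains $\delta$-directed closed after the first stage. A secondary point requiring care is that the branch we must kill lives over $V^{\mathbb{R}^*\upharpoonright\gamma}$, not over $V^{\mathbb{C}_\gamma}$; the reduction in the first paragraph handles this, but one must check that the projection $\mathbb{C}_\gamma\times\mathbb{U}_\gamma\to\mathbb{R}^*\upharpoonright\gamma$ interacts correctly with the quotient $\mathbb{R}^*/(\mathbb{R}^*\upharpoonright\gamma)$ so that a branch added by the quotient really does pull back. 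Modulo these points, which are routine given the groundwork already laid, the proof is a clean concatenation of two well-known preservation facts.
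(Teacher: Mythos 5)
Your stage two is where the argument genuinely breaks down. You claim that the closed factor $\mathbb{Q}_\gamma$ adds no cofinal branch to $T\cap\gamma$ because it adds no new ${<}\delta$-sequences. But a cofinal branch of $T\cap\gamma$ is a $\gamma$-sequence, and even its restriction to a cofinal set of levels has order type $\cof(\gamma)\geq\delta$; neither it nor its long initial segments is a ${<}\delta$-sequence, so $\delta$-closure (let alone mere distributivity) says nothing about it. In general a $\delta$-closed forcing \emph{can} add cofinal branches to trees of height ${>}\delta$; what rules this out here is Silver's branch lemma, whose essential hypothesis is that $2^\kappa$ is at least the height of the tree. That is exactly how the paper argues: in $V^{\mathbb{R}^*\upharpoonright\gamma}$ one has $2^\kappa\geq\gamma$ and $\mathbb{Q}_\gamma$ is $\kappa^+$-closed, so by Silver's theorem (\cite[Lemma V.2.26]{Kun}) $\mathbb{Q}_\gamma$ adds no branch to $T\cap\gamma$. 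You never invoke $2^\kappa\geq\gamma$ anywhere, and without it your closure-based argument cannot be repaired; the same faulty ``no new ${<}\delta$-sequences, hence no new branch'' step also appears in your preliminary reduction to $V^{\mathbb{C}_\gamma}$, which is moreover unnecessary: $T\cap\gamma$ is only known to live in $V^{\mathbb{R}^*\upharpoonright\gamma}$, and $(\beta_\gamma)$ is a statement over that model; the paper uses Proposition~\ref{projectionUgamma}(3) only to transfer the $\delta$-cc hypothesis from $V^{\mathbb{C}_\gamma}$ to $V^{\mathbb{R}^*\upharpoonright\gamma}$, not to move the tree.

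There is also a structural problem with your ordering of the two factors. You force with $\mathbb{P}_\gamma$ first and assert that $\delta$-closure of $\mathbb{Q}_\gamma$ is preserved because $\mathbb{P}_\gamma$ is $\delta$-cc; Easton's lemma only yields $\delta$-distributivity of $\mathbb{Q}_\gamma$ in the $\mathbb{P}_\gamma$-extension, and distributivity is not enough to run Silver's argument (one must take lower bounds of descending $\kappa$-sequences of conditions). The paper factors the product the other way around: it forces with $\mathbb{Q}_\gamma$ first, over $V^{\mathbb{R}^*\upharpoonright\gamma}$, where $\mathbb{Q}_\gamma$ really is $\delta$-directed closed and Silver applies; then, since $\mathbb{Q}_\gamma$ forces $|\gamma|=\delta$ and Easton's lemma preserves the $\delta$-cc of $\mathbb{P}_\gamma\times\mathbb{P}_\gamma$, the square-cc branch lemma \cite[Lemma 2.2]{Ung} applies verbatim to $\mathbb{P}_\gamma$ over $V^{\mathbb{R}^*\upharpoonright\gamma\ast\mathbb{Q}_\gamma}$. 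Note that this collapse also disposes of what you flagged as your ``main obstacle'' (a branch lemma for trees of height $\gamma$ with $\cof(\gamma)>\delta$): after forcing with $\mathbb{Q}_\gamma$ the height of the tree has cofinality $\delta$, so no nonstandard variant of the lemma is needed.
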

\begin{proof}
Let us first prove that if $\mathbb{P}_\gamma\times\mathbb{P}_\gamma$ is $\delta$-cc over $V^{\mathbb{R}^*\upharpoonright\gamma}$ then $\mathbb{P}_\gamma\times\mathbb{Q}_\gamma$ witnesses $(\beta_\gamma)$. Notice that $\one_{\mathbb{Q}_\gamma}\Vdash^{V^{\mathbb{R}^*\upharpoonright\gamma}}_{\mathbb{Q}_\gamma}\text{$``|\gamma|=\delta$''}$. Since $\mathbb{Q}_\gamma$ is $\delta$-directed closed, Easton's Lemma (see e.g. \cite[Lemma 4.4.]{GolMoh}) yields $\one_{\mathbb{Q}_\gamma}\Vdash^{V^{\mathbb{R}^*\upharpoonright\gamma}}_{\mathbb{Q}_\gamma}\text{$``\mathbb{P}_\gamma\times\mathbb{P}_\gamma$ is $\delta$-cc''}$. Now by appealing to \cite[Lemma 2.2]{Ung} it follows that $\mathbb{Q}_\gamma$ forces, over $V^{\mathbb{R}^*\upharpoonright\gamma}$  that $\mathbb{P}_\gamma$ does not add a cofinal branch to $T\cap\gamma.$ On the other hand, $\one_{\mathbb{R}^*\upharpoonright\gamma}\Vdash_{\mathbb{R}^*\upharpoonright\gamma}\text{$``2^\kappa\geq\gamma$''}$ and  $\one_{\mathbb{R}^*\upharpoonright\gamma}\Vdash_{\mathbb{R}^*\upharpoonright\gamma}\text{``$\mathbb{Q}_\gamma$ is $\kappa^+$-closed''}$, so by Silver's theorem \cite[Lemma V.2.26]{Kun}, $\mathbb{Q}_\gamma$ does not add  cofinal branches to $T\cap \gamma$. Finally 
we use Proposition~\ref{projectionUgamma} (3) to 
infer that if $\mathbb{P}_\gamma\times\mathbb{P}_\gamma$ is $\delta$-cc over $V^{\mathbb{C}_\gamma}$ then it is also $\delta$-cc over $V^{\mathbb{R}^*\upharpoonright\gamma}$. %By Proposition~\ref{projectionUgamma}(2)  this amounts to verify that this product is $\delta$-cc in $V^{\mathbb{C}_\gamma\times \mathbb{U}_\gamma}$. Since $\mathbb{C}_\gamma$ and $\mathbb{U}_\gamma$ are, respectively, $\delta$-cc and $\delta$-directed closed (over $V$), Easton's Lemma yields $\one_{\mathbb{C}_\gamma}\Vdash_{\mathbb{C}_\gamma}\text{$``\check{\mathbb{U}}_\gamma$ is $\delta$-distributive''}$. In particular, $\mathbb{U}_\gamma$ does not add $\delta$-sequences over $V^{\mathbb{C}_\gamma}$ and thus $\mathbb{P}_\gamma\times\mathbb{P}_\gamma$ is $\delta$-cc over $V^{\mathbb{C}_\gamma\times\mathbb{U}_\gamma}$.
\end{proof}

\begin{lemma}\label{PreviousNotInTheQuotient}
Let $\mathbb{P}$ and $\mathbb{Q}$ be  two forcing notions and $\pi:\mathbb{P}\rightarrow\mathbb{Q}$ be a projection. For every $p\in\mathbb{P}$ and $q\in\mathbb{Q}$, $q\Vdash_\mathbb{Q} p\notin \dot{(\mathbb{P}/\mathbb{Q})}$ if and only if for every generic filter $G\subseteq\mathbb{P}$ with $p\in G$, $q$ is not in $H$, the generic filter generated by $\pi[ G]$. In particular, if $\pi(p)\perp q$, $q\Vdash_{\mathbb{Q}} p\notin \dot{(\mathbb{P}/\mathbb{Q})}$.
\end{lemma}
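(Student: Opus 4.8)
The plan is to unwind the definition of the quotient forcing and apply the forcing theorem twice. Recall the standard picture: given the projection $\pi\colon \mathbb{P}\to\mathbb{Q}$, the name $\dot{(\mathbb{P}/\mathbb{Q})}$ can be taken so that, for $H\subseteq\mathbb{Q}$ generic over $V$, its realization is $(\dot{(\mathbb{P}/\mathbb{Q})})_H=\{p\in\mathbb{P}\mid \pi(p)\in H\}=:\mathbb{P}/H$, ordered as a suborder of $\mathbb{P}$. I will use the two standard facts about this set: (i) if $G\subseteq\mathbb{P}$ is $V$-generic and $H$ is the filter on $\mathbb{Q}$ generated by $\pi[G]$, then $H$ is $\mathbb{Q}$-generic over $V$ and $G$ is $\mathbb{P}/H$-generic over $V[H]$, so in particular $G\subseteq (\dot{(\mathbb{P}/\mathbb{Q})})_H$; and (ii) conversely, if $H\subseteq\mathbb{Q}$ is $V$-generic and $G\subseteq \mathbb{P}/H$ is generic over $V[H]$, then $G$ is $\mathbb{P}$-generic over $V$ and the filter on $\mathbb{Q}$ generated by $\pi[G]$ is exactly $H$.

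For the forward implication I would argue as follows. Assume $q\Vdash_\mathbb{Q} p\notin\dot{(\mathbb{P}/\mathbb{Q})}$, let $G\subseteq\mathbb{P}$ be $V$-generic with $p\in G$, and let $H$ be the filter generated by $\pi[G]$. By (i), $H$ is $\mathbb{Q}$-generic over $V$ and $p\in G\subseteq (\dot{(\mathbb{P}/\mathbb{Q})})_H$, so $p\in(\dot{(\mathbb{P}/\mathbb{Q})})_H$. If we had $q\in H$, the forcing theorem applied to $q\Vdash_\mathbb{Q} p\notin\dot{(\mathbb{P}/\mathbb{Q})}$ would give $p\notin(\dot{(\mathbb{P}/\mathbb{Q})})_H$, a contradiction. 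Hence $q\notin H$, as required; since $G$ was arbitrary, this establishes the ``only if'' direction.

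For the converse I would argue contrapositively. Suppose $q\not\Vdash_\mathbb{Q} p\notin\dot{(\mathbb{P}/\mathbb{Q})}$, and fix $q'\leq_\mathbb{Q} q$ with $q'\Vdash_\mathbb{Q} p\in\dot{(\mathbb{P}/\mathbb{Q})}$. Pick $H\subseteq\mathbb{Q}$ generic over $V$ with $q'\in H$; then $p\in(\dot{(\mathbb{P}/\mathbb{Q})})_H=\mathbb{P}/H$, so $p$ is a condition of the forcing $\mathbb{P}/H$ over $V[H]$, and we may choose $G\subseteq\mathbb{P}/H$ generic over $V[H]$ with $p\in G$. By (ii), $G$ is $\mathbb{P}$-generic over $V$, it contains $p$, and the filter generated by $\pi[G]$ is $H$, which contains $q'$ and hence $q$. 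Thus there is a $V$-generic $G\subseteq\mathbb{P}$ with $p\in G$ and $q\in H$, which is exactly the negation of the right-hand side; this proves the equivalence.

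Finally, the ``in particular'' clause is an immediate consequence: if $\pi(p)\perp q$, then for any $V$-generic $G\subseteq\mathbb{P}$ with $p\in G$ one has $\pi(p)\in H$ (the filter generated by $\pi[G]$), and since $H$ is a filter it cannot also contain the incompatible element $q$, so $q\notin H$; by the equivalence just proved, $q\Vdash_\mathbb{Q} p\notin\dot{(\mathbb{P}/\mathbb{Q})}$. The only delicate point in the whole argument is the bookkeeping in facts (i) and (ii) — that is, pinning down that the realization of $\dot{(\mathbb{P}/\mathbb{Q})}$ is $\{p\mid\pi(p)\in H\}$ and that the correspondence $G\mapsto(H,G)$ between $\mathbb{P}$-generics and pairs consisting of a $\mathbb{Q}$-generic $H$ and a $\mathbb{P}/H$-generic $G$ over $V[H]$ is a genuine bijection; once these are in place, everything else is a routine invocation of the forcing theorem.
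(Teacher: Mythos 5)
Your proof is correct and takes essentially the same route as the paper's: the forward direction is the immediate forcing-theorem argument (which the paper dismisses as obvious), and the converse is the same contrapositive argument --- fix $q'\leq_{\mathbb{Q}} q$ forcing $p\in\dot{(\mathbb{P}/\mathbb{Q})}$, take a generic $H$ containing $q'$, then a $\mathbb{P}/H$-generic $G\ni p$ over $V[H]$, and observe that $\pi[G]$ generates $H$, which contains $q$. Your handling of the ``in particular'' clause and your explicit appeal to the standard facts about projections (that $\pi[G]$ generates $H$, etc.) just spell out what the paper leaves implicit.
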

\begin{proof}
The first implication is obvious. Conversely, assume that there is $q'\leq_\mathbb{Q} q$ be such that $q'\Vdash_\mathbb{Q} p\in \dot{(\mathbb{P}/\mathbb{Q})}$. Let $H\s\mathbb{Q}$ be some generic filter over $V$ containing $q$. Hence, $p\in\mathbb{P}/H$. Now let $G\s\mathbb{P}/H$ be some generic filter over $V[H]$ containing $p$. Clearly $\pi[G]=H$ and $q\in H$, which yields the desired contradiction.

%Set $D:=\{p'\leq p\mid\, \pi(p')\leq q'\}$. Since $\pi$ is a projection, $D$ is a dense subset of $\mathbb{P}$ below $p$. Let $p'\in G\cap D$. By definition, $\pi(p')\in H$ hence, since $p'\in D$, $q'\in H$, which yields $q\in H$. 
%This forms the desired contradiction. %From this it is easy to show that we get a contradiction with the fact that $p\in G$ and $q\notin H$.%Let $p'\in G\cap D$. %On the one hand, since $\pi(p')\in H$ and $q'\leq q$ then $q\in H$, and on the other $p\in G$ because $p'\leq p$. Altogether we reach a contradiction with our initial assumption proving the remaining implication and thus the lemma.
\end{proof}

\begin{remark}
\rm{
Let $\gamma\in\mathcal{B}^*\cup\{\lambda\}$. Observe that $\tilde{\mathbb{C}}_\gamma:=\{(p,(\check{g},\dot{H}))\mid p\in\mathbb{A}_\gamma,\, g\in V,\, p\Vdash_{\mathbb{A}_\gamma} (\check{g},\dot{H})\in\dot{\mathbb{S}}^{\pi^*}_\gamma\}$ endowed with the induced order is a dense subposet of $\mathbb{C}_\gamma$. Thus, for our current purposes it is enough to assume that $\mathbb{C}_\gamma=\tilde{\mathbb{C}}_\gamma$.
}
\end{remark}

\begin{notation}
\rm{
For each $\gamma\in\mathcal{B}^*\cup\{\lambda\}$, set $g(\mu):=\varepsilon$ and $\kappa_{g(\mu)}:=\kappa$, for every $g$ which is a stem for some $q\in \dot{\mathbb{S}}^{\pi^*}_\gamma$. Observe that $\mathcal{P}_{\kappa_{g(\mu)}}(\kappa_\eta\cap g(\mu))=\mathcal{P}_\kappa(\kappa_\eta)$, for each $\eta<\mu$.

%$\blacktriangleright$ For stems $g$, $g'$ we will write $g\unlhd g'$ if $g\s g'$ and, for each $\xi\in\dom(g')\setminus\dom(g)$, $g'(\xi')\prec g(\theta)$, where $\theta:=r_{\dom(g)}(\xi)$.

%$\blacktriangleright$ For a stem $g$ and $\gamma\in\mathcal{B}^*\cup\{\lambda\}$, $\dot{\mathcal{V}}_{\gamma,\eta, g}$ will denote a $\mathbb{A}_\gamma$-name for either the measure $(\mathfrak{U}^{\pi^*}_\gamma(\eta))^\eta_{\xi_\eta, g(\xi_\eta)}$ or $\mathfrak{U}^{\pi^*}_\gamma(\eta)$, depending on whether $\xi_\eta:=r_{\dom(g)}(\eta)<\mu$ or not.

%$\blacktriangleright$ For a stem $g$ and $\eta<\mu$, set $r_{g,\eta}:=r_{\dom(g)}(\eta)$ and $\ell_{g,\eta}:=\ell_{\dom(g)}(\eta)$.

}
\end{notation}

\begin{convention}
\rm{For the ease of notation --and provided no confusion arise-- we shall tend to omit the mention to the particular family of measures that we are working with. For instance, instead of writting $(\mathfrak{U}^{\pi^*}_\gamma)^{\xi}_{\eta, x}$ we shall simply write $U^\xi_{\eta,x}$.
} 
\end{convention}

\begin{lemma}\label{NotNotInTheQuotient}
Let $\gamma\in\mathcal{B}^*$, $r=(p,(\check{h}, \dot{H}))\in\mathbb{C}_\lambda$ and $r'=(q, (\check{f},\dot{F}))\in\mathbb{C}_\gamma$. Then, $r'\Vdash_{\mathbb{C}_\gamma} \text{$``r\notin\mathbb{P}_\gamma$''}$ if and only if one of the following hold:
\begin{enumerate}
\item $p\upharpoonright\gamma\perp_{\mathbb{A}_\gamma} q$;
\item $p\upharpoonright\gamma\parallel_{\mathbb{A}_\gamma} q$ and $h\cup f $ is not a $\prec$-increasing function;
\item $p\upharpoonright\gamma\parallel_{\mathbb{A}_\gamma} q$, $h\cup f $ is a $\prec$-increasing function and 
$$p\cup q\Vdash_{\mathbb{A}_\lambda} (\check{f},\dot{F}){}^\curvearrowright (\check{h}\setminus \check{f})\notin\dot{\mathbb{S}}^{\pi^*}_\gamma\,\vee\, (\check{h},\dot{H}){}^\curvearrowright (\check{f}\setminus \check{h})\notin\dot{\mathbb{S}}^{\pi^*}_\lambda.\footnote{Here we are identifying the $\mathbb{A}_\gamma$-name $\mathbb{S}^{\pi^*}_\gamma$ with its standard extension to a $\mathbb{A}_\lambda$-name.}$$
%\item $p\upharpoonright\gamma\parallel_{\mathbb{A}_\gamma} q$, $h\cup f $ is a $\prec$-increasing function and 
%$$p\cup q\Vdash_{\mathbb{A}_\lambda} (\check{h},\dot{H}){}^\curvearrowright (\check{f}\setminus \check{h})\notin\dot{\mathbb{S}}^{\pi^*}_\lambda.$$
\end{enumerate}
\end{lemma}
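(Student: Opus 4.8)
The plan is to reduce everything to Lemma~\ref{PreviousNotInTheQuotient}. Write $\pi\colon\mathbb{C}_\lambda\to\mathbb{C}_\gamma$ for the projection with respect to which $\mathbb{P}_\gamma=\mathbb{C}_\lambda/\mathbb{C}_\gamma$; then ``$r'\Vdash_{\mathbb{C}_\gamma}r\notin\mathbb{P}_\gamma$'' amounts to saying that no $\mathbb{C}_\lambda$-generic $G$ with $r\in G$ induces, via $\pi$, a filter containing $r'$. The task is therefore to characterise combinatorially when $r$ and $r'$ cannot be realised together. Two features of $\pi$ will be used repeatedly: on the $\mathbb{A}$-coordinate it is the restriction map $p\mapsto p\upharpoonright\gamma$; and on the Sinapova coordinate it fixes stems (conditions of the dense subposet $\tilde{\mathbb{C}}_\gamma$ have ground-model stems, which are sets of ordinals and hence undisturbed by passing to a smaller model) and only shrinks the large-set components. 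Moreover, because the measures defining $\mathbb{S}^{\pi^*}_\gamma$ are the restrictions of those defining $\mathbb{S}^{\pi^*}_\lambda$, Theorem~\ref{CriterionGenericity} shows that the $\mathbb{C}_\lambda$-Sinapova generic sequence is already $\mathbb{S}^{\pi^*}_\gamma$-generic and coincides with the induced one; in particular every condition of $\mathbb{S}^{\pi^*}_\gamma$ is a condition of $\mathbb{S}^{\pi^*}_\lambda$.

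For the implication ``(1), (2) or (3) $\Rightarrow$ $r'\Vdash r\notin\mathbb{P}_\gamma$'' I would argue by cases. Under~(1), $\pi(r)$ and $r'$ have incompatible $\mathbb{A}_\gamma$-parts, so $\pi(r)\perp_{\mathbb{C}_\gamma}r'$ and no $G\ni r$ can induce a filter containing $r'$. Under~(2), if such a $G$ existed then $\pi(r)$ and $r'$ would lie in the induced filter, hence be compatible in $\mathbb{C}_\gamma$; but a common extension of two Sinapova conditions carries a single $\prec$-increasing stem extending both (Definitions~\ref{SinapovaForcing} and~\ref{OrderSinapova}), so the union $h\cup f$ of the (preserved) stems would be a $\prec$-increasing function, a contradiction. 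Under~(3), assume again $G\ni r$ induces a filter containing $r'$. Since $\pi$ is restriction on $\mathbb{A}$ and $p\upharpoonright\gamma\parallel q$, the condition $p\cup q$ lies in the $\mathbb{A}_\lambda$-generic of $G$; below it the common Sinapova generic $g^*$ extends both $(\check h,\dot H)$ and $(\check f,\dot F)$, so both are realised in the $\mathbb{S}^{\pi^*}_\lambda$-generic filter, and a refinement of them lying in that filter has a stem containing $h\cup f$. Inspecting its large sets via Proposition~\ref{OneStepProp}(1) shows that below $p\cup q$ both $(\check f,\dot F){}^\curvearrowright(\check h\setminus\check f)\in\dot{\mathbb{S}}^{\pi^*}_\gamma$ and $(\check h,\dot H){}^\curvearrowright(\check f\setminus\check h)\in\dot{\mathbb{S}}^{\pi^*}_\lambda$ hold, contradicting the forcing assertion of~(3).

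For the converse I would prove the contrapositive, producing a simultaneous realisation of $r$ and $r'$ from the failure of~(1)--(3). The failure of~(1) and~(2) gives $p\upharpoonright\gamma\parallel q$ and makes $h\cup f$ a $\prec$-increasing function, so $A:=(\check f,\dot F){}^\curvearrowright(\check h\setminus\check f)$ and $B:=(\check h,\dot H){}^\curvearrowright(\check f\setminus\check h)$ are pairs with common stem $h\cup f$; the failure of~(3) yields $\bar p\leq_{\mathbb{A}_\lambda}p\cup q$ forcing ``$A\in\dot{\mathbb{S}}^{\pi^*}_\gamma$ and $B\in\dot{\mathbb{S}}^{\pi^*}_\lambda$'', hence ``$A,B\in\dot{\mathbb{S}}^{\pi^*}_\lambda$'' and so ``$A\wedge B\in\dot{\mathbb{S}}^{\pi^*}_\lambda$''. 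Put $r^*:=(\bar p,\dot{(A\wedge B)})$. Then $r^*\leq_{\mathbb{C}_\lambda}r$ (on $\mathbb{A}$ because $\bar p\leq p\cup q\leq p$, on the Sinapova part because $A\wedge B\leq B\leq(\check h,\dot H)$) and $\pi(r^*)\leq_{\mathbb{C}_\gamma}r'$ (on $\mathbb{A}$ because $\bar p\upharpoonright\gamma\leq q$, on the Sinapova part because $\pi$ keeps the stem $h\cup f\supseteq f$ and shrinks large sets, so the large sets of $\pi(A\wedge B)$ sit inside those of $A$, hence inside those of $(\check f,\dot F)$, while the extra stem values lie in the relevant large sets of $(\check f,\dot F)$ since $\bar p$ forces $A$ to be a genuine condition). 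Any $G\ni r^*$ then contains $r$ and induces a filter containing $\pi(r^*)\leq r'$, so $r'\not\Vdash_{\mathbb{C}_\gamma}r\notin\mathbb{P}_\gamma$.

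I expect the delicate point to be the bookkeeping on the Sinapova coordinate: verifying that $\pi$ genuinely fixes stems and only shrinks large-set components, that $\mathbb{S}^{\pi^*}_\gamma$ together with its local measures $U^\eta_{\xi,x}$ embeds into $\mathbb{S}^{\pi^*}_\lambda$ (so that $A\wedge B$ is meaningful and the asymmetry between the two disjuncts of~(3) is exactly right), and that the obstruction to compatibility is precisely the failure of the one-step-extension pairs to be conditions, as analysed in Proposition~\ref{OneStepProp} and the surrounding discussion of Section~3. It is there that Theorem~\ref{CriterionGenericity} and that analysis carry the weight of the argument.
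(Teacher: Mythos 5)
Your proposal is correct and takes essentially the same route as the paper: both directions hinge on Lemma~\ref{PreviousNotInTheQuotient} together with the observation that two Sinapova conditions are compatible exactly when the union of their stems is $\prec$-increasing and both one-step extensions are conditions, which is how the paper argues as well. Your explicit condition $(\bar p, A\wedge B)$ in the converse direction is just a hands-on version of the paper's passage to generics $A$ and $S$ containing a common refinement $(i,I)$, so the two arguments differ only in bookkeeping, not in substance.
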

\begin{proof}
First, observe that two conditions $(h,H), (f,F)\in \mathbb{S}^{\pi^*}_\lambda$ are compatible if and only if $h\cup f$ is a $\prec$-increasing function and $(h,H){}^\curvearrowright (f\setminus h), (f,F){}^\curvearrowright (h\setminus f)\in\mathbb{S}^{\pi^*}_\lambda$. Thereby, if some of the above conditions is true, $\varrho^\lambda_\gamma(r)\perp_{\mathbb{C}_\gamma} r'$. Thus, Lemma \ref{PreviousNotInTheQuotient} yields $r'\Vdash_{\mathbb{C}_\gamma} \text{$``r\notin\mathbb{P}_\gamma$''}$. Conversely, assume that (1)-(3) are false. Since (1) and (2) are false,  $p\cup q\in \mathbb{A}_\lambda$ and $i:=f\cup h$ is $\prec$-increasing. Also, since (3) is false, we may let  a condition $a\leq_{\mathbb{A}_\lambda} p\cup q$ forcing the opposite. Let $A\s\mathbb{A}_\lambda$ generic (over $V$) containing $a$. By the above, in $V[A]$, $(f,F){}^\curvearrowright(h\setminus f)\in \mathbb{S}^{\pi^*}_\gamma$ and  $(h,H){}^\curvearrowright(f\setminus h)\in \mathbb{S}^{\pi^*}_\lambda$, hence both Sinapova conditions are compatible. Let $(i,I)\in\mathbb{S}^{\pi^*}_\lambda$  be a condition witnessing this compatibility and $S\s\mathbb{S}^{\pi^*}_\lambda$ generic (over $V[A]$) containing $(i,I)$. Set $r^*:=(a,(\check{i},\dot{I}))$. Clearly, $r^*\in A\ast\dot{S}$ and $r^*\leq_{\mathbb{C}_\lambda} r$, so $r\in A\ast\dot{S}$. On the other hand, $\varrho^\lambda_\gamma[A\ast\dot{S}]$ generates a $\mathbb{C}_\gamma$-generic filter containing $r'$, hence Lemma \ref{PreviousNotInTheQuotient} yields  $r'\nVdash_{\mathbb{C}_\gamma} \text{$``r\notin\mathbb{P}_\gamma$''}$, as wanted.

% For each $\theta\notin\dom(i)$, let $\dot{I}(\theta)$ and $\dot{J}(\theta)$ be  $\mathbb{A}_\lambda$-names forced by $p\cup q$ to be $\dot{F}(\theta)\cap \dot{\mathcal{P}}_{\kappa_{i(\tau_\theta)}}(\kappa_\theta\cap i(\tau_\theta))$ and $\dot{H}(\theta)\cap \dot{\mathcal{P}}_{\kappa_{i(\tau_\theta)}}(\kappa_\theta\cap i(\tau_\theta))$, where $q\cup p\Vdash_{\mathbb{A}_\lambda} \text{$``\tau_\theta=r_{\dom(\check{i})}(\check{\theta})$''}$. 

% Analogously, let $\dot{J}(\theta)$ be a $\mathbb{A}_\lambda$-name forced by $p\cup q$ to be $\dot{H}(\theta)\cap \dot{\mathcal{P}}_{\kappa_{i(\tau_\theta)}}(\kappa_\theta\cap i(\tau_\theta))$, where $p\cup q\Vdash_{\mathbb{A}_\lambda} \text{$``\tau_\theta=r_{\dom(\check{i})}(\check{\theta})$''}$.
% Set $r^*:=(p\cup q, (\check{i}, \dot{J}\wedge \dot{H}))$. Since (3) and (4) fails, $r^*\in \mathbb{C}_\lambda$. Moreover, $\varrho^\lambda_\gamma(r^*)\leq_{\mathbb{C}_\gamma} r'$ and $r^*\leq_{\mathbb{C}_\lambda} r$. It is immediate that $r'\Vdash_{\mathbb{C}_\gamma}\text{$`` r^*\in\mathbb{P}_\gamma\,\wedge\, r^*\leq_{\mathbb{P}_\gamma} r$''}$, and thus $r'\Vdash_{\mathbb{C}_\gamma}\text{$`` r\in\mathbb{P}_\gamma$''}$.
\end{proof}
For each $\gamma\in\mathcal{B}^*\cup\{\lambda\}$, and unless otherwise stated, we will assume that for each $r=(q, (\check{f},\dot{F}))\in \mathbb{C}_\gamma$, $q\Vdash_{\mathbb{A}_\gamma}\text{$``(\check{f},\dot{F})$ is pruned''}$. This is of course feasible by virtue of  Proposition \ref{ExistsPrunedExtension}.
\begin{lemma}\label{RefinningWithRowbottom}
Let $\gamma\in\mathcal{B}^*$, $r=(p,(\check{h}, \dot{H}))\in\mathbb{C}_\lambda$ and $r'=(q, (\check{f},\dot{F}))\in\mathbb{C}_\gamma$. Assume that $q\leq_{\mathbb{A}_\gamma} p\upharpoonright\gamma$, $h\s f$ and $$(\Upsilon)\;\; p\cup q\Vdash_{\mathbb{A}_\lambda}\text{$`` \forall\theta\in\dom(\dot{H})\, \left(\dot{H}(\theta)\cap \dot{\mathcal{P}}_{\kappa_{\check{f}(\tau_\theta)}}(\kappa_\theta\cap \check{f}(\tau_\theta))\in \dot{U}^\theta_{\tau_\theta, \check{i}(\tau_\theta)}\right)$''},$$
where $q\cup p\Vdash_{\mathbb{A}_\lambda} \text{$``\tau_\theta=r_{\dom(\check{f})}(\check{\theta})$''}$.
Then there is a $\mathbb{A}_\gamma$-name $\dot{I}$  for which all the following hold:
\begin{enumerate}
\item[(I)] $q\Vdash_{\mathbb{A}_\gamma}\text{$``(\check{f},\dot{I})\leq_{\mathbb{S}^{\pi^*}_\gamma} (\check{f},\dot{F})\, \wedge\, (\check{f},\dot{I})$ is pruned''}$.
\item[(II)] $q\Vdash_{\mathbb{A}_\gamma}\text{$``\forall \tau\in [\prod_\xi \dot{I}(\xi)]^{<\omega}\, \left( p\nVdash_{\mathbb{A}_\lambda/\mathbb{A}_\gamma} (\check{h},\dot{H}){}^\curvearrowright \tau\notin\dot{\mathbb{S}}^{\pi^*}_\lambda \right)$''}$.
\end{enumerate}
\end{lemma}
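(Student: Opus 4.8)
The plan is to build $\dot I$ by a single application of the Generalized R\"owbottom Lemma~\ref{DiagonalSupercompactRowottom} to a two-valued colouring read off from the quotient $\mathbb{A}_\lambda/\mathbb{A}_\gamma$, and then to verify (I) and (II) using $(\Upsilon)$ together with the coherence clauses of Proposition~\ref{TheBxisets}. First I would work below $q$, so that $p\upharpoonright\gamma$ lies in the $\mathbb{A}_\gamma$-generic and $\mathbb{A}_\lambda/\mathbb{A}_\gamma$ is a (${<}\kappa$-closed) forcing in $V[G\upharpoonright\gamma]$ having $p$ as a condition; write $f,F$ for the interpretations of the stem and large set of $r'$, and recall that clause~(3) of Definition~\ref{SinapovaForcing} for $r'$ says precisely that $\langle F(\theta)\mid\theta\in\mu\setminus\dom(f)\rangle$ is amenable to $\langle f\rangle$.

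For $\tau\in[\prod_{\theta\in\mu\setminus\dom(f)}F(\theta)]^{<\omega}$ put $c(\tau)=0$ if $p\nVdash_{\mathbb{A}_\lambda/\mathbb{A}_\gamma}(\check h,\dot H){}^\curvearrowright\tau\notin\dot{\mathbb{S}}^{\pi^*}_\lambda$ and $c(\tau)=1$ otherwise. Since $\mathbb{A}_\lambda/\mathbb{A}_\gamma$, the name $\dot H$ and the stem $h$ all belong to $V[G\upharpoonright\gamma]$, and since $\mathbb{A}_\gamma$ adds no ${<}\kappa$-sequences so that all points $x\in\mathcal{P}_\kappa(\kappa_\theta)$ and all the finite $\tau$ involved already live in $V$, the map $c$ lies in $V[G\upharpoonright\gamma]$ and has values in $2\leq\mu$. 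Applying Lemma~\ref{DiagonalSupercompactRowottom} with $g:=f$, $H_\theta:=F(\theta)$ and $\vartheta:=2$ yields $\langle I(\theta)\mid\theta\in\mu\setminus\dom(f)\rangle$ amenable to $\langle f\rangle$ with $I(\theta)\subseteq F(\theta)$ and $c$ constant on $[\prod_{\theta\in s}I(\theta)]$ for every finite $s\subseteq\mu\setminus\dom(f)$; shrinking further, which keeps $c$ constant on the corresponding products, I may assume by Proposition~\ref{ExistsPrunedExtension} that $q$ forces $(\check f,\dot I)$ to be pruned, where $\dot I$ is an $\mathbb{A}_\gamma$-name forced by $q$ to name this sequence. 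This gives~(I).

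It remains to show the constant value of $c$ on each $[\prod_{\theta\in s}I(\theta)]$ is $0$; since $c(\emptyset)=0$ (because $r\in\mathbb{C}_\lambda$ forces $(\check h,\dot H)\in\dot{\mathbb{S}}^{\pi^*}_\lambda$), for nonempty $s$ it suffices to exhibit one $\tau\in[\prod_{\theta\in s}I(\theta)]$ with $c(\tau)=0$. I would fix an $\mathbb{A}_\lambda/\mathbb{A}_\gamma$-generic $\bar G\ni p$ over $V[G\upharpoonright\gamma]$ and argue in $V[\bar G]=V[G]$, where $(\Upsilon)$ holds of the interpretation $H$ of $\dot H$. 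By the standard description of one-step extensions (Proposition~\ref{OneStepProp} and the remark preceding it), $(h,H){}^\curvearrowright\tau\in\mathbb{S}^{\pi^*}_\lambda$ exactly when, for every $\xi\in\dom(\tau)$ and every $\eta\in(\ell_{\dom(h)\cup\dom(\tau)}(\xi),\xi)\setminus(\dom(h)\cup\dom(\tau))$, the localisation of $H(\eta)$ below $\tau(\xi)$ lies in $U^\eta_{\xi,\tau(\xi)}$ (the $\prec$-increasing requirement being automatic, as $\tau$ is drawn from the $F(\theta)$'s and $h\subseteq f$). Using $(\Upsilon)$ to control each $H(\eta)$ relative to the next coordinate of $f$ above $\eta$, and transferring this information along the coherence clauses $(\star)$ and $(\diamond)$ of Proposition~\ref{TheBxisets} --- invoking clause~(4) of Definition~\ref{SinapovaForcing}, which gives $f(\zeta)\prec\tau(\xi)$ for every $f$-coordinate $\zeta<\xi$ --- one shows that for each $\xi\in s$ the set $G_\xi$ of $x\in\mathcal{P}_\kappa(\kappa_\xi)$ for which adding $x$ at $\xi$, given the part of $\tau$ on coordinates below $\xi$, keeps every relevant localisation large is a member of the very measure $U^\xi_{r_{\dom(f)}(\xi),\,f(r_{\dom(f)}(\xi))}$ (with the convention $f(\mu)=\varepsilon$) carrying $F(\xi)$, hence $I(\xi)$; and that $G_\xi$ depends only on $\xi$, on $\dom(\tau)\cap\xi$, and on $\tau(\ell_{\dom(h)\cup\dom(\tau)}(\xi))$. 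Choosing $\tau(\xi)\in I(\xi)\cap G_\xi$ recursively along $s$ in increasing order, which is possible since these are intersections of two sets in the same $\mu^+$-complete measure, yields $\tau\in[\prod_{\theta\in s}I(\theta)]$ with $(h,H){}^\curvearrowright\tau\in\mathbb{S}^{\pi^*}_\lambda$ in $V[G]$; then some condition of $\bar G$ below $p$ forces $(\check h,\dot H){}^\curvearrowright\tau\in\dot{\mathbb{S}}^{\pi^*}_\lambda$, so $c(\tau)=0$, and homogeneity upgrades this to $c\equiv0$ on $[\prod_{\theta\in s}I(\theta)]$, i.e.\ (II).

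I expect the main obstacle to be the transfer argument just sketched: one must push the largeness supplied by $(\Upsilon)$, which sits at the next $f$-coordinate above $\eta$, down to the coordinate $\xi\in\dom(\tau)$ via $(\diamond)$, and in the delicate case where that $f$-coordinate lies strictly between $\eta$ and $\xi$ one has to iterate through the chain of intermediate $f$-coordinates occurring in the gap below $\xi$, all the while carrying along the ``$\prec$''-side conditions inherited from the previously chosen $\tau$-value. The crux is to check that the resulting good set is membership in exactly the measure $U^\xi_{r_{\dom(f)}(\xi),f(r_{\dom(f)}(\xi))}$ to which $I(\xi)$ belongs --- rather than the a priori different measure governed by the stem $h$ --- so that the recursive choice of $\tau$, and hence the R\"owbottom homogenisation, actually applies; this is precisely where the exact form of $(\Upsilon)$ and of the coherence clauses of Proposition~\ref{TheBxisets} is needed.
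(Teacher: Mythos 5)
Your treatment of (I) is the same as the paper's: the colouring $c$ on $[\prod_\xi F(\xi)]^{<\omega}$ according to whether $p$ forces $(\check h,\dot H){}^\curvearrowright\tau\notin\dot{\mathbb{S}}^{\pi^*}_\lambda$, followed by Lemma~\ref{DiagonalSupercompactRowottom}, is exactly what the paper does (your extra appeal to Proposition~\ref{ExistsPrunedExtension} for prunedness of $(\check f,\dot I)$ is harmless). The problem is your verification of (II). The ``transfer argument'' you flag as the main obstacle is not just delicate --- it does not go through from $(\Upsilon)$ and Proposition~\ref{TheBxisets} alone. In the case where some $\zeta\in\dom(f)\setminus\dom(h)$ lies strictly between a relevant $\eta$ and $\xi\in\dom(\tau)$, $(\Upsilon)$ only tells you that the trace of $H(\eta)$ on the \emph{small} space $\mathcal{P}_{\kappa_{f(\tau_\eta)}}(\kappa_\eta\cap f(\tau_\eta))$ is $U^\eta_{\tau_\eta,f(\tau_\eta)}$-large, where $\tau_\eta<\xi$. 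To run $(\diamond)$ at the coordinate $\nu=r_{\dom(f)}(\xi)$ and land in the measure $U^\xi_{\nu,f(\nu)}$ carrying $I(\xi)$, you would need largeness of the trace of $H(\eta)$ on $\mathcal{P}_{\kappa_{f(\nu)}}(\kappa_\eta\cap f(\nu))$; but largeness on the small space never implies largeness on the bigger one --- by fineness the small space is null for the bigger localized measures --- and $(\star)$, $(\diamond)$ only push largeness \emph{downwards} from a fixed higher localization, never upwards. Concretely, take $h=\emptyset$, $\dom(f)=\{1,3\}$, $\xi=2$, $\eta=0$, and let $H(0)$ consist of all $y\in B_0$ with $y\not\s f(3)$ together with $\mathcal{P}_{\kappa_{f(1)}}(\kappa_0\cap f(1))$: this is $U_0$-large and satisfies the instance of $(\Upsilon)$ at $\theta=0$, yet for $U^2_{3,f(3)}$-almost all $x$ the localization $H(0)\cap\mathcal{P}_{\kappa_x}(\kappa_0\cap x)$ is $U^0_{2,x}$-null, so your set $G_2$ is null rather than measure one and the recursive choice of $\tau$ breaks down.

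What rescues the lemma (and what the paper's proof actually uses) is the standing convention, declared just before the statement, that $p$ forces $(\check h,\dot H)$ itself to be \emph{pruned} --- an assumption you never invoke. With it, no analysis of intermediate coordinates is needed: after homogenizing, one only has to find, for each finite domain, a $\prec$-increasing tuple $j$ with $j(\theta)\in I(\theta)\cap H(\theta)$ (localized at $f(\tau_\theta)$) for the finitely many $\theta\in\dom(j)$; such tuples exist because $(\Upsilon)$ at those $\theta$ together with amenability of $I$ to $\langle f\rangle$ puts each intersection into $U^\theta_{\tau_\theta,f(\tau_\theta)}$ (or $U_\theta$ when $\theta>\max\dom(f)$), and then prunedness of $(\check h,\dot H)$ guarantees outright that $(\check h,\dot H){}^\curvearrowright j$ is a condition. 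The paper phrases this contrapositively: if some tuple from the $I$-products were forced bad, homogeneity would make all of them bad, and prunedness would then force the product of the intersections $I(\theta)\cap H(\theta)$ to be empty, contradicting the largeness just described. So your overall architecture can be repaired by replacing the coherence-transfer construction of $G_\xi$ with the choice $\tau(\theta)\in I(\theta)\cap H(\theta)$ plus prunedness of $(\check h,\dot H)$; as written, however, the proposal has a genuine gap at precisely the step you identified as the crux.
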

\begin{proof}
Let us work over $V^{\mathbb{A}_\gamma\downarrow q}$.  Let $c:[\prod_{\xi}F(\xi)]\rightarrow 2$ be defined as
$$
c(i):=\begin{cases}
0, & \text{if $p\Vdash_{\mathbb{A}_\lambda/\mathbb{A}_\gamma} (\check{h},\dot{H}){}^\curvearrowright i\notin \dot{\mathbb{S}}^{\pi^*}_\lambda$};\\
1, & \text{if $p\nVdash_{\mathbb{A}_\lambda/\mathbb{A}_\gamma} (\check{h},\dot{H}){}^\curvearrowright i\notin \dot{\mathbb{S}}^{\pi^*}_\lambda$}.
\end{cases}
$$
By Lemma \ref{DiagonalSupercompactRowottom} there is  $I\s F$ a  suitable function for $\langle f\rangle$ and homogeneous for $c$.  In particular, $(f,I)\leq_{\mathbb{S}^{\pi^*}_\gamma} (f,F)$ and $(f,I)$ is pruned, as $(f,F)$ was. Thus, (I)  holds. Towards a contradiction, assume that (II) is false. Let $r\leq_{\mathbb{A}_\gamma} q$ be such that $r$ forces the negation of the above formula. By shrinking $r$ we may assume that there is a $\prec$-increasing function $i$ such that $r\Vdash_{\mathbb{A}_\gamma} \check{i}\in [\prod_\xi\dot{I}(\xi)]^{<\omega}$ and $r\Vdash_{\mathbb{A}_\gamma}\text{$``\left( p\Vdash_{\mathbb{A}_\lambda/\mathbb{A}_\gamma} (\check{h},\dot{H}){}^\curvearrowright \check{i}\notin\dot{\mathbb{S}}^{\pi^*}_\lambda \right) $''} $. Since $r\leq_{\mathbb{A}_\gamma} q$, $r\cup p\in\mathbb{A}_\lambda$, hence $r\cup p\Vdash_{\mathbb{A}_\lambda} (\check{h},\dot{H}){}^\curvearrowright \check{i}\notin\dot{\mathbb{S}}^{\pi^*}_\lambda$. Now, since $r$ forces $\dot{I}$ to be homogenous for $\dot{c}$, it follows that for all $j$ with the same domain as $i$, $r\cup p\Vdash_{\mathbb{A}_\lambda} (\check{h},\dot{H}){}^\curvearrowright \check{j}\notin\dot{\mathbb{S}}^{\pi^*}_\lambda$. Since $p$ forces $(\check{h},\dot{H})$ to be pruned  the only chance for this property to hold is that $r\cup p \Vdash_{\mathbb{A}_\lambda} \prod_{\theta\in\dom(i)}\dot{I}(\theta)\cap \prod_{\theta\in\dom(i)}\dot{H}(\theta)=\emptyset$.  Let us show that this is impossible.

Let $\theta\in\dom(i)$. If $\theta>\max(\dom(f))$, $\dot{I}(\theta)$ and $\dot{H}(\theta)$ are names for sets in the measure $U_\theta$, and thus they are not forced to be disjoint. Otherwise, if $\theta<\max(\dom(f))$, since  $r\cup p\leq_{\mathbb{A}_\lambda} q\cup p$ and $(\Upsilon)$ holds, we may find $s\leq_{\mathbb{A}_\lambda} r\cup p$, such that $s\Vdash_{\mathbb{A}_\lambda} \dot{H}(\theta)\cap \dot{\mathcal{P}}_{\kappa_{\check{f}(\tau_\theta)}}(\kappa_\theta\cap \check{f}(\tau_\theta))\in \dot{U}^\theta_{\tau_\theta, \check{f}(\tau_\theta)}$. In particular, $s\Vdash_{\mathbb{A}_\lambda}\dot{I}(\theta)\cap \dot{H}(\theta)\cap \dot{\mathcal{P}}_{\kappa_{\check{f}(\tau_\theta)}}(\kappa_\theta\cap \check{f}(\tau_\theta))\in \dot{U}^\theta_{\tau_\theta, \check{f}(\tau_\theta)}$. Altogether, this produces the desired contradiction.
\end{proof}

\begin{lemma}\label{LyingWithinTheQuotient}
Let $\gamma\in\mathcal{B}^*$, $r=(p,(\check{h}, \dot{H}))\in\mathbb{C}_\lambda$ and $r'=(q, (\check{f},\dot{F}))\in\mathbb{C}_\gamma$. Assume that 
\begin{enumerate}
\item[$(\aleph)$] $q\leq_{\mathbb{A}_\gamma} p\upharpoonright\gamma$;
\item[$(\beth)$] $h\s f$;
%\item[$(\gimel)$] $p\cup q\Vdash_{\mathbb{A}_\lambda}\text{$`` \forall\theta\in\dom(\dot{H})\, \left(\dot{H}(\theta)\cap \dot{\mathcal{P}}_{\kappa_{\check{f}(\tau_\theta)}}(\kappa_\theta\cap \check{f}(\tau_\theta))\in \dot{U}^\theta_{\tau_\theta, \check{f}(\tau_\theta)}\right)$''}.$;
\item[$(\gimel)$] $p\cup q\Vdash_{\mathbb{A}_\lambda}\text{$`` (\check{h},\dot{H}){}^\curvearrowright (\check{f}\setminus \check{h})\in\dot{\mathbb{S}}^{\pi^*}_\lambda$''}.$
\end{enumerate}
Let $\dot{I}$ be the function obtained from Lemma \ref{RefinningWithRowbottom} with respect to $r$ and $r'$. Then, $(q,(\check{f},\dot{I}))\Vdash_{\mathbb{C}_\gamma} (p,(\check{h},\dot{H}))\in \mathbb{P}_\gamma$.
\end{lemma}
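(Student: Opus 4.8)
The plan is to show directly, via Lemma~\ref{PreviousNotInTheQuotient} (equivalently the $\Vdash$-negation of Lemma~\ref{NotNotInTheQuotient}), that no extension of $(q,(\check{f},\dot{I}))$ in $\mathbb{C}_\gamma$ forces $r=(p,(\check{h},\dot{H}))\notin\mathbb{P}_\gamma$. Suppose toward a contradiction that $r''=(q',(\check{f}',\dot{F}'))\leq_{\mathbb{C}_\gamma}(q,(\check{f},\dot{I}))$ forces $r\notin\mathbb{P}_\gamma$. Then $r''$ satisfies one of the three alternatives of Lemma~\ref{NotNotInTheQuotient} (with $r$ and $r''$ in place of $r$ and $r'$). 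Since $q'\leq_{\mathbb{A}_\gamma} q\leq_{\mathbb{A}_\gamma} p\upharpoonright\gamma$, the first alternative ($p\upharpoonright\gamma\perp q'$) fails. For the second alternative one uses $(\beth)$ together with $f'\supseteq f\supseteq h$: since $(\gimel)$ says $p\cup q\Vdash (\check h,\dot H)^{\curvearrowright}(\check f\setminus\check h)\in\dot{\mathbb S}^{\pi^*}_\lambda$, in particular $p\cup q$ forces $h\cup f$ ($=f$) to be $\prec$-increasing, and the same then holds of $h\cup f'=f'$ below $p\cup q'$; so the second alternative also fails. Hence the third alternative must hold: $p\cup q'\Vdash_{\mathbb{A}_\lambda}(\check f',\dot F')^{\curvearrowright}(\check h\setminus\check f')\notin\dot{\mathbb S}^{\pi^*}_\gamma\,\vee\,(\check h,\dot H)^{\curvearrowright}(\check f'\setminus\check h)\notin\dot{\mathbb S}^{\pi^*}_\lambda$.

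The first disjunct is ruled out because $f'\supseteq h$, so $h\setminus f'=\emptyset$ and $(\check f',\dot F')^{\curvearrowright}\emptyset=(\check f',\dot F')\in\dot{\mathbb S}^{\pi^*}_\gamma$ is forced. So $p\cup q'$ must force $(\check h,\dot H)^{\curvearrowright}(\check f'\setminus\check h)\notin\dot{\mathbb S}^{\pi^*}_\lambda$. This is exactly where Lemma~\ref{RefinningWithRowbottom} comes in. First one checks that its hypotheses are met: $(\aleph)$ and $(\beth)$ are given, and the auxiliary hypothesis $(\Upsilon)$ follows from $(\gimel)$ — the statement that $(\check h,\dot H)^{\curvearrowright}(\check f\setminus\check h)$ is a condition below $p\cup q$ is precisely the assertion that each $\dot H(\theta)$, cut down to $\mathcal P_{\kappa_{\check f(\tau_\theta)}}(\kappa_\theta\cap\check f(\tau_\theta))$, lies in the appropriate measure $\dot U^\theta_{\tau_\theta,\check f(\tau_\theta)}$ for $\theta<\max(\dom\check f)$, and for $\theta>\max(\dom\check f)$ the relevant set is unchanged. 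Thus conclusion (II) of Lemma~\ref{RefinningWithRowbottom} applies to $\dot I$.

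Now derive the contradiction. Working over $V^{\mathbb{A}_\gamma\downarrow q'}$ (note $q'\le q$, so $q'$ forces $(\check f,\dot I)$ to be a pruned condition $\le(\check f,\dot F)$, and $(\check f',\dot F')\le(\check f,\dot I)$, so $\dot F'(\xi)\subseteq\dot I(\xi)$ is forced for $\xi\notin\dom\check f'$ and $\check f'(\xi)\in\dot I(\xi)$ for $\xi\in\dom\check f'\setminus\dom\check f$), the tuple $\check f'\setminus\check h=\check f'\setminus\check f$ is (forced to be) an element of $[\prod_\xi\dot I(\xi)]^{<\omega}$. Applying conclusion (II) of Lemma~\ref{RefinningWithRowbottom} with $\tau:=\check f'\setminus\check h$ gives $q'\nVdash_{\mathbb{A}_\gamma}\big(p\Vdash_{\mathbb{A}_\lambda/\mathbb{A}_\gamma}(\check h,\dot H)^{\curvearrowright}(\check f'\setminus\check h)\notin\dot{\mathbb S}^{\pi^*}_\lambda\big)$; equivalently, there is $q''\le q'$ and a generic $A\ni q''\cup p$ for $\mathbb{A}_\lambda$ over which $(\check h,\dot H)^{\curvearrowright}(\check f'\setminus\check h)$ \emph{is} a condition of $\mathbb S^{\pi^*}_\lambda$. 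But $q''\cup p\le p\cup q'$, contradicting that $p\cup q'$ forces the opposite. This contradiction shows no such $r''$ exists; by Lemma~\ref{PreviousNotInTheQuotient}, $(q,(\check f,\dot I))\Vdash_{\mathbb{C}_\gamma}(p,(\check h,\dot H))\in\mathbb P_\gamma$, as desired. The only delicate bookkeeping — and the step I expect to be the main obstacle — is checking that $(\Upsilon)$ really does follow verbatim from $(\gimel)$, i.e. matching the indexing of $\tau_\theta=r_{\dom\check f}(\check\theta)$ in $(\Upsilon)$ with the definition of the one-step / iterated $^{\curvearrowright}$ operation that underlies $(\gimel)$, and likewise confirming that $\check f'\setminus\check h$ is a legitimate argument for clause (II), i.e.\ genuinely lands in $[\prod_\xi\dot I(\xi)]^{<\omega}$ below $q'$.
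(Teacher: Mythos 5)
Your overall strategy is the paper's: assume some $(q',(\check f',\dot F'))\leq_{\mathbb{C}_\gamma}(q,(\check f,\dot I))$ forces $(p,(\check h,\dot H))\notin\mathbb{P}_\gamma$, use Lemma~\ref{NotNotInTheQuotient} to see that only alternative (3) can hold, discard its first disjunct since $h\setminus f'=\emptyset$, and then contradict the remaining forced statement via clause (II) of Lemma~\ref{RefinningWithRowbottom} (whose hypothesis $(\Upsilon)$ you correctly extract from $(\gimel)$). However, there is a genuine gap exactly at the point you flagged: you assert $\check f'\setminus\check h=\check f'\setminus\check f$, which is false unless $h=f$; the lemma only assumes $h\subseteq f$, and in the intended application (Lemma~\ref{LemmaCCness}, where $h=h_i$ and $f=f^*\supseteq h_0\cup h_1$) the inclusion is typically proper. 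The function $\dot I$ produced by Lemma~\ref{RefinningWithRowbottom} has domain $\mu\setminus\dom(f)$, so $[\prod_\xi\dot I(\xi)]^{<\omega}$ only contains tuples avoiding $\dom(f)$; the tuple $f'\setminus h$ has coordinates in $\dom(f)\setminus\dom(h)$ whose values $f(\xi)$ have nothing to do with $\dot I$. Hence clause (II) simply does not apply to $\tau=f'\setminus h$, and your final contradiction is not licensed as written.

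The missing step is a second use of $(\gimel)$, which is how the paper bridges this mismatch: from $p\cup q'\Vdash_{\mathbb{A}_\lambda}(\check h,\dot H){}^\curvearrowright(\check f'\setminus\check h)\notin\dot{\mathbb{S}}^{\pi^*}_\lambda$, together with $p\cup q'\leq_{\mathbb{A}_\lambda}p\cup q$ and the fact that $(\gimel)$ forces $(\check h,\dot H){}^\curvearrowright(\check f\setminus\check h)$ to be a condition, one deduces $p\cup q'\Vdash_{\mathbb{A}_\lambda}(\check h,\dot H){}^\curvearrowright(\check f'\setminus\check f)\notin\dot{\mathbb{S}}^{\pi^*}_\lambda$ (if the extension by $f'\setminus f$ alone were a condition, combining it with the extension by $f\setminus h$, which $(\gimel)$ guarantees, would make the full extension by $f'\setminus h$ a condition). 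Now $\tau:=f'\setminus f$ genuinely lies in $[\prod_\xi\dot I(\xi)]^{<\omega}$ below $q'$ (since $(\check f',\dot F')\leq(\check f,\dot I)$ is forced), and clause (II) yields the desired contradiction. With this reduction inserted, your argument coincides with the paper's proof.
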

\begin{proof}
Otherwise, let $r^*:=(r,(\check{j},\dot{J}))\leq_{\mathbb{C}_\gamma}(q,(\check{f},\dot{I}))$ forcing the opposite. By using Lemma \ref{NotNotInTheQuotient} with respect to $r^*$ and $r$ it follows that some of the conditions (1)-(3) must hold. It is not hard to check that $(\aleph)$-$(\gimel)$ implies that (3) holds: particularly, that $r\cup p\Vdash_{\mathbb{A}_\lambda}\text{$`` (\check{h},\dot{H}){}^\curvearrowright (\check{j}\setminus \check{h})\notin\dot{\mathbb{S}}^{\pi^*}_\lambda$''}$ holds. By $(\gimel)$ and since $r\cup p\leq_{\mathbb{A}_\lambda} p\cup q$, $r\cup p\Vdash_{\mathbb{A}_\lambda}\text{$`` (\check{h},\dot{H}){}^\curvearrowright (\check{j}\setminus \check{f})\notin\dot{\mathbb{S}}^{\pi^*}_\lambda$''}.$ Clearly, $r\leq_{\mathbb{A}_\gamma} q$ and $r\Vdash_{\mathbb{A}_\gamma} \check{j}\setminus \check{f}\in[\prod_\xi \dot{I}(\xi)]$. Observe that $(\gimel)$ yields $(\Upsilon)$ of  Lemma \ref{RefinningWithRowbottom}, and this latter implies $r\cup p\nVdash_{\mathbb{A}_\lambda}\text{$`` (\check{h},\dot{H}){}^\curvearrowright (j\setminus f)\notin\dot{\mathbb{S}}^{\pi^*}_\lambda$''}.$  This produces the desired contradiction.
\end{proof}
\begin{remark}
\rm{As the referee has pointed out, there is a somewhat simpler way to prove the above lemma without relying on Lemma \ref{RefinningWithRowbottom}. Let $A$ a $\mathbb{A}_\gamma$-generic with $q\in A$. In $V[A]$ appeal to the Prikry property of $\mathbb{S}^{\pi^*}_\gamma$ and find $(f,I)\leq^* (f,F)$ with $(f,I)\parallel^{V[A]}_{\mathbb{S}^{\pi^*}_\gamma} (p,(\check{h},\dot{H}))\in\mathbb{P}_\gamma$. Now observe that (1)-(3) of Lemma \ref{NotNotInTheQuotient} hold, hence  $(q,(\check{f},\dot{I}))\Vdash_{\mathbb{C}_\gamma}(p,(\check{h},\dot{H}))\in\mathbb{P}_\gamma$. } 
\end{remark}

\begin{lemma}\label{LemmaCCness}
Let $\gamma\in\mathcal{B}^*$, $(q,(\check{f}, \dot{F}))\in\mathbb{C}_\gamma$ and $\dot{r}_0, \dot{r}_1$ be two $\mathbb{C}_\gamma$-names forced by $\one_{\mathbb{C}_\gamma}$ to be  in $\mathbb{P}_\gamma$. Then, there are $(q^*,(\check{f}^*, \dot{F}^*))\in\mathbb{C}_\gamma$,  $(p_0, (\check{h}_0,\dot{H}_0))$, $(p_1, (\check{h}_1,\dot{H}_1))\in \mathbb{P}_\gamma$ and $\bar{p}_0, \bar{p}_1\in \mathbb{A}_\lambda$ be such that the following hold: For  $i\in\{0,1\}$, 
\begin{itemize}
\item[(a)] $(q^*,(\check{f}^*, \dot{F}^*))\leq_{\mathbb{C}_\gamma}(q,(\check{f}, \dot{F}))$,
\item[$(b_i)$] $(q^*,(\check{f}^*, \dot{F}^*))\Vdash_{\mathbb{C}_\gamma}\text{$``\dot{r}_i=(p_i, (\check{h}_i,\dot{H}_i))\in\mathbb{P}_\gamma$''}$,
\item[$(c_i)$] $\bar{p}_i\leq_{\mathbb{A}_\lambda} p_i$ and $(q^*,(\check{f}^*, \dot{F}^*))$ and $(\bar{p}_i, (\check{h}_i,\dot{H}_i))$ satisfy conditions (1)-(3) of Lemma \ref{LyingWithinTheQuotient}.
\end{itemize}
\end{lemma}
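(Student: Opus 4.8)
The plan is to refine $(q,(\check{f},\dot{F}))$ in three phases: first decide the two names as honest pruned conditions of $\mathbb{C}_\lambda$ with ``check'' stems; then absorb their stems, and the $\gamma$-restrictions of their Cohen parts, into the $\mathbb{C}_\gamma$-condition being built; and finally perform one further extension of the Cohen parts to make the one--step extensions demanded by hypothesis $(\gimel)$ of Lemma~\ref{LyingWithinTheQuotient} legal. The only real tool is Lemma~\ref{NotNotInTheQuotient}: whenever a $\mathbb{C}_\gamma$-condition $r'$ forces an honest $\mathbb{C}_\lambda$-condition $r$ into $\mathbb{P}_\gamma$, then $r'$ cannot force ``$r\notin\mathbb{P}_\gamma$'' and neither can any of its extensions, so for every extension all of clauses (1)--(3) of that lemma fail; reading off those failures drives the construction.

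In phase one, using that $\tilde{\mathbb{C}}_\gamma$ is dense in $\mathbb{C}_\gamma$, the standing pruning convention, and Proposition~\ref{ExistsPrunedExtension}, I would extend $(q,(\check{f},\dot{F}))$ inside $\mathbb{C}_\gamma$ to force $\dot{r}_0=(p_0,(\check{h}_0,\dot{H}_0))$, where $p_0\in\mathbb{A}_\lambda$, $h_0\in V$ is a finite $\prec$-increasing sequence, $\dot{H}_0$ is an $\mathbb{A}_\lambda$-name, and $p_0$ forces $(\check{h}_0,\dot{H}_0)$ to be pruned; this is possible since the underlying set of $\mathbb{P}_\gamma$ is contained in the $V$-set $\tilde{\mathbb{C}}_\lambda$, so the value of $\dot r_0$ can be decided. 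As the new condition forces $(p_0,(\check{h}_0,\dot{H}_0))\in\mathbb{P}_\gamma$, failure of clause (1) for all extensions plus separativity of $\mathbb{A}_\gamma$ gives that its Cohen part lies below $p_0\upharpoonright\gamma$, failure of (2) gives that $h_0$ together with the stem is $\prec$-increasing, and failure of (3) gives that ``$(\check{f},\dot{F}){}^\curvearrowright(\check{h}_0\setminus\check{f})\notin\dot{\mathbb{S}}^{\pi^*}_\gamma\,\vee\,(\check{h}_0,\dot{H}_0){}^\curvearrowright(\check{f}\setminus\check{h}_0)\notin\dot{\mathbb{S}}^{\pi^*}_\lambda$'' is not forced. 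Using the last fact I would extend the Cohen part further to force ``$(\check{f},\dot{F}){}^\curvearrowright(\check{h}_0\setminus\check{f})\in\dot{\mathbb{S}}^{\pi^*}_\gamma$'' (an $\mathbb{A}_\gamma$-statement), replace the stem of the $\mathbb{C}_\gamma$-condition by $f\cup h_0$ via the one--step extension, and re-prune by Proposition~\ref{ExistsPrunedExtension}; this leaves the condition below $p_0\upharpoonright\gamma$ and still forcing $\dot{r}_0=(p_0,(\check{h}_0,\dot{H}_0))\in\mathbb{P}_\gamma$. Repeating this verbatim for $\dot{r}_1$ below the new condition produces $(q^{(4)},(\check{f}^{(4)},\dot{F}^{(4)}))\leq_{\mathbb{C}_\gamma}(q,(\check{f},\dot{F}))$ forcing $\dot{r}_i=(p_i,(\check{h}_i,\dot{H}_i))\in\mathbb{P}_\gamma$ for $i\in\{0,1\}$, with $h_0,h_1\subseteq f^{(4)}$ and $q^{(4)}\leq_{\mathbb{A}_\gamma}p_0\upharpoonright\gamma,\,p_1\upharpoonright\gamma$. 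This already secures $(\aleph)$ and $(\beth)$ for this condition paired with each $(p_i,(\check{h}_i,\dot{H}_i))$.

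For phase three, fix $i=0$ and apply Lemma~\ref{NotNotInTheQuotient} once more to $r=(p_0,(\check{h}_0,\dot{H}_0))$ and $r'=(q^{(4)},(\check{f}^{(4)},\dot{F}^{(4)}))$: since $h_0\subseteq f^{(4)}$ the first disjunct of clause (3) is vacuously false, so $p_0\cup q^{(4)}$ does not force $(\check{h}_0,\dot{H}_0){}^\curvearrowright(\check{f}^{(4)}\setminus\check{h}_0)\notin\dot{\mathbb{S}}^{\pi^*}_\lambda$; I would pick $a_0\leq_{\mathbb{A}_\lambda}p_0\cup q^{(4)}$ forcing the positive statement and set $\bar{p}_0:=a_0$ and $q^{(5)}:=a_0\upharpoonright\gamma$. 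Since $(q^{(5)},(\check{f}^{(4)},\dot{F}^{(4)}))$ is still a $\mathbb{C}_\gamma$-extension of $(q^{(4)},\dots)$, it still forces both $\dot{r}_i\in\mathbb{P}_\gamma$, so I would run the same step for $i=1$ below it, obtaining $a_1\leq_{\mathbb{A}_\lambda}p_1\cup q^{(5)}$ forcing $(\check{h}_1,\dot{H}_1){}^\curvearrowright(\check{f}^{(4)}\setminus\check{h}_1)\in\dot{\mathbb{S}}^{\pi^*}_\lambda$, and put $\bar{p}_1:=a_1$ and $q^*:=a_1\upharpoonright\gamma$. Taking $(q^*,(\check{f}^*,\dot{F}^*)):=(q^*,(\check{f}^{(4)},\dot{F}^{(4)}))$, clause (a) is a chain of $\mathbb{C}_\gamma$-extensions and $(b_i)$ is clear; for $(c_i)$, $\bar{p}_i\leq_{\mathbb{A}_\lambda}p_i$ by construction, $(\beth)$ holds as $h_i\subseteq f^*$, and since $a_1\supseteq q^{(5)}=\bar{p}_0\upharpoonright\gamma$ we get $q^*=a_1\upharpoonright\gamma\leq_{\mathbb{A}_\gamma}\bar{p}_i\upharpoonright\gamma$ for $i=0,1$, giving $(\aleph)$, while $\bar{p}_1\cup q^*=a_1$ and $\bar{p}_0\cup q^*\supseteq a_0$ force the required Sinapova memberships, giving $(\gimel)$.

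The step I expect to be delicate is the bookkeeping that keeps $(\aleph)$, $(\beth)$ and $(\gimel)$ alive simultaneously for both indices: the $\mathbb{A}_\lambda$-extension witnessing $(\gimel)$ for one index inevitably strengthens the Cohen condition on coordinates below $\gamma$, which threatens $(\aleph)$ for the other index. What makes it work is that all three of $(\aleph)$, $(\beth)$ and $(\gimel)$ persist under further extension in the relevant posets, so it suffices to process $\dot{r}_0$ before $\dot{r}_1$ and fold the ${<}\gamma$-part of the witness for $\dot{r}_1$ into the final $q^*$; one should also check, as part of this, that passing from a $\mathbb{C}_\gamma$-condition to a Cohen-strengthening of its first coordinate keeps it a condition and preserves ``forces $\dot{r}_i=(p_i,(\check{h}_i,\dot{H}_i))\in\mathbb{P}_\gamma$'', which is routine.
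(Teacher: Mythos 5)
Your proposal is correct and follows essentially the same route as the paper's proof: decide the two names as ground-model conditions, absorb their stems and the $\gamma$-restrictions of their Cohen parts into the $\mathbb{C}_\gamma$-condition, and extract $\bar{p}_0,\bar{p}_1$ from the failure of clause (3) of Lemma~\ref{NotNotInTheQuotient}, folding their restrictions back into $q^*$. Your sequential treatment of the two indices at the end in fact makes explicit two points the paper's terser write-up glosses over, namely why the stems $h_i$ can be legally absorbed into $f^*$ and why $\bar{p}_0\upharpoonright\gamma$ and $\bar{p}_1\upharpoonright\gamma$ are compatible when the final strengthening of $q^*$ is performed.
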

\begin{proof}
Let $(q^*,(\check{f}^*, \dot{F}^*))\leq_{\mathbb{C}_\gamma}(q,(\check{f}, \dot{F}))$ and $(p_0, (\check{h}_0,\dot{H}_0))$, $(p_1, (\check{h}_1,\dot{H}_1))\in \mathbb{P}_\gamma$ be such that $(b_0)$ and $(b_1)$ hold. By extending $q^*$ and $f^*$ if necessary, we may further assume that $q^*\leq_{\mathbb{A}_\gamma} p_0\upharpoonright\gamma\cup p_1\upharpoonright\gamma$ and $h_0\cup h_1\s f^*$. For each $i\in\{0,1\}$, combining this with Lemma \ref{NotNotInTheQuotient} it follows that condition (4) must fail. Thus, there is $\bar{p}_i\leq_{\mathbb{A}_\lambda} q^*\cup p_i$ with  $\bar{p}_i\Vdash_{\mathbb{A}_\lambda} (\check{h}_i,\dot{H}_i){}^\curvearrowright (\check{f}^*\setminus\check{h}_i)\in \dot{\mathbb{S}}^{\pi^*}_\lambda$. Again, extend $p^*$ to ensure $q^*\leq_{\mathbb{A}_\gamma} \bar{p}_0,\bar{p}_1$. It should be  clear at this point that, for $i\in\{0,1\}$, $(q^*,(\check{f}^*, \dot{F}^*))$ and $(\bar{q}_i,(\check{h}_i,\dot{H}_i))$ witness $(c_i)$.
\end{proof}
Finally, we are in conditions to prove the $\delta$-ccness of  $\mathbb{P}_\gamma\times\mathbb{P}_\gamma$.
\begin{lemma}
Let $\gamma\in\mathcal{B}^*$. Then, $\one_{\mathbb{C}_\gamma}\Vdash_{\mathbb{C}_\gamma} \text{$``\mathbb{P}_\gamma\times\mathbb{P}_\gamma$ is $\delta$-cc''}$.
\end{lemma}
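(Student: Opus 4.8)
The plan is to argue by contradiction, following the usual pattern for the $\delta$-c.c.\ of a Mitchell-type forcing but delegating the Sinapova coordinate to the three preceding lemmas. Suppose some $(q,(\check f,\dot F))\in\mathbb{C}_\gamma$ forces $\langle(\dot r^0_\alpha,\dot r^1_\alpha)\mid\alpha<\delta\rangle$ to enumerate an antichain of size $\delta$ in $\mathbb{P}_\gamma\times\mathbb{P}_\gamma$; after trivialising each $\dot r^i_\alpha$ off $(q,(\check f,\dot F))$ we may assume $\one_{\mathbb{C}_\gamma}\Vdash\dot r^i_\alpha\in\mathbb{P}_\gamma$, so Lemma~\ref{LemmaCCness} applies, with input condition $(q,(\check f,\dot F))$ and the pair $\dot r^0_\alpha,\dot r^1_\alpha$. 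For each $\alpha<\delta$ it delivers $(q^*_\alpha,(\check f^*_\alpha,\dot F^*_\alpha))\leq_{\mathbb{C}_\gamma}(q,(\check f,\dot F))$, conditions $(p^i_\alpha,(\check h^i_\alpha,\dot H^i_\alpha))\in\mathbb{P}_\gamma$ and $\bar p^i_\alpha\leq_{\mathbb{A}_\lambda}p^i_\alpha$ ($i\in\{0,1\}$) such that $(q^*_\alpha,(\check f^*_\alpha,\dot F^*_\alpha))$ forces $\dot r^i_\alpha=(p^i_\alpha,(\check h^i_\alpha,\dot H^i_\alpha))$, $q^*_\alpha\leq_{\mathbb{A}_\gamma}\bar p^i_\alpha\upharpoonright\gamma$, $h^i_\alpha\s f^*_\alpha$, and the pairs $((q^*_\alpha,(\check f^*_\alpha,\dot F^*_\alpha)),(\bar p^i_\alpha,(\check h^i_\alpha,\dot H^i_\alpha)))$ satisfy hypotheses $(\aleph)$--$(\gimel)$ of Lemma~\ref{LyingWithinTheQuotient}.

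Then come two counting reductions. The number of possible stems is at most $\sum_{s\in[\mu]^{<\omega}}\prod_{\xi\in s}|\mathcal{P}_\kappa(\kappa_\xi)|$, and under $\gch_{\geq\kappa}$ (together with Definition~\ref{DefinitioOfXxi}($\beta$)) one has $|\mathcal{P}_\kappa(\kappa_\xi)|\leq\kappa_\xi^{+}$, so this quantity is at most $\varepsilon<\delta$; since $\delta$ is regular there is $A_0\in[\delta]^{\delta}$ on which $(\check f^*_\alpha,\check h^0_\alpha,\check h^1_\alpha)$ takes a constant value $(\check f^*,\check h^0,\check h^1)$, with $h^i\s f^*$. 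Secondly, each factor of $\mathbb{A}_\lambda\times\mathbb{A}_\lambda\times\mathbb{A}_\gamma$ is $\kappa^{+}$-Knaster (being of the form $\Add(\kappa,\cdot)$ with $2^{<\kappa}=\kappa$), hence so is the product, hence it is $\delta$-c.c.; therefore among the $\delta$ triples $\langle\bar p^0_\alpha,\bar p^1_\alpha,q^*_\alpha\rangle$ ($\alpha\in A_0$) there are $\alpha\neq\beta$ with $\bar p^0_\alpha\parallel\bar p^0_\beta$, $\bar p^1_\alpha\parallel\bar p^1_\beta$ and $q^*_\alpha\parallel q^*_\beta$. Put $\bar p^i:=\bar p^i_\alpha\cup\bar p^i_\beta\in\mathbb{A}_\lambda$ and $\bar q:=q^*_\alpha\cup q^*_\beta\in\mathbb{A}_\gamma$; since the order on $\mathbb{A}_\gamma$ is $\supseteq$ and each $q^*_\delta$ extends $\bar p^i_\delta\upharpoonright\gamma$, one gets $\bar q\leq_{\mathbb{A}_\gamma}\bar p^0\upharpoonright\gamma,\bar p^1\upharpoonright\gamma$, so $\bar p^0,\bar p^1,\bar q$ are pairwise coherent, $\bar p^i\cup\bar q$ extends both $\bar p^i_\alpha\cup q^*_\alpha$ and $\bar p^i_\beta\cup q^*_\beta$, and $\bar p^i\leq_{\mathbb{A}_\lambda}p^i_\alpha,p^i_\beta$.

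Now I glue. For $i\in\{0,1\}$ put $c^i:=(\bar p^i,(\check h^i,\dot H^i_\alpha\wedge\dot H^i_\beta))$; since $\bar p^i$ extends both $p^i_\alpha$ and $p^i_\beta$ and the relevant supercompact measures are closed under intersection, $c^i\in\mathbb{C}_\lambda$ and $c^i\leq_{\mathbb{C}_\lambda}(p^i_\alpha,(\check h^i,\dot H^i_\alpha)),(p^i_\beta,(\check h^i,\dot H^i_\beta))$. The instance of $(\gimel)$ of Lemma~\ref{LyingWithinTheQuotient} produced by Lemma~\ref{LemmaCCness} for $\alpha$ (resp.\ for $\beta$) says that $\bar p^i_\alpha\cup q^*_\alpha$ (resp.\ $\bar p^i_\beta\cup q^*_\beta$) forces $(\check h^i,\dot H^i_\alpha){}^\curvearrowright(\check f^*\setminus\check h^i)\in\dot{\mathbb{S}}^{\pi^*}_\lambda$ (resp.\ with $\dot H^i_\beta$); as $\bar p^i\cup\bar q$ extends both of these conditions, it forces both, and intersecting the large sets coordinatewise gives $\bar p^i\cup\bar q\Vdash_{\mathbb{A}_\lambda}(\check h^i,\dot H^i_\alpha\wedge\dot H^i_\beta){}^\curvearrowright(\check f^*\setminus\check h^i)\in\dot{\mathbb{S}}^{\pi^*}_\lambda$. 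Thus $c^i$ together with $(\bar q,(\check f^*,\dot F^*_\alpha\wedge\dot F^*_\beta))$ satisfies $(\aleph)$--$(\gimel)$ of Lemma~\ref{LyingWithinTheQuotient} (recall $(\gimel)$ yields $(\Upsilon)$). Applying Lemma~\ref{RefinningWithRowbottom} and then Lemma~\ref{LyingWithinTheQuotient}, first with $i=0$ to obtain a $\mathbb{A}_\gamma$-name $\dot I_0$ with $\bar q\Vdash(\check f^*,\dot I_0)\leq^*(\check f^*,\dot F^*_\alpha\wedge\dot F^*_\beta)$, and then with $i=1$ starting from $(\bar q,(\check f^*,\dot I_0))$ to obtain $\dot I_1$ with $\bar q\Vdash(\check f^*,\dot I_1)\leq^*(\check f^*,\dot I_0)$, we conclude that $\bar c:=(\bar q,(\check f^*,\dot I_1))\in\mathbb{C}_\gamma$ forces $c^0,c^1\in\mathbb{P}_\gamma$. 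Since $\mathbb{P}_\gamma$ is upward closed in $\mathbb{C}_\lambda$ and $\bar c\leq_{\mathbb{C}_\gamma}(q^*_\alpha,(\check f^*,\dot F^*_\alpha)),(q^*_\beta,(\check f^*,\dot F^*_\beta))$, the condition $\bar c$ forces $\dot r^i_\alpha=(p^i_\alpha,(\check h^i,\dot H^i_\alpha))$, $\dot r^i_\beta=(p^i_\beta,(\check h^i,\dot H^i_\beta))$, and $c^i$ to be a common extension of these two in $\mathbb{P}_\gamma$; hence $\bar c\leq_{\mathbb{C}_\gamma}(q,(\check f,\dot F))$ forces $(\dot r^0_\alpha,\dot r^1_\alpha)$ and $(\dot r^0_\beta,\dot r^1_\beta)$ to be compatible in $\mathbb{P}_\gamma\times\mathbb{P}_\gamma$, contradicting the choice of the antichain.

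I expect the only genuine difficulty to be the step certifying $c^i\in\mathbb{P}_\gamma$, i.e.\ that the coordinatewise meet of the two quotient conditions is not ejected from $\mathbb{C}_\lambda/\mathbb{C}_\gamma$; this is precisely what Lemmas~\ref{NotNotInTheQuotient}--\ref{LyingWithinTheQuotient} are designed for, with the R\"owbottom-style diagonalisation of Lemma~\ref{RefinningWithRowbottom} guaranteeing that no one-step extension of the Sinapova part by a tuple of measure-one sets is forced out of $\dot{\mathbb{S}}^{\pi^*}_\lambda$. Granting those lemmas, the remaining care is bookkeeping: arranging a single witness $\bar c$ that is good for both product coordinates (which is why the R\"owbottom refinement is chained through $i=0$ and then $i=1$), plus the two routine cardinal-arithmetic counts and the coherence checks on the $\Add(\kappa,\cdot)$-conditions.
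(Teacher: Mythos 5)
Your proposal is correct and follows essentially the same route as the paper: apply Lemma~\ref{LemmaCCness} to each pair of names, stabilize the stems $f^*, h^0, h^1$, use a Knaster/chain-condition argument on the Cohen coordinates to find two indices whose witnessing conditions can be amalgamated, and then feed the glued conditions (with large sets intersected via $\wedge$) into Lemma~\ref{RefinningWithRowbottom} and Lemma~\ref{LyingWithinTheQuotient} to produce a single condition forcing both coordinates into $\mathbb{P}_\gamma$, contradicting antichain-hood. The only differences are presentational: you make explicit the stem-counting, the chaining of Lemma~\ref{LyingWithinTheQuotient} through $i=0$ and then $i=1$, and you invoke Knasterness of the pure Cohen product where the paper cites $\delta$-Knasterness of $\mathbb{C}_\gamma\times\mathbb{C}_\lambda^2$.
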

\begin{proof}
Let $\{(\dot{r}^0_\alpha,\dot{r}^1_\alpha)\}_{\alpha<\delta}$ be a collection of $\mathbb{C}_\gamma$-names that $\one_{\mathbb{C}_\gamma}$ forces to be in a  maximal antichain of $\mathbb{P}_\gamma\times\mathbb{P}_\gamma$. Appealing to Lemma \ref{LemmaCCness}  we find families $\{(q^*_\alpha, (\check{f}^*_\alpha, \dot{F}^*_\alpha))\}_{\alpha<\delta}$, $\{\langle (p^0_\alpha, (\check{h}^0_\alpha, \dot{H}^0_\alpha)),(p^1_\alpha, (\check{h}^1_\alpha, \dot{H}^1_\alpha))\rangle\}_{\alpha<\delta}$ and\linebreak $\{\langle \bar{p}^0_\alpha, \bar{p}^1_\alpha\rangle\}_{\alpha<\delta}$ witnessing it.

It is not hard to check that for each $\varrho\in\mathcal{B}^*\cup\{\lambda\}$, $\mathbb{C}_\varrho$ is $\delta$-Knaster, hence $\mathbb{C}_\gamma\times\mathbb{C}_\lambda^2$ also.  In particular, $\mathbb{C}_\gamma\times\mathbb{C}_\lambda^2$ is $\delta$-cc, and thus we may assume that all the above conditions are compatible. Modulo a further refinement, we may also assume that $f^*_\alpha=f^*$, $h^0_\alpha=h^0$ and $h^1_\alpha=h^1$, for each $\alpha<\delta$. For each $\alpha<\beta<\delta$,  set $r_{\alpha,\beta}:=(q^*_\alpha\cup q^*_{\beta}, (f^*, \dot{F}^*_\alpha\wedge\dot{F}^*_{\beta}))$ and $r'_{i,\alpha,\beta}:=(\bar{p}^i_\alpha\cup \bar{p}^i_{\beta}, (h^i, \dot{H}^i_\alpha\wedge\dot{H}^i_{\beta}))$. It is routine to check that, for each $i\in\{0,1\}$, $r_{\alpha,\beta}$ and $r'_{i,\alpha,\beta}$ witness the hypotheses of Lemma \ref{LyingWithinTheQuotient}, hence there is  $r^*_{\alpha,\beta}\leq_{\mathbb{C}_\gamma}r_{\alpha,\beta}$ forcing that both $r'_{0,\alpha,\beta}$ and $r'_{1,\alpha,\beta}$ are in $\mathbb{P}_\gamma$.  In particular, $r_{\alpha,\beta}^*\Vdash_{\mathbb{C}_\gamma}(\dot{r}^0_\alpha, \dot{r}^1_\alpha)\parallel_{\mathbb{P}_\gamma\times\mathbb{P}_\gamma}(\dot{r}^0_{\beta}, \dot{r}^1_{\beta})$, which entails the desired contradiction.

% $r^*$ extension of $(p^*_\alpha\cup p^*_{\alpha'}, (g, \dot{H}^*_\alpha\wedge\dot{H}^*_{\alpha'}))$ forcing that both $(\bar{p}^0_\alpha\cup \bar{p}^0_{\alpha'}, (g^0, \dot{H}^0_\alpha\wedge\dot{H}^0_{\alpha'}))$ and $(\bar{p}^1_\alpha\cup \bar{p}^1_{\alpha'}, (g^1, \dot{H}^1_\alpha\wedge\dot{H}^1_{\alpha'}))$ are in $\mathbb{P}_\lambda$. In particular, $r^*$ forces the compatibility of $(\dot{r}^0_\alpha, \dot{r}^1_\alpha)$ and $(\dot{r}^0_{\alpha'}, \dot{r}^1_{\alpha'})$ and thus $\mathbb{P}_\gamma\times\mathbb{P}_\gamma$ is $\delta$-cc.
\end{proof}

\section{$\TP(\kappa^+)$ holds}\label{TPkappa+Section}
In this section we conclude the proof of Theorem \ref{MainTheorem1} by showing that $\TP(\kappa^+)$ holds in $V[R]$. Once again, we 
only give details when $\Theta=\lambda^+$, as the more general case is completely parallel. In essence the arguments exposed here are due to Sinapova \cite{SinTree} and Neeman \cite{Nee}.
The only reason in favour of presenting them is to point out some subtle differences between their argument and ours. Also, by showing explicitly the arguments, we hope to convince the skeptic reader
that similar ideas indeed do the job in our context.  To avoid repetitions, we sometimes tend to sketch the main ideas and refer the reader to \cite{SinTree}, \cite{SinUncCof} or \cite{Nee} for more details. The proof of $V[R]\models \TP(\delta)$, at least as conceived in \cite{SinTree},  uses a family of intermediate forcings between $\mathbb{R}$ and $\bar{\mathbb{R}}$ (see Section \ref{SectionTheMainConstruction}). These forcings  $\mathbb{R}_{\dot{p}}$  have the particularity that its generics $R_{\dot{p}}$ resemble $R$. For the record of the section let us recall that $G$, $S$ and $R$ are, respectively, the generic filters for $\mathbb{A}_{\lambda^+}$, $\mathbb{S}_{\lambda^+}$ and $\mathbb{R}$ considered at Section \ref{SectionTheMainConstruction}.
 %and introduce a collection of intermediate forcing between $\mathbb{R}$ and $\bar{\mathbb{R}}$ whose generics ``resemble'' the generic filter $R$. %More formally, for each $\mathbb{A}_{\lambda^+}$-name $\dot{q}$  for a condition in $\mathbb{S}_{\lambda^+}$ we define a forcing $\mathbb{R}_{\dot{q}}$ that intuitively corresponds with the subforcing of $\mathbb{R}$ obtained when the \textit{Sinapova part} is $\dot{q}$.
\begin{convention}
\rm{
For each $\mathbb{A}_{\lambda^+}$-name $\dot{q}$ for a condition in  $\mathbb{S}_{\lambda^+}$, we shall denote by $q$ its interpretation by $G$. Also, set $\hat{q}:=\langle (\one, \dot{q}), (\one,\dot{\one},\one)\rangle$ and $q^*:=(\one,\dot{q},\one)$.}
\end{convention}
\begin{defi}
Let $\dot{q}$ be a $\mathbb{A}_{\lambda^+}$-name for a condition in $\mathbb{S}_{\lambda^+}$. Let $\mathbb{R}_{\dot{q}}$  be the set of $(p,\dot{q}',r)\in\mathbb{R}$ endowed with the order
$(p_1,\dot{q}_1,r_1)\leq_{\mathbb{R}_{\dot{p}}}(p_2,\dot{q}_2,r_2)$ if and only if $(p_1,\dot{q}_1)\leq_{\mathbb{A}_{\lambda^+}\ast \dot{\mathbb{S}}_{\lambda^+}}(p_2,\dot{q}_2)$, $\dom(r_2)\subseteq \dom(r_1)$ and for each $\gamma\in\dom(r_2)$, 
$\sigma^{\lambda^+}_\gamma(p_1,\dot{q})\Vdash_{\mathbb{A}_{\even(\gamma)}\ast \dot{\mathbb{S}}^\pi_\gamma}\text{$``\dot{r}_1(\gamma)\leq_{\dot{\Add}(\delta,1)}\dot{r}_2(\gamma)$''}.$
\end{defi}

The next proposition shows that there is a system of projections between the forcings $\bar{\mathbb{R}}$, $\mathbb{R}$ and $\mathbb{R}_{\dot{q}}$ (see \cite[\S2]{SinTree} for details).
\begin{prop}\label{ProjectionsBetweenForcings}
Let $\dot{q}$ be a $\mathbb{A}_{\lambda^+}$-name for a condition in $\mathbb{S}_{\lambda^+}$.
\begin{enumerate}
\item The map $\langle (p,\dot{t}),(\one,\one,r)\rangle\mapsto (p,\dot{t},r)$ defines a projection between $\bar{\mathbb{R}}$ and $\mathbb{R}_{\dot{q}}$ and also between $\bar{\mathbb{R}}\downarrow\hat{q}$ and $\mathbb{R}_{\dot{q}}\downarrow q^*$.
\item The identity entails a projection between  $\mathbb{R}_{\dot{q}}\downarrow q^*$ and $\mathbb{R}\downarrow q^*$. %Moreover, $\mathbb{R}_{\dot{q}}/q^*$ is isomorphic to a dense subposet of $\mathbb{R}/q^*$.
\end{enumerate}
Let $q,t$ be conditions in $\mathbb{S}_{\lambda^+}$ such that $t\leq_{\mathbb{S}_{\lambda^+}} q$. Then the identity establishes a projection between $\mathbb{R}_{{q}}$ and $\mathbb{R}_{{t}}$.
\end{prop}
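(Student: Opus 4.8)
The plan is to verify, in each of the three cases, the standard criterion for a map $\pi\colon\mathbb{P}\to\mathbb{Q}$ to be a projection: $\pi$ is order preserving, $\pi$ preserves the top element, and for every $p\in\mathbb{P}$ and every $b\leq_{\mathbb{Q}}\pi(p)$ there is $p'\leq_{\mathbb{P}}p$ with $\pi(p')\leq_{\mathbb{Q}}b$. In all three cases the first two requirements are immediate from the definitions of $\leq_{\mathbb{R}}$, $\leq_{\mathbb{R}_{\dot q}}$ and $\leq_{\mathbb{U}}$, together with the facts that $\sigma^{\lambda^+}_\gamma$ is a projection (Lemma~\ref{ProjectionsCohenPart}), hence monotone and top preserving, and that a statement forced by $\one$ is forced below every Boolean condition. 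The only real content is the refinement clause, and a single device handles it uniformly: in $\bar{\mathbb{R}}$ the $\dot{\Add}(\delta,1)$-coordinates are compared below $\one$, in $\mathbb{R}_{\dot q}$ below $\sigma^{\lambda^+}_\gamma(p_1,\dot q)$, and in $\mathbb{R}$ below $\sigma^{\lambda^+}_\gamma(p_1,\dot q_1)$, and a \emph{mixing} of names will convert an inequality valid below one of these Boolean values into one valid below a weaker one.

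For the map $\langle(p,\dot t),(\one,\one,r)\rangle\mapsto(p,\dot t,r)$ from $\bar{\mathbb{R}}$ onto $\mathbb{R}_{\dot q}$: given $\langle(p,\dot t),(\one,\one,r)\rangle$ and $(p',\dot t',r')\leq_{\mathbb{R}_{\dot q}}(p,\dot t,r)$, we would keep the first coordinate $(p',\dot t')$ and redefine the Cohen--collapse coordinate. For $\gamma\in\dom(r)$ the hypothesis yields only $\sigma^{\lambda^+}_\gamma(p',\dot q)\Vdash_{\mathbb{A}_{\even(\gamma)}\ast\dot{\mathbb{S}}^\pi_\gamma}r'(\gamma)\leq r(\gamma)$, whereas for the new triple to sit in $\mathbb{U}$ below $(\one,\one,r)$ we need $\one$ to force $r''(\gamma)\leq r(\gamma)$. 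So, with $b_\gamma:=\sigma^{\lambda^+}_\gamma(p',\dot q)$, let $r''(\gamma)$ be the $\mathbb{A}_{\even(\gamma)}\ast\dot{\mathbb{S}}^\pi_\gamma$-name that $b_\gamma$ forces to equal $r'(\gamma)$ and $\neg b_\gamma$ forces to equal $r(\gamma)$; this is again forced by $\one$ to lie in $\dot{\Add}(\delta,1)$. Then $\one\Vdash r''(\gamma)\leq r(\gamma)$ (below $b_\gamma$ this is $r'(\gamma)\leq r(\gamma)$, off $b_\gamma$ it is trivial) and $b_\gamma\Vdash r''(\gamma)=r'(\gamma)$, so $\sigma^{\lambda^+}_\gamma(p',\dot q)\Vdash r''(\gamma)\leq r'(\gamma)$. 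For $\gamma\in\dom(r')\setminus\dom(r)$ set $r''(\gamma):=r'(\gamma)$, and $\dom(r''):=\dom(r')$. Then $\langle(p',\dot t'),(\one,\one,r'')\rangle\leq_{\bar{\mathbb{R}}}\langle(p,\dot t),(\one,\one,r)\rangle$ and its image $(p',\dot t',r'')$ is $\leq_{\mathbb{R}_{\dot q}}(p',\dot t',r')$; the version for $\bar{\mathbb{R}}\downarrow\hat q$ and $\mathbb{R}_{\dot q}\downarrow q^*$ follows at once, since the map sends $\hat q$ to $q^*$ and the witness's first coordinate stays $\leq(\one,\dot q)$.

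For the identity between $\mathbb{R}_{\dot q}\downarrow q^*$ and $\mathbb{R}\downarrow q^*$: order preservation holds because below $q^*$ every first coordinate $(p_0,\dot q_0)$ satisfies $(p_0,\dot q_0)\leq(p_0,\dot q)$, so $\sigma^{\lambda^+}_\gamma(p_0,\dot q_0)\leq\sigma^{\lambda^+}_\gamma(p_0,\dot q)$ and the clause defining $\leq_{\mathbb{R}_{\dot q}}$ implies the one defining $\leq_{\mathbb{R}}$; and for the refinement clause, given $(p,\dot q',r)\leq q^*$ and $(p_0,\dot q_0,r_0)\leq_{\mathbb{R}}(p,\dot q',r)$, we keep $(p_0,\dot q_0)$ and, for $\gamma\in\dom(r)$, put $b_\gamma:=\sigma^{\lambda^+}_\gamma(p_0,\dot q_0)$ (which forces $r_0(\gamma)\leq r(\gamma)$) and let $r_1(\gamma)$ be the name that $b_\gamma$ forces to equal $r_0(\gamma)$ and $\neg b_\gamma$ forces to equal $r(\gamma)$, while for $\gamma\in\dom(r_0)\setminus\dom(r)$ we put $r_1(\gamma):=r_0(\gamma)$ and $\dom(r_1):=\dom(r_0)$; then $\one\Vdash r_1(\gamma)\leq r(\gamma)$ and $b_\gamma\Vdash r_1(\gamma)=r_0(\gamma)$ give $(p_0,\dot q_0,r_1)\leq_{\mathbb{R}_{\dot q}}(p,\dot q',r)$ and $(p_0,\dot q_0,r_1)\leq_{\mathbb{R}}(p_0,\dot q_0,r_0)$, as needed. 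Finally, the assertion that the identity is a projection from $\mathbb{R}_q$ to $\mathbb{R}_t$ when $t\leq_{\mathbb{S}_{\lambda^+}}q$ is proved in exactly this way, now using $\sigma^{\lambda^+}_\gamma(p_0,\check t)\leq\sigma^{\lambda^+}_\gamma(p_0,\check q)$ for all $p_0$ and mixing the $\gamma$-th coordinate below $\sigma^{\lambda^+}_\gamma(p_0,\check t)$ (where it equals $r_0(\gamma)$) with its complement (where it equals $r(\gamma)$). The only real obstacle throughout is this reconciliation of the various Boolean parameters against which the $\dot{\Add}(\delta,1)$-coordinates are compared; once the mixing lemma (names glue along a partition of a complete Boolean algebra, preserving membership in a fixed poset such as $\dot{\Add}(\delta,1)$) is in place, the remaining verifications are routine bookkeeping with the definitions.
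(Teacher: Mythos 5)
Your proof is correct: the only substantive point in all three claims is the refinement clause of the projection property, and your device of mixing the $\dot{\Add}(\delta,1)$-names along $b_\gamma=\sigma^{\lambda^+}_\gamma(\cdot)$ and its complement, combined with the monotonicity of the projections $\sigma^{\lambda^+}_\gamma$ from Lemma~\ref{ProjectionsCohenPart} (so that a statement forced by $\one$, or by a larger Boolean value, is forced by the relevant smaller one), verifies it in each case. The paper gives no proof of Proposition~\ref{ProjectionsBetweenForcings}, deferring to \cite[\S 2]{SinTree}, and your mixing argument is exactly the standard verification intended there, so there is nothing to flag.
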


\begin{defi}
 Work in $V[G]$. For each $q\in\mathbb{S}$ define the forcing $\mathbb{U}_{q}$ whose conditions are all $r\in \mathbb{U}$ such that $r_1\leq_{\mathbb{U}_{p}} r_2$ if and only if $\dom(r_2)\subseteq\dom(r_1)$ and there is $p\in G$ such that for each $\gamma\in\dom(r_2)$,
$$\sigma^{\lambda^+}_{\gamma}(p,\dot{q}){\Vdash_{\mathbb{A}_{\even(\gamma)}\ast\dot{\mathbb{S}}^\pi_\gamma}}\text{$``r_1(\gamma)\leq r_2(\gamma)$''}.$$
\end{defi}
The next lemma corresponds with \cite[Lemma 2.7]{SinTree}.
\begin{lemma}
Let $\dot{q}$ be a $\mathbb{A}_{\lambda^+}$-name for a condition in ${\mathbb{S}_{\lambda^+}}$. Then $\mathbb{R}_{\dot{q}}$ and $\mathbb{A}_{\lambda^+}\ast (\dot{\mathbb{S}}_{\lambda^+}\times \dot{\mathbb{U}}_{q})$ are isomorphic. In particular, in $V[G]$, there is a projection between $\mathbb{R}_{{q}}$ and $\mathbb{U}_q$.
\end{lemma}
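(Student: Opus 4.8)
The plan is to exhibit the obvious map and check it is an order-isomorphism onto a dense subposet, which is what ``isomorphic'' means here. Given $(p,\dot q',r)\in\mathbb{R}_{\dot q}$, clause (3) of Definition~\ref{MainForcingR} says $(\one,\dot\one,r)\in\mathbb{R}$, so $r$ lies in the underlying set $U$ of $\mathbb{U}_q$ (which is a ground-model set), and $\check r$ is a legitimate $\mathbb{A}_{\lambda^+}$-name for a condition in $\dot{\mathbb{U}}_q$. Set
\[
\Phi(p,\dot q',r):=(p,(\dot q',\check r)),
\]
regarded as a condition of $\mathbb{A}_{\lambda^+}\ast(\dot{\mathbb{S}}_{\lambda^+}\times\dot{\mathbb{U}}_q)$, where $\dot{\mathbb{U}}_q$ is the canonical $\mathbb{A}_{\lambda^+}$-name for the forcing $\mathbb{U}_q$ of $V[G]$. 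The structural point that makes this work is that in the order of $\mathbb{R}_{\dot q}$ the clause governing the third coordinate, namely $\sigma^{\lambda^+}_\gamma(p_1,\dot q)\Vdash``\dot r_1(\gamma)\le\dot r_2(\gamma)"$, refers only to $p_1$ and to the \emph{fixed} name $\dot q$ --- it never mentions the second coordinate $\dot q_1'$. Thus, once $p\in\mathbb{A}_{\lambda^+}$ is fixed, the $\dot{\mathbb{S}}_{\lambda^+}$-part and the $r$-part of a condition evolve independently, which is precisely the product $\dot{\mathbb{S}}_{\lambda^+}\times\dot{\mathbb{U}}_q$ over $V^{\mathbb{A}_{\lambda^+}}$.

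Next I would verify that $\Phi$ preserves and reflects $\le$. The $\mathbb{A}_{\lambda^+}$-clause ($p_1\le p_2$) and the $\dot{\mathbb{S}}_{\lambda^+}$-clause ($p_1\Vdash\dot q_1'\le\dot q_2'$) are literally the same on both sides, and $\dom(r_2)\subseteq\dom(r_1)$ is a statement about $\check U$-elements, hence matches too. So everything reduces, for $r_1,r_2\in U$ with $\dom(r_2)\subseteq\dom(r_1)$, to
\[
\big(\forall\gamma\in\dom(r_2)\ \sigma^{\lambda^+}_\gamma(p_1,\dot q)\Vdash r_1(\gamma)\le r_2(\gamma)\big)\ \Longleftrightarrow\ p_1\Vdash_{\mathbb{A}_{\lambda^+}}\check r_1\le_{\dot{\mathbb{U}}_q}\check r_2.
\]
Unwinding the order on $\mathbb{U}_q$, the right side says $G$ meets the set $O$ of all $p$ for which $\sigma^{\lambda^+}_\gamma(p,\dot q)\Vdash r_1(\gamma)\le r_2(\gamma)$ holds for every $\gamma\in\dom(r_2)$. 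Since $\sigma^{\lambda^+}_\gamma$ is order-preserving, this predicate is downward closed in $p$, so $O$ is open; therefore $p_1$ forces $G\cap O\ne\emptyset$ exactly when $O$ is dense below $p_1$, which by downward closure of $O$ holds precisely on a dense set of $p_1\in O$ --- the same as the left side after passing to the separative quotient (equivalently, after restricting $\Phi$ to the dense subposet of conditions $(p,(\dot q',\check r))$ with $r\in U$). Density of that subposet is clear, since the field of $\dot{\mathbb{U}}_q$ is the ground-model set $\check U$, so below any condition of the iteration one may decide the $\dot{\mathbb{U}}_q$-coordinate to a $\check r$. Hence $\Phi$ is an order-isomorphism onto a dense subposet, giving the isomorphism of the lemma.

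For the ``in particular'' clause I would use that a product projects onto either factor: $\dot{\mathbb{S}}_{\lambda^+}\times\dot{\mathbb{U}}_q$ projects onto $\dot{\mathbb{U}}_q$, whence $\mathbb{A}_{\lambda^+}\ast(\dot{\mathbb{S}}_{\lambda^+}\times\dot{\mathbb{U}}_q)$ projects onto $\mathbb{A}_{\lambda^+}\ast\dot{\mathbb{U}}_q$; composing with the isomorphism just established and then passing to $V[G]$ (forcing with the $\mathbb{A}_{\lambda^+}$-factor via $G$) yields, over $V[G]$, a projection from $\mathbb{R}_q$ onto $\mathbb{U}_q$. The main obstacle is exactly the order-matching displayed above: the order of $\mathbb{R}_{\dot q}$ is phrased via $\sigma^{\lambda^+}_\gamma(p_1,\dot q)$ for the particular $p_1$, while the order of $\mathbb{U}_q$ in $V[G]$ uses an existential over $p\in G$, and reconciling the two needs the downward closure of the relevant predicate in $p$, so that ``some $p\in G$ works'' and ``$p_1$ works'' coincide on a dense set. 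Everything else --- bijectivity of $\Phi$ on the relevant dense subposets, the matching of the $\mathbb{A}_{\lambda^+}$- and $\dot{\mathbb{S}}_{\lambda^+}$-clauses, and the extraction of the final projection --- is routine bookkeeping with two-step iterations and products.
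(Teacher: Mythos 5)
Your route is, in substance, the intended one: the paper itself gives no argument for this lemma beyond the pointer to \cite[Lemma 2.7]{SinTree}, and the expected proof is exactly the observation you isolate --- in the order of $\mathbb{R}_{\dot q}$ the clause governing the third coordinate mentions only $p_1$ and the \emph{fixed} name $\dot q$, so over $V^{\mathbb{A}_{\lambda^+}}$ the Sinapova coordinate and the $r$-coordinate decouple, and the map $\Phi(p,\dot q',r)=(p,(\dot q',\check r))$ identifies $\mathbb{R}_{\dot q}$ with the conditions of $\mathbb{A}_{\lambda^+}\ast(\dot{\mathbb{S}}_{\lambda^+}\times\dot{\mathbb{U}}_q)$ whose last coordinate is a check name, a dense set since the field of $\dot{\mathbb{U}}_q$ is the ground-model set $U$. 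Your extraction of the projection onto $\mathbb{U}_q$ in $V[G]$ from this is also the standard step.

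One step is overstated, and it is precisely the point you half-acknowledge. Order preservation of $\Phi$ is fine, since $p_1$ itself is a witness in $\dot G$; but the converse fails as stated: $p_1\Vdash\check r_1\le_{\dot{\mathbb{U}}_q}\check r_2$ only says that the open set $O$ of witnessing conditions is (pre)dense below $p_1$, not that $p_1\in O$, so $\Phi$ need not reflect the order and is not literally an order-isomorphism onto the dense subposet of check-named conditions. The parenthetical ``restrict to the subposet with $r\in U$'' does not repair this, because $O$ depends on the pair $(r_1,r_2)$, so no fixed further restriction makes all comparisons match. The economical fix is to observe that $\Phi$ is a dense embedding: besides preserving order and having dense image, it preserves incompatibility --- if the images of $(p_1,\dot q_1,r_1)$ and $(p_2,\dot q_2,r_2)$ have a common extension, first decide its $\dot{\mathbb{U}}_q$-coordinate to some $\check r$, then shrink the Cohen part into the two open dense sets of witnesses for $\check r\le\check r_1$ and $\check r\le\check r_2$, obtaining a common extension in $\mathbb{R}_{\dot q}$. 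This yields the forcing equivalence (equivalently, isomorphic separative quotients), which is all that the ``in particular'' clause and every later use of the lemma require; with that adjustment your argument goes through.
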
 
%This family of forcings  also admits a system of  projections. 
\begin{prop}\label{ProjectionbetweenQs}
Work in $V[G]$. For each condition $q\in\mathbb{S}$, the identity yields a projection between $\mathbb{U}$ and $\mathbb{U}_q$. Moreover, for each $t\leq_{\mathbb{S}_{\lambda^+}} q$ the same holds between $\mathbb{U}_q$ and $\mathbb{U}_{t}$.
\end{prop}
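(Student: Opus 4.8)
The plan is to verify, in each of the two cases, the three defining properties of a projection for the identity map. Since $\mathbb{U}$, $\mathbb{U}_q$ and $\mathbb{U}_t$ have the same underlying set and the same maximal element $(\one,\dot{\one},\emptyset)$, preservation of the top condition is immediate, so the work reduces to order preservation and the lifting property. Throughout I would freely use that each $\sigma^{\lambda^+}_\gamma$ is a projection, hence order preserving and sending the top condition to the top condition, so that $\sigma^{\lambda^+}_\gamma(\one,\dot{\one})=\one$.

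For order preservation from $\mathbb{U}$ to $\mathbb{U}_q$: if $r_0\leq_{\mathbb{U}}r_1$ then $\dom(r_1)\subseteq\dom(r_0)$ and $\one=\sigma^{\lambda^+}_\gamma(\one,\dot{\one})\Vdash r_0(\gamma)\leq r_1(\gamma)$ for each $\gamma\in\dom(r_1)$; being forced by $\one$ this holds below every condition of $\mathbb{A}_{\even(\gamma)}\ast\dot{\mathbb{S}}^\pi_\gamma$, so any $p\in G$ witnesses $r_0\leq_{\mathbb{U}_q}r_1$. For the moreover clause, from $\mathbb{U}_q$ to $\mathbb{U}_t$ with $t\leq_{\mathbb{S}_{\lambda^+}}q$: if $r_0\leq_{\mathbb{U}_q}r_1$ with witness $p\in G$, I would fix $p_0\in G$ with $p_0\Vdash_{\mathbb{A}_{\lambda^+}}\dot{t}\leq\dot{q}$ and $p''\in G$ below both $p$ and $p_0$, so that $(p'',\dot{t})\leq_{\mathbb{A}_{\lambda^+}\ast\dot{\mathbb{S}}_{\lambda^+}}(p,\dot{q})$; then $\sigma^{\lambda^+}_\gamma(p'',\dot{t})\leq\sigma^{\lambda^+}_\gamma(p,\dot{q})$, so $\sigma^{\lambda^+}_\gamma(p'',\dot{t})$ still forces $r_0(\gamma)\leq r_1(\gamma)$ for each relevant $\gamma$, and $p''$ witnesses $r_0\leq_{\mathbb{U}_t}r_1$.

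The heart of the matter is the lifting property, which I would establish by a mixing argument; I describe the case $\mathbb{U}\to\mathbb{U}_q$, the case $\mathbb{U}_q\to\mathbb{U}_t$ being verbatim after replacing $\dot{q}$ by $\dot{t}$. Suppose $r'\leq_{\mathbb{U}_q}r$ with a single witness $p\in G$, so $\dom(r)\subseteq\dom(r')$; for $\gamma\in\dom(r)$ set $b_\gamma:=\sigma^{\lambda^+}_\gamma(p,\dot{q})$, an element of the Boolean completion $\mathrm{RO}(\mathbb{A}_{\even(\gamma)}\ast\dot{\mathbb{S}}^\pi_\gamma)$, so $b_\gamma\Vdash r'(\gamma)\leq r(\gamma)$. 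By mixing there is an $\mathbb{A}_{\even(\gamma)}\ast\dot{\mathbb{S}}^\pi_\gamma$-name $s_\gamma$ with $b_\gamma\Vdash s_\gamma=r'(\gamma)$ and $\neg b_\gamma\Vdash s_\gamma=r(\gamma)$; since $\one$ forces both $r(\gamma)$ and $r'(\gamma)$ into $\dot{\Add}(\delta,1)$, the same holds of $s_\gamma$, and if $b_\gamma=\one$ then $s_\gamma$ is simply $r'(\gamma)$. I then set $\dom(r''):=\dom(r')$ with $r''(\gamma):=s_\gamma$ for $\gamma\in\dom(r)$ and $r''(\gamma):=r'(\gamma)$ otherwise; clearly $(\one,\dot{\one},r'')\in\mathbb{R}$, so $r''\in\mathbb{U}$. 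One checks $r''\leq_{\mathbb{U}}r$: indeed $\dom(r)\subseteq\dom(r'')$, and for $\gamma\in\dom(r)$ we have $r''(\gamma)=r'(\gamma)\leq r(\gamma)$ below $b_\gamma$ and $r''(\gamma)=r(\gamma)$ below $\neg b_\gamma$, hence $\one\Vdash r''(\gamma)\leq r(\gamma)$; and $r''\leq_{\mathbb{U}_q}r'$ is witnessed by the same $p$, since $\dom(r')=\dom(r'')$ and $b_\gamma=\sigma^{\lambda^+}_\gamma(p,\dot{q})\Vdash r''(\gamma)=r'(\gamma)\leq r'(\gamma)$ for $\gamma\in\dom(r)$ and trivially otherwise. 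I expect the only slightly delicate points to be the bookkeeping of the two cases defining $\dom(r'')$ and the routine mixing of $\dot{\Add}(\delta,1)$-names over the two-element partition $\{b_\gamma,\neg b_\gamma\}$; for the moreover clause one further notes that $r''\leq_{\mathbb{U}}r$ already yields $r''\leq_{\mathbb{U}_q}r$ via the order preservation established above.
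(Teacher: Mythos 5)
Your proof is correct: order preservation is exactly as you say, and the mixing of $r'(\gamma)$ and $r(\gamma)$ over the partition $\{b_\gamma,\neg b_\gamma\}$ with $b_\gamma=\sigma^{\lambda^+}_\gamma(p,\dot q)$ (resp.\ $\sigma^{\lambda^+}_\gamma(p,\dot t)$) produces a condition of $\mathbb{U}$ witnessing the lifting property, with all ingredients lying in $V$ so that $r''$ is indeed an element of $\mathbb{U}$. The paper states this proposition without proof, treating it as routine, and your argument is precisely the standard verification it leaves implicit.
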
 
Let $\bar{R}\subseteq \bar{\mathbb{R}}$ a generic filter   whose respective projections onto $\mathbb{R}$, $\mathbb{A}_{\lambda^+}$ and $\mathbb{S}_{\lambda^+}$ induce $R$, $G$ and $S$.\footnote{Recall that these are the generic filters of Section \ref{SectionTheMainConstruction}} % as quoted just before proposition \ref{PropertiesOfV[R]}. 
Let $U\subseteq\mathbb{U}$ be the generic filter induced by $\bar{R}$.  We also need generics for the family  $\langle \mathbb{R}_{p}, \mathbb{U}_p\mid p \in\mathbb{S}\rangle$. For this, we will use the following standard lemma. For a proof see, for instance, \cite[Proposition 4.7]{Ung}.
%Before doing that we need to prove a technical result:
%Let $\bar{R}\subseteq \bar{\mathbb{R}}$ generic over $V$ and $R\subseteq \mathbb{R}$ be the generic filter derived from $\bar{R}$ and the projection between $\bar{\mathbb{R}}$ and $\mathbb{R}$ (cf proposition \ref{projectionU}). From $R$ we can also derive the  generic filter $G\subseteq \mathbb{A}$ over $V$ and a generic filter $\mathcal{S}\subseteq \mathbb{S}$ over $V[\mathcal{A}]$. Now it remains to show how to derive generic filters for the forcings $\mathbb{R}_{p}$ and $\mathbb{Q}_p$ over the corresponding models. The next proposition will be helpful on this regard as well as for further arguments.
\begin{lemma}\label{projectionquotient}
Let $\mathbb{P},\mathbb{Q},\mathbb{C}$ be posets and $\pi:\mathbb{P}\rightarrow\mathbb{Q}$ and $\sigma:\mathbb{Q}\rightarrow\mathbb{C}$ be projections. For any generic filter $H\subseteq\mathbb{C}$, the restriction $\pi\upharpoonright\mathbb{P}/H$ is a projection between $\mathbb{P}/H$ and $\mathbb{Q}/H$ in $V[H]$.
\end{lemma}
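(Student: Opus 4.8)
The statement to prove is Lemma~\ref{projectionquotient}: given posets $\mathbb{P},\mathbb{Q},\mathbb{C}$ with projections $\pi:\mathbb{P}\rightarrow\mathbb{Q}$ and $\sigma:\mathbb{Q}\rightarrow\mathbb{C}$, for any generic $H\subseteq\mathbb{C}$ the restriction $\pi\upharpoonright\mathbb{P}/H$ is a projection between $\mathbb{P}/H$ and $\mathbb{Q}/H$ in $V[H]$.

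\medskip

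The plan is as follows. First I would fix the conventions: recall that $\mathbb{P}/H$ consists of those $p\in\mathbb{P}$ such that $(\sigma\circ\pi)(p)\in H$ (equivalently, $p$ is compatible with every element of the $\mathbb{C}$-generic — I would use whatever is the standard definition of quotient forcing in the paper's references, e.g.\ the ``$\sigma\circ\pi$ maps into the generic'' formulation), and similarly $\mathbb{Q}/H=\{q\in\mathbb{Q}\mid \sigma(q)\in H\}$, both ordered as suborders of $\mathbb{P}$ and $\mathbb{Q}$ respectively. The first thing to check is that $\pi\upharpoonright\mathbb{P}/H$ actually \emph{lands} in $\mathbb{Q}/H$: if $p\in\mathbb{P}/H$ then $(\sigma\circ\pi)(p)=\sigma(\pi(p))\in H$, so $\pi(p)\in\mathbb{Q}/H$. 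That is immediate.

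\medskip

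Next I would verify the two defining clauses of a projection for $\pi\upharpoonright\mathbb{P}/H$. \textbf{Order preservation and preservation of the top element} are inherited directly from the fact that $\pi$ is a projection on all of $\mathbb{P}$ (and $\one_{\mathbb{P}}\in\mathbb{P}/H$, $\pi(\one_\mathbb{P})=\one_\mathbb{Q}\in\mathbb{Q}/H$). The substantive clause is the \emph{mixing} (lifting) property: given $p\in\mathbb{P}/H$ and $q'\leq\pi(p)$ with $q'\in\mathbb{Q}/H$, I must produce $p'\leq p$ in $\mathbb{P}/H$ with $\pi(p')\leq q'$. Working in $V[H]$: since $\pi$ is a projection in $V$, there is (in $V$) some $p''\leq p$ in $\mathbb{P}$ with $\pi(p'')\leq q'$. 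The only thing that can go wrong is $p''\notin\mathbb{P}/H$, i.e.\ $(\sigma\circ\pi)(p'')\notin H$. To fix this I would argue by genericity: the set $D=\{p''\leq p\mid \pi(p'')\leq q'\}$ (which is nonempty in $V$) — more precisely I want to show that below $p$ there are conditions $p''$ with $\pi(p'')\leq q'$ \emph{and} $(\sigma\circ\pi)(p'')\in H$. Consider the set $E$ of $c\in\mathbb{C}$ such that either $c\perp (\sigma\circ\pi)(p')$ for every $p'\leq p$ with $\pi(p')\leq q'$, or $c\leq (\sigma\circ\pi)(p')$ for some such $p'$; a density argument using that $\sigma\circ\pi:\mathbb{P}\to\mathbb{C}$ is a projection shows $E$ is dense in $\mathbb{C}$ below $\sigma(q')$, and $\sigma(q')\in H$ since $q'\in\mathbb{Q}/H$. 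Meeting $E$ with $H$ and using that $(\sigma\circ\pi)(p)\in H$ so the ``incompatible'' alternative cannot occur below $(\sigma\circ\pi)(p)$ in $H$, we get the desired $p'\in\mathbb{P}/H$.

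\medskip

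The \textbf{main obstacle} is precisely this last density/genericity argument: one has to be careful that the lift $p''$ produced by the projection property of $\pi$ in $V$ can be chosen to remain in the quotient, and the clean way to see this is to run the density argument through the composite projection $\sigma\circ\pi$ rather than through $\pi$ alone. I expect everything else — well-definedness, order preservation, the top element — to be routine bookkeeping. I would then remark that this is standard and cite \cite[Proposition 4.7]{Ung} (as the excerpt already does) for a fully detailed treatment, keeping our presentation to the sketch above.
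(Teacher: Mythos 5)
Your proposal is correct and is essentially the standard argument: the paper itself omits the proof of this lemma and simply cites \cite[Proposition 4.7]{Ung}, and your route — checking that $\pi$ maps $\mathbb{P}/H$ into $\mathbb{Q}/H$ and then verifying the lifting clause by a density argument in $\mathbb{C}$ run through the composite projection $\sigma\circ\pi$ — is exactly that standard proof. One tiny repair: to exclude the ``incompatible'' alternative for the condition of $H$ meeting your set $E$, argue via $\sigma(q')\in H$ (any $c\in H$ is compatible with $\sigma(q')$, and below a common extension the density argument produces some $\sigma(\pi(p''))$ with $p''\leq p$, $\pi(p'')\leq q'$), since compatibility with $\sigma(\pi(p))$ alone does not rule it out; upward closure of $H$ then gives $\sigma(\pi(p''))\in H$, i.e.\ $p''\in\mathbb{P}/H$, as desired.
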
 

For $q\in S$, $q^*\in R$, hence $R\downarrow q^*$ is a generic filter for $\mathbb{R}\downarrow q^*$. Since there are projections $\pi_q$ between $\bar{\mathbb{R}}\downarrow\hat{q}$ and $\mathbb{R}_{\dot{q}}\downarrow q^*$ and $\pi^q$ between $\mathbb{R}_{\dot{q}}\downarrow q^*$ and $\mathbb{R}\downarrow q^*$, the previous lemma ensures that $\pi_q\upharpoonright \bar{\mathbb{R}}/R$ is a projection between $\bar{\mathbb{R}}/R$ and $\mathbb{R}_{\dot{q}}/R$. For each $q\in S$,  let $R_{q}\subseteq \mathbb{R}_{\dot{q}}\downarrow q^*$ be the generic filter over $V[R]$ induced by $\bar{R}$ and $\pi_q$. Analogously, let $U_q \subseteq \mathbb{U}_q$ be the generic filter over $V[G]$ induced by $R_q$ and the corresponding projection. 
\begin{remark}\label{RemarkInclusionGenerics}
\rm{$ $
\begin{enumerate} 
\item By Proposition  \ref{ProjectionsBetweenForcings}, $R_{q}\subseteq R_{q'}\subseteq R$, for each $q'\leq_{\mathbb{S}_{\lambda^+}} q$ in $S$. %In particular, $\bar{R}\subseteq \bigcup_{q\in S} R_q\subseteq R$. 
Moreover, for each $s\in \bar{\mathbb{R}}/R$, there is $p\in S$ such that $s\in R_p$ (see \cite[Lemma 3.8]{Sin}).
\item By Proposition \ref{ProjectionbetweenQs}, $U\subseteq U_q\subseteq {U}_{q'}$, each $q'\leq_{\mathbb{S}_{\lambda^+}}  q$ in $S$.% In particular, $U\subseteq \bigcup_{q\in S} U_q$.
%\item Any condition in $\bar{\mathbb{R}}/R$ can be indentified with a condition in $\mathbb{R}$ of the form $(p,\dot{q},r)$ where $p\in G$, $q\in S$ and $r\in U_q$.
\end{enumerate}
}
\end{remark}
Aiming for a contradiction, assume that $V[R]\models \neg \TP(\delta)$ and let  a $\delta$-tree $(T, <_T)\in V[R]$ be  witnessing this. For each $\alpha<\delta$, set $T_\alpha:=\{u\in T\mid \mathrm{level}(u)=\alpha\}$. Modulo isomorphism, we may assume $T_\alpha=\{\alpha\}\times\kappa$, for each $\alpha<\delta$.  Let $\tau\in V[G]$ be a ${\mathbb{R}}/G$-name for $T$ and assume that $\one_{\mathbb{R}/G}\Vdash_{\mathbb{R}/G}\text{$``\tau$ is a $\delta$-tree''}$. Analogously, let $\dot{T}\in V[G][U]$ and, for each $q\in \mathbb{S}_{\lambda^+}$, $\dot{T}_q\in V[G][U_q]$ be, respectively, the $\mathbb{S}_{\lambda^+}$-name for the tree $T$  induced by $\tau$. Notice that the interpretation of the names $\tau$, $\dot{T}$ and $\dot{T}_q$ by the corresponding generic filters gives the same set; i.e. $T$. Thus, the only formal difference between these names is the ground model where they are regarded.
 %be the $\mathbb{S}$-name for the tree $T$ induced by $\tau$ and %; namely, $$q\Vdash_{\mathbb{S}}^{V[G][U]} u<_{\dot{T}} v\,\Longleftrightarrow\, \exists p\in  G\,\exists r\in U\, (p,\dot{q},r)\Vdash_{\bar{\mathbb{R}}/G}^{V[G]} u<_\tau v.$$
%for each $q\in \mathbb{S}$ let  $\dot{T}_q\in V[G][U_q]$ be the $\mathbb{S}$-name for the tree $T$  induced by $\tau$. %; namely, for each $t\leq q$,
%$$ t\Vdash_{\mathbb{S}}^{V[G][U]} u<_{\dot{T}} v\,\Longleftrightarrow\, \exists p\in  G\,\exists r\in U\, (p,\dot{t},r)\Vdash_{\bar{\mathbb{R}}/G}^{V[G]} u<_\tau v.$$
\begin{defi}
For a condition $p\in \mathbb{S}_{\lambda^+}$, write $\mathfrak{m}^p:=\max(\dom(g^p))$.
Denote by $\mathcal{S}$ the set of pairs $(g, H^*)$ for which there is $p\in\mathbb{S}_{\lambda^+}$ with $p_{\upharpoonright\mathfrak{m}^p+1}=(g,H^*)$ (c.f. Definition \ref{SinapovaForcing}).
% $\{(g^*, H^*)\mid \exists (g,H)\in\mathbb{S}_{\lambda^+}\, g=g^*,\, (g,H)_{\upharpoonright \eta^*+1}=(g^*, H^*), \text{where $\eta^*:=\max(\dom(g))$ and $\dom(H^*)=\eta^*\setminus\dom(g^*)$}\}$.% 
We will consider $\mathcal{S}$ endowed with $\leq_{\mathcal{S}}$, the induced order by $\leq_{\mathbb{S}_{\lambda^+}}$: i.e. $(g,H^*)\leq_{\mathcal{S}} (i,I^*)$ iff there are $p,q\in\mathbb{S}_{\lambda^+}$ witnessings that $(g,H^*), (i,I^*)\in\mathcal{S}$ and $p\leq_{\mathbb{S}_{\lambda^+}} q$.

%If $(g^*, H^*), (i^*,I^*)\in\mathcal{S}$, we will write $(g^*, H^*)\leq_{\mathcal{S}} (i^*, I^*)$ if there are $(g,H)$ and $(i,I)$ witness for these pairs to be in $\mathcal{S}$ such that  $(g,H)\leq_{\mathbb{S}_{\lambda^+}} (i,I)$.
\end{defi}
The following property is implicitly considered in \cite{SinUncCof}.
\begin{defi}[Dagger property]\label{Dagger}
Work in $V[G][U]$.
For a pair $(g, H^*)\in\mathcal{S}$, we will say that $\dagger_{(g, H^*)}$ holds if there is $J\s \delta$ unbounded, $\langle p_\alpha\mid \alpha\in J\rangle$ a sequence of conditions in $\mathbb{S}_{\lambda^+}$ and $\xi<\kappa$  such that for each $\alpha\in J$ setting $u_\alpha:=\langle\alpha,\xi\rangle$, the following are true:
\begin{enumerate}
\item For each $\alpha\in J$, $p_\alpha$ witnesses that  $(g,H^*)\in\mathcal{S}$.
\item For each $\alpha<\beta$ in $J$, $p_\alpha\wedge p_\beta\Vdash^{V[G][U]}_{\mathbb{S}} u_\alpha<_{\dot{T}} u_\beta$.
%\item $|\{\alpha\in J\mid p_\alpha\in S\}|=\delta$.
\end{enumerate}
\end{defi}
Since $\mathbb{U}$ is $\delta$-directed closed (in $V$), $V[U]$ thinks that $\kappa$ is supercompact and  the same holds for the sequence $\langle\kappa_{\xi+1}\mid\xi<\mu\rangle$. %Also, $V[U]$ thinks that $\lambda$ is weakly compact.
 By appealing to the arguments of \cite[\S3]{SinUncCof} one has the following:
\begin{lemma}\label{DaggerSinapova}
In $V[G][U]$ the set $\{p\in\mathbb{S}_{\lambda^+}\mid \text{$\dagger_{p_{\upharpoonright\mathfrak{m}^p+1}}$ holds}\}$ is dense.
\end{lemma}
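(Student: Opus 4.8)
The plan is to adapt Sinapova's argument from \cite[\S3]{SinUncCof} to the present situation, in which the Sinapova forcing is preceded by the Cohen component $\mathbb{A}_{\lambda^{+}}$ and accompanied by the $\delta$-directed closed side forcing $\mathbb{U}$. Working in $V[G][U]$, I would fix an arbitrary $p^{*}\in\mathbb{S}_{\lambda^{+}}$ and aim to produce $q\leq_{\mathbb{S}_{\lambda^{+}}}p^{*}$ with $\dagger_{q_{\upharpoonright\mathfrak{m}^{q}+1}}$. The first step is a reduction: it suffices to produce a single $q\leq p^{*}$ together with an unbounded $J\subseteq\delta$, a family $\langle p_{\alpha}\mid\alpha\in J\rangle$ of conditions all having $q_{\upharpoonright\mathfrak{m}^{q}+1}$ as their initial segment (so $g^{p_{\alpha}}=g^{q}$, and $p_{\alpha}$ differs from $q$ only in its tail large sets), and ordinals $\langle\zeta_{\alpha}\mid\alpha\in J\rangle$ below $\kappa$, such that $p_{\alpha}\wedge p_{\beta}\Vdash_{\mathbb{S}_{\lambda^{+}}}\langle\alpha,\zeta_{\alpha}\rangle<_{\dot{T}}\langle\beta,\zeta_{\beta}\rangle$ whenever $\alpha<\beta$ in $J$. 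Indeed, since $\kappa<\delta$ the map $\alpha\mapsto\zeta_{\alpha}$ is constant, with value some $\xi<\kappa$, on an unbounded $J'\subseteq J$, and then $\langle p_{\alpha}\mid\alpha\in J'\rangle$ and $\xi$ witness $\dagger_{q_{\upharpoonright\mathfrak{m}^{q}+1}}$, because passing to $J'$ preserves clause~(2) of Definition~\ref{Dagger} (a pairwise requirement).

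To obtain such a family I would invoke the supercompactness of the $\kappa_{\xi+1}$'s. Since $\mathbb{U}\in V$ is $\delta$-directed closed, $\mathbb{A}_{\lambda^{+}}=\Add(\kappa,\lambda^{+})$ is ${<}\kappa$-directed closed, and $\delta>\kappa_{\xi+1}$ for each $\xi<\mu$, every $\kappa_{\xi+1}$ remains supercompact in $V[G][U]$. Choosing $\eta<\mu$ above $\mathfrak{m}^{p^{*}}$ (and above the technical bound of Definition~\ref{SinapovaForcing}(2)), I would fix in $V[G][U]$ an embedding $j\colon V[G][U]\to M$ with $\crit(j)=\kappa_{\eta+1}$, with $M$ closed under $\Theta'$-sequences for $\Theta'$ large enough to absorb $\dot{T}$, $\mathbb{S}_{\lambda^{+}}$ and $\mathbb{U}$, and with $j(\kappa_{\eta+1})>\Theta'$. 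Here $\kappa<\crit(j)$, so $j(\kappa)=\kappa$, $j(\mu)=\mu$ and $j\upharpoonright\kappa_{\eta+1}$ is the identity, while $j$ moves $\kappa_{\xi}$ for $\xi>\eta$. This $j$ is manufactured from a fully supercompact embedding of $V[U]$, lifted through $\mathbb{A}_{\lambda^{+}}$ (building an $M$-generic for the ${<}\kappa$-closed quotient $\Add(\kappa,j(\lambda^{+})\setminus\lambda^{+})$ over $M$ by the usual closure argument) and through $\mathbb{U}$ (via a master condition $\bigwedge j"U\in j(\mathbb{U})$, which exists by $j(\delta)$-directed closure of $j(\mathbb{U})$ together with the supercompactness of $j$).

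The heart of the matter is the analysis of $j(\mathbb{S}_{\lambda^{+}})$ and $j(\dot{T})$ in $M$. Fixing $S\subseteq\mathbb{S}_{\lambda^{+}}$ generic with $p^{*}\in S$, I would lift $j$ to $j^{*}\colon V[G][U][S]\to M[S^{*}]$ with $j"S\subseteq S^{*}$: using the factorization of $\mathbb{S}_{\lambda^{+}}$ below $p^{*}$ (Proposition~\ref{FactorizationSinapova}), the ${<}\kappa$-closure of $\leq^{*}$, and the Prikry property of $j(\mathbb{S}_{\lambda^{+}})$, the missing $M[S]$-generic for the quotient forcing $j(\mathbb{S}_{\lambda^{+}})/S$ is built by a $\leq^{*}$-recursion of length ${<}\kappa$ below a master condition extending the tail of $p^{*}$. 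Since $\cof(\delta)=\delta>\crit(j)$ and $M$ is closed under $\delta$-sequences, one has $\sup j"\delta<j(\delta)$, so the $j(\delta)$-tree $j(\dot{T})[S^{*}]$ has a node $v$ on level $\sup j"\delta$; for $\alpha<\delta$ its predecessor on level $j(\alpha)$ is of the form $\langle j(\alpha),\zeta_{\alpha}\rangle$ with $\zeta_{\alpha}<\kappa$ (as $j$ fixes $\zeta_{\alpha}$), and by elementarity of $j^{*}$ we get $\langle\alpha,\zeta_{\alpha}\rangle<_{\dot{T}[S]}\langle\beta,\zeta_{\beta}\rangle$ for all $\alpha<\beta<\delta$, i.e.\ the extension of $V[G][U][S]$ by $j(\mathbb{S}_{\lambda^{+}})/S$ contains a cofinal branch of $\dot{T}[S]$. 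As in \cite[\S3]{SinUncCof} one then reads off, from a condition of $j(\mathbb{S}_{\lambda^{+}})/S$ forcing this branch and from the closure and chain-condition analysis of that quotient below $p^{*}$ (which below $p^{*}$ is itself a Sinapova forcing whose critical cardinal is $\kappa_{\eta+1}$), the required conditions $p_{\alpha}\leq p^{*}$ and ordinals $\zeta_{\alpha}$ already in $V[G][U]$; after extending $p^{*}$ once more so that all the $p_{\alpha}$ share a common initial segment $q_{\upharpoonright\mathfrak{m}^{q}+1}$, the reduction of the first paragraph finishes the argument.

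The step I expect to be the main obstacle is exactly this transfer: constructing the lift $j^{*}$ together with the master condition extending the tail of $p^{*}$, and verifying that $j(\mathbb{S}_{\lambda^{+}})/S$ retains enough closure and a strong enough chain condition below $p^{*}$ for the thread-data to descend to $V[G][U]$. This is the point where the indestructible supercompactness of $\kappa_{\eta+1}$ (and not merely of $\kappa$) is used, and where the interaction of Sinapova forcing with $\mathbb{A}_{\lambda^{+}}$ and $\mathbb{U}$ must be controlled; the relevant bookkeeping is carried out in \cite[\S3]{SinUncCof}, \cite{SinTree} and \cite{Nee}, and the task here is to check that it goes through in $V[G][U]$.
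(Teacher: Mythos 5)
Your overall strategy (reduce to a pairwise-coherent family with constant $\xi$ by pigeonhole, then run Sinapova's argument from \cite[\S3]{SinUncCof} using a supercompact embedding with critical point some $\kappa_{\eta+1}$ above the stem) is indeed the route the paper takes --- the paper's own ``proof'' is essentially the observation that $\mathbb{U}$ is $\delta$-directed closed plus a citation of \cite[\S3]{SinUncCof}. However, your proposal rests on a claim that is false: that each $\kappa_{\xi+1}$ remains supercompact in $V[G][U]$, so that you may ``fix in $V[G][U]$'' an embedding $j\colon V[G][U]\to M$ with $\crit(j)=\kappa_{\eta+1}$ and $M$ closed under $\Theta'$-sequences. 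In $V[G]$ (hence in $V[G][U]$) one has $2^{\kappa}=\lambda^{+}>\varepsilon>\kappa_{\eta+1}$, so $\kappa_{\eta+1}$ is not even a strong limit there; consequently it is not measurable, and no elementary embedding of $V[G][U]$ with critical point $\kappa_{\eta+1}$ can exist in $V[G][U]$, regardless of closure. Laver indestructibility does not help: it covers $\kappa_{\xi+1}$-directed closed forcing such as $\mathbb{U}$, but $\mathbb{A}_{\lambda^{+}}=\Add(\kappa,\lambda^{+})$ is only ${<}\kappa$-directed closed. Your proposed repair --- lifting an embedding of $V[U]$ through $\mathbb{A}_{\lambda^{+}}$ by ``the usual closure argument'' for the quotient $\Add(\kappa,j(\lambda^{+})\setminus\lambda^{+})$ --- cannot be carried out inside $V[G][U]$: besides the fact that $j[G]$ is supported on $j[\lambda^{+}]$ rather than on $\lambda^{+}$, an $M$-generic for that quotient would produce more than $\lambda^{+}=2^{\kappa}$ pairwise distinct subsets of $\kappa$, and in any case a $\kappa$-closed poset cannot be threaded through the $j(\lambda^{+})$-many dense sets by a closure argument.

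The correct setting, and the one the paper relies on (see the footnote in Section 2 and the sentence preceding Lemma \ref{DaggerSinapova}), is that the $\kappa_{\xi+1}$'s remain supercompact in $V[U]$, since $\mathbb{U}\in V$ is $\delta$-directed closed and the preparation made them indestructible. Sinapova's argument is then run over $V[U]$, treating $\mathbb{A}_{\lambda^{+}}\ast\dot{\mathbb{S}}_{\lambda^{+}}$ (below the given condition) at the level of names: the embedding is only lifted in an outer generic extension (one forces with $j(\mathbb{A}_{\lambda^{+}})$-type forcing, using that $j\upharpoonright\mathbb{A}_{\lambda^{+}}$ is well behaved because $\mathbb{A}_{\lambda^{+}}$ is $\kappa^{+}$-cc and $\crit(j)>\kappa$), the branch through $j(\dot{T})$ at level $\sup j''\delta$ is found there, and the family of conditions and ordinals witnessing $\dagger$ is pulled back into $V[G][U]$ using the chain condition of $\mathbb{A}_{\lambda^{+}}$, the Prikry property and $\leq^{*}$-closure of the Sinapova poset, and the $\delta$-distributivity of $\mathbb{U}$ over $V[G]$. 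Your final paragraph correctly identifies this transfer as the main obstacle, but as written the proposal attempts it from an embedding that cannot exist, so the argument has a genuine gap at its foundation.
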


An immediate consequence of the previous lemma is the existence of a cofinal branch of $T$ in $V[\bar{R}]$ (see \cite[Proposition 21]{SinUncCof} and the subsequent discussion).
\begin{prop}
There is a cofinal branch $b\in V[\bar{R}]$ through $T$. 
\end{prop}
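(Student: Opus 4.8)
The plan is to exploit Lemma~\ref{DaggerSinapova} together with the $\delta$-directed closure of $\mathbb{U}$ to produce, inside $V[\bar R]$, a cofinal branch through $T$. Since $\{p\in\mathbb{S}_{\lambda^+}\mid \dagger_{p_{\upharpoonright\mathfrak{m}^p+1}}\text{ holds}\}$ is dense in $\mathbb{S}_{\lambda^+}$ over $V[G][U]$, and $S\subseteq\mathbb{S}_{\lambda^+}$ is generic over $V[G][U]$ (recall $\bar R$ induces $S$ over $V[G]$, and $V[G][U]\subseteq V[\bar R]$ with the same cardinals up to $\delta$ since $\mathbb{U}$ is $\delta$-directed closed), we may fix $q\in S$ witnessing the density statement; that is, setting $(g,H^*):=q_{\upharpoonright\mathfrak{m}^q+1}$, the property $\dagger_{(g,H^*)}$ holds in $V[G][U]$. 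Let $J\subseteq\delta$ unbounded, $\langle p_\alpha\mid \alpha\in J\rangle$ and $\xi<\kappa$ be the witnesses from Definition~\ref{Dagger}, so that for $\alpha<\beta$ in $J$, $p_\alpha\wedge p_\beta\Vdash^{V[G][U]}_{\mathbb{S}_{\lambda^+}} u_\alpha<_{\dot T} u_\beta$, where $u_\alpha=\langle\alpha,\xi\rangle$.

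The key point is that all the conditions $p_\alpha$ share the stem $g$; more precisely, each $p_\alpha$ has the form $(g,H^*_\alpha)$ for a single common initial segment $(g,H^*)$ of the stem, and any two of them are $\wedge$-compatible (cf.\ Definition~\ref{OrderSinapova}, noting that conditions with the same stem are always compatible via the pointwise intersection of their large sets). Since $S$ is generic and $q\in S$ witnesses that $(g,H^*)\in\mathcal{S}$, one argues that the generic sequence $g^*$ induced by $S$ extends $g$ on its common part; then by genericity of $S$ (using the Prikry-type density arguments already developed, or directly that $S(g^*)=S$ as in Proposition~\ref{RecoveringGenerics}), for each $\alpha\in J$ there is a condition in $S$ below $p_\alpha$, or at least below $p_\alpha\wedge p_\beta$ for cofinally many pairs. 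Hence for cofinally many $\alpha<\beta$ in $J$, the forcing relation $u_\alpha<_{\dot T} u_\beta$ is realized in $V[\bar R]$, since $\dot T$ and the full generic extension $V[\bar R]$ interpret the tree as the same set $T$ (recall the names $\tau$, $\dot T$, $\dot T_q$ all yield $T$). Because $<_T$ is a tree order, the set $\{u_\alpha\mid \alpha\in J'\}$ for the relevant cofinal $J'\subseteq J$ is linearly ordered by $<_T$, and its downward closure is a branch meeting cofinally many levels of $T$; since $T$ is a $\delta$-tree this branch is cofinal.

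The main obstacle — and the step requiring the most care — is the passage from ``$p_\alpha\wedge p_\beta$ forces $u_\alpha<_{\dot T} u_\beta$ over $V[G][U]$'' to ``$u_\alpha<_T u_\beta$ holds in $V[\bar R]$''. This requires checking that sufficiently many of the conditions $p_\alpha\wedge p_\beta$ actually lie in the generic $S$ (or have extensions in $S$), which in turn uses that they all share the stem-segment $(g,H^*)$ witnessed by $q\in S$, together with the genericity of $S$ over $V[G][U]$ to meet the dense sets $D_{\alpha,\beta}:=\{p\in\mathbb{S}_{\lambda^+}\downarrow q\mid p\leq p_\alpha\wedge p_\beta\text{ or }p\perp p_\alpha\wedge p_\beta\}$ — and arguing that the second alternative can be avoided cofinally often by a counting/pressing-down argument on $J$, exactly as in \cite[Proposition~21]{SinUncCof} and the subsequent discussion. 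Once this is in place, the branch is constructed in $V[\bar R]$ as indicated, completing the proof.

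\medskip

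\noindent\emph{Remark.} Note this branch lives in $V[\bar R]$, not yet in $V[R]$; obtaining one in $V[R]$ (which is what $\TP(\kappa^+)$ in $V[R]$ demands) is carried out in the subsequent part of the argument, using the system of projections $\langle \mathbb{R}_p,\mathbb{U}_p\mid p\in\mathbb{S}_{\lambda^+}\rangle$ from Proposition~\ref{ProjectionsBetweenForcings} to reflect the branch down.
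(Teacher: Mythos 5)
Your opening move (use Lemma~\ref{DaggerSinapova} plus genericity of $S$ over $V[G][U]$ to fix $q\in S$ whose lower part $(g,H^*)$ satisfies $\dagger$) is the right start, and this is indeed where the paper points (it gives no proof, deferring to \cite{SinUncCof}). The gap is at your central step: the claim that, for the fixed family $\langle p_\alpha\mid\alpha\in J\rangle$, cofinally many $p_\alpha$ (or $p_\alpha\wedge p_\beta$) are realized in $S$. Meeting your sets $D_{\alpha,\beta}$ gives nothing, since they are dense only because of the incompatibility alternative, and no ``counting/pressing-down on $J$'' rules that alternative out: by the geometric criterion (Proposition~\ref{GenericityCriterionFirstImplication}, Theorem~\ref{CriterionGenericity}), $p_\alpha\in S=S(g^*)$ requires $g^*(\eta)\in H^{p_\alpha}(\eta)$ at \emph{every} coordinate above the stem, whereas genericity only guarantees this eventually; the measures are merely $\kappa$-complete while $|J|=\delta$, so no single condition forces unboundedly many members of a fixed $\delta$-sized family into $\dot{S}$, and nothing prevents one new entry of the generic sequence at one coordinate from lying outside every $H^{p_\alpha}$ at that coordinate, which kills the whole fixed family at once. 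So the mechanism you propose cannot be completed from the dagger property as stated, and it is not how the cited argument runs.

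The actual route (the content of \cite[Proposition 21]{SinUncCof} and its discussion) does not try to push a fixed family into the generic. Fix in $V[G][U]$, for every lower part $h$ for which $\dagger_h$ holds, one family of witnesses $J_h,\langle p^h_\alpha\rangle,\xi_h$. For $\gamma<\delta$ the set of conditions $s$ which lie below $p^{h}_{\alpha}\wedge p^{h}_{\beta}$ for some $\gamma<\alpha<\beta$ in $J_h$, where $h$ is the lower part of $s$ itself, is dense: given $t$, first pass to $t'\leq t$ with $\dagger$ for its lower part $h'$ (Lemma~\ref{DaggerSinapova}), and then $t'\wedge p^{h'}_\alpha\wedge p^{h'}_\beta$ works, because all three conditions have the same stem (hence are compatible via pointwise intersection of large sets) and the meet has lower part $h'$ again. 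The generic $S$ meets each of these dense sets; since there are fewer than $\delta$ many lower parts (and $\kappa<\delta$ many possible $\xi$'s --- the same counting used later in Lemma~\ref{MainSplittingLemma}), a pigeonhole in $V[\bar{R}]$ produces one lower part $h^*$, its fixed family, and an unbounded set of indices $\alpha$ such that some $s\in S$ lies below $p^{h^*}_{\alpha}\wedge p^{h^*}_{\beta}$ for suitable $\beta$. For any two such indices the witnessing conditions have a common extension in $S$, which lies below the corresponding pairwise meet, so the forced comparability of the nodes is actually true in $V[\bar{R}]$; the downward closure of this chain is the desired cofinal branch. Your proposal, by contrast, leaves precisely this realization step unproved, so as it stands it restates the difficulty rather than resolving it.
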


Now we are left to prove that $b$ induces a cofinal branch for $T$ in $V[R]$. Let $\dot{b}$ be a $\bar{\mathbb{R}}/G$-name for $b$. Moreover, let us assume that
$(\one_\mathbb{S}, \one_\mathbb{U})\Vdash_{\mathbb{S}\times\mathbb{U}}^{V[G]} \text{``$\dot{b}$ cofinal branch in $\tau$''}.$ We will need to consider a minor variation of the property $\dagger_h$ of \cite[Definition 3.3]{SinTree}.

\begin{notation}
\rm{
Work in $V[G]$. For a pair $(g, H^*)\in\mathcal{S}$, denote by $E_{(g,H^*)}$ the set of $u\in T$ for which there are $(q,r)\in \mathbb{S}\times \mathbb{U}$ such that $q$ witnesses $(g,H^*)\in\mathcal{S}$, $r\in U$ and $(q,r)\Vdash^{V[G]}_{\mathbb{S}\times\mathbb{U}} u\in \dot{b}$.}
\end{notation}

\begin{defi}\label{h-splitting}
Work in $V[G]$. For a pair $(g, H^*)\in\mathcal{S}$ and $\alpha<\delta$,  we say that there is a $(g, H^*)$-splitting at $u\in T_\alpha\cap E_{(g, H^*)}$ if, provided that $(q,r)$ witnesses $u\in E_{(g, H^*)}$, there are $\beta\geq  \alpha$, $v_1,v_2\in T_\beta$ and $r_1,r_2\leq_\mathbb{U} r$ in $U_q$ be such that 
\begin{itemize}
\item $(q,r_k)\Vdash_{\mathbb{S}\times \mathbb{U}}^{V[G]} v_k\in \dot{b}$, $k\in\{0,1\}$,
\item $q\Vdash_{\mathbb{S}}^{V[G][U]}v_1\perp_{\dot{T}} v_2$.
\end{itemize}
\end{defi}
\begin{remark}\label{RemarkOnSplittings}
\rm{
If there is a $(g,H^*)$-splitting at $u$ and $(g,I^*)\in\mathcal{S}$ then there is $(g,F^*)\leq_{\mathcal{S}} (g,I^*), (g,H^*)$ and a $(g,F^*)$-splitting at $u$. Indeed, let $q,r,v_1,v_2,r_1$ and $r_2$ witnessing the existence of a $(g,H^*)$-splitting at $u$. Now set $q^*:=(g,F)$, where 
$$F(\eta):=\begin{cases}
H^*(\eta)\cap I^*(\eta) , & \text{if $\eta\in\mathfrak{m}^p\setminus \dom(g^*)$};\\
H^q(\eta), & \text{$\mathfrak{m}^p<\eta$}.
\end{cases}
$$}
Set $F^*:=F\upharpoonright\mathfrak{m}^p+1$. Clearly $q^*\leq_{\mathbb{S}_{\lambda^+}} q$. By Remark \ref{RemarkInclusionGenerics}, $r_1,r_2\in U_{q^*}$. Evidently, $q^*,r,v_1,v_2,r_1$ and $r_2$ witness a $(g,F^*)$-splitting at $u$ and $(g,F^*)\leq_{\mathcal{S}} (g,I^*), (g,H^*)$. The same is true for $(g,F^*)=(g,I^*)$ if $(g,I^*)\leq_{\mathcal{S}} (g,H^*)$.
%\footnote{A similar argument shows that if $(g,H^*)$-splitting at $u$ and $(i,I^*)\leq_{\mathcal{S}} (g,H^*)$ then there is also a $(i,I^*)$-splitting at $u$.  }
\end{remark}
This remark suggest the following definition:
\begin{defi}
Work in $V[G]$. For a stem $g$, we will say that there is a $g$-splitting at $u$ if there is some $(g,H^*)$-splitting at $u$, for some $(g,H^*)\in \mathcal{S}$.
\end{defi}

%By Remark \ref{RemarkInclusionGenerics}, if there is a $(g^*,H^*)$-splitting at $u$ and $(i^*,I^*)\leq_{\mathcal{S}} (g^*,H^*)$, then there is also a $(i^*,I^*)$-splitting at $u$.

\begin{defi}
Work in $V[G][U]$. For a pair $(g, H^*)\in\mathcal{S}$ we will say that $\dagger^b_{(g, H^*)}$ holds if there is $J\s \delta$ unbounded, $\langle p_\alpha\mid \alpha\in J\rangle$ a sequence of conditions in $\mathbb{S}_{\lambda^+}$ and $\xi<\kappa$ such that for each $\alpha\in J$ setting $u_\alpha:=\langle\alpha,\xi\rangle$, the following are true:
\begin{enumerate}
\item For each $\alpha\in J$, $p_\alpha$ witnesses that  $(g,H^*)\in\mathcal{S}$.
\item For each $\alpha\in J$, $p_\alpha\Vdash^{V[G][U]}_{\mathbb{S}_{\lambda^+}} u_\alpha\in\dot{b}$.
\item For each $\alpha<\beta$ in $J$, $p_\alpha\wedge p_\beta\Vdash^{V[G][U]}_{\mathbb{S}_{\lambda^+}} u_\alpha<_{\dot{T}} u_\beta$.
%\item $|\{\alpha\in J\mid p_\alpha\in S\}|=\delta$.
\end{enumerate}
\end{defi}
 A straightforward modification of the arguments involved in the proof of Lemma \ref{DaggerSinapova} yields that $\{p\in\mathbb{S}_{\lambda^+}\mid \text{$\dagger^b_{p_{\upharpoonright\mathfrak{m}^p+1}}$ holds}\}$ is dense.
  
\begin{remark}\label{RemarkOnDagger}
\rm{ If $(g,I^*) \in\mathcal{S}$ and $\dagger^b_{(g,H^*)}$ holds then there is $(g,F^*)\leq_{\mathcal{S}} (g,I^*), (g,H^*)$ for which $\dagger^b_{(g,F^*)}$ holds. Indeed, let $J\s \delta$, $\langle p_\alpha\mid \alpha\in J\rangle$ and $\xi<\kappa$ witnessing $\dagger^b_{(g,H^*)}$. For each $\alpha\in J$, define $q_\alpha:=(g,F_\alpha)$, where $F_\alpha$ is defined as in Remark \ref{RemarkOnSplittings} but with respect to $H^{p_\alpha}\setminus \mathfrak{m}^{p_\alpha}+1$ rather than $H^p\setminus \mathfrak{m}^{p}+1$. It is obvious that $J$, $\langle q_\alpha\mid \alpha\in J\rangle$ and $\xi<\kappa$ are witness for $\dagger^b_{(g,F^*)}$. The same is true for $(g,F^*)=(g,I^*)$ if $(g,I^*)\leq_{\mathcal{S}} (g,H^*)$.
 }
 
\end{remark}
\begin{defi}
Work in $V[G][U]$. For a stem $g$, we will say that $\dagger^b_g$ holds if $\dagger^b_{(g,H^*)}$ holds, for some $(g,H^*)\in\mathcal{S}$. Define 
$$\alpha_{(g,H^*)}:=\sup \{\alpha<\delta\mid \exists u\in T_\alpha\cap E_{(g,H^*)}\,\text{and there is $(g,H^*)$-splitting at $u$}\},$$
and set $\alpha_g:=\sup\{\alpha_{(g,H^*)}\mid \exists H^*\,(g,H^*)\in\mathcal{S}\}$.
\end{defi}
By a very similar argument to Remark \ref{RemarkOnSplittings} if $(g,I^*)\leq_{\mathcal{S}} (g,H^*)$, then every $(g,H^*)$-splitting at some $u$ yields a $(g,I^*)$-splitting at $u$, and thus $\alpha_{(g,H^*)}\leq \alpha_{(g,I^*)}$.
\begin{lemma}\label{SplittingImpliesHsplitting}
If there is a $g$-splitting at $u$ then there is some stem $i\supseteq g$ for which there is a $i$-splitting at $u$ and $\dagger^b_i$ holds.
\end{lemma}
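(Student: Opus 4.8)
The plan is to combine the splitting data furnished by the hypothesis with the density, recorded just above the lemma, of the set of $p\in\mathbb{S}_{\lambda^+}$ with $\dagger^b_{p_{\upharpoonright\mathfrak{m}^p+1}}$, and to observe that an $(g,H^*)$-splitting at $u$ is inherited by any $\mathbb{S}_{\lambda^+}$-stronger condition.

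First I would unwind the hypothesis. Since there is a $g$-splitting at $u$, I fix $(g,H^*)\in\mathcal{S}$ together with data witnessing an $(g,H^*)$-splitting at $u$: writing $u\in T_\alpha$, a condition $q\in\mathbb{S}_{\lambda^+}$ witnessing $(g,H^*)\in\mathcal{S}$ (so $g^q=g$ and $q_{\upharpoonright\mathfrak{m}^q+1}=(g,H^*)$), an $r\in U$ with $(q,r)\Vdash^{V[G]}_{\mathbb{S}_{\lambda^+}\times\mathbb{U}}u\in\dot{b}$, an ordinal $\beta\geq\alpha$, nodes $v_1,v_2\in T_\beta$, and $r_1,r_2\leq_{\mathbb{U}}r$ in $U_q$ with $(q,r_k)\Vdash^{V[G]}_{\mathbb{S}_{\lambda^+}\times\mathbb{U}}v_k\in\dot{b}$ for $k\in\{1,2\}$ and $q\Vdash^{V[G][U]}_{\mathbb{S}_{\lambda^+}}v_1\perp_{\dot{T}}v_2$.

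Next I would apply density of the $\dagger^b$-set below $q$ to get $p\leq_{\mathbb{S}_{\lambda^+}}q$ with $\dagger^b_{p_{\upharpoonright\mathfrak{m}^p+1}}$ holding, and set $i:=g^p$ and $F^*:=H^p\upharpoonright\mathfrak{m}^p+1$, so that $p_{\upharpoonright\mathfrak{m}^p+1}=(i,F^*)\in\mathcal{S}$ (witnessed by $p$ itself), $i=g^p\supseteq g^q=g$, and, by definition of $\dagger^b_i$, $\dagger^b_i$ holds. It then remains to check that $(p,r,\beta,v_1,v_2,r_1,r_2)$ witnesses an $(i,F^*)$-splitting at $u$, and for this I transport each clause of Definition~\ref{h-splitting} along $p\leq_{\mathbb{S}_{\lambda^+}}q$: $p$ witnesses $(i,F^*)\in\mathcal{S}$; from $(p,r)\leq_{\mathbb{S}_{\lambda^+}\times\mathbb{U}}(q,r)$ we get $(p,r)\Vdash^{V[G]}_{\mathbb{S}_{\lambda^+}\times\mathbb{U}}u\in\dot{b}$, so $u\in T_\alpha\cap E_{(i,F^*)}$ and $(p,r)$ witnesses $u\in E_{(i,F^*)}$; by Proposition~\ref{ProjectionbetweenQs} one has $U_q\subseteq U_p$, so $r_1,r_2\in U_p$ with still $r_1,r_2\leq_{\mathbb{U}}r$; from $(p,r_k)\leq_{\mathbb{S}_{\lambda^+}\times\mathbb{U}}(q,r_k)$ we get $(p,r_k)\Vdash v_k\in\dot{b}$; and from $p\leq_{\mathbb{S}_{\lambda^+}}q$ we get $p\Vdash^{V[G][U]}_{\mathbb{S}_{\lambda^+}}v_1\perp_{\dot{T}}v_2$. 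Hence there is an $(i,F^*)$-splitting at $u$, so an $i$-splitting at $u$, and $\dagger^b_i$ holds, as wanted.

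I do not expect a serious obstacle; the argument is essentially bookkeeping. The only point that deserves care is the inheritance of the splitting data along $p\leq_{\mathbb{S}_{\lambda^+}}q$, which relies on the inclusion $U_q\subseteq U_p$ from Proposition~\ref{ProjectionbetweenQs} (equivalently Remark~\ref{RemarkInclusionGenerics}(2)) together with monotonicity of the forcing relation. If one insists on working with witnesses lying in the generic filter $S$, one should additionally take $q\in S$ and choose $p\in S$ with $p\leq_{\mathbb{S}_{\lambda^+}}q$ inside the dense $\dagger^b$-set; the remaining steps are unchanged.
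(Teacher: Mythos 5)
Your proposal is correct and follows essentially the same route as the paper: extend the witness $q$ of the splitting to a condition $p\leq_{\mathbb{S}_{\lambda^+}}q$ in the dense set where $\dagger^b_{p_{\upharpoonright\mathfrak{m}^p+1}}$ holds, set $(i,F^*):=p_{\upharpoonright\mathfrak{m}^p+1}$, and transfer the splitting data $(r,v_1,v_2,r_1,r_2)$ to $p$ using Remark~\ref{RemarkInclusionGenerics} (i.e.\ $U_q\subseteq U_p$) together with monotonicity of forcing. The paper's proof is exactly this bookkeeping, so no further comparison is needed.
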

\begin{proof}
Let $u$ be some node where a $(g,H^*)$-splitting occurs, for some $H^*$. Say $(q,r)\Vdash^{V[G]}_{\mathbb{S}\times\mathbb{U}} u \in\dot{b}$, $(q,r_k)\Vdash^{V[G]}_{\mathbb{S}\times\mathbb{U}}v_k\in\dot{b}$, $r_k\leq_\mathbb{U} r$ and $r_k\in U_q$, for $k\in\{0,1\}$. By previous comments, find $\tilde{q}\leq_{\mathbb{S}_{\lambda^+}} q$ for which $\dagger^b_{\tilde{q}_{\upharpoonright \mathfrak{m}^{\tilde{q}}+1}}$ holds. Set $(i,I^*):=\tilde{q}_{\upharpoonright \mathfrak{m}^{\tilde{q}}+1}$. Hence, $\dagger^b_i$ holds.  By Remark \ref{RemarkInclusionGenerics}, $r_0,r_1\in U_{\tilde{q}}$. Clearly,  $\tilde{q}, r, v_1,v_2, r_1$ and $r_2$ witness the existence of a $(i,I^*)$-splitting at $u$.
 %$\alpha<\delta$ and $s\in  \bar{\mathbb{R}}/R$ be such that $s\Vdash_{\bar{\mathbb{R}}/R}^{V[R]} u\in \dot{b}\cap T_\alpha$. Let $\beta>\alpha$, $v_1, v_2\in T_\beta$, $v_1\neq v_2$, and $s_1,s_2\leq_{\bar{\mathbb{R}}/R} s$ such that $s_k\Vdash_{\bar{\mathbb{R}}/R}^{V[R]} v_k\in \dot{b}\cap T_\beta$, $k\in\{1,2\}$. Say $s_1=\langle (p_1,\dot{q}_1), (1,1,r_1)\rangle$ and $s_2=\langle (p_2,\dot{q}_2), (1,1,r_2)\rangle$. Using lemma \ref{UnionOfGenerics} and remark \ref{RemarkInclusionGenerics} we can find some condition $q\in S$ such that $s_1,s_2\in R_q$ and moreover, since $s_1$ and $s_2$ are compatible and thus $q_1$ and $q_2$ have the same stem, that $q\leq_{\mathbb{S}} q_1, q_2$ and $r_1,r_2\in U_q$. Since $(\dagger)_h$ holds for densely many $h$ (see discussion after definition \ref{Daggerh}) we may take the stem of $q$ in such a way that this principle holds. Set $h$ be this stem and $s'_1=\langle (p_1,\dot{q}), (1,1,r_1)\rangle$, $s'_2=\langle (p_2,\dot{q}), (1,1,r_2)\rangle$, noticing that $s'_1, s'_2\in \bar{\mathbb{R}}/R$. Finally it is not hard to check that $s'_1, s'_2, v_1$ and $v_2$ witness that there is an $h$-splitting at $u$, for some stem $h$ where $(\dagger)_h$ holds.
\end{proof}
Now we need to show that if  $\dagger^b_{g}$ holds then  $\alpha_{g}<\delta$. This is essentially what is proved in \cite[Proposition 3.4]{SinTree} for Gitik-Sharon forcing. We will give some details just to convince the reader that the same arguments also work for Sinapova forcing.

\begin{lemma}\label{MainSplittingLemma}
In $V[G][U]$, for each stem $g$, if $\dagger^b_g$ holds then $\alpha_g<\delta$.
\end{lemma}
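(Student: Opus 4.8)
The plan is to adapt the argument of \cite[Proposition~3.4]{SinTree} to Sinapova forcing, the role played in the Gitik--Sharon model by a specific computation of cardinal arithmetic being played here by the inequality $2^\kappa=\Theta>\delta$, which holds in $V[G][U]$: indeed $\mathbb{A}_{\lambda^+}$ forces $2^\kappa=\Theta$, the forcing $\mathbb{U}$ is $\delta$-directed closed and hence adds no new subsets of $\kappa$, and $\lambda$ is inaccessible in $V$, hence a limit cardinal $>\varepsilon$, so $\Theta\geq\lambda>\varepsilon^+=\delta$.

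Suppose, towards a contradiction, that $\dagger^b_g$ holds but $\alpha_g=\delta$. Using the $\dagger^b$-variant of Lemma~\ref{DaggerSinapova} and the monotonicity remarks preceding the statement, fix $(g,H^*)\in\mathcal{S}$ witnessing $\dagger^b_{(g,H^*)}$ and set $\mathfrak{m}:=\max(\dom(g))$. I would build, by recursion on $\nu\leq\kappa$, a \emph{perfect tree of conditions}: a system $\langle (q_s,r_s),\,w_s,\,H^*_s\mid s\in 2^{\leq\kappa}\rangle$ such that each $q_s$ has stem $g$ and $(q_s)_{\upharpoonright\mathfrak{m}+1}=(g,H^*_s)$; whenever $s\subseteq s'$ we have $(g,H^*_{s'})\leq_{\mathcal{S}}(g,H^*_s)$, $q_{s'}\leq_{\mathbb{S}}q_s$ and $r_{s'}\leq_{\mathbb{U}}r_s$; each $r_s$ belongs to $U_{q_s}$ and $(q_s,r_s)\Vdash^{V[G]}_{\mathbb{S}\times\mathbb{U}}w_s\in\dot b$; and for each $s$ there is a $(g,H^*_s)$-splitting at $w_s$ whose two outcomes are $w_{s{}^\smallfrown 0},w_{s{}^\smallfrown 1}$, lying at a common level $\beta_s$, with $q_s\Vdash^{V[G][U]}_{\mathbb{S}}w_{s{}^\smallfrown 0}\perp_{\dot T}w_{s{}^\smallfrown 1}$. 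At a successor step one uses $\alpha_g=\delta$: if $w_s$ has level $\alpha<\delta$, pick $(g,K^*)\in\mathcal{S}$ admitting a splitting at some node of level $\geq\alpha$ lying $<_{\dot T}$-above $w_s$, and apply Remark~\ref{RemarkOnSplittings} to descend below both $(g,K^*)$ and $(g,H^*_s)$ to a common $(g,H^*_{s{}^\smallfrown i})$ witnessing a splitting at $w_s$; the two resulting nodes, together with the associated $\mathbb{U}$-conditions (further shrunk inside $U_{q_s}$ so as to decide the relevant initial segment of $\dot b$), become $w_{s{}^\smallfrown i}$ and $r_{s{}^\smallfrown i}$. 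At a limit $\nu\leq\kappa$ along a branch $s$ one lets $r_{s\upharpoonright\nu}$ be a $\leq_{\mathbb{U}}$-lower bound of $\langle r_{s\upharpoonright i}\mid i<\nu\rangle$, which exists and can be taken inside $U_q$ since $\mathbb{U}$ is $\delta$-directed closed and $\nu<\delta$, and one sets $H^*_{s\upharpoonright\nu}:=\bigcap_{i<\nu}H^*_{s\upharpoonright i}$ together with a corresponding lower bound of the $q_{s\upharpoonright i}$'s; since $b$ is forced to be a cofinal branch of $\dot T$ and each $(q_{s\upharpoonright i},r_{s\upharpoonright i})$ forces $w_{s\upharpoonright i}\in\dot b$, this limit condition forces a common $<_{\dot T}$-upper bound of all the $w_{s\upharpoonright i}$ into $\dot b$, which (after deciding it, extending $r_{s\upharpoonright\nu}$ within $U_q$) is taken as $w_{s\upharpoonright\nu}$.

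Carrying the recursion out to height $\kappa$, for each $s\in 2^\kappa$ we obtain an ordinal pair $w_s\in\delta\times\kappa$; and, as in \cite[Proposition~3.4]{SinTree}, for $s\neq s'$ one checks $w_s\neq w_{s'}$: if $j$ is least with $s(j)\neq s'(j)$, then $q_{s\upharpoonright j}$ forces $w_{s\upharpoonright(j+1)}\perp_{\dot T}w_{s'\upharpoonright(j+1)}$, while $(q_s,r_s)$ forces $w_{s\upharpoonright(j+1)}<_{\dot T}w_s$ and $(q_{s'},r_{s'})$ forces $w_{s'\upharpoonright(j+1)}<_{\dot T}w_{s'}$, so $w_s=w_{s'}$ would force $w_{s\upharpoonright(j+1)}$ and $w_{s'\upharpoonright(j+1)}$ to be $<_{\dot T}$-comparable. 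Hence $\{w_s\mid s\in 2^\kappa\}$ has size $2^\kappa$ and is contained in $\delta\times\kappa$, a set of size $\delta<2^\kappa$ in $V[G][U]$, a contradiction.

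The step I expect to be the main obstacle is the treatment of the limit stages of the recursion up to height $\kappa$, and specifically the $\mathbb{S}$-coordinate: the large sets $H^*_s$ live in measures of completeness only $\kappa$ (in fact only $\kappa_{g(\mathfrak{m})}$), so the naive intersection $\bigcap_{i<\kappa}H^*_{s\upharpoonright i}$ need not be large, and one must organise the construction --- as is done in \cite[Proposition~3.4]{SinTree} --- so that along each branch only $<\kappa$ many genuinely new refinements of $(g,H^*)$ are forced (for instance by first replacing $(g,H^*)$ by one with $\alpha_{(g,H^*)}=\delta$). A secondary subtlety, already present in \cite{SinTree}, is that the witnessing conditions $r_i$ are required to lie in the generic filter $U_q$, so the whole construction has to be performed over $V[\bar R]$, using the projections relating $\mathbb{U}$, $\mathbb{U}_q$ and their distinct orderings, with all lower bounds at limit stages chosen inside $U_q$ rather than merely in $\mathbb{U}$.
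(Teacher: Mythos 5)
There is a genuine gap, and it lies exactly where you flag it yourself: the height-$\kappa$ recursion cannot be carried out. Your perfect-tree construction needs, along every branch, $\leq^*$-decreasing sequences of conditions with the \emph{fixed} stem $g$ of length up to (and at the top level, exactly) $\kappa$. But for coordinates $\eta$ below $\max(\dom(g))$ the large sets live in the measures $U^\eta_{\xi_g,g(\xi_g)}$, which are only $\kappa_{g(\xi_g)}$-complete with $\kappa_{g(\xi_g)}<\kappa$, so already at limit stages of moderate length the intersections $\bigcap_{i<\nu}H^*_{s\upharpoonright i}$ need not be measure one; and even on the coordinates above the stem, $\kappa$-completeness handles only sequences of length $<\kappa$, whereas defining $w_s$ for $s\in 2^{\kappa}$ requires a lower bound of a length-$\kappa$ sequence. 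Your proposed repair (``only $<\kappa$ many genuinely new refinements along each branch'') is not substantiated, and it is not how \cite[Proposition 3.4]{SinTree} proceeds: no tree of conditions of height $\kappa$ is ever built there. A second, related gap is at the successor and limit steps themselves: you must produce a new node of $\dot b$ above $w_s$, forced in by a condition whose Sinapova part still has stem $g$ and whose $\mathbb{U}$-part lies in the relevant $U_q$. Extending $(q_s,r_s)$ to decide a higher node of $\dot b$ will in general lengthen the stem, and the Prikry property only lets one decide a \emph{single sentence} by a direct extension, not select among $\kappa$-many candidate nodes; nothing in your outline explains how the stem is kept fixed. Note also that a node of $E_{(g,K^*)}$ admitting a splitting need not lie $<_{\dot T}$-above $w_s$, since it is forced into $\dot b$ by a possibly incompatible condition.

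This is precisely the work the hypothesis $\dagger^b_g$ does in the paper's proof, and it is telling that your argument never really uses it. The paper fixes $(g,H^*)$ with $\bar r$ forcing $\dagger^b_{(g,H^*)}$ and $\alpha_{(g,H^*)}=\delta$, and then (Claim \ref{SplittingLemma}) uses the $\dagger^b$-witnesses --- the unbounded $J$, the conditions $\langle p_\alpha\mid\alpha\in J\rangle$ all with stem-part $(g,H^*)$, and the fixed coordinate $\xi$ --- to obtain, in advance, a coherent chain of nodes $u_\alpha=\langle\alpha,\xi\rangle$ forced into $\dot b$ without ever having to extend stems; the splittings of Definition \ref{h-splitting} are interleaved with this chain via the subclaim \ref{KeySubclaim}, a club $C$ of closure points, Easton's lemma (to pull the $\delta$-sequence back into $V[G]$ using $\delta$-distributivity of $\mathbb{U}$), and a single two-valued application of the Prikry property (deciding $\varphi(i,k)$) to pick, for each $i$, one split node incompatible with $u_{i+1}$. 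The endgame is then the pigeonhole of \cite{SinTree} applied to the resulting $\varepsilon$-many pairwise-stem-compatible conditions forcing pairwise $\dot T$-incomparable nodes into $\dot b$ --- not a $2^\kappa$-versus-$\delta$ count. Your Silver-style counting would be fine \emph{if} the tree of conditions existed, but the closure simply is not there in the Sinapova coordinate, so the argument has to be reorganised along the lines above rather than patched at the limit stages.
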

\begin{proof}
Assume otherwise and let $\bar{r}$ be a condition in $U$   such that $\bar{r}\Vdash^{V[G]}_{\mathbb{U}}\text{$``\dagger^b_{g}$ holds and $\dot{\alpha}_g=\check{\delta}$''}$. Since $\one_{\mathbb{U}}\Vdash^{V[G]}_{\mathbb{U}} \text{$``\delta$ is regular''}$ and $|\{H^*\mid (g,H^*)\in\mathcal{S}\}|^{V[G]}<\delta$, it follows that $$\bar{r}\Vdash^{V[G]}_{\mathbb{U}}\text{$``\exists \check{H}^*\,(\dagger^b_{(\check{g},\check{H}^*)}$ holds and $\dot{\alpha}_{(\check{g},\check{H}^*)}=\check{\delta})$''}.$$ By extending $\bar{r}$ if necessary, we may assume that there is $(g,H^*)\in\mathcal{S}$ be such that $\bar{r}\Vdash^{V[G]}_{\mathbb{U}}\text{$``\dagger^b_{(\check{g},\check{H}^*)}$ holds and $\dot{\alpha}_{(\check{g},\check{H}^*)}=\check{\delta}$''}$.
 %Let $\dot{J}$, $\check{\xi}$ and $\langle \dot{p}_\alpha\mid \alpha\in\dot{J}\rangle$ be $\mathbb{Q}$-names for the objects witnessing $\dagger^*_{(g^*,H^*)}$. 
\begin{claim}\label{SplittingLemma}
Let $r\leq_{\mathbb{Q}}\bar{r}$ and $r\in U_q$, for some $q\in S$ witnessing $(g, I^*)\in\mathcal{S}$ and $(g,I^*)\leq_{\mathcal{S}} (g,H^*)$. Then in $V[G]$ there are  $\langle v^*_i\mid i<\varepsilon\rangle$ nodes and $\langle \langle p^*_i, r^*_i\rangle\mid i<\varepsilon\rangle$ conditions in $\mathbb{S}_{\lambda^+}\times\mathbb{U}$ be such that:
\begin{enumerate}
\item For each $i<\varepsilon$, $p^*_i\leq_{\mathbb{S}_{\lambda^+}} q$ , $r^*_i\leq_\mathbb{U} r$, $r_i\in U_{p^*_i}$;
\item for each $i<\varepsilon$, $p^*_i$ has stem $g$, %witnesses that $(g,H^*_i)\in \mathcal{S}$ and $(g,H^*_i)\leq_{\mathcal{S}} (g,I^*)$;\marginpar{Revisar en el seu moment}
\item for each $i<\varepsilon$, $\langle p^*_i,r^*_i\rangle\Vdash_{\mathbb{S}_{\lambda^+}\times \mathbb{U}} v^*_i\in \dot{b}$, and
\item for each $i<j<\varepsilon$, $p^*_i\wedge p^*_j\Vdash_{\mathbb{S}_{\lambda^+}} v^*_i\perp_{\dot{T}} v^*_j$.
\end{enumerate}
\end{claim}
\begin{proof}[Proof of claim]
Let $U'$ be a $\mathbb{U}/U_q$ generic over $V[G][U_q]$ and $r\in U'$. Since $r\leq_{\mathbb{Q}}\bar{r}$,  $\alpha_{(g,H^*)}=\delta$ and $\dagger^b_{(g,H^*)}$ hold in $V[G][U']$. By the previous remarks we have that $\dagger^b_{(g,I^*)}$ and $\alpha_{(g,I^*)}$ also hold in this model. Denote by $E_{(g,I^*)}$, $J$, $\langle p_\alpha\mid \alpha\in J\rangle$ and $\xi$ the objects in $V[G][U']$ that witness $\dagger^b_{(g,I^*)}$. Let us now work over $V[G][U']$. 

\begin{subclaim}\label{KeySubclaim}
For every $u\in E_{(g,I^*)}$, there is $p\in \mathbb{S}_{\lambda^+}$ with $p\leq^*_{\mathbb{S}_{\lambda^+}}q$, $r_1,r_2\in U_p$ and nodes $v_1,v_2$ of higher levels, such that $\langle p,r_k\rangle\Vdash_{\mathbb{S}_{\lambda^+}\times \mathbb{U}} v_k\in\dot{b}$ and $p\Vdash^{V[G][U']}_{\mathbb{S}_{\lambda^+}} v_1\perp_{\dot{T}} v_2, \, u<_{\dot{T}} v_1, \, u<_{\dot{T}} v_2.$
\end{subclaim}
\begin{proof}[Proof of subclaim]
Let $u\in E_{(g,I^*)}$ and $(p',t')\Vdash^{V[G]}_{\mathbb{S}_{\lambda^+}\times \mathbb{U}} u\in \dot{b}$ with $t'\in U$ and $p'$ witnessing $(g,I^*)\in \mathcal{S}$. Since $\alpha_{(g,I^*)}=\delta$, there is $v$ in a higher level of the tree for which there is a $(g,I^*)$-splitting. Namely, there are $p, r, v_1,v_2, r_1,r_2$ as follows:
\begin{enumerate}
\item $p\in\mathbb{S}_{\lambda^+}$ witnesses $(g,I^*)\in\mathcal{S}$, $r\in U$, $\langle p,r\rangle\Vdash^{V[G]}_{\mathbb{S}_{\lambda^+}\ast \mathbb{U}} v\in\dot{b}$,
\item $v_k$ is a node in a higher level than $v$ and $\langle p,r_k\rangle\Vdash^{V[G]}_{\mathbb{S}_{\lambda^+}\times \mathbb{U}}v_k\in\dot{b}$, with $r_k\leq_{\mathbb{U}} r$ and $r_k\in U_p$, for $k\in\{1,2\}$, 
\item $p\Vdash^{V[G][U']}_{\mathbb{S}_{\lambda^+}} v_1\perp_{\dot{T}} v_2$.
\end{enumerate}
Observe that we may further assume $r_1,r_2\leq_{\mathbb{U}} t'$. Also, $p^*:=p\wedge p'$ is a condition $\leq^*_{\mathbb{S}_{\lambda^+}}$-below $p$ and $p'$.  Remark \ref{RemarkInclusionGenerics} yields $r_1,r_2\in U_{p^*}$. Finally, notice that $p^*, r_1,r_2,v_1,v_2$ is a witness for our statement.
\end{proof}
By extending $r$ if necessary, we may assume that $r$ forces the conclusion of the above subclaim. Let $C$ be the set of all $\alpha<\delta$ such that for each $\beta<\alpha$ and $u\in T_\gamma$, if there is some $r'\leq_{\mathbb{U}/U_q} r$ with $r'\Vdash^{V[G][U_q]}_{\mathbb{U}/U_q} u\in \dot{E}_{(g,I^*)}$, then there are levels $\beta<\gamma_1\leq \gamma_2<\alpha$ and nodes $v_1\in T_{\gamma_1}$ and $v_2\in T_{\gamma_2}$  witnessing the above subclaim, for some conditions $p\in\mathbb{S}_{\lambda^+}$ and $r_1,r_2\in\mathbb{U}$. Clearly, $C$ is closed. Also, since $\alpha_{(g,I^*)}=\delta$, is unbounded, hence  $C$ is a club on $\delta$. Observe that $C\in V[G][U_q]$.

Working in $V[G][U']$ define $\langle p^i, \gamma_i,\alpha_i\mid i<\varepsilon\rangle$ as follows: $\gamma_i\in J$, $p^i:=p_{\gamma_i}$ and $\alpha_i\in C$ is such that $\gamma_i<\alpha_i\leq \gamma_{i+1}$. For each $i<\varepsilon$, set $u_i:=\langle \gamma_i, \xi\rangle$ and let $s_i\in U'$, $s_i\leq_{\mathbb{Q}} r$, be such that $s_i\Vdash^{V[G]}_{\mathbb{U}} \text{$``\gamma_i\in\dot{J}$ and $p^i=\dot{p}_{\gamma_i}$''}$. Since $\mathbb{A}$ is $\kappa$-cc and $\mathbb{U}$ is $\delta$-directed closed, Easton's lemma implies that $\mathbb{A}$ forces that $\mathbb{U}$ is $\delta$-distributive, hence $\langle p^i, \gamma_i,\alpha_i, s_i\mid i<\varepsilon\rangle\in V[G]$. By construction,
\begin{itemize}
\item for each $i<\varepsilon$, $p^i$ witnesses $(g,I^*)\in\mathcal{S}$,
\item for each $i<\varepsilon$, $\langle p^i,s_i\rangle\Vdash^{V[G]}_{\mathbb{S}_{\lambda^+}\times\mathbb{U}} u_i\in \dot{b}$,
\item $i<j<\varepsilon$, $p^i\wedge p^j\Vdash_{\mathbb{S}_{\lambda^+}} u_i<_{\dot{T}} u_j$.
\end{itemize}
In particular, $s_i\Vdash^{V[G]}_{\mathbb{U}} u_i\in \dot{E}_{(g,I^*)}$. By definition of $C$, for each $i<\varepsilon$, there is $q^i\leq^*_{\mathbb{S}_{\lambda^+}} q$,  $r^i_1,r^i_2\in U_{q^i}$ and $v^i_1$, $v^i_2$ be such that
\begin{enumerate}
\item for each $i<\varepsilon$ and $k\in\{1,2\}$, $\langle q^i, r^i_k\rangle\Vdash_{\mathbb{S}_{\lambda^+}\times\mathbb{U}} v^i_k\in\dot{b}$ and $r^i_k\in U_{q^i}$,
\item for each $i<\varepsilon$, $q_i\Vdash_{\mathbb{S}_{\lambda^+}} v^i_1\perp_{\dot{T}} v^i_2, \, u_i<_{\dot{T}} v^i_1, \, u_i<_{\dot{T}} v^i_2$,
\item for each $i<\varepsilon$, $\gamma_i<\mathrm{level}(v^i_1), \mathrm{level}(v^i_2)<\gamma_{i+1}$.
\end{enumerate}
Observe that we may further assume that $q^i\leq_{\mathbb{S}_{\lambda^+}} p^i$, as the stems are the same. Let $\varphi(i,k)$ be $\text{$``v^i_k<_{\dot{T}}u_{i+1}$''}$.  By (2) and the Prikry property, there is $k^*\in\{1,2\}$ and $p^*_i\leq^* q^i\wedge p^{i+1}$ be such that $p^*_i\Vdash_{\mathbb{S}_{\lambda^+}} \neg \varphi(i,k^*)$. Set %$(g,H^*_i):={p^*_i}_{\upharpoonright \eta^{p^*_i}+1}$,
 $r^*_i=r^i_{k^*}$ and $v^*_i:=v^i_{k^*}$. By using Remark \ref{RemarkInclusionGenerics} it is immediate that $\langle p^*_i, r^*_i, v^*_i\mid i<\varepsilon\rangle$ is as desired. This finishes the proof of the claim.
\end{proof}
From this point on the argument is identical to \cite{SinTree}, so we decline the chance to provide more details.

\end{proof}

%We finish the paper with the proof of the tree property at $V[R]$. Since the argument is the same as in \cite{SinTree} we have opted for just presenting a sketch of the proof.
\begin{lemma}
$V[R]\models \TP(\delta)$.
\end{lemma}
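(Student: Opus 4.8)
The plan is to argue by contradiction. Assume $V[R]\models\neg\TP(\delta)$ and fix a $\delta$-Aronszajn tree $(T,<_T)\in V[R]$, normalised so that $T_\alpha=\{\alpha\}\times\kappa$, together with a $\mathbb{R}/G$-name $\tau$ for it, exactly as set up above. The results preceding this lemma --- the density of $\dagger^b$, which is the obvious modification of Lemma~\ref{DaggerSinapova}, together with \cite[Prop.~21]{SinUncCof} and the discussion following it --- already provide a cofinal branch $b\in V[\bar{R}]$ through $T$. All that remains is to show that such a $b$ in fact belongs to $V[R]$: this contradicts the assumption that $T$ is $\delta$-Aronszajn in $V[R]$, and hence yields $V[R]\models\TP(\delta)$, which together with Proposition~\ref{PropertiesOfV[R]} and the results of Section~\ref{TPkappa++Section} completes the proof of Theorem~\ref{MainTheorem1}.

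First I would fix a stem and a level beyond which $b$ cannot split any further. Using the density of $\{p\in\mathbb{S}_{\lambda^+}\mid \dagger^b_{p_{\upharpoonright\mathfrak{m}^p+1}}\text{ holds}\}$, choose $p_0\in S$ and put $(g_0,H_0^*):=(p_0)_{\upharpoonright\mathfrak{m}^{p_0}+1}$, so that $\dagger^b_{(g_0,H_0^*)}$, hence $\dagger^b_{g_0}$, holds. By Lemma~\ref{MainSplittingLemma}, $\alpha_{g_0}<\delta$. From this, together with Lemma~\ref{SplittingImpliesHsplitting} --- which is precisely what lets one pass from a splitting at an arbitrary stem to a splitting at a stem for which $\dagger^b$ holds --- and with the monotonicity $\alpha_{(g,H^*)}\leq\alpha_{(g,I^*)}$ valid for $(g,I^*)\leq_{\mathcal{S}}(g,H^*)$, I would extract an ordinal $\alpha^*<\delta$ such that above level $\alpha^*$ the branch $b$ exhibits no further splitting; the recurring use of Remark~\ref{RemarkOnSplittings} to amalgamate witnesses into a common $(g_0,F^*)\in\mathcal{S}$ is what keeps this under control.

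The remaining --- and main --- step is to turn ``$b$ does not split above $\alpha^*$'' into ``$b\in V[R]$'', and here I would follow \cite[\S3]{SinTree} (and \cite{Nee}) essentially verbatim. The absence of splittings forces that, for each $\beta\in(\alpha^*,\delta)$, the value $b(\beta)$ is the \emph{unique} node of $T_\beta$ that some condition of stem $g_0$ forces into $\dot{b}$, and that this value is independent of the $\mathbb{U}$-generic; passing from $\dot{b}$ to the induced $\mathbb{U}$-name over $V[G][S]$ and invoking Remark~\ref{RemarkInclusionGenerics} together with the projections $\mathbb{U}\rightarrow\mathbb{U}_q$, one obtains that the relevant value is already decided below a fixed condition, so that $b\upharpoonright(\alpha^*,\delta)$ is definable from $T$, $S$ and $\dot{b}$ --- in particular it lies in $V[R]$ --- and then $b\upharpoonright\alpha^*$ is recovered from $T\in V[R]$ as the set of $<_T$-predecessors of $b(\alpha^*+1)$, giving $b\in V[R]$, a contradiction. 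I expect this last step to be the only genuinely delicate point: the computation that makes it precise rests on the whole system of projections between $\bar{\mathbb{R}}$, $\mathbb{R}$ and the forcings $\mathbb{R}_{\dot q}$, $\mathbb{U}_q$ of this section, on the inclusions $U\subseteq U_q\subseteq U_{q'}$ (for $q'\leq_{\mathbb{S}_{\lambda^+}}q$ in $S$) of Remark~\ref{RemarkInclusionGenerics}, and on matching the name $\dot{b}$ to the correct intermediate model so that its branch becomes visible in $V[R]$ rather than only in $V[\bar{R}]$ --- exactly the purpose for which Sections~\ref{SectionTheMainConstruction} and~\ref{TPkappa+Section} were built up. As noted after Lemma~\ref{MainSplittingLemma}, this portion of the argument is identical to \cite{SinTree}, so I would not reproduce it.
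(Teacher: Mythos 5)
Your skeleton (argue by contradiction, use the density of $\dagger^b$, Lemma~\ref{MainSplittingLemma}, then pull the branch down to $V[R]$) is the paper's, but two of your steps would fail as written. First, your $\alpha^*$ is extracted from a single stem: you fix one $p_0\in S$ and only know $\alpha_{g_0}<\delta$ for $g_0$. The contradiction at the end produces a splitting whose stem is arbitrary (it arises from conditions forcing incomparable nodes into $\dot b$, and Lemma~\ref{SplittingImpliesHsplitting} only replaces that stem by an extension $i\supseteq g$ with $\dagger^b_i$, not by $g_0$; Remark~\ref{RemarkOnSplittings} amalgamates upper parts for a \emph{fixed} stem and can never change the stem). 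So $\alpha^*$ must bound $\alpha_g$ simultaneously for \emph{all} stems $g$ with $\dagger^b_g$: the paper sets $\alpha^*:=\sup_g\{\alpha_g\mid \dagger^b_g \text{ holds}\}+1$, which is ${<}\delta$ because Lemma~\ref{MainSplittingLemma} applies to each such $g$, there are fewer than $\delta$ many stems, and $\delta$ is regular. From $\alpha_{g_0}<\delta$ alone no such level can be ``extracted''.

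Second, your main step --- that for $\beta\in(\alpha^*,\delta)$ the value $b(\beta)$ is the unique node of $T_\beta$ forced into $\dot b$ by some condition of stem $g_0$, so that $b\upharpoonright(\alpha^*,\delta)$ is definable from $T$, $S$, $\dot b$ and hence lies in $V[R]$ --- is not right as stated. At a given level $\beta$ there need be no stem-$g_0$ condition deciding the branch at all (the $\dagger^b$-sequence only covers an unbounded set of levels, and the conditions of $\bar R$ that actually decide $b(\beta)$ have long stems coming from the Sinapova generic); and uniqueness can only be hoped for when the witnessing $\mathbb{U}$-parts lie in the generic filters $U_q$ --- exactly what the splitting notion demands --- but $U$, $U_q$ and $\bar R$ are not elements of $V[R]$, so a definition quantifying over them does not obviously yield a set of $V[R]$. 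The paper's device, which is the missing idea, is different: fix $u\in T_{\alpha^*}$ and $s^*\in R$ forcing $u\in\dot b$, and define in $V[R]$ the set $b^*$ of nodes $v$ with $u<_T v$ forced into $\dot b$ by some $s\in R$ below $s^*$ (this quantifies only over $R\in V[R]$); $b^*$ is cofinal, and if it is not a chain one uses $\bar{\mathbb{R}}/R=\bigcup_{p\in S}R_p$ (Remark~\ref{RemarkInclusionGenerics}) to convert two incomparable nodes of $b^*$ into a $(g,H^*)$-splitting at $u$, whence Lemma~\ref{SplittingImpliesHsplitting} gives $\alpha^*\le\alpha_i<\alpha^*$ for some dagger-stem $i$, the desired contradiction. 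In particular the paper never shows that the original branch $b$ itself belongs to $V[R]$; it shows that a (possibly different) cofinal chain definable from $R$ does, which is what the Aronszajn assumption forbids.
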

\begin{proof}[Proof skecth]
By Lemma \ref{MainSplittingLemma}, $\alpha^*:=\sup_{g}\{\alpha_{g}\mid \text{$``\dagger^b_g$ holds''} \}+1<\delta$. Let $u\in T_{\alpha^*}$ and $s^*\in R$ be such that $s^*\Vdash^{V[G]}_{\mathbb{R}^*/G} u\in\dot{b}$. Define $b^*:=\{v\in T\mid u<_T v,\, (\exists s\in R)\, s\leq_{\bar{\mathbb{R}}} s^*, s\Vdash_{\bar{\mathbb{R}}/G}^{V[G]} v\in\dot{b}\}$. Clearly, $b^*\in V[R]$ and $b^*$ is a cofinal set in $T$. By our initial assumption, $b^*$ is not a branch through $T$, hence there is $\gamma> \alpha^*$ with $|T_\gamma\cap b^*|\geq 2$. By Remark \ref{RemarkInclusionGenerics}, $\bar{\mathbb{R}}/R=\bigcup_{p\in S} R_p$. We can use this to prove that there is a $(g,H^*)$-splitting at $u$, for some $(g,H^*)$. Thus, $\alpha^*\leq \alpha_g$. By Lemma \ref{SplittingImpliesHsplitting}, we may further assume that $\dagger^b_{(g,H^*)}$ holds, so that $\alpha^*\leq \alpha_g< \alpha^*$. This forms the desired contradiction.
\end{proof}

\textbf{Acknowledgments:} The author would like to thank professor M. Golshani for suggesting him to work on this problem and to 
professor J. Bagaria for his friendly guidance. The said gratitude is also extensible to the anonymous referee for his/her carefully reading of the paper and for his/her timely corrections and remarks.

\bibliography{biblio}
\bibliographystyle{alpha}

\end{document}